\numberwithin{equation}{section}  
\DeclareFontFamily{OT1}{pzc}{}
\DeclareFontShape{OT1}{pzc}{m}{it}{<-> s * [1.10] pzcmi7t}{}
\DeclareMathAlphabet{\mathpzc}{OT1}{pzc}{m}{it}
\let\originalleft\left
\let\originalright\right
\renewcommand{\left}{\mathopen{}\mathclose\bgroup\originalleft}
\renewcommand{\right}{\aftergroup\egroup\originalright}
\theoremstyle{plain}
\newtheorem{thm}{Theorem}[section]
\newtheorem{lem}[thm]{Lemma}
\newtheorem{prop}[thm]{Proposition}
\theoremstyle{definition}
\newtheorem{rem}[thm]{Remark}
\newcommand{\<}{\langle}
\renewcommand{\>}{\rangle}
\renewcommand{\(}{\left(}
\renewcommand{\)}{\right)}
\renewcommand{\tilde}{\widetilde}
\renewcommand{\bar}[1]{\mkern 1.5mu\overline{\mkern-1.5mu#1\mkern-1.5mu}\mkern 1.5mu}
\newcommand{\bs}{\backslash}
\newcommand{\mm}[4]{\begin{pmatrix} #1 & #2 \\ #3 & #4 \end{pmatrix}} 
\renewcommand{\Re}{\mathrm{Re}}
\renewcommand{\Im}{\mathrm{Im}}
\newcommand{\SL}{\mathrm{SL}}
\newcommand{\sgn}{\mathrm{sgn}}
\newcommand{\loc}{\mathrm{loc}}
\renewcommand{\mod}{\mathrm{mod\,}}
\newcommand{\id}{\mathrm{id}}
\newcommand{\supp}{\mathrm{supp}}
\newcommand{\onebb}{\mathbbm{1}}  
\newcommand{\dse}{\mathbbm{e}}
\newcommand{\R}{\mathbb{R}}
\renewcommand{\P}{\mathbb{P}}
\newcommand{\C}{\mathbb{C}}
\newcommand{\Z}{\mathbb{Z}}
\newcommand{\Q}{\mathbb{Q}}
\newcommand{\dsE}{\mathbb{E}}
\newcommand{\dsH}{\mathbb{H}}
\newcommand{\fa}{\mathfrak{a}}
\newcommand{\scc}{\mathpzc{c}}
\newcommand{\scp}{\mathpzc{p}}
\newcommand{\scC}{\mathcal{C}}
\newcommand{\scG}{\mathcal{G}}
\newcommand{\scK}{\mathcal{K}}
\newcommand{\tla}{\tilde{a}}
\newcommand{\tlA}{\tilde{A}}
\newcommand{\tlb}{\tilde{b}}
\newcommand{\tlB}{\tilde{B}}
\newcommand{\tlE}{\tilde{E}}
\newcommand{\tlg}{\tilde{g}}
\newcommand{\tlgamma}{\tilde{\gamma}}
\newcommand{\tlGamma}{\tilde{\Gamma}}
\newcommand{\tlh}{\tilde{h}}
\newcommand{\tlk}{\tilde{k}}
\newcommand{\tlK}{\tilde{K}}
\newcommand{\tlm}{\tilde{m}}
\newcommand{\tlM}{\tilde{M}}
\newcommand{\tln}{\tilde{n}}
\newcommand{\tlN}{\tilde{N}}
\newcommand{\tls}{\tilde{s}}
\newcommand{\tlSL}{\tilde{\SL}}
\newcommand{\tlxi}{\tilde{\xi}}
\newcommand{\h}{\hspace{.1in}}
\newcommand{\hh}{\hspace{.2in}}
\newcommand{\hhh}{\hspace{.3in}}
\begin{document}
\title[Metaplectic Eisenstein distributions]{Metaplectic Eisenstein distributions}
\author{Brandon Bate}
\address{Department of Mathematics, Houghton College, 1 Willard Ave, Houghton, NY 14744}
\email{brandonbate@gmail.com}
\date{}
\subjclass[2010]{11F37}

\begin{abstract}
In this paper, we define distributional analogues of the metaplectic Eisenstein series for $\SL_2(\R)$, which we refer to as
\textit{metaplectic Eisenstein distributions}.  We prove that these distributions
have meromorphic continuation and give an explicit functional equation that they satisfy.
From these results, we deduce the
functional equation for the corresponding classical metaplectic Eisenstein series.
\end{abstract}

\dedicatory{To Wilfried Schmid}

\maketitle

\section{Introduction}
\label{sec:Introduction}

In 1967, Robert Langlands outlined a definition for $L$-functions associated to
automorphic representations.  He then conjectured that these $L$-functions, commonly
referred to as \textit{automorphic $L$-functions}, can be meromorphically continued
and satisfy a functional equation \cite{Langlands67}.  This conjecture, which falls under the purview of the Langlands program, 
has only been proved for a relatively small number of automorphic $L$-functions.
A principal method employed in these cases has been the Rankin-Selberg method \cite{Bump05}.
In the Rankin-Selberg method, an automorphic $L$-function for an
automorphic representation $(\pi,V)$ is computed by taking a sufficiently ``nice'' element of the representation space $V$,
and considering the integral of this function multiplied by an Eisenstein series and some other suitable automorphic function.
Simplifications of the resulting integral yield the desired automorphic $L$-function, along with an integral
which is believed to simplify to a ratio of Gamma functions.\footnote{In reality, such a simplification
into Gamma functions is extremely difficult and rarely accomplished.}
Since the analytic variable of the resulting $L$-function
corresponds to the analytic variable of the Eisenstein series, the meromorphic continuation and functional
equation for such $L$-functions can be established from that of the Eisenstein series \cite{Bump05}.

As a part of one of the most celebrated applications of the Rankin-Selberg method,
Shimura first established the meromorphic continuation 
of a metaplectic Eisenstein series based at the cusp at infinity \cite{Shimura75}.
By then unfolding a Rankin-Selberg integral composed from this metaplectic Eisenstein series,
a $\theta$-function, and a weight $k$ Hecke eigenform $f$, Shimura was able 
to prove the meromorphic continuation of $L(s, \text{Sym}^2 f)$, the symmetric square $L$-function for $f$.
Since then, metaplectic Eisenstein series and their generalizations have become a fixture in modern number theory,
being utilized to prove the meromorphic continuation and functional equations of other automorphic $L$-functions,
and utilized in the study of average values of $L$-functions and multiple Dirichlet series \cite{Bump12,Ginzburg93,Goldfeld85}.

One of the drawbacks to the Rankin-Selberg method, which we alluded to earlier, is the difficulty involved in computing
the aforementioned ratio of Gamma functions, especially in the archimedean places.
Fortunately, the recent adaptation of the Rankin-Selberg method
to automorphic distributions by Miller and Schmid \cite{Miller12, Miller08}, has in large part simplified the process of calculating
these Gamma factors.  The difficulty however, is that one must work with automorphic distributions, and in particular,
one must utilize Eisenstein distributions, the distributional analogues of Eisenstein series.
Thus in order to adapt the Rankin-Selberg integrals which utilize metaplectic Eisenstein series to this
distributional setting, we must first define the metaplectic Eisenstein distributions, prove their meromorphic continuation,
and compute an explicit functional equation. By doing this, it is believed that Rankin-Selberg integrals, such as
the one in \cite{Ginzburg93}, can be adapted to the distributional setting, and thereby provide a means for
the explicit computation of the corresponding Gamma factors.

As indicated in \cite{Miller11, Miller08}, Eisenstein distributions are identified as elements
of distributional principal series for the group in question.  Thus we begin Section \ref{sec:DblCvrsSL2}
with a review of the metaplectic group and its corresponding distributional principal series representations.
In Section \ref{sec:MetaEisenDist}, we continue our analysis of distributional principal series representations,
concluding with a definition of $\tlE_{\epsilon,\nu}^{(\infty)}$,
the metaplectic Eisenstein distribution based at the cusp at $\infty$.
In Section \ref{sec:GaussKloo}, we find that the Fourier coefficients of $\tlE_{\epsilon,\nu}^{(\infty)}$
will in part consist of Gauss and Kloosterman sums, and we conclude this section by proving explicit
formulas for these sums.
Then in Section \ref{sec:MeroContFourCoeff}, we use the results of Section \ref{sec:GaussKloo} to 
compute explicit formulas for the Fourier coefficients, which in turn show that the Fourier coefficients
of $\tlE_{\epsilon,\nu}^{(\infty)}$ have meromorphic continuation.
The exact formulas for the Fourier coefficients is stated in Theorem \ref{thm:SimplFourCoeff}.

In Section \ref{sec:MeroContEisenInf}, 
we show that $\tlE_{\epsilon,\nu}^{(\infty)}$ has holomorphic continuation to $\C$ except for simple poles on $\{0,\frac{1}{2}\}$.
We do this by utilizing the meromorphic continuation of its Fourier coefficients.
Establishing the meromorphic continuation of $\tlE_{\epsilon,\nu}^{(\infty)}$ in this manner is preferable since it
affords us sharper information about the meromorphic continuation compared to spectral theory.
Some care is required since the Fourier series for  $\tlE_{\epsilon,\nu}^{(\infty)}$
does not capture its behavior ``at infinity."  To handle this situation, we make reference to
the notion of a distribution ``vanishing'' to a given order at a point \cite{Miller04}.
In Proposition \ref{prop:EisenInfMeroCont} we give a series expansion for  $\tlE_{\epsilon,\nu}^{(\infty)}$
which accounts for this behavior ``at infinity", and which also has the property that it
can be meromorphically continued.

In Section \ref{sec:MetaAtZero}, we define $\tlE_{\epsilon,\nu}^{(0)}$, the metaplectic Eisenstein distribution based at the cusp $0$.
Most of the work done in proving the meromorphic continuation for $\tlE_{\epsilon,\nu}^{(\infty)}$ can be directly
applied to prove the meromorphic continuation of $\tlE_{\epsilon,\nu}^{(0)}$.  We conclude this section with an explicit
computation of the zeroeth Fourier coefficient of $\tlE_{\epsilon,\nu}^{(0)}$, which is a necessary ingredient for 
our proof of the functional equation.  In Section \ref{sec:Intertwine}, we prove some results concerning the
intertwining operators for the metaplectic group, and then use these results to establish series expansions for the intertwined
metaplectic Eisenstein distributions.
In Section \ref{sec:EisenDistFunctionalEq}, we prove a functional equation for the metaplectic
Eisenstein distributions, and then in Section \ref{sec:EisenSeriesFunctionalEq}, obtain from this a functional equation
for the corresponding classical metaplectic Eisenstein series, which were first introduced by Kubota \cite{Kubota69}.
To the best of the author's knowledge, no such explicit functional equation for the classical metaplectic
Eisenstein series is recorded in the literature,
although \cite{Moeglin95} gives a road map for proving the adelic functional equation.
Given that its derivation from the distributional
functional equation required only a little extra work, it seemed appropriate to take the opportunity
to record it here.

The author would like to thank Stephen D. Miller for his advice and assistance in preparing this
paper, Henryk Iwaniec for his insightful conversations on this subject, and the referees for their helpful comments.
Finally, the author would like to thank Wilfried Schmid, whose work in the
theory of automorphic distributions has served as a foundation for this article.

\section{Principal Series Representations for \texorpdfstring{$\tlSL_2$}{the Metaplectic Group}}
\label{sec:DblCvrsSL2}

In this section we define the metaplectic group and review some facts concerning its principal
series representations.  In Section \ref{sec:MetaEisenDist} we will define the metaplectic 
Eisenstein distributions to be elements of these principal series representation spaces.

As a set, let $\tlSL_2(\R) = \SL_2(\R) \times \{\pm 1\}$.  Recall that the Hilbert
symbol for $\R$ is given by the formula
\begin{equation}
(x,y)_H = \begin{cases}-1 &\text{if } x<0 \text{ and } y<0\\ 1 &\text{otherwise},\end{cases}
\end{equation}
where $x,y \in \R_{\neq 0}$. For $\mm{a}{b}{c}{d} \in \SL_2(\R)$, define
\begin{equation}
X\(\mm{a}{b}{c}{d}\) = \begin{cases}c & \text{if } c \neq 0\\d & \text{if } c = 0.\end{cases}
\end{equation}
For $g_1, g_2 \in \SL_2(\R)$, define
\begin{equation}
\alpha(g_1,g_2) = \(\frac{X(g_1 g_2)}{X(g_1)},\frac{X(g_1 g_2)}{X(g_2)}\)_H.
\end{equation}
One can show that $\alpha$ is a 2-cocycle \cite{Kazhdan84,Kubota67}, and thus we give $\tlSL_2(\R)$ the structure
of a group by defining the following multiplication law:
\begin{equation}
(g_1,\epsilon_1) \cdot (g_2,\epsilon_2) = (g_1 g_2, \alpha(g_1, g_2) \epsilon_1 \epsilon_2),
\end{equation}
where $g_1, g_2 \in \SL_2(\R)$ and $\epsilon_1, \epsilon_2 \in \{\pm 1 \}$.
In addition to identifying $\tlSL_2(\R)$ as a group, one can give $\tlSL_2(\R)$
the structure of a smooth manifold by requiring the map
\begin{equation}
\label{eq:covermap}
\tlg=(g,\epsilon) \mapsto g
\end{equation}
to be a smooth covering map from $\tlSL_2(\R)$ onto $\SL_2(\R)$.
With respect to this smooth structure, one can check that $\tlSL_2(\R)$ is a Lie group.
This group is commonly referred to as \textit{the metaplectic group}.

Let
\begin{align}
\label{eq:sltl2elems}
&\tlm = \tlm_{\epsilon_1,\epsilon_2} = \(\mm{\epsilon_1}{0}{0}{\epsilon_1}, \epsilon_2\),&  &\tla = \tla_u  = \(\mm{u}{0}{0}{u^{-1}},1\),& \\
&\tln = \tln_x = \(\mm{1}{x}{0}{1},1\),& &\tln_- = \tln_{-,x} = \(\mm{1}{0}{x}{1},1\),&\notag\\
&\tlk = \tlk_\theta = \(\mm{\cos(\theta)}{-\sin(\theta)}{\sin(\theta)}{\cos(\theta)}, \varepsilon(\theta)\),\notag
\end{align}
where $\epsilon_1, \epsilon_2 \in \{\pm 1\}$, $u \in \R_{>0}$, $x \in \R$, $\theta \in [-2\pi,2\pi)$, and
\begin{equation}
\label{eq:epsDef}
\varepsilon(\theta) = \begin{cases} 1 &\text{if } -\pi \leq \theta < \pi \\ -1 &\text{if } -2 \pi \leq \theta < -\pi \text{ or } \pi \leq \theta < 2 \pi. \end{cases}
\end{equation}
We extend the definition of $\tlk_\theta$ periodically to all $\theta \in \R$.
As indicated in the above definitions, sometimes we will suppress the variables
$\epsilon_1$, $\epsilon_2$, $u$, $x$, and $\theta$ in our notation.  Occasionally we will write $\tlm(\epsilon_1,\epsilon_2)$,
$\tla(u)$, $\tln(x)$, $\tln_-(x)$, and $\tlk(\theta)$ for $\tlm_{\epsilon_1,\epsilon_2}$, $\tla_u$, $\tln_x$, $\tln_{-,x}$, and
$\tlk_\theta$ (respectively).
Let
\begin{align}
\label{eq:tlSL2Subgroup}
&\tlM = \left\{ \tlm_{\epsilon_1,\epsilon_2}: \epsilon_1,\epsilon_2 \in \{\pm 1\}\right\},& &\tlA = \left\{ \tla_u : u \in \R_{>0} \right\},&\\
&\tlN = \left\{ \tln_{x}: x \in \R \right\},& &\tlN_{-} = \left\{ \tln_{-,x}: x \in \R \right\},&\notag\\
&\tlK = \{ k_\theta: \theta \in [-2\pi,2\pi) \},&   &\tlB = \tlM \tlA \tlN_-.\notag& 
\end{align}
One can check that the sets defined in \eqref{eq:tlSL2Subgroup} are subgroups of $\tlSL_2(\R)$. One can also verify that
$\tlM$ is the center $\tlSL_2(\R)$ and that
$\theta \mapsto \tlk_\theta$ is a continuous isomorphism from $\R / [-2\pi,2\pi)$ onto $\tlK$.

Let $\omega$ denote a quasi-character of $\tlB$, and let
\begin{align}
\label{eq:indRep}
&V_\omega^\infty = \{f \in C^\infty(\tlSL_2(\R)):f(\tlg \tlb)=\omega(\tlb^{-1})f(\tlg) \text{ for all } \tlg \in \tlSL_2(\R), \tlb \in \tlB\},\\
&V_\omega = \{f \in L_\loc^2(\tlSL_2(\R)):f(\tlg \tlb)=\omega(\tlb^{-1})f(\tlg) \text{ for all } \tlg \in \tlSL_2(\R), \tlb \in \tlB\},\notag\\
&V_\omega^{-\infty} = \{f \in C^{-\infty}(\tlSL_2(\R)):f(\tlg \tlb)=\omega(\tlb^{-1})f(\tlg) \text{ for all } \tlg \in \tlSL_2(\R), \tlb \in \tlB\},\notag
\end{align}
where $C^\infty(\tlSL_2(\R))$ denotes the spaces of smooth functions on $\tlSL_2(\R)$, $L_\loc^2(\tlSL_2(\R))$ denotes
the space of locally $L^2$-functions  on $\tlSL_2(\R)$ (modulo the space of functions which vanish almost everywhere),
and $C^{-\infty}(\tlSL_2(\R))$ denotes the space of distributions on $\tlSL_2(\R)$.
The equality specified in the definition of $V_\omega^{-\infty}$ is interpreted as an equality between distributions on $\tlSL_2(\R)$. 
We refer to the spaces in \eqref{eq:indRep} as \textit{smooth (resp. $L^2$, distributional) principal series representations spaces}.
Each of these spaces comes equipped with the action of left inverse multiplication by elements of $\tlSL_2(\R)$.
We denote these actions by $\pi = \pi_\omega$.
It is clear from the definitions in \eqref{eq:indRep} that
\begin{equation}
\label{eq:indRepIncls}
V^\infty_\omega \subset V_\omega \subset V_\omega^{-\infty}.
\end{equation}

We define the quasi-characters of $\tlB$ which will be of interest to us in this paper.
For $\nu \in \C$, let 
\begin{equation}
\label{eq:mudef}
\mu_\nu(\tla_u) = u^{\nu-1},
\end{equation}
which is a quasi-character of $\tlA$ (the $-1$ in the exponent in \eqref{eq:mudef} is a normalizing factor
common in representation theory).  For $\epsilon \in \{\pm 1\}$, let $\sigma_\epsilon:\tlM \to \C^*$ denote the character of $\tlM$
defined by the following identities:
\begin{align}
\label{eq:sigmaDef}
&\sigma_\epsilon\( \mm{1}{0}{0}{1}, \pm 1 \) = \pm 1,&
&\sigma_\epsilon\( \mm{-1}{0}{0}{-1}, \pm 1 \) = \mp \epsilon \, i.&
\end{align}
For $\tlb = \tlm \tla \tln_-$, we define
\begin{equation}
\label{eq:omegaepsnu}
\omega_{\epsilon,\nu}(\tlb) = \sigma_{\epsilon}(\tlm) \mu_\nu(\tla),
\end{equation}
which is a quasi-character of $\tlB$.
In order to simplify notation, we will write 
$V_{\epsilon,\nu}^{\infty}$ for $V_{\omega_{\epsilon,\nu}}^{\infty}$.  A similar notation will also be used
for the $L^2$ and distributional principal series representation spaces.

For $f \in V_{\epsilon,\nu}^\infty$, define $f_0, f_\infty \in C^\infty(\R)$ by the equalities
\begin{equation}
\label{eq:Sub0SubInfDef}
f_0(x) = f(\tln_x) \text{ and } f_\infty(x) = (\pi(\tls)f)(\tln_x) = f(\tls^{-1} \tln_x),
\end{equation}
where
\begin{equation}
\label{eq:tlsdef}
\tls  =  \tlk_{\frac{\pi}{2}} = \( \mm{0}{-1}{1}{0}, 1\).
\end{equation}
We will now explain how $f_0$ and $f_\infty$ are related to each other.  To do so, let
\begin{align}
\label{eq:branchCut}
&\log(z) \text{ denote the branch cut of the logarithm whose imaginary} \\
&\text{part lies in } (-\pi,\pi], \text{ and let } z^t = \exp(\log(z) t)  \text{ where } t \in \C, z \in \C^*.\notag
\end{align}
Since for $x \neq 0$,
\begin{align}
\label{eq:tlsAction}
\tls^{-1} \tln_x = \tln_{-x^{-1}} \(\tla_{|x|} \tlm_{\sgn(-x),\sgn(-x)} \tln_{-,-x}\)^{-1},
\end{align}
we find for $f \in V_{\epsilon,\nu}^{\infty}$,
\begin{equation}
\label{eq:SmoothSigmaIndRepUnbddEq}
f_\infty(x) = \sgn(-x)^{\epsilon/2} |x|^{\nu-1} f_0(-x^{-1})
\end{equation}
for $x \neq 0$.
Conversely, when given $f_0,f_\infty \in C^{\infty}(\R)$ which satisfy \eqref{eq:SmoothSigmaIndRepUnbddEq}, 
it follows that one can define a unique element $f \in V_{\epsilon,\nu}^{\infty}$ 
such that \eqref{eq:Sub0SubInfDef} holds since $\tlN$ and $\tls^{-1} \tlN$ cover $\tlSL_2(\R) / \tlB$.
Thus
\begin{align}
\label{eq:SmoothSigmaIndRepUnbddIso}
&V_{\epsilon,\nu}^{\infty} \cong \{(f_0,f_\infty) \in C^{\infty}(\R)^2 : f_0 \text{ and } f_\infty \text{ satisfy \eqref{eq:SmoothSigmaIndRepUnbddEq} }\}.
\end{align}

For $f_1 \in V_{\epsilon,\nu}$ and $f_2 \in V_{-\epsilon,-\nu}$
we define the following pairing:
\begin{equation}
\label{eq:pairingDef}
\<f_1,f_2\>_{\epsilon,\nu} = \int_{-\pi/2}^{\pi/2} f_1(\tlk_\theta) f_2(\tlk_\theta) \, d\theta.
\end{equation}
We claim that the integration is in fact over the compact group $\tlK / \tlM$.
To see this, first recall that $\theta \mapsto \tlk_\theta$ is a continuous isomorphism from $\R / [-2\pi,2\pi)$ onto  $\tlK$.
Therefore a Haar measure on $\tlK$ is given by $d\theta$ under the parameterization $\theta \mapsto \tlk_\theta$.
Since $\tlM = \{\tlk_{-2\pi}, \tlk_{-\pi}, \tlk_{0}, \tlk_{\pi}\}$, it follows that 
$d\theta$ likewise is a Haar measure for $\tlK / \tlM$, allowing any interval of the form $[a,a+\pi)$ to serve as a 
fundamental domain for $\tlK / \tlM$.
In \eqref{eq:pairingDef}, we chose the fundamental domain $[-\frac{\pi}{2},\frac{\pi}{2})$.
Since the integrand is $\tlM$-invariant and since $\tlK / \tlM$ is compact,
it follows that the integral converges and corresponds to integration over $\tlK / \tlM$.
Since $\tlSL_2(\R) / \tlB \cong \tlK / \tlM$, the pairing is clearly non-degenerate.
Furthermore, it can be shown that the dual representation of
$(\pi,V_{\epsilon,\nu})$ under this pairing agrees with the representation $(\pi, V_{-\epsilon,-\nu})$.
This allows us to identify $(\pi, V_{-\epsilon,-\nu})$ with the dual of $(\pi,V_{\epsilon,\nu})$.

The pairing \eqref{eq:pairingDef} extends (on the left) and restricts (on the right) to
$V_{\epsilon,\nu}^{-\infty} \times V_{-\epsilon,-\nu}^{\infty}$;
we can extend to $V_{\epsilon,\nu}^{-\infty}$ since distributions are given locally by integration against
derivatives of locally $L^2$ functions. 
Via this pairing we can identify $V_{\epsilon,\nu}^{-\infty}$ with the dual of 
$V_{-\epsilon,-\nu}^{\infty}$, where we equip $V_{-\epsilon,-\nu}^{\infty}$ with the usual test function topology.
Relative to this topology, we equip $V_{\epsilon,\nu}^{-\infty}$ with the strong topology
(see \cite[\S 19]{Treves06} for the definition of the strong topology for topological vector spaces in general).
This topology on $V_{\epsilon,\nu}^{-\infty}$ is also known as the \textit{strong distribution topology}.
With this topology, one can check that $V_{\epsilon,\nu}^\infty$ is dense in $V_{\epsilon,\nu}^{-\infty}$.
The pairing $\<\cdot,\cdot\>_{\epsilon,\nu}$ on $V_{\epsilon,\nu}^{-\infty} \times V_{-\epsilon,-\nu}^\infty$
is not continuous.
However, it is separately continuous \cite[\S 41]{Treves06}, which means that
\begin{align}
\label{eq:sepCts}
&v \mapsto \<v',v\> \text{ is continuous for fixed } v' \in V_{\epsilon,\nu}^{-\infty}, \text{ and}\\
&v' \mapsto \<v',v\> \text{ is continuous for fixed } v \in V_{-\epsilon,-\nu}^\infty.\notag
\end{align}
It should also be noted that since $V_{\epsilon,\nu}^{-\infty}$ is the dual of a Montel space, it follows that 
sequential convergence in the strong distribution topology is equivalent to sequential convergence in the weak topology
\cite[\S 34.4]{Treves06}.

We wish to speak of the restriction of $f \in V_{\epsilon,\nu}^{-\infty}$ to $\tlN \cong \R$.
To do this in a meaningful way, we select $\phi \in C^\infty_c(\R)$ and define $h_\phi \in V_{-\epsilon,-\nu}^\infty$
(via the isomorphism in \eqref{eq:SmoothSigmaIndRepUnbddIso}) by the equation
\begin{equation}
(h_\phi)_0(x) = \phi(x).
\end{equation}
Although we have not specified $(h_\phi)_\infty(x)$, its precise definition is easily deduced from \eqref{eq:SmoothSigmaIndRepUnbddEq} 
since $\phi$ has compact support.
We define $f_0 \in C^{-\infty}(\R)$ by the equation
\begin{equation}
\label{eq:f0Def}
f_0(\phi) = \<f,h_\phi\>_{\epsilon,\nu}.
\end{equation}
We refer to $f_0$ as \textit{the restriction of $f$ to $\tlN$}. This definition
for the restriction of $f$ to $\tlN$ is appropriate since it agrees with the usual notion of restriction of
$f$ to $\tlN$ when $f$ is a function.  Indeed, if $f \in V_{\epsilon,\nu}$ then since 
\begin{equation}
\label{eq:tlkTotlnId}
\tlk_\theta = \tln_{-\!\tan(\theta)} \(\tla_{|\cos(\theta)|} \tlm_{\sgn(\cos(\theta)),1} \tln_{-,-\!\sin(\theta)\cos(\theta)}\)^{-1}
\end{equation}
for $\theta \in (-\frac{\pi}{2},\frac{\pi}{2})$, then
\begin{align}
\label{eq:argCmptToUnbdd}
&\<f,h_\phi\>_{\epsilon,\nu} = \int_{-\pi/2}^{\pi/2} f(\tlk_\theta) h_\phi(\tlk_\theta) d \theta\\
&= \int_{-\pi/2}^{\pi/2} f(\tln_{-\!\tan(\theta)}) |\cos(\theta)|^{\nu-1} \sigma_{\epsilon}(\tlm_{\sgn(\cos(\theta)),1})\notag\\
&\hh\hh \cdot h_\phi(\tln_{-\!\tan(\theta)})  |\cos(\theta)|^{-\nu-1} \sigma_{-\epsilon}(\tlm_{\sgn(\cos(\theta)),1}) d \theta\notag\\
&= \int_{-\pi/2}^{\pi/2} f_0(-\tan(\theta)) \phi(-\tan(\theta))  |\cos(\theta)|^{-2} d \theta\notag\\
&=\int_{-\infty}^\infty f_0(-t) \phi(-t) (\sqrt{1+t^2})^2 \frac{dt}{1+t^2} = \int_{-\infty}^\infty f_0(t) \phi(t) dt.\notag
\end{align}
In the last line we performed the change of variables $\theta \mapsto \arctan(t)$.

We can likewise define the restriction of $f \in V_{\epsilon,\nu}^{-\infty}$ to $\tls^{-1} N \cong \R$.
We define $f_\infty \in C^{-\infty}(\R)$ by the equation
\begin{equation}
f_\infty(\phi) = \<\pi(\tls)f,h_\phi\>_{\epsilon,\nu}.
\end{equation}
We refer to $f_\infty$ as the \textit{the restriction of $f$ to $\tls^{-1} \tlN$}.  
As before, one can check that the restriction
of $f$ to $\tls^{-1} \tlN$ is appropriate since it agrees with our usual notion of restriction of $f$
to $\tls^{-1} \tlN$ when $f$ is a function.
Since $\tlN$ and $\tls^{-1} \tlN$ cover $\tlSL_2(\R) / \tlB$,  it follows that $f$ is completely determined by $f_0$ and $f_\infty$.
Just as in the case of smooth $f$, we have that for $f \in V_{\epsilon,\nu}^{-\infty}$,
\begin{equation}
\label{eq:sigmaIndRepUnbddEq}
f_\infty(x) = \sgn(-x)^{\epsilon/2} |x|^{\nu-1} f_0(-x^{-1})
\end{equation}
as an equality between distributions on $\R_{\neq 0}$.
This can be deduced directly from the analogous statement for smooth functions in \eqref{eq:SmoothSigmaIndRepUnbddEq}
and the density of $V_{\epsilon,\nu}^\infty$ in $V_{\epsilon,\nu}^{-\infty}$.
Conversely, when given $f_0,f_\infty \in C^{-\infty}(\R)$ which satisfy \eqref{eq:sigmaIndRepUnbddEq}, 
it follows that one can define a unique element
$f \in V_{\epsilon,\nu}^{-\infty}$. Thus
\begin{align}
\label{eq:sigmaIndRepUnbddIso}
&V_{\epsilon,\nu}^{-\infty} \cong \{(f_0,f_\infty) \in C^{-\infty}(\R)^2 : f_0 \text{ and } f_\infty \text{ satisfy \eqref{eq:sigmaIndRepUnbddEq}} \}.
\end{align}

Sections 3-7 will rely heavily upon \eqref{eq:sigmaIndRepUnbddIso}, with little need to explicitly reference
back to the pairing $\<\cdot,\cdot\>_{\epsilon,\nu}$.
In Sections 8-10, we will again make explicit reference to $\<\cdot,\cdot\>_{\epsilon,\nu}$,
and in particular, need some of the following facts.

Notice that \eqref{eq:argCmptToUnbdd} applies not only to $f \in V_{\epsilon,\nu}$, but also to $f \in V_{\epsilon,\nu}^{-\infty}$.
By changing notation slightly,%
\footnote{The change in notation is slightly confusing: we replaced $h_\phi$ with $\phi$ and $\phi$ with $\phi_0$.}
we then have that
\begin{equation}
\label{eq:cmpctToUnbdd}
\<f,\phi\>_{\epsilon,\nu} = \int_{-\infty}^\infty f(\tln_t) \phi(\tln_t) \, dt = \int_{-\infty}^\infty f_0(t) \phi_0(t) \, dt,
\end{equation}
for $f \in V_{\epsilon,\nu}^{-\infty}$ and $\phi \in V_{-\epsilon,-\nu}^\infty$, provided $\supp(\phi) \subset \tlN \tlB$
or $\supp(f) \subset \tlN \tlB$.
Similarly, 
\begin{equation}
\label{eq:cmpctToUnbdd2}
\<f,\phi\>_{\epsilon,\nu} = \int_{-\infty}^\infty f(\tls^{-1} \tln_t) \phi(\tls^{-1} \tln_t) \, dt 
= \int_{-\infty}^\infty f_\infty(t) \phi_\infty(t) \, dt,
\end{equation}
provided $\supp(\phi) \subset \tls^{-1}\tlN\tlB$ or $\supp(f) \subset \tls^{-1}\tlN\tlB$.
For arbitrary $\phi \in V_{-\epsilon,-\nu}^\infty$, 
there exists $\phi_1, \phi_2 \in V_{-\epsilon,-\nu}^\infty$ such that
$\phi = \phi_1 + \phi_2$, $\supp(\phi_1) \subset \tlN \tlB$, and $\supp(\phi_2) \subset \tls^{-1} \tlN \tlB$.
Thus by \eqref{eq:cmpctToUnbdd} and \eqref{eq:cmpctToUnbdd2},
\begin{align}
\label{eq:phi1phi2breakup}
&\<f,\phi\>_{\epsilon,\nu} = \<f,\phi_1\>_{\epsilon,\nu} + \<f,\phi_2\>_{\epsilon,\nu}\\
&= \int_{-\infty}^\infty f_0(t) \(\phi_1\)_0(t) \, dt + \int_{-\infty}^\infty f_\infty(t) \(\phi_2\)_\infty(t) \, dt. \notag
\end{align}
The following proposition provides another convenient means by which to compute $\<f,\phi\>_{\epsilon,\nu}$ in certain cases.

\begin{prop}
\label{prop:pairingToCondConvIntegral}
Suppose $\Re(\nu) > 0$.
Let $f \in V_{\epsilon,\nu}^{-\infty}$ such that $f_0 \in L^\infty(\R)$ and such that 
\eqref{eq:sigmaIndRepUnbddEq} holds as an equality
between distributions on $\R$ (as opposed to an equality between distributions on $\R_{\neq 0}$).
Then for $\phi \in V_{-\epsilon,-\nu}^\infty$,
\begin{equation}
\label{eq:pairingToCondConvIntegral}
\<f,\phi\>_{\epsilon,\nu} = \lim_{m_1,m_2 \to \infty} \int_{-m_1}^{m_2} f_0(x) \phi_0(x) \, dx.
\end{equation}
\end{prop}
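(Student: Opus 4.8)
The plan is to reduce the identity to the decomposition already recorded in \eqref{eq:phi1phi2breakup}: write $\phi = \phi_1 + \phi_2$ with $\phi_1,\phi_2 \in V_{-\epsilon,-\nu}^\infty$, $\supp(\phi_1)\subset\tlN\tlB$, and $\supp(\phi_2)\subset\tls^{-1}\tlN\tlB$. Because the support of each $\phi_i$ is a closed, hence compact, subset of $\tlSL_2(\R)/\tlB\cong\tlK/\tlM$ lying inside a single coordinate patch, one sees that $(\phi_1)_0\in C_c^\infty(\R)$ while $(\phi_2)_\infty\in C_c^\infty(\R)$; note, however, that $(\phi_2)_0$ is only decaying, not compactly supported, which is the source of the difficulty. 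Since restriction is linear, $\phi_0 = (\phi_1)_0 + (\phi_2)_0$, so I would split the truncated integral accordingly and show that each truncated piece converges, as $m_1,m_2\to\infty$, to $\langle f,\phi_1\rangle_{\epsilon,\nu}$ and $\langle f,\phi_2\rangle_{\epsilon,\nu}$ respectively; summing then yields $\langle f,\phi\rangle_{\epsilon,\nu}$.

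The piece coming from $\phi_1$ is routine: since $(\phi_1)_0$ has compact support and $f_0\in L^\infty(\R)$, the product $f_0(\phi_1)_0$ lies in $L^1(\R)$, so $\int_{-m_1}^{m_2} f_0(\phi_1)_0\,dx \to \int_\R f_0(\phi_1)_0\,dx$, and this limit equals $\langle f,\phi_1\rangle_{\epsilon,\nu}$ by \eqref{eq:cmpctToUnbdd}, whose support hypothesis $\supp(\phi_1)\subset\tlN\tlB$ is met.

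The piece coming from $\phi_2$ is the heart of the matter. First I would record the decay of $(\phi_2)_0$: inverting \eqref{eq:sigmaIndRepUnbddEq} for elements of $V_{-\epsilon,-\nu}^\infty$ gives $(\phi_2)_0(x) = \sgn(x)^{\epsilon/2}|x|^{-\nu-1}(\phi_2)_\infty(-x^{-1})$, so $(\phi_2)_0(x) = O(|x|^{-\Re(\nu)-1})$ as $|x|\to\infty$ because $(\phi_2)_\infty$ is smooth near $0$. Combined with $f_0\in L^\infty(\R)$ and the standing hypothesis $\Re(\nu)>0$, this shows $f_0(\phi_2)_0\in L^1(\R)$, so again the truncated integrals converge to $\int_\R f_0(\phi_2)_0\,dx$; in particular the double limit in \eqref{eq:pairingToCondConvIntegral} exists independently of how $m_1,m_2\to\infty$. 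It then remains to identify $\int_\R f_0(\phi_2)_0\,dx$ with $\langle f,\phi_2\rangle_{\epsilon,\nu}$. Multiplying the relation \eqref{eq:sigmaIndRepUnbddEq} for $f$ by the corresponding relation for $\phi_2$, the sign factors cancel ($\sgn(-x)^{\epsilon/2}\sgn(-x)^{-\epsilon/2}=1$) and the exponents combine, giving $f_\infty(x)(\phi_2)_\infty(x)=|x|^{-2}f_0(-x^{-1})(\phi_2)_0(-x^{-1})$ on $\R_{\neq 0}$; the substitution $t=-x^{-1}$, under which $|x|^{-2}\,dx = dt$, then turns $\int_\R f_\infty(\phi_2)_\infty\,dx$ into $\int_\R f_0(\phi_2)_0\,dt$. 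By \eqref{eq:cmpctToUnbdd2}, applied with $\supp(\phi_2)\subset\tls^{-1}\tlN\tlB$, the former integral is exactly $\langle f,\phi_2\rangle_{\epsilon,\nu}$.

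The main obstacle, and the reason both hypotheses on $f$ appear, is justifying this last change of variables rigorously, since $f_\infty$ is a priori only a distribution. The key point is that the hypotheses are exactly what is needed to treat $f_\infty$ as an honest function near the origin. Because $f_0\in L^\infty(\R)$ and $\Re(\nu)>0$, the function $g(x):=\sgn(-x)^{\epsilon/2}|x|^{\nu-1}f_0(-x^{-1})$ is locally integrable on all of $\R$: the only concern is the factor $|x|^{\Re(\nu)-1}$ at $x=0$, which is integrable precisely when $\Re(\nu)>0$. The assumption that \eqref{eq:sigmaIndRepUnbddEq} holds as an equality of distributions on $\R$, not merely on $\R_{\neq 0}$, then guarantees that $f_\infty$ equals this $L^1_{\loc}$ function $g$ as a distribution on $\R$. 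Without it, $f_\infty$ and $g$ could differ by a distribution supported at $\{0\}$, which would contribute to $\langle f_\infty,(\phi_2)_\infty\rangle$ whenever $(\phi_2)_\infty(0)\neq 0$ and would break the identity. Once $f_\infty$ is known to be the function $g$, the pairing $\langle f,\phi_2\rangle_{\epsilon,\nu}=\int_\R g(x)(\phi_2)_\infty(x)\,dx$ is a genuine Lebesgue integral, the change of variables above is legitimate, and combining the two pieces completes the argument.
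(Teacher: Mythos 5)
Your proof is correct, but it takes a genuinely different route from the paper's. The paper decomposes $\phi$ using a $t$-dependent family of smooth cutoffs $\psi_t \in V_{\omega_0}^\infty$ (induced from the trivial character of $\tlB$), writes $\<f,\phi\>_{\epsilon,\nu}$ as $\int f_0\phi_0(\psi_t)_0\,dx + \int f_\infty\phi_\infty(\bm{1}-\psi_t)_\infty\,dx$, kills the second term as $t\to\infty$ by invoking Corollary 3.12 of Miller--Schmid on distributions vanishing to non-negative order at a point (the footnote there uses exactly your observation that the hypotheses force $f_\infty \in L^1_\loc(\R)$), and finally trades the smooth cutoff for a hard one by dominated convergence since $f_0\phi_0 \in L^1(\R)$ when $\Re(\nu)>0$. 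You instead use a single fixed decomposition $\phi = \phi_1+\phi_2$ and handle the $\phi_2$-piece by an explicit change of variables $t=-x^{-1}$, which is legitimate precisely because the two hypotheses on $f$ make $f_\infty$ an honest $L^1_\loc$ function equal to $\sgn(-x)^{\epsilon/2}|x|^{\nu-1}f_0(-x^{-1})$ on all of $\R$, with no distribution supported at $\{0\}$ left over. Your version is more elementary and self-contained -- it avoids the external citation and makes completely transparent where each hypothesis enters -- while the paper's cutoff machinery is the one that would survive in regimes where $f_0\phi_0$ is only conditionally integrable; in the present setting $\Re(\nu)>0$ makes the integral absolutely convergent (so the iterated limit over $m_1,m_2$ is automatic), and both arguments ultimately lean on that same fact. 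One small point worth making explicit if you write this up: $(\phi_2)_\infty \in C_c^\infty(\R)$ because $\supp(\phi_2)$ is a closed, hence compact, subset of $\tlSL_2(\R)/\tlB$ contained in the chart $\tls^{-1}\tlN\tlB$, which is what makes $\int g\,(\phi_2)_\infty$ a genuine Lebesgue integral against the $L^1_\loc$ function $g$.
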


\begin{proof}
Let $\omega_0$ denote the trivial character on $\tlB$.
By \eqref{eq:indRep},
\begin{equation*}
V^\infty_{\omega_0} = \{h \in C^\infty(\tlSL_2(\R)): h(\tlg \tlb) = h(\tlg) \text{ for all } \tlg \in \tlSL_2(\R), \tlb \in \tlB\}.
\end{equation*}
As we did in \eqref{eq:Sub0SubInfDef}, let $h_0(x) = h(\tln_x)$ and $h_\infty(x) = h(\tls^{-1} \tln_x)$
for $h \in V_{\omega_0}^\infty$.
By \eqref{eq:tlsAction}, we see that for $h \in V^\infty_{\omega_0}$,
\begin{equation}
\label{eq:cutOffUnbddEq1}
h_\infty(x) = h_0(-x^{-1}).
\end{equation}
Conversely, since $\tlN$ and $\tls^{-1} \tlN$ cover $\tlSL_2(\R) / \tlB$, it follows that we can define a unique element
$h \in V^\infty_{\omega_0}$ for given $h_0$ and $h_\infty$ which satisfy \eqref{eq:cutOffUnbddEq1}.
With that in mind, we will now define an element of $\psi_t \in V_{\omega_0}^\infty$ for $t \in \R_{>0}$.
Let $\psi^*: \R \to [0,1]$ be a smooth cutoff function such that $\psi^*(x) \equiv 1$ near the origin.
Let $\(\psi_t\)_0(x) = \psi^*(x/t)$ and
\begin{equation}
\label{eq:cutOffUnbddEq2}
\(\psi_t\)_\infty(x) = \(\psi_t\)_0(-x^{-1}),
\end{equation}
for $x \in \R_{\neq 0}$, with $\(\psi_t\)_\infty(0) = 0$.
Since $\(\psi_t\)_\infty(x)$ is a smooth function and since $\(\psi_t\)_0$ and $\(\psi_t\)_\infty$ satisfy \eqref{eq:cutOffUnbddEq1},
they together define a unique element $\psi_t \in V_{\omega_0}^\infty$.

Let $\bm{1}$ denote the function $(\tlg \mapsto 1) \in V_{\omega_0}^\infty$.
Notice that for $\phi \in V_{-\epsilon,-\nu}^\infty$,
\begin{equation}
\label{eq:cutOffSupp}
\supp(\phi \cdot \psi_t) \subset \tlN \tlB \text{ and } \supp(\phi \cdot (\bm{1}-\psi_t)) \subset \tls^{-1}\tlN \tlB.
\end{equation}
Since
\begin{equation*}
\<f,\phi\>_{\epsilon,\nu} = \<f,\phi \cdot \psi_t\>_{\epsilon,\nu} + \<f,\phi \cdot (\boldsymbol{1}-\psi_t)\>_{\epsilon,\nu},
\end{equation*}
it follows then by \eqref{eq:cmpctToUnbdd} and \eqref{eq:cmpctToUnbdd2},
\begin{align}
\label{eq:cutOffStep1}
&\<f,\phi\>_{\epsilon,\nu} = \int_{-\infty}^\infty f_0(x) \phi_0(x) \(\psi_t\)_0(x) \, dx + \int_{-\infty}^\infty f_\infty(x) \phi_\infty(x) (\boldsymbol{1}-\psi_t)_\infty(x) \, dx.
\end{align}
The second integral on the right-hand side vanishes as $t \to \infty$ by \cite[Corollary 3.12]{Miller04}.%
\footnote{Corollary 3.12 is stated in terms of a distribution $\sigma$ on $\R\P^1$ and a connected open
neighborhood $I$ containing $\infty$.
In our application, $I = \R\P^1 - \{0\} \cong \R$ and $\sigma(x) = \sgn(-x)^{\epsilon/2}|x|^{-\nu+1}f_0(-x)$.
Thus by \eqref{eq:sigmaIndRepUnbddEq},
\begin{equation*}
\sigma(1/x) = \sgn(-x)^{\epsilon/2} |x|^{\nu-1} f_0(-x^{-1}) = f_\infty(x).
\end{equation*}
Since $f_\infty \in L^1_\loc(\R)$, it follows from \cite[Lemma 3.1]{Miller04} that $\sigma$ vanishes 
to non-negative order at $\infty$.
Applying \cite[(3.23)]{Miller04} to the conclusion of Corollary 3.12 yields the desired result.}
Hence
\begin{equation}
\label{eq:cutOffStep2}
\<f,\phi\>_{\epsilon,\nu} =  \lim_{t \to \infty} \int_{-\infty}^\infty f_0(x) \phi_0(x) \(\psi_t\)_0(x) \, dx.
\end{equation}
If $\phi_0 \in L^1(\R)$, which is guaranteed to occur when $\Re(\nu)>0$,
then the integrand in \eqref{eq:cutOffStep2} is also in $L^1(\R)$; hence by dominated
convergence we can replace the smooth cutoff function in \eqref{eq:cutOffStep2} with a hard cutoff.
Indeed, since
\begin{equation}
\tlk(-\arctan(t)) = \tln_t \( \tla\( (1+t^2)^{-1/2} \) \tln_-\(\frac{t}{1+t^2}\) \)^{-1},
\end{equation}
it then follows from \eqref{eq:indRep} and \eqref{eq:omegaepsnu} that
\begin{equation}
\label{eq:phi0Bnd}
|\phi_0(t)| = |\phi(\tln_t)| \ll (1+t^2)^{-\frac{1}{2}(\nu+1)},
\end{equation}
with the implied constant dependent upon $\sup_{\tlk \in \tlK} |\phi(\tlk)|$.
\end{proof}

\section{The Metaplectic Eisenstein Distribution at \texorpdfstring{$\infty$}{Infinity}}
\label{sec:MetaEisenDist}

In this section we define the metaplectic Eisenstein distribution based at the cusp $\infty$.
However before doing so, we prove some lemmas that will be helpful for computing the Fourier series expansion
for this metaplectic Eisenstein distribution. The proofs will make implicit reference to the following
transformation laws for $f \in V_{\epsilon,\nu}^{-\infty}$:
\begin{align}
\label{eq:TransLawsB}
&f(\tlg \tla_u \tlm_{-1,\kappa} \tln_-) = \epsilon \kappa i u^{-\nu+1} f(\tlg)\\
&f(\tlg \tla_u \tlm_{1,\kappa} \tln_-) = \kappa u^{-\nu+1} f(\tlg),\notag
\end{align}
where $\kappa, \epsilon \in \{\pm 1\}$, $\nu \in \C$, and $u \in \R_{>0}$.
One can check that \eqref{eq:TransLawsB} follows from \eqref{eq:indRep} and \eqref{eq:omegaepsnu}.

In the following lemma, we give explicit formulas for
$(\pi(\tlg)f)_0$ and $(\pi(\tlg)f)_\infty$ in terms of $f_0$ and $f_\infty$ for $f \in V^{-\infty}_{\epsilon,\nu}$.
In the statement of this lemma, we will use the following notational convention:
for $a,b,c,d \in \R$ and $\kappa \in \{\pm 1\}$, let $\kappa' \in \{\pm 1\}$ such that
\begin{equation}
\label{eq:tlsComm}
\(\mm{a}{b}{c}{d},\kappa\) \tls^{-1} = \tls^{-1} \(\mm{d}{-c}{-b}{a}, \kappa'\),
\end{equation}
with $\tls$ as defined in \eqref{eq:tlsdef}.

\begin{lem}
\label{lem:MetaUnbddAct}
Let $f \in V_{\epsilon,\nu}^{-\infty}$ and $\tilde{g}^{-1}=\left( \mm{a}{b}{c}{d}, \kappa \right) \in \tlSL_2(\R)$.

\begin{itemize}[leftmargin=.3in,font=\normalfont\textbf]

\item[(a)] If $c \neq 0$ then
\begin{equation*}
(\pi( \tilde{g}) f)_0(x) = \kappa (-c,cx+d)_H |cx+d|^{\nu-1} \sgn(cx+d)^{\epsilon/2} f_0 \left( \frac{ax + b}{cx + d} \right),
\end{equation*}
as an equality between distributions on $\R_{\neq \frac{-d}{c}}$.

\item[(b)] If $c = 0$ then
\begin{equation*}
(\pi(\tlg) f)_0(x) = \kappa |d|^{\nu - 1} \sgn(d)^{\epsilon/2} f_0\(\frac{x}{d^2} + \frac{b}{d}\),
\end{equation*}
as an equality between distributions on $\R$.

\item[(c)] If $b \neq 0$ then
\begin{align*}
&(\pi( \tilde{g}) f)_\infty(x)\\
&= \kappa' (b,-bx+a)_H |-bx+a|^{\nu-1} \sgn(-bx + a)^{\epsilon/2} f_\infty \left( \frac{dx -c}{-bx + a} \right),
\end{align*}
as an equality between distributions on $\R_{\neq \frac{a}{b}}$.

\item[(d)] If $b = 0$ then
\begin{equation*}
(\pi( \tilde{g}) f)_\infty(x) = \kappa' |a|^{\nu-1} \sgn(a)^{\epsilon/2} f_\infty \left( \frac{x}{a^2} - \frac{c}{a} \right),
\end{equation*}
as an equality between distributions on $\R$. 
\end{itemize}
Recall that $\kappa'$ is defined according to \eqref{eq:tlsComm}.
\end{lem}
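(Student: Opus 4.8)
The plan is to prove parts (a) and (b) directly for smooth vectors by an explicit Bruhat-type decomposition, to deduce (c) and (d) from them, and to pass to distributional $f$ by density. First I reduce (c) and (d) to (a) and (b): since $\pi$ is a representation, $(\pi(\tlg)f)_\infty(x)=f(\tlg^{-1}\tls^{-1}\tln_x)$, and \eqref{eq:tlsComm} lets me write $\tlg^{-1}\tls^{-1}=\tls^{-1}\tlg'^{-1}$ with $\tlg'^{-1}=\left(\mm{d}{-c}{-b}{a},\kappa'\right)$. Hence $(\pi(\tlg)f)_\infty=\left(\pi(\tlg')(\pi(\tls)f)\right)_0$, and since $(\pi(\tls)f)_0=f_\infty$, applying part (a) (when $b\neq0$, so the lower-left entry $-b$ of $\tlg'^{-1}$ is nonzero) or part (b) (when $b=0$) to the vector $\pi(\tls)f$ and the element $\tlg'$ yields (c) and (d) verbatim after the substitution $(a,b,c,d,\kappa)\mapsto(d,-c,-b,a,\kappa')$. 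So it suffices to treat (a) and (b).

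For (a) I take $f\in V_{\epsilon,\nu}^\infty$ first, so that everything is an honest function. By definition $(\pi(\tlg)f)_0(x)=f(\tlg^{-1}\tln_x)$, and multiplying in $\tlSL_2(\R)$ gives $\tlg^{-1}\tln_x=\left(\mm{a}{ax+b}{c}{cx+d},\kappa\,\alpha_1\right)$ with $\alpha_1=\alpha\left(\mm{a}{b}{c}{d},\mm{1}{x}{0}{1}\right)$. When $c\neq0$ and $cx+d\neq0$ I factor through the big cell, $\mm{a}{ax+b}{c}{cx+d}=\mm{1}{y}{0}{1}\mm{(cx+d)^{-1}}{0}{c}{cx+d}$ with $y=\frac{ax+b}{cx+d}$ (the relation $ad-bc=1$ makes this work), and read off the right factor as $\tla_{|cx+d|^{-1}}\tlm\tln_-$ with the $\tlM$-part carrying $\epsilon_1=\sgn(cx+d)$. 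Lifting this factorization to $\tlSL_2(\R)$ and applying the transformation laws \eqref{eq:TransLawsB} with $\tlg\mapsto\tln_y$ produces $(\pi(\tlg)f)_0(x)=\epsilon_2'\,\sgn(cx+d)^{\epsilon/2}\,|cx+d|^{\nu-1}f_0(y)$, where $\epsilon_2'$ is the metaplectic sign on the $\tlM$-part and where I use $(-1)^{\epsilon/2}=\epsilon i$ to merge the two cases of \eqref{eq:TransLawsB}. Part (b) is the same computation but simpler: for $c=0$ the matrix is already upper-triangular, the decomposition is $\mm{a}{ax+b}{0}{d}=\mm{1}{a^2x+ab}{0}{1}\mm{a}{0}{0}{d}$ with no $\tln_-$ factor, and one reads off the argument $\frac{x}{d^2}+\frac{b}{d}$ together with $\sgn(d)^{\epsilon/2}|d|^{\nu-1}$.

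The hard part will be the sign bookkeeping, namely showing $\epsilon_2'=\kappa\,(-c,cx+d)_H$ in (a) (and $\epsilon_2'=\kappa$ in (b)). This is a cocycle computation: I evaluate $\alpha$ on each pair arising both in forming $\tlg^{-1}\tln_x$ and in reassembling the factored form, reducing every instance through the definition $\alpha(g_1,g_2)=\left(X(g_1g_2)/X(g_1),\,X(g_1g_2)/X(g_2)\right)_H$ and the explicit Hilbert symbol. Most of these factors collapse to $1$, either because $X$ is unchanged across the relevant product or because a Hilbert symbol has a positive argument, and the only surviving contribution comes from the internal splitting of the Borel matrix into $\tla\tlm\tln_-$, which yields $\left(c/(cx+d),\,cx+d\right)_H$. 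A short sign analysis shows this equals $(-c,cx+d)_H$, since each is $-1$ precisely when $c>0$ and $cx+d<0$. With this identity the two branches of \eqref{eq:TransLawsB} collapse into the single factor $\kappa\,(-c,cx+d)_H\,\sgn(cx+d)^{\epsilon/2}$ recorded in (a); for (b) the analogous cocycles are all trivial, leaving $\epsilon_2'=\kappa$.

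Finally I would upgrade from smooth to distributional $f$ by density. On the open set $\R_{\neq -d/c}$ the map $x\mapsto\frac{ax+b}{cx+d}$ is a diffeomorphism and the prefactor $|cx+d|^{\nu-1}\sgn(cx+d)^{\epsilon/2}$ is smooth and nonvanishing, so both sides of (a) are continuous maps $V_{\epsilon,\nu}^{-\infty}\to C^{-\infty}(\R_{\neq -d/c})$ (the action $\pi(\tlg)$, the restriction map \eqref{eq:f0Def}, pullback by a diffeomorphism, and multiplication by a smooth function are each continuous). Since the two sides agree on the dense subspace $V_{\epsilon,\nu}^\infty$, they agree as distributions on $\R_{\neq -d/c}$, and the same argument handles (b)--(d). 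The restriction to $\R_{\neq -d/c}$ is forced: $x=-d/c$ is exactly where $\tlg^{-1}\tln_x$ exits the cell $\tlN\tlB$, and the behaviour there is recorded instead by $(\pi(\tlg)f)_\infty$.
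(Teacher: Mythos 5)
Your proposal is correct and follows essentially the same route as the paper: decompose $\tlg^{-1}\tln_x$ into $\tlN\tlA\tlM\tlN_-$ form, apply the transformation laws \eqref{eq:TransLawsB}, and deduce (c),(d) from (a),(b) via the commutation \eqref{eq:tlsComm} applied to $\pi(\tls)f$. The only differences are that you make explicit two points the paper leaves implicit --- the cocycle verification of the metaplectic sign $\kappa(-c,cx+d)_H$ and the density upgrade from smooth to distributional $f$ --- both of which are sound (the paper itself invokes exactly this density argument for \eqref{eq:sigmaIndRepUnbddEq}).
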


\begin{proof}
For $c \neq 0$ and $x \neq \frac{-d}{c}$,
\begin{equation}
\tlg^{-1} \tln_x = \tln\(\frac{ax+b}{cx+d}\)  \tla(|cx+d|^{-1}) \tlm(\sgn(cx+d),\kappa (-c,cx+d)_H) \tln_-\(\frac{c}{cx+d}\)
\end{equation}
By utilizing \eqref{eq:TransLawsB},  we find that
\begin{equation*}
(\pi(\tlg) f)_0(x) = \kappa (-c,cx+d)_H |cx+d|^{\nu-1} \sgn(cx+d)^{\epsilon/2} f_0 \left( \frac{ax + b}{cx + d} \right),
\end{equation*}
as an equality between distributions on $\R_{\neq \frac{-d}{c}}$.  Similarly, when $c=0$ (which implies that 
$a = d^{-1} \neq 0$) we see that
\begin{equation}
\tlg^{-1} \tln_x = \tln\(\frac{x}{d^2} + \frac{b}{d}\) \tla(|d|^{-1}) \tlm_{\sgn(d),\kappa} 
\end{equation}
Thus
\begin{equation*}
(\pi(\tlg) f)_0(x) = \kappa |d|^{\nu - 1} \sgn(d)^{\epsilon/2} f_0\(\frac{x}{d^2} + \frac{b}{d}\),
\end{equation*}
as an equality between distributions on $\R$.  This proves parts (a) and (b).

By \eqref{eq:tlsComm} we have
\begin{align*}
&\Big(\pi\(\mm{a}{b}{c}{d},\kappa\)^{-1} f\Big)_\infty(x) = f\(\(\mm{a}{b}{c}{d},\kappa\) \tls^{-1} \tln_x \)\\
&= f\(\tls^{-1} \(\mm{d}{-c}{-b}{a},\kappa'\) \tln_x \) = \pi\(\mm{d}{-c}{-b}{a},\kappa'\)^{-1} (\pi(\tls)f)(\tln_x).
\end{align*}
We repeat the argument in the prior paragraph, except with $f$ replaced by $\pi(\tls) f$ and
with $\(\mm{a}{b}{c}{d},\kappa\)$ replaced by $\(\mm{d}{-c}{-b}{a}, \kappa'\)$.  
The conclusion of this repeated argument then yields parts (c) and (d) since $(\pi(\tls)f)_0 = f_\infty$.
\end{proof}

Define
\begin{equation}
\label{eq:tlGammaDef}
\tlGamma_1(4) = \left\{ \(\mm{a}{b}{c}{d},\(\frac{c}{d}\)\) \in \tlSL_2(\R): \begin{array}{l} a,b,c,d \in \Z \\ a \equiv d \equiv 1 (\mod 4) \\ c \equiv 0 (\mod 4) \end{array} \right\},
\end{equation}
where $\(\frac{\cdot}{\cdot}\)$ is the Kronecker symbol.  One can show
that $\tlGamma_1(4)$ is a well-defined subgroup of $\tlSL_2(\R)$.  Since we will be performing several computations involving the
Kronecker symbol, we state (without proof) some of its important properties. More information concerning the Kronecker symbol can be found
in \cite[\S 3.4.3]{Buchmann07}, \cite[\S 5]{Davenport00}.

\begin{prop}[Properties of the Kronecker Symbol]
\label{prop:KronProps}
Let $a,b,m,n \in \Z$.

\begin{itemize}[leftmargin=.3in,font=\normalfont\textbf]
\item[(a)] If $p$ is an odd prime then 
\begin{equation*}
\(\frac{a}{p}\) = \begin{cases}0 &\text{if } a \equiv 0(\mod p)\\ 1 &\text{if there exists } x\in\Z \text{ such that } x^2 \equiv a \not\equiv 0 (\mod p)\\ -1 &\text{if there are no solutions to } x^2 \equiv a (\mod p).\end{cases}
\end{equation*}

\item[(b)] $\displaystyle \(\frac{a}{2}\) = \begin{cases}0  &\text{if } a \equiv 0 (\mod 2)\\ 1 &\text{if } a \equiv 1, 7(\mod 8)\\ -1 &\text{if } a \equiv 3, 5(\mod 8). \end{cases}$

\item[(c)] $\displaystyle \(\frac{a}{1}\) = 1$,\h $\displaystyle \(\frac{a}{-1}\) = \begin{cases}-1 &\text{if } a<0\\ 1 &\text{if } a \geq 0,\end{cases}$ \h and \h $\displaystyle \(\frac{a}{0}\) = \begin{cases}1 &\text{if } a = \pm 1\\ 0 &\text{otherwise}.\end{cases}$

\item[(d)] If $n = \pm p_1^{e_1} \cdots p_k^{e_k}$ is the prime factorization of $n$ then 
\begin{equation*}
\(\frac{a}{n}\) = \(\frac{a}{\pm 1}\) \(\frac{a}{p_1}\)^{e_1} \cdots \(\frac{a}{p_k}\)^{e_k}.
\end{equation*}

\item[(e)] If $ab \neq 0$ then $\displaystyle \(\frac{a}{n}\) \(\frac{b}{n}\) = \(\frac{ab}{n}\)$.

\item[(f)] If $mn \neq 0$ then $\displaystyle \(\frac{a}{m}\) \(\frac{a}{n}\) = \(\frac{a}{mn}\)$.

\item[(g)] If $n>0$ then $\displaystyle \(\frac{a}{n}\) = \(\frac{b}{n}\)$ if $a \equiv b (\mod m)$ where
\begin{equation*}
m = \begin{cases}4n &\text{if } n \equiv 2 (\mod 4)\\ n &\text{otherwise}.\end{cases}
\end{equation*}

\item[(h)] If $a \neq 0$ and $a \equiv 0, 1(\mod 4)$ then $\displaystyle \(\frac{a}{m}\) = \(\frac{a}{n}\)$ if $m \equiv n (\mod a)$.

\item[(i)] If $m, n$ are positive odd integers then
\begin{equation*}
\(\frac{-1}{n}\) = (-1)^{(n-1)/2}, \h \(\frac{2}{n}\) = (-1)^{(n^2-1)/8}, \h \(\frac{m}{n}\) \(\frac{n}{m}\) = (-1)^{\frac{(m-1)(n-1)}{4}},
\end{equation*}
for $\gcd(m,n)=1$.

\end{itemize}
\end{prop}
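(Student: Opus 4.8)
The plan is to organize the nine properties by their logical depth. Properties (a)--(d) I would take as the \emph{definition} of the Kronecker symbol: one declares $\left(\frac{a}{p}\right)$ at an odd prime to be the Legendre symbol (a), fixes the anomalous values at $2$, at the units $\pm 1$, and at $0$ by the formulas in (b) and (c), and then defines $\left(\frac{a}{n}\right)$ for general $n = \pm p_1^{e_1}\cdots p_k^{e_k}$ to be the product in (d). With this convention there is nothing to prove in (a)--(d), and (f) is essentially immediate: if $m = u_m \prod p_i^{e_i}$ and $n = u_n \prod q_j^{f_j}$, then the prime factorization of $mn$ is obtained by concatenating those of $m$ and $n$ (with the unit parts multiplying), so the product defining $\left(\frac{a}{mn}\right)$ splits as $\left(\frac{a}{m}\right)\left(\frac{a}{n}\right)$; the only point to check is $\left(\frac{a}{-1}\right)^2 = 1$ for $a \neq 0$, which holds by (c).

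For (e) I would reduce, via the multiplicative definition (d), to the case in which $n$ is a single prime, a unit, or $0$. At an odd prime this is the multiplicativity of the Legendre symbol, which follows from Euler's criterion $\left(\frac{a}{p}\right) \equiv a^{(p-1)/2} \pmod p$; at $n = 2$ it follows from the fact that the formula in (b) defines a multiplicative function of $a$ (with $ab$ even exactly when $a$ or $b$ is); at $n = \pm 1$ it reduces to $\sgn(ab) = \sgn(a)\sgn(b)$; and at $n = 0$ one checks directly that $ab = \pm 1$ iff $a, b \in \{\pm 1\}$.

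The substantive content is (i), and this is the main obstacle. I would first establish the two supplementary laws and reciprocity for the Legendre symbol, i.e.\ for $m, n$ themselves odd primes, which is Gauss's quadratic reciprocity (citing \cite{Davenport00}). I would then bootstrap to arbitrary positive odd $m, n$ using the multiplicativity (e), (f): the only thing to verify is that the exponents behave additively modulo $2$, namely $(mn-1)/2 \equiv (m-1)/2 + (n-1)/2$ and $(m^2 n^2 - 1)/8 \equiv (m^2-1)/8 + (n^2-1)/8 \pmod 2$ for odd $m, n$, and that the reciprocity exponent $(m-1)(n-1)/4$ is additive in the same sense over prime factorizations; each is an elementary congruence using that $m-1$ and $n-1$ are even.

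Finally I would derive (g) and (h) from (i) together with multiplicativity, and this is where the bookkeeping genuinely specific to the Kronecker symbol (as opposed to the Legendre or Jacobi symbol) enters. For (g) with $n > 0$ I would split off the power of $2$, writing $n = 2^k n'$ with $n'$ odd: the factor $\left(\frac{a}{n'}\right)$ depends only on $a \bmod n'$ since the Legendre symbol depends only on the residue of $a$, while $\left(\frac{a}{2}\right)^k$ depends on $a \bmod 8$ when $k$ is odd and only on the parity of $a$ when $k$ is even; combining these and comparing the resulting periods against $n$ reproduces exactly the dichotomy $m = 4n$ (the case $n \equiv 2 \pmod 4$, where $8 \nmid n$) versus $m = n$. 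For (h) the point is that the hypothesis $a \equiv 0, 1 \pmod 4$ is precisely the condition forcing the sign factors produced by reciprocity to assemble into a function of the denominator modulo $|a|$: concretely, I would use (i) to rewrite $\left(\frac{a}{m}\right)$ for odd $m$ in terms of $\left(\frac{m}{a_1}\right)$, where $a_1$ is the odd part of $|a|$, times explicit factors $\left(\frac{-1}{m}\right)$ and $\left(\frac{2}{m}\right)$ coming from the sign of $a$ and its power of $2$, and then check that $a \equiv 0, 1 \pmod 4$ makes the total depend on $m$ only through $m \bmod a$. The delicate part throughout is tracking the even denominators and the unit $(-1)$, features that the Legendre and Jacobi symbols simply do not see.
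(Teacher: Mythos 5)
The paper states this proposition explicitly \emph{without proof}, deferring to the cited references (\S 3.4.3 of Buchmann and \S 5 of Davenport), so there is no in-paper argument to compare against. Your outline --- taking (a)--(d) as the definition, deducing (e), (f) by reduction to prime, unit, and zero denominators, bootstrapping (i) from quadratic reciprocity for primes via the standard parity congruences, and deriving the periodicity statements (g), (h) from (i) with careful bookkeeping at the prime $2$ and the unit $-1$ --- is a correct reconstruction of exactly the standard development found in those references.
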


Let $\delta_0$ denote the Dirac distribution on $\R$ centered at zero.
By \eqref{eq:sigmaIndRepUnbddIso}, the pair $(\delta_0,0)$ defines an element of
$V_{\epsilon,\nu}^{-\infty}$.
We denote this element of $V_{\epsilon,\nu}^{-\infty}$ by $\delta_{\epsilon,\nu,0}$.
Let $\delta_{\epsilon,\nu,\infty} = \pi(\tls) \delta_{\epsilon,\nu,0}$, with $\tls$ as defined in \eqref{eq:tlsdef}.
By \eqref{eq:TransLawsB},
\begin{align}
\label{eq:tlsdeltprop}
&\(\delta_{\epsilon,\nu,\infty}\)_0(x) = 0 \text{ and }\\
&\(\delta_{\epsilon,\nu,\infty}\)_\infty(x) = \delta_{\epsilon,\nu,0}(\tlm_{-1,1}\tln_x) = \sigma_\epsilon(\tlm_{-1,-1}) \delta_0(x) = \epsilon i \delta_0(x).\notag
\end{align}

\begin{lem}
\label{lem:deltaInfProp}
Let $\tlgamma^{-1} = \( \mm{a}{b}{c}{d}, \(\frac{c}{d}\) \) \in \tlGamma_1(4)$.

\begin{itemize}[leftmargin=.3in,font=\normalfont\textbf]

\item[(a)]
We have
\begin{equation*}
(\pi(\tlgamma)\delta_{\epsilon,\nu,\infty})_0 = \begin{cases} \(\frac{c}{d}\) |c|^{-\nu-1} \sgn(c)^{\epsilon/2} \delta_{\frac{-d}{c}} &\text{if } c \neq 0\\ 0 &\text{if } c=0, \end{cases}
\end{equation*}
as an equality between distributions on $\R$. 

\item[(b)] We have
\begin{equation*}
(\pi(\tlgamma) \delta_{\epsilon,\nu,\infty})_\infty = \begin{cases} \epsilon i (-c,d)_H \(\frac{c}{d}\)|d|^{-\nu-1} \sgn\(d\)^{\epsilon/2} \delta_{\frac{c}{d}} & \text{if } bc \neq 0\\
 \epsilon i \delta_0 & \text{if } b \neq 0, c=0\\
 \epsilon i \delta_{c} & \text{if } b = 0, \end{cases}
\end{equation*}
as an equality between distributions on $\R$.

\end{itemize}
\end{lem}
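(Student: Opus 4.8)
The plan is to apply Lemma~\ref{lem:MetaUnbddAct} to $f := \delta_{\epsilon,\nu,\infty}$ with $\tlg = \tlgamma$ and $\kappa = \left(\frac{c}{d}\right)$, exploiting the fact recorded in \eqref{eq:tlsdeltprop} that $f_0 = 0$ while $f_\infty = \epsilon i\,\delta_0$. Throughout I would use that an element of $\tlGamma_1(4)$ has $a \equiv d \equiv 1 \pmod 4$, so $a$ and $d$ are odd and in particular nonzero; this also forces $a = d = 1$ whenever $b = 0$ or $c = 0$, which is what collapses the two degenerate cases of part~(b).

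For part~(b) I would compute $(\pi(\tlgamma)f)_\infty$ directly. When $b \neq 0$, Lemma~\ref{lem:MetaUnbddAct}(c) expresses this in terms of $f_\infty\bigl(\tfrac{dx-c}{-bx+a}\bigr) = \epsilon i\,\delta_0\bigl(\tfrac{dx-c}{-bx+a}\bigr)$. Since $ad - bc = 1$, the M\"obius map $x \mapsto \tfrac{dx-c}{-bx+a}$ vanishes exactly at $x = c/d$ with derivative $(-bx+a)^{-2}$, which equals $d^2$ there; pulling back the Dirac mass thus produces $d^{-2}\,\delta_{c/d}$, and combining the resulting $d^{-2}$ with the prefactor $|-bx+a|^{\nu-1} = |d|^{1-\nu}$ gives the claimed $|d|^{-\nu-1}$. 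The case $b = 0$ is handled the same way through Lemma~\ref{lem:MetaUnbddAct}(d), where with $a=1$ the map degenerates to the translation $x \mapsto x - c$, producing $\delta_c$.

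The main obstacle is the sign factor $\kappa'$, which is only defined implicitly by \eqref{eq:tlsComm}. I would first compute $\tls^{-1} = \(\mm{0}{1}{-1}{0},1\)$ from the cocycle $\alpha$, then expand both sides of \eqref{eq:tlsComm} as products in $\tlSL_2(\R)$ using the multiplication law; this isolates $\kappa'$ as $\kappa$ multiplied by two values of $\alpha$, namely those attached to $\gamma = \mm{a}{b}{c}{d}$ and $\gamma^{*} = \mm{d}{-c}{-b}{a}$. Evaluating the relevant $X(\cdot)$'s, these reduce to $(\tfrac{-d}{c},d)_H$ and $(d,\tfrac{d}{b})_H$, so $\kappa' = \kappa\,(\tfrac{-d}{c},d)_H\,(d,\tfrac{d}{b})_H$. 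The crux is then to reconcile this with the stated answer: multiplying $\kappa'$ by the Hilbert symbol $(b,-bx+a)_H = (b,d)_H$ coming from the prefactor, and using that $(x,y)_H$ depends only on the signs of $x$ and $y$ (hence is additive in the ``sign bits''), the three Hilbert symbols collapse to $(-c,d)_H$, while the Kronecker factor $\left(\frac{c}{d}\right) = \kappa$ simply passes through unchanged. In the degenerate cases one checks directly from the cocycle that $\kappa' = 1$, matching the pure $\epsilon i\,\delta_0$ and $\epsilon i\,\delta_c$.

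Finally, for part~(a) the situation is subtler, because Lemma~\ref{lem:MetaUnbddAct}(a),(b) express $(\pi(\tlgamma)f)_0$ through $f_0 = 0$ and so vanish wherever they apply: for $c = 0$ this gives $(\pi(\tlgamma)f)_0 = 0$ on all of $\R$, but for $c \neq 0$ Lemma~\ref{lem:MetaUnbddAct}(a) holds only on $\R_{\neq -d/c}$, so it merely shows the distribution is supported at the excluded point $-d/c$ and cannot detect its coefficient. To pin this down I would transport the result of part~(b) through the intertwining identity \eqref{eq:sigmaIndRepUnbddEq}, which relates $(\pi(\tlgamma)f)_\infty$ and $(\pi(\tlgamma)f)_0$ on $\R_{\neq 0}$; since $c,d \neq 0$ the relevant Dirac masses lie away from the origin, so pulling $\delta_{c/d}$ back through the map $x \mapsto -x^{-1}$ to $\delta_{-d/c}$ is legitimate, and the accompanying Jacobian and Hilbert-symbol factors simplify to the stated $\left(\frac{c}{d}\right)|c|^{-\nu-1}\sgn(c)^{\epsilon/2}$. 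Equivalently, one may write $\pi(\tlgamma)\delta_{\epsilon,\nu,\infty} = \pi(\tlgamma\tls)\delta_{\epsilon,\nu,0}$ and apply Lemma~\ref{lem:MetaUnbddAct}(a) to the genuine Dirac mass $(\delta_{\epsilon,\nu,0})_0 = \delta_0$, which is then pulled back directly to $-d/c$.
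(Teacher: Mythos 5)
Your proposal is correct and follows essentially the same route as the paper: part (b) is obtained exactly as in the paper by applying Lemma \ref{lem:MetaUnbddAct}(c),(d) to $(\delta_{\epsilon,\nu,\infty})_\infty = \epsilon i\,\delta_0$, determining $\kappa'$ from the cocycle, pulling the Dirac mass back through the M\"obius map, and collapsing the Hilbert symbols to $(-c,d)_H$ — which, as you rightly flag, is the crux; note that this collapse is not pure sign arithmetic but uses $ad-bc=1$ together with $a \equiv d \equiv 1\ (\mod 4)$, as in the paper's reduction $(a,d)_H(b,d)_H(-c,d)_H=(a,d)_H(-bc,d)_H=1$. For part (a), your ``equivalent'' second route — writing $\pi(\tlgamma)\delta_{\epsilon,\nu,\infty}=\pi(\tlgamma\tls)\delta_{\epsilon,\nu,0}$ and applying Lemma \ref{lem:MetaUnbddAct}(a) to the genuine Dirac mass — is precisely the paper's argument, and you correctly identify the key subtlety that the lemma only determines the distribution away from a single excluded point, so one must observe that $-b/a \neq -d/c$ (from $ad-bc=1$) to promote the identity to all of $\R$.
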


\begin{proof}
By Lemma \ref{lem:MetaUnbddAct}(a,b),
\begin{equation}
\label{eq:tlgmdelinfeq0}
(\pi(\tlgamma) \delta_{\epsilon,\nu,\infty})_0(x) = 0,
\end{equation}
as an equality between distributions on $\R_{\neq \frac{-d}{c}}$ when $c \neq 0$, and as an equality
between distributions on $\R$ when $c = 0$.
Thus for part (a) it remains to describe $(\pi(\tlgamma) \delta_{\epsilon,\nu,\infty})_0$ about the point $\frac{-d}{c}$ when $c \neq 0$.
To do this, first observe that $a, d \neq 0$ since $\tlgamma \in \tlGamma_1(4)$. Next, observe
\begin{equation}
\label{eq:tlgammatls}
\tlgamma \tls = \(\mm{-b}{-d}{a}{c},\(\frac{c}{d}\)(c,a)_H\) = \(\mm{c}{d}{-a}{-b},\(\frac{c}{d}\)(c,a)_H\)^{-1}
\end{equation}
(for $c \neq 0$). Thus by Lemma \ref{lem:MetaUnbddAct}(a), 
\begin{align}
\label{eq:pitlgmdeltinfcompl}
&(\pi(\tlgamma) \delta_{\epsilon,\nu,\infty})_0(x) = (\pi(\tlgamma \tls) \delta_{\epsilon,\nu,0})_0(x)\\
&= \(\frac{c}{d}\) (c,a)_H (a,-ax-b)_H |ax+b|^{\nu-1} \sgn(-ax-b)^{\epsilon/2} \delta_0\(\frac{cx+d}{-ax-b}\),\notag
\end{align}
as an equality between distributions on $\R_{\neq \frac{-b}{a}}$.

We can further simplify \eqref{eq:pitlgmdeltinfcompl}.  To do so, consider $\phi$ a test function on $\R_{\neq \frac{-b}{a}}$.
By performing various changes of  variables, we have
\begin{align*}
&\int_{\R_{\neq \frac{-b}{a}}} (\pi(\tlgamma) \delta_{\epsilon,\nu,\infty})_0(x) \phi(x)\, dx\\
&= \int_{\R_{\neq \frac{-b}{a}}} \(\frac{c}{d}\) (c,a)_H (a,-ax-b)_H |ax+b|^{\nu-1} \sgn(-ax-b)^{\epsilon/2}\notag\\
&\hhh \cdot \delta_0\(\frac{cx+d}{-ax-b}\) \phi(x) \, dx\notag\\
&= \int_{\R_{\neq 0}} \(\frac{c}{d}\) (c,a)_H (a,-ax)_H |ax|^{\nu-1} \sgn(-ax)^{\epsilon/2} \delta_0\(\frac{cx-\frac{bc}{a}+d}{-ax}\)\notag\\
&\hhh \cdot \phi\(x-\frac{b}{a}\) \, dx\notag\\
&= \int_{\R_{\neq 0}} \(\frac{c}{d}\) (c,a)_H (a,-ax)_H |ax|^{\nu-1} \sgn(-ax)^{\epsilon/2} \delta_0\( \frac{-c}{a} - \frac{1}{a^2 x} \)\notag\\
&\hhh \cdot \phi\(x-\frac{b}{a}\) \, dx\notag\\
&= \int_{\R_{\neq 0}} x^{-2} \(\frac{c}{d}\) (c,a)_H (a,ax^{-1})_H |ax^{-1}|^{\nu-1} \sgn(ax^{-1})^{\epsilon/2} \delta_0\( \frac{-c}{a} + \frac{x}{a^2} \) \notag\\
&\hhh \cdot \phi\(-x^{-1}-\frac{b}{a}\) \, dx\notag\\
&= \int_{\R_{\neq 0}} (ax)^{-2} \(\frac{c}{d}\) (c,a)_H (a,(ax)^{-1})_H |(ax)^{-1}|^{\nu-1} \sgn((ax)^{-1})^{\epsilon/2}\notag\\
&\hhh \cdot \delta_0\( \frac{-c}{a} + x\) \cdot \phi\(-(a^2 x)^{-1}-\frac{b}{a}\) \, dx\notag\\
&= c^{-2} \(\frac{c}{d}\) (c,a)_H (a,c^{-1})_H |c^{-1}|^{\nu-1} \sgn(c^{-1})^{\epsilon/2} \phi\(-\frac{1}{ac}-\frac{b}{a}\)\notag\\
&=  \(\frac{c}{d}\) |c|^{-\nu-1} \sgn(c)^{\epsilon/2} \phi\(-\frac{d}{c}\)\notag\\
&= \(\frac{c}{d}\) |c|^{-\nu-1} \sgn(c)^{\epsilon/2} \int_{\R_{\neq \frac{-b}{a}}} \delta_{\frac{-d}{c}}(x) \phi(x) \, dx.\notag
\end{align*}
In this last equality we have used the fact that $\frac{-b}{a} \neq \frac{-d}{c}$ (since $ad-bc=1$).
Thus for $c \neq 0$,
\begin{equation}
\label{eq:tlgammaDeltaInf}
(\pi(\tlgamma) \delta_{\epsilon,\nu,\infty})_0 = \(\frac{c}{d}\) |c|^{-\nu-1} \sgn(c)^{\epsilon/2} \delta_{\frac{-d}{c}},
\end{equation}
as an equality between distributions on $\R_{\neq \frac{-b}{a}}$.
Since $\frac{-b}{a} \neq \frac{-d}{c}$ then it follows from \eqref{eq:tlgmdelinfeq0} that $(\pi(\tlgamma) \delta_\infty)_0$
vanishes about the point $\frac{-b}{a}$.  Thus we conclude that \eqref{eq:tlgammaDeltaInf} holds as an equality between
distributions on $\R$.  This proves part (a).

In order to prove part (b), we follow the same approach we used for part (a).
If $c \neq 0$ then it follows from \eqref{eq:tlgammatls} and Lemma \ref{lem:MetaUnbddAct}(c) that
\begin{equation}
\label{eq:tlgmdelinfinfeq0}
(\pi(\tlgamma) \delta_{\epsilon,\nu,\infty})_\infty = (\pi(\tlgamma \tls) \delta_{\epsilon,\nu,0})_\infty = 0 \hh \text{(for $c \neq 0$)},
\end{equation}
as an equality between distributions on $\R_{\neq \frac{c}{d}}$.
If instead $c=0$, then we must have $a=d=1$ since $\tlgamma \in \tlGamma_1(4)$, and hence
\begin{equation*}
\tlgamma \tls = \(\mm{-b}{-1}{1}{0},1\) = \(\mm{0}{1}{-1}{-b},1\)^{-1}.
\end{equation*}
Therefore when $c=0$, it follows from this equality and Lemma \ref{lem:MetaUnbddAct}(c) that
\begin{equation}
\label{eq:tlgmdelinfinfeq0a}
(\pi(\tlgamma) \delta_{\epsilon,\nu,\infty})_\infty = (\pi(\tlgamma \tls) \delta_{\epsilon,\nu,0})_\infty = 0  \hh \text{(for $c = 0$)},
\end{equation}
as an equality between distributions on $\R_{\neq 0}$.
Thus it remains to describe $(\pi(\tlgamma) \delta_{\epsilon,\nu,\infty})_\infty$ about the point $\frac{c}{d}$, 
both for the case of $c=0$ and the case of $c \neq 0$.
We proceed by considering the following cases for $\tlgamma^{-1}$:
\begin{multicols}{3}
\begin{itemize}
\item[(i)] $bc \neq 0$,
\item[(ii)] $b \neq 0$ and $c=0$,
\item[(iii)] $b = 0$.
\end{itemize}
\end{multicols}

First we consider case (i) ($bc \neq 0$).
If $bc > 0$, then $ad = bc + 1 > 1$, which implies either $a,d > 0$ or $a,d < 0$.
If instead $bc < 0$, then $ad = bc + 1 < 1$, which implies either $a > 0$ and $d < 0$, or $a < 0$ and $d > 0$
(since $ad \neq 0$ because $\tlgamma \in \tlGamma_1(4)$).
By computing $\tls \tlgamma^{-1} \tls^{-1}$ for each of these possibilities, one can show that 
\begin{equation}
\label{eq:tlgmtlsbcneq0}
\tlgamma^{-1} \tls^{-1} = \tls^{-1} \(\mm{d}{-c}{-b}{a}, (a,d)_H \(\frac{c}{d}\)\).
\end{equation}
For case (ii) ($b \neq 0$ and $c = 0$), we find that \eqref{eq:tlgmtlsbcneq0} also holds (recall that if
$c = 0$ then $a = d = 1$ since $\tlgamma \in \tlGamma_1(4)$).
Thus in what follows, we consider cases (i) and (ii) simultaneously.

Notice that \eqref{eq:tlgmtlsbcneq0} is of the form \eqref{eq:tlsComm}.
Thus by Lemma \ref{lem:MetaUnbddAct}(c) and \eqref{eq:tlsdeltprop}
it follows that
\begin{align}
\label{eq:pitlgmdeltinfinfcompl}
&(\pi(\tlgamma) \delta_{\epsilon,\nu,\infty})_\infty(x)\\
&= (a,d)_H \(\frac{c}{d}\) (b, -bx +a)_H |-bx+a|^{\nu-1} \sgn(-bx+a)^{\epsilon/2} (\delta_{\epsilon,\nu,\infty})_\infty\(\frac{dx - c}{-bx + a}\)\notag\\
&= \epsilon i (a,d)_H \(\frac{c}{d}\) (b, -bx +a)_H |-bx+a|^{\nu-1} \sgn(-bx+a)^{\epsilon/2} \delta_0\(\frac{dx - c}{-bx + a}\),\notag
\end{align}
as an equality between distributions on $\R_{\neq \frac{a}{b}}$.
We simplify \eqref{eq:pitlgmdeltinfinfcompl} by integrating it against a test function $\phi$  on $\R_{\neq \frac{a}{b}}$
and then performing various changes of variables:
{\allowdisplaybreaks
\begin{align*}
&\int_{\R_{\neq \frac{a}{b}}} (\pi(\tlgamma) \delta_{\epsilon,\nu,\infty})_\infty(x) \phi(x) \, dx\\
&= \epsilon  i \int_{\R_{\neq \frac{a}{b}}} (a,d)_H \(\frac{c}{d}\) (b, -bx +a)_H |-bx+a|^{\nu-1} \sgn(-bx+a)^{\epsilon/2}\notag\\
&\hhh \cdot \delta_0\(\frac{dx - c}{-bx + a}\) \phi(x) \, dx\notag\\
&= \epsilon  i \int_{\R_{\neq 0}} (a,d)_H \(\frac{c}{d}\) (b, -bx)_H |-bx|^{\nu-1} \sgn(-bx)^{\epsilon/2} \delta_0\(\frac{dx + \frac{ad}{b} - c}{-bx}\)\notag\\
&\hhh \cdot \phi\(x + \frac{a}{b}\) \, dx\notag\\
&= \epsilon  i \int_{\R_{\neq 0}} (a,d)_H \(\frac{c}{d}\) (b, -bx)_H |bx|^{\nu-1} \sgn(-bx)^{\epsilon/2} \delta_0\(\frac{-d}{b}- \frac{1}{b^2 x}\)\notag\\
&\hhh \cdot \phi\(x + \frac{a}{b}\) \, dx\notag\\
&= \epsilon  i \int_{\R_{\neq 0}} (a,d)_H \(\frac{c}{d}\) \(b, \frac{-x}{b}\)_H |b|^{-\nu-1} |x|^{\nu-1} \sgn\(\frac{-x}{b}\)^{\epsilon/2} \delta_0\(\frac{-d}{b}- \frac{1}{x}\) \notag\\
&\hhh \cdot \phi\(\frac{x}{b^2} + \frac{a}{b}\) \, dx\notag\\
&= \epsilon  i \int_{\R_{\neq 0}} (a,d)_H \(\frac{c}{d}\) \(b, \frac{1}{bx}\)_H |b|^{-\nu-1} |x|^{-\nu-1} \sgn\(\frac{1}{bx}\)^{\epsilon/2} \delta_0\(\frac{-d}{b}+x\)\notag\\
&\hhh \cdot \phi\(\frac{-1}{b^2 x} + \frac{a}{b}\) \, dx\notag\\
&= \epsilon  i (a,d)_H \(\frac{c}{d}\) \(b, \frac{1}{d}\)_H |d|^{-\nu-1} \sgn\(\frac{1}{d}\)^{\epsilon/2} \phi\(\frac{-1}{bd} + \frac{a}{b}\)\notag\\
&= \epsilon  i (a,d)_H  \(b, d\)_H \(\frac{c}{d}\)|d|^{-\nu-1} \sgn\(d\)^{\epsilon/2} \int_{\R_{\neq \frac{a}{b}}} \delta_{\frac{c}{d}}(x) \phi(x) \, dx.\notag
\end{align*}
}%
In this last equality we once more use the fact that $\frac{a}{b} \neq \frac{c}{d}$. 
Thus for cases (i) and (ii) we have
\begin{equation}
\label{eq:tlgammaDeltaInf2}
(\pi(\tlgamma) \delta_{\epsilon,\nu,\infty})_\infty = \epsilon i (a,d)_H  \(b, d\)_H \(\frac{c}{d}\)|d|^{-\nu-1} \sgn\(d\)^{\epsilon/2} \delta_{\frac{c}{d}},
\end{equation}
as an equality between distributions on $\R_{\neq \frac{a}{b}}$.  Since $\frac{a}{b} \neq \frac{c}{d}$ it follows from 
\eqref{eq:tlgmdelinfinfeq0} (and \eqref{eq:tlgmdelinfinfeq0a}) that
$(\pi(\tlgamma) \delta_{\epsilon,\nu,\infty})_\infty$ vanishes about the point $\frac{a}{b}$.
Thus we conclude that \eqref{eq:tlgammaDeltaInf2}
holds as an equality between distributions on $\R$.

For case (ii), \eqref{eq:tlgammaDeltaInf2} simplifies to
\begin{equation}
\label{eq:tlgammaDeltaInf3}
(\pi(\tlgamma) \delta_{\epsilon,\nu,\infty})_\infty = \epsilon i \delta_{0},
\end{equation}
as an equality between distributions on $\R$. For case (i), we can also further simplify \eqref{eq:tlgammaDeltaInf2};
in particular, $(a,d)_H (b,d)_H$ can be written more concisely.
Observe that if $d>0$ then it follows that $(a,d)_H (b,d)_H$ $= (-c,d)_H$ since $(x,d)_H=1$ whenever $d>0$.
We wish to prove that this is also the case for $d<0$.  
To do so, recall that $(x,z)_H (y,z)_H = (xy,z)_H$ for $x,y,z \in \R_{\neq 0}$. 
Therefore 
\begin{equation}
\label{eq:hilId}
(a,d)_H (b,d)_H (-c,d)_H = (a,d)_H (-bc,d)_H.
\end{equation}
If $a > 0$ then $ad < 0$, which implies $-bc = 1 - ad > 0$, and hence $(a,d)_H (-bc,d)_H = 1$.
If $a < 0$ then $ad \geq 9$ since $a \equiv d \equiv 1 (\mod 4)$, which implies $-bc = 1 - ad < 0$, and hence $(a,d)_H (-bc,d)_H = 1$.
Thus by \eqref{eq:hilId}, $(a,d)_H (b,d)_H = (-c,d)_H$ (both for $d>0$ and $d<0$).
Hence for case (i),
\begin{equation}
\label{eq:tlgammaDeltaInf4}
(\pi(\tlgamma) \delta_{\epsilon,\nu,\infty})_\infty =  \epsilon i  (-c,d)_H \(\frac{c}{d}\)|d|^{-\nu-1} \sgn\(d\)^{\epsilon/2} \delta_{\frac{c}{d}}
\end{equation}
as an equality between distributions on $\R$.

For case (iii), we have
\begin{equation}
\label{eq:tlgmtlsbeq0}
\tlgamma^{-1} \tls^{-1} = \tls^{-1} \(\mm{1}{-c}{0}{1}, 1 \),
\end{equation}
since if $b = 0$ then $a = d = 1$ since $\tlgamma \in \tlGamma_1(4)$.
Notice that \eqref{eq:tlgmtlsbeq0} is also an equation of the form \eqref{eq:tlsComm}.
Thus by Lemma \ref{lem:MetaUnbddAct}(d) and \eqref{eq:tlsdeltprop},
\begin{equation}
\label{eq:tlgammaDeltaInf5}
(\pi(\tlgamma) \delta_{\epsilon,\nu,\infty})_\infty(x) = (\delta_{\epsilon,\nu,\infty})_\infty(x-c) = \epsilon i \delta_c(x).
\end{equation}
as an equality between distributions on $\R$.
Taken together, \eqref{eq:tlgammaDeltaInf3}, \eqref{eq:tlgammaDeltaInf4}, and \eqref{eq:tlgammaDeltaInf5} prove part (b).
\end{proof}

Let
\begin{equation}
\tlGamma_\infty = \left\{ \( \mm{1}{n}{0}{1}, 1 \): n \in \Z \right\}.
\end{equation}
For $\Re(\nu) > 1$, we define the \textit{metaplectic Eisenstein distribution at $\infty$} to be the following distribution in $V_{\epsilon,\nu}^{-\infty}$:
\begin{equation}
\label{eq:EisenInfDist}
\tlE^{(\infty)}_{\epsilon,\nu}(\tilde{g}) = \zeta_2(2\nu+1) \sum_{\tlgamma \in \tlGamma_1(4) / \tlGamma_\infty} \pi(\tlgamma) \delta_{\epsilon,\nu,\infty}(\tilde{g}),
\end{equation}
where 
\begin{equation}
\zeta_2(\nu) = (1-2^{-\nu}) \zeta(\nu) = \prod_{p \text{ odd prime}} (1-p^{-\nu})^{-1}.
\end{equation}
The summation over $\tlGamma_1(4) / \tlGamma_\infty$ is justified since by Lemma \ref{lem:deltaInfProp}
and \eqref{eq:tlsdeltprop}, we have that $\delta_{\epsilon,\nu,\infty}$ is 
$\tlGamma_\infty$-invariant under left inverse translation.
Thus $\tlE_{\epsilon,\nu}^{(\infty)}$ is at least formally $\tlGamma_1(4)$-invariant.
We shall justify the convergence of this series momentarily.

For $\tlgamma^{-1}=\(\mm{a}{b}{c}{d}, \(\frac{c}{d}\)\)$, observe
\begin{align}
&\tlgamma \cdot \(\mm{1}{n}{0}{1},1 \) = \(\mm{d}{-b}{-c}{a}, \(\frac{c}{d}\)\) \cdot \(\mm{1}{n}{0}{1},1 \)\\
&= \(\mm{d}{dn-b}{-c}{-cn+a},\(\frac{c}{d}\)\). \notag
\end{align}
Thus to each coset $\tlgamma \tlGamma_\infty$ of $\tlGamma_1(4) / \tlGamma_\infty$, there corresponds $(c,d) \in \Z^2$
such that $\gcd(c,d)=1$, $c \equiv 0(\mod 4)$, and $d \equiv 1 (\mod 4)$.  This correspondence is unique, for if
\begin{equation*}
\tlgamma' = \(\mm{d}{*}{-c}{*},*\) \in \tlGamma_1(4),
\end{equation*}
as must be the case if $\tlgamma' \tlGamma_\infty$ and $\tlgamma \tlGamma_\infty$ corresponded to the same $(c,d) \in \Z^2$,
then
\begin{equation*}
(\tlgamma')^{-1} \tlgamma = \(\mm{*}{*}{c}{d},*\) \(\mm{d}{*}{-c}{*},*\) = \(\mm{*}{*}{0}{*},*\).
\end{equation*}
Since the matrix coordinate of this element must have determinant $1$ and since we require $a \equiv d \equiv 1 (\mod 4)$,
it follows that $(\tlgamma')^{-1} \tlgamma \in \tlGamma_\infty$.
This shows that the correspondence of cosets of 
$\tlGamma / \tlGamma_\infty$  to such $(c,d) \in \Z^2$ is in fact unique.
Conversely, when given $(c,d) \in \Z^2$ such that $\gcd(c,d)=1$, $c \equiv 0(\mod 4)$,
and $d \equiv 1 (\mod 4)$, it follows that there exists $a,b \in \Z$ such that $ad - bc =1$.  Since $c \equiv 0 (\mod 4)$ and $d \equiv 1 (\mod 4)$
it follows that $a \equiv 1 (\mod 4)$.  Thus we are able to construct $\tlgamma$ which corresponds to such $(c,d)$.
Therefore
\begin{equation}
\tlGamma_1(4) / \tlGamma_\infty \cong \{(c,d) \in \Z^2: \gcd(c,d)=1, c \equiv 0 (\mod 4), d \equiv 1 (\mod 4)\}.
\end{equation}
Thus by Lemma \ref{lem:deltaInfProp},
\begin{equation}
\label{eq:EisenInfDistZero}
\(\tlE^{(\infty)}_{\epsilon,\nu}\)_0(x) = \zeta_2(2\nu+1) \sum_{\substack{(c,d)\in \Z_{\neq 0} \times \Z \\ \gcd(c,d)=1 \\ d \equiv 1 (\mod 4) \\ c \equiv 0 (\mod 4)}} \(\frac{c}{d}\) |c|^{-\nu - 1} \sgn(c)^{\epsilon/2} \delta_{-\frac{d}{c}}(x),
\end{equation}
and
\begin{align}
\label{eq:EisenInfDistInf}
&\(\tlE^{(\infty)}_{\epsilon,\nu}\)_\infty(x)\\
&= \epsilon i  \zeta_2(2\nu+1) \Bigg(\delta_0(x) + \sum_{\substack{(c,d)\in \Z_{\neq 0} \times \Z \\ \gcd(c,d)=1 \\ d \equiv 1 (\mod 4) \\ c \equiv 0 (\mod 4)}} (-c,d)_H \(\frac{c}{d}\)|d|^{-\nu-1} \sgn\(d\)^{\epsilon/2} \delta_{\frac{c}{d}}(x) \Bigg).\notag
\end{align}
Notice that for $\Re(\nu) > 1$, the integrals of \eqref{eq:EisenInfDistZero} and \eqref{eq:EisenInfDistInf} converge uniformly and absolutely
against compactly supported test functions on $\R$.  Therefore since
$\tlE^{(\infty)}_{\epsilon,\nu}$ is determined completely by $\(\tlE^{(\infty)}_{\epsilon,\nu}\)_0$ and $\(\tlE^{(\infty)}_{\epsilon,\nu}\)_\infty$, it follows that
$\tlE^{(\infty)}_{\epsilon,\nu}$ depends holomorphically on $\nu$ for $\Re(\nu) > 1$.  Furthermore, it follows
 that our series expansion for $\tlE^{(\infty)}_{\epsilon,\nu}$ converges
in the strong distribution topology.

\section{Gauss and Kloosterman Sums}
\label{sec:GaussKloo}

We will assume throughout this section that $\Re(\nu) > 1$.
Since $\(\tlE^{(\infty)}_{\epsilon,\nu}\)_0$ is periodic, it has a Fourier series expansion
\begin{equation}
\label{eq:tlEInfFourSeries}
\(\tlE^{(\infty)}_{\epsilon,\nu}\)_0(x) = \sum_{n \in \Z} a_{\epsilon,\nu}(n) e(n x),
\end{equation}
where
\begin{align}
&a_{\epsilon,\nu}(n) = \int^1_0 \(\tlE^{(\infty)}_{\epsilon,\nu}\)_0(x) e(-nx) \, dx \, \text{ and } e(x) = e^{2 \pi i x}.
\end{align}
By computing explicit formulas for $a_{\epsilon,\nu}(n)$, we will be able to show in Section \ref{sec:MeroContEisenInf} that 
$\tlE^{(\infty)}_{\epsilon,\nu}$ has holomorphic continuation to $\C$ except for simple poles on $\{0,\frac{1}{2}\}$.
By \eqref{eq:EisenInfDistZero}, observe
{\allowdisplaybreaks
\begin{align*}
&\frac{a_{\epsilon,\nu}(n)}{\zeta_2(2\nu+1)} = \sum_{\substack{(c,d)\in \Z_{\neq 0} \times \Z \\ \gcd(c,d)=1 \\ d \equiv 1 (\mod 4) \\ c \equiv 0 (\mod 4)}} \int^1_0 \(\frac{c}{d}\) |c|^{-\nu-1} \sgn(c)^{\epsilon/2} \delta_{-\frac{d}{c}}(x) e(-nx) \, dx\\
&= \sum_{\substack{(c,d) \in \Z_{>0}\times\Z \\ 0 \leq -d < c \\ d \equiv 1 (\mod 4) \\ c \equiv 0 (\mod 4)}} \(\frac{c}{d}\) |c|^{-\nu-1} e\(\frac{nd}{c}\) + \epsilon i \sum_{\substack{(c,d) \in \Z_{<0}\times\Z \\ 0 \geq -d > c \\ d \equiv 1 (\mod 4) \\ c \equiv 0 (\mod 4)}} \(\frac{c}{d}\) |c|^{-\nu-1} e\(\frac{nd}{c}\)\notag\\
&= \sum_{\substack{(c,d) \in \Z_{>0}\times\Z \\ 0 \leq -d < c \\ d \equiv 1 (\mod 4) \\ c \equiv 0 (\mod 4)}} \(\frac{c}{d}\) |c|^{-\nu-1} e\(\frac{nd}{c}\)
+ \epsilon i \sum_{\substack{(c,d) \in \Z_{>0}\times\Z \\ 0 \geq -d > -c \\ d \equiv 1 (\mod 4) \\ c \equiv 0 (\mod 4)}} \(\frac{-c}{d}\) |c|^{-\nu-1} e\(\frac{nd}{-c}\)\notag\\
&= \sum_{\substack{(c,d) \in \Z_{>0}\times\Z \\ 0 \leq d < c \\ d \equiv 3 (\mod 4) \\ c \equiv 0 (\mod 4)}} \(\frac{c}{-d}\) |c|^{-\nu-1} e\(\frac{-nd}{c}\)
+ \epsilon i \sum_{\substack{(c,d) \in \Z_{>0}\times\Z \\ 0 \leq d < c \\ d \equiv 1 (\mod 4) \\ c \equiv 0 (\mod 4)}} \(\frac{-c}{d}\) |c|^{-\nu-1} e\(\frac{nd}{-c}\).\notag
\end{align*}
}%
For $c>0$, we have $\(\frac{c}{d}\)=\(\frac{c}{-d}\)$, and for $d \equiv 1(\mod 4)$ we have $\(\frac{-c}{d}\) = \(\frac{c}{d}\)$.  Thus
\begin{align*}
&a_{\epsilon,\nu}(n) = \epsilon i \zeta_2(2\nu+1) \sum_{\substack{(c,d)\in \Z_{>0} \times \Z \\ 0 \leq d < c \\ c \equiv 0 (\mod 4)}} \Delta_d^{-\epsilon} \(\frac{c}{d}\) c^{-\nu-1} e\(-\frac{nd}{c}\)\\
&= \epsilon i \zeta_2(2\nu+1) \sum_{\substack{(c,d)\in \Z_{>0} \times \Z \\ 0 \leq d < 4c}} \Delta_d^{-\epsilon} \(\frac{4 c}{d}\) (4c)^{-\nu-1} e\(-\frac{nd}{4c}\),\notag
\end{align*}
where
\begin{equation}
\label{eq:DeltaDef}
\Delta_d = \begin{cases}1 &\text{if } d \equiv 1 (\mod 4)\\ i &\text{if } d \equiv 3 (\mod 4)\\ 0 &\text{otherwise}. \end{cases}
\end{equation}

Let
\begin{align}
\label{eq:KlooDef}
&K_\kappa(n;4c) = \sum_{d \in \Z/(4c)\Z} \Delta_d^{-\kappa} \(\frac{4c}{d}\) e\(\frac{nd}{4c}\),\\
\label{eq:GaussDef}
&G(n;c) = \sum_{x \in \Z/c\Z} \(\frac{x}{c}\)e\(\frac{nx}{c}\),
\end{align}
where $\kappa \equiv 1 (\mod 2)$.  The expressions in \eqref{eq:KlooDef} and \eqref{eq:GaussDef} are
referred to as \textit{Kloosterman Sums} and \textit{Gauss Sums} (respectively). Observe
\begin{equation*}
a_{\epsilon,\nu}(n) = \epsilon i 4^{-\nu-1} \zeta_2(2\nu+1) \sum_{c \in \Z_{>0}} c^{-\nu-1} K_{\epsilon}(-n;4c).
\end{equation*}
Since $\bar{K_\kappa(n;c)} = K_{-\kappa}(-n;c)$, it follows that
\begin{equation*}
a_{\epsilon,\nu}(n) = \epsilon i 4^{-\nu-1} \zeta_2(2\nu+1) \sum_{c \in \Z_{>0}} c^{-\nu-1} \scC_{-\epsilon} \( K_{-1}(\epsilon \, n;4c)\),
\end{equation*}
where
\begin{equation}
\label{eq:scCdef}
\scC_\epsilon(z) = \begin{cases}z &\text{if } \epsilon=1,\\ \bar{z} &\text{if } \epsilon=-1.\end{cases}
\end{equation}
For $c = 2^k c'$ where $\gcd(2,c')=1$ and $k \in \Z_{\geq 0}$, it follows from \cite[Lemma 2]{Iwaniec87} that
\begin{equation}
K_{-1}(n;4c)=K_{-1}(n;2^{k+2}c')=K_{-c'}(n\bar{c'};2^{k+2})G(n\bar{2^{k+2}};c'),
\end{equation}
where $\bar{c'}$ and $\bar{2^{k+2}}$ are integers such that 
\begin{equation}
\label{eq:WeirdBarCondition}
c' \bar{c'} \equiv 1(\mod 2^{k+2}) \text{ and } 2^{k+2}\bar{2^{k+2}} \equiv 1 (\mod c').
\end{equation} 
Thus 
\begin{align}
\label{eq:EisenInfFourCoeff}
&a_{\epsilon,\nu}(n) = \epsilon i 4^{-\nu-1} \zeta_2(2\nu+1) \sum_{c \in \Z_{>0}} c^{-\nu-1} \scC_{-\epsilon} \(K_{-1}(\epsilon n;4c)\)\\
&= \epsilon i 4^{-\nu-1} \zeta_2(2\nu+1) \sum_{\substack{c' \in \Z_{>0}\\c' \text{ odd}}} \sum_{k \in \Z_{\geq 0}} (2^k c')^{-\nu-1} \scC_{-\epsilon}\( K_{-c'}(\epsilon n \bar{c'};2^{k+2})G(\epsilon n\bar{2^{k+2}};c') \).\notag
\end{align}
In light of the comments following \eqref{eq:EisenInfDistInf}, we know that
this series expansion converges uniformly and absolutely for $\Re(\nu)>1$.

The following lemmas give explicit formulas for the Gauss and Kloosterman sums that occur in \eqref{eq:EisenInfFourCoeff}.
By utilizing these formulas, we will show in Section \ref{sec:MeroContFourCoeff}
that the Fourier coefficients $a_{\epsilon,\nu}(n)$ can be expressed in terms of Dirichlet $L$-functions.

\begin{lem} 
\label{lem:GaussSums}
Let $c>0$ and odd.
\begin{itemize}[leftmargin=.3in,font=\normalfont\textbf]
	
\item[(a)] If $\gcd(n,c)=1$ and $c$ is prime then $G(n;c) = \(\frac{n}{c}\)\Delta_c c^{1/2}$.

\item[(b)] If $c = p q$ and $\gcd(p,q)=1$ then $G(n;c) = (-1)^{\frac{(p-1)(q-1)}{4}} G(n;p) G(n;q)$.  Consequently,
$\Delta_c G(n;c)$ is multiplicative since $\Delta_p \Delta_q = (-1)^{\frac{(p-1)(q-1)}{4}} \Delta_{pq}$.

\item[(c)] If $c$ is square-free and $\gcd(n,c)=1$ then $G(n;c) = \(\frac{n}{c}\) \Delta_c c^{1/2}$.

\item[(d)] If $k \geq 2$, $p$ is prime, and $n = p^\ell n'$ where $\gcd(n',p)=1$ and $\ell \in \Z_{\geq 0}$, then
\begin{equation*}
G(n;p^k)=\begin{cases} 0 &\text{if } \ell < k-1\\ -p^{k-1} &\text{if } \ell = k-1, k \text{ even}\\ \(\frac{n'}{p}\) \Delta_p p^{k-1/2} &\text{if } \ell = k-1, k \text{ odd}\\ p^k - p^{k-1} & \text{if } \ell \geq k, k \text{ even}\\ 0 &\text{if } \ell \geq k, k \text{ odd}. \end{cases}
\end{equation*}

\item[(e)] If $c$ is not square-free then $G(1;c)=0$.

\end{itemize}
\end{lem}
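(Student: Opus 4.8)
The plan is to rest everything on two pillars: the classical evaluation of the quadratic Gauss sum to a prime modulus, which gives (a), and a multiplicativity/recursion calculus that carries this to composite and prime-power moduli. For part (a), I would first isolate the dependence on $n$: since $\gcd(n,c)=1$, the substitution $x\mapsto \bar{n}x$ (with $n\bar n\equiv 1\pmod c$) gives $G(n;c)=\(\frac{\bar n}{c}\)G(1;c)=\(\frac{n}{c}\)G(1;c)$, using Proposition \ref{prop:KronProps}(e),(g) and the fact that $\(\frac{n}{c}\)\(\frac{\bar n}{c}\)=\(\frac{1}{c}\)=1$. It then remains to show $G(1;p)=\Delta_p\,p^{1/2}$, which is exactly Gauss's evaluation: $G(1;p)$ equals $p^{1/2}$ for $p\equiv 1\pmod 4$ and $i\,p^{1/2}$ for $p\equiv 3\pmod 4$, i.e.\ $\Delta_p\,p^{1/2}$. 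The magnitude follows from $|G(1;p)|^2=p$, while the sign determination is the classical (genuinely nontrivial) input, which I would invoke rather than reprove.

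For parts (b) and (c) I would pass through coprime moduli by the Chinese Remainder Theorem. Writing $x=qy+pz$ as $y$ ranges over $\Z/p\Z$ and $z$ over $\Z/q\Z$ (a bijection onto $\Z/pq\Z$ since $\gcd(p,q)=1$), Proposition \ref{prop:KronProps}(e),(f),(g) factor the Kronecker symbol and split the exponential, producing $G(n;pq)=\(\frac{q}{p}\)\(\frac{p}{q}\)G(n;p)G(n;q)$; reciprocity, Proposition \ref{prop:KronProps}(i), rewrites the leading factor as $(-1)^{(p-1)(q-1)/4}$, which is (b). The asserted multiplicativity of $\Delta_c\,G(n;c)$ then reduces to the elementary congruence $\Delta_p\Delta_q=(-1)^{(p-1)(q-1)/4}\Delta_{pq}$, which I would check by the four cases of $p,q\bmod 4$. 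For (c), I would iterate (b) across a squarefree factorization $c=p_1\cdots p_r$, substitute (a) into each factor, and collapse the outcome using $\Delta_d^2=\(\frac{-1}{d}\)$ together with Proposition \ref{prop:KronProps}(f): the product of the $\Delta_{p_i}^2$ reassembles into $\Delta_c^2$, leaving precisely $\(\frac{n}{c}\)\Delta_c\,c^{1/2}$.

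Part (d) is where I expect the real work to lie, and I would split it by the parity of $k$. Since $\(\frac{x}{p^k}\)=\(\frac{x}{p}\)^k$, when $k$ is even the symbol is simply the indicator that $p\nmid x$, so $G(n;p^k)$ is a Ramanujan sum: removing the multiples of $p$ from a full geometric sum gives a contribution $p^k$ exactly when $p^k\mid n$ minus a contribution $p^{k-1}$ exactly when $p^{k-1}\mid n$, and reading off the three subcases in terms of $\ell$ settles $k$ even. When $k$ is odd the symbol is $\(\frac{x}{p}\)$, and I would run the substitution $x=y+p^{k-1}z$ (with $y\bmod p^{k-1}$, $z\bmod p$): the $z$-sum forces $p\mid n$, and applying the substitution twice — using $\(\frac{x}{p}\)^k=\(\frac{x}{p^{k-2}}\)$ for $k\geq 3$ to realign the symbol with the reduced modulus — yields the parity-preserving recursion $G(n;p^k)=p^2\,G(n/p^2;p^{k-2})$ valid when $p^2\mid n$, together with $G(n;p^k)=0$ when $p\nmid n$ or when $p$ exactly divides $n$. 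Iterating from $p^k$ down to $p^1$ and invoking (a) at the base when the residual argument is coprime to $p$ — or the vanishing $\sum_{x}\(\frac{x}{p}\)=0$ when it is divisible by $p$ — produces the two odd subcases. The delicate point is precisely this bookkeeping: peeling one factor of $p$ changes the exponent on $\(\frac{\cdot}{p}\)$ by one and so its parity, so a naive one-step recursion does not close; the two-step recursion, which preserves parity and divides $n$ by $p^2$, is what makes the count of $\ell$ against $k$ match the stated cases.

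Finally, part (e) falls out of (b) and (d). Factoring the odd $c$ into coprime prime powers and using the multiplicativity of $\Delta_c\,G(1;c)$, any non-squarefree $c$ contributes a factor $G(1;p^{e})$ with $e\geq 2$; here $n=1$ has $p$-adic valuation $\ell=0<e-1$, so by (d) this factor vanishes, and with it the entire product, giving $G(1;c)=0$.
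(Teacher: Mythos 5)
The paper offers no proof of this lemma at all --- it simply cites \cite{Berndt98,Ireland84,Iwaniec04} and remarks that the results follow from the usual techniques --- so there is nothing internal to compare against; your proposal is a correct and complete rendering of exactly those standard techniques (twisted-to-untwisted reduction plus Gauss's sign determination for (a), CRT factorization plus reciprocity for (b) and (c), the Ramanujan-sum computation for even $k$ and the parity-preserving two-step recursion $G(n;p^k)=p^{2}\,G(n/p^{2};p^{k-2})$ for odd $k$ in (d), and multiplicativity combined with (d) for (e)). In particular you correctly identify and resolve the one genuinely delicate point, namely that peeling a single factor of $p$ flips the parity of the exponent on $\left(\tfrac{\cdot}{p}\right)$ so that a one-step recursion does not close, whereas the two-step recursion does and makes the bookkeeping of $\ell$ against $k$ come out as stated.
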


The results in Lemma \ref{lem:GaussSums} are well-known and are proved using the usual techniques \cite{Berndt98,Ireland84,Iwaniec04}.
Let
\begin{equation}
\label{eq:chiDef}
\chi_4(c) = \begin{cases}1 &\text{if } c \equiv 1(\mod 4)\\ -1 &\text{if } c \equiv 3 (\mod 4)\\ 0 & \text{otherwise}\end{cases} \hh \text{and} \hh \chi_8(c) = \begin{cases}1 &\text{if } c \equiv 1(\mod 8)\\ 1 &\text{if } c \equiv 3 (\mod 8)\\ -1 &\text{if } c \equiv 5 (\mod 8)\\ -1 &\text{if } c \equiv 7 (\mod 8)\\ 0 & \text{otherwise}.\end{cases}
\end{equation}

\begin{lem}
\label{lem:KlooSums}
Let $c > 0$ and odd.
\begin{itemize}[leftmargin=.3in,font=\normalfont\textbf]
	
\item[(a)]  If $n \equiv 1 (\mod 4)$
\begin{align*}
&\hhh\hhh \Delta_c^{-1} K_{-c}(n \bar{c};4) = (1+i)\chi_4(c), \text{ where } \bar{c} \in \Z \text{ such that } c \bar{c} \equiv 1(\mod 4),\\
&\hhh\hhh \Delta_c^{-1} K_{-c}(n \bar{c};8) = 2^{3/2}(1+i)\chi_8(c), \text{ where } \bar{c} \in \Z \text{ such that } c \bar{c} \equiv 1(\mod 8),\\
&\hhh\hhh \Delta_c^{-1} K_{-c}(n \bar{c};2^{k+2}) = 0, \text{ where } \bar{c} \in \Z \text{ such that } c \bar{c} \equiv 1(\mod 2^{k+2}),
\end{align*}
for $k \geq 2$.

\item[(b)] If $n \equiv 3 (\mod 4)$ then
\begin{align*}
&\hhh\hh \Delta_c^{-1} K_{-c}(n \bar{c};4) = -(1+i)\chi_4(c), \text{ where } \bar{c} \in \Z \text{ such that } c \bar{c} \equiv 1(\mod 4),\\
&\hhh\hh \Delta_c^{-1} K_{-c}(n \bar{c};8) = 0, \text{ where } \bar{c} \in \Z \text{ such that } c \bar{c} \equiv 1(\mod 8),\\
&\hhh\hh \Delta_c^{-1} K_{-c}(n \bar{c};2^{k+2}) = 0, \text{ where } \bar{c} \in \Z \text{ such that } c \bar{c} \equiv 1(\mod 2^{k+2}),
\end{align*}
for $k \geq 2$.

\item[(c)] If $n=2^\ell n'$, $\ell>0$, and $\gcd(n',2)=1$, then
\begin{align*}
&\Delta_c^{-1} K_{-c}(n \bar{c};4) = \begin{cases}(1+i)\chi_4(c) &\text{if } \ell>1 \\ -(1+i)\chi_4(c) &\text{if } \ell=1, \end{cases}\\
&\hhh\hhh \text{ where } \bar{c} \in \Z \text{ such that } c \bar{c} \equiv 1(\mod 4),\\
&\Delta_c^{-1} K_{-c}(n \bar{c};8) = 0,\\
&\hhh\hhh \text{ where } \bar{c} \in \Z \text{ such that } c \bar{c} \equiv 1(\mod 8),
\end{align*}
and for $k \geq 2$ and $\bar{c} \in \Z$ such that $c \bar{c} \equiv 1(\mod 2^{k+2})$, we have:

\begin{itemize}[leftmargin=.25in,font=\normalfont\textbf]
\item[(1)] if $\ell \geq k+2$ then
\begin{equation*}
\Delta_c^{-1} K_{-c}(n \bar{c};2^{k+2}) = \begin{cases}(1+i) 2^k\chi_4(c) &\text{if } k \text{ even} \\ 0 &\text{if } k \text{ odd}, \end{cases}
\end{equation*}

\item[(2)] if $\ell = k+1$ then
\begin{equation*}
\Delta_c^{-1} K_{-c}(n \bar{c};2^{k+2}) = \begin{cases}-(1+i) 2^k\chi_4(c) &\text{if } k \text{ even} \\ 0 &\text{if } k \text{ odd}, \end{cases}
\end{equation*}

\item[(3)] if $\ell = k$ then
\begin{equation*}
\Delta_c^{-1} K_{-c}(n \bar{c};2^{k+2}) = \begin{cases} (1+i) 2^k\chi_4(n' c) & \text{if } k \text{ even} \\ 0 & \text{if } k \text{ odd}, \end{cases}
\end{equation*}

\item[(4)] if $\ell = k-1$ then
\begin{equation*}
\Delta_c^{-1} K_{-c}(n \bar{c};2^{k+2}) = \begin{cases} 0 & \text{if } \begin{array}{l}k \text{ even or }\\[-.15cm] n' \equiv 3(\mod 4)\end{array} \\  \frac{1+i}{\sqrt{2}} 2^{k+1} \chi_8(n'c') & \text{if } \begin{array}{l}k \text{ odd and }\\[-.15cm] n' \equiv 1 (\mod 4),\end{array} \end{cases}
\end{equation*}

\item[(5)] if $\ell \leq k-2$ then
\begin{equation*}
\Delta_c^{-1} K_{-c}(n \bar{c};2^{k+2}) = 0.
\end{equation*}
\end{itemize}

\end{itemize}
\end{lem}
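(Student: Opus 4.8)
The plan is to obtain every evaluation by direct summation, exploiting that in the summand of $K_{-c}(m;2^{k+2})$ from \eqref{eq:KlooDef} (with $m=n\bar{c}$) every factor except the additive character depends only on $d$ modulo $8$. First I would record, via Proposition \ref{prop:KronProps}(b), that $\(\frac{2^{k+2}}{d}\)$ vanishes for even $d$, so that only odd $d$ contribute, and via Proposition \ref{prop:KronProps}(e) that $\(\frac{2^{k+2}}{d}\)=\(\frac{2}{d}\)^{k+2}$, which by Proposition \ref{prop:KronProps}(b) is a function of $d\bmod 8$ depending on the parity of $k$. Since $\Delta_d$ from \eqref{eq:DeltaDef} depends only on $d\bmod 4$ and $c$ is odd, the entire weight $\Delta_c^{-1}\Delta_d^{c}\(\frac{2}{d}\)^{k+2}$ is a function of $d\bmod 8$ alone.

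For the moduli $2^{k+2}$ with $k\ge 1$ I would split the sum by writing $d=8t+r$ with $r\in\{1,3,5,7\}$ and $t$ ranging over $\Z/2^{k-1}\Z$, which is a bijection onto the odd residues modulo $2^{k+2}$. Then $e\(\frac{md}{2^{k+2}}\)=e\(\frac{mr}{2^{k+2}}\)e\(\frac{mt}{2^{k-1}}\)$, and $\sum_{t\bmod 2^{k-1}}e\(\frac{mt}{2^{k-1}}\)$ equals $2^{k-1}$ when $2^{k-1}\mid m$ and $0$ otherwise. Because $\bar{c}$ is a unit, $v_2(m)=v_2(n)=\ell$, so the sum vanishes once $\ell\le k-2$, which is case (c)(5). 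When $\ell\ge k-1$ I write $m=2^{k-1}m_1$ and reduce the surviving phase to $e\(\frac{mr}{2^{k+2}}\)=e\(\frac{m_1 r}{8}\)$, a function of $m_1\bmod 8$, i.e. of $v_2(m_1)=\ell-(k-1)$. This produces precisely the four regimes $\ell\ge k+2$, $\ell=k+1$, $\ell=k$, $\ell=k-1$ of the lemma, in which the phase is trivial, is $-1$, is $i^{(m/2^{k})r}$, and is $e\(\frac{m_1 r}{8}\)$ respectively.

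In each regime the sum collapses to the four-term expression $\sum_{r\in\{1,3,5,7\}}\Delta_c^{-1}\Delta_r^{c}\(\frac{2}{r}\)^{k+2}(\text{phase}(r))$, together with the prefactor $2^{k-1}$, which I would evaluate term by term. Here the parity of $k$ enters through $\(\frac{2}{r}\)^{k+2}$ and governs whether the four terms reinforce or cancel, accounting for the vanishing of all $k$-odd cases except (c)(4). The inverses $\bar{c}$ in the phase are eliminated using $c^2\equiv 1\pmod 8$ for odd $c$, so that $\bar{c}\equiv c\pmod 8$ and $m_1\equiv 2^{\ell-k+1}n'c\pmod 8$; this turns the $r$-sums into the characters $\chi_4(c)$, $\chi_4(n'c)$ and $\chi_8(n'c)$ of \eqref{eq:chiDef}. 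The moduli $4$ and $8$ (the cases $k=0$ and $k=1$) lie outside the range of this splitting, and I would evaluate them directly: over $d\in\{1,3\}$ for modulus $4$, using $\(\frac{4}{d}\)=1$, and over $d\in\{1,3,5,7\}$ for modulus $8$, using $\(\frac{8}{d}\)=\(\frac{2}{d}\)$, in each case reading off the dependence on $n\bmod 8$ that distinguishes parts (a), (b) and the first formulas of (c).

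The main difficulty is organizational rather than conceptual, and lies in the boundary regimes $\ell=k$ and especially $\ell=k-1$, where the residual phase genuinely varies with $r\bmod 8$ and must be paired against both $\Delta_r^{c}$ and $\(\frac{2}{r}\)^{k+2}$ while tracking the parity of $k$ and the factor $i^{c}$ coming from $\Delta_c^{-1}\Delta_r^{c}$. Verifying the surviving normalizations there, in particular the factor $\frac{1+i}{\sqrt{2}}$ in case (c)(4) (which appears because a single residue class modulo $8$ survives with a $\chi_8$-twist), is where care is needed. I would manage this by tabulating the eight values of $\Delta_c^{-1}\Delta_r^{c}\(\frac{2}{r}\)^{k+2}$ indexed by $r\bmod 8$ and $k\bmod 2$, and then reading every case of the lemma off that single table.
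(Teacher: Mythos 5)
Your proposal is correct and follows essentially the same route as the paper: the paper's identity \eqref{eq:KlooId} is exactly your factorization of the Kloosterman sum into a four-term sum over $r \bmod 8$ times a geometric sum over $t \bmod 2^{k-1}$ (the paper writes the latter factor as $\sum_{d \equiv 1 (\mod 8)} e(n\bar{c}d/2^{k+2})$ and kills it for $\ell \le k-2$ by the same roots-of-unity argument you invoke), and it likewise disposes of the moduli $4$ and $8$ by a direct finite check justified by periodicity of the summand in $c$ and $n$. The only cosmetic difference is that the paper reaches the characters $\chi_4(c)$, $\chi_4(n'c)$, $\chi_8(n'c)$ via the identity $\Delta_c^{-1}(1+i^c)=(1+i)\chi_4(c)$ and a small case table in the $\ell=k-1$ regime, rather than your single eight-entry table indexed by $r \bmod 8$ and the parity of $k$.
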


\begin{proof}
Observe that for $k \geq 0$ we have
\begin{equation}
\Delta_{c}^{-1} K_{-c}(n \bar{c};2^{k+2}) = \Delta_{b}^{-1} K_{-b}(m \bar{b};2^{k+2})
\end{equation}
if $c \equiv b (\mod 2^{k+2})$ and $n \equiv m (\mod 2^{k+2})$.
Thus the evaluation of $\Delta_c^{-1} K_{-c}(n \bar{c};4)$ and $\Delta_c^{-1} K_{-c}(n \bar{c};8)$ in parts (a), (b), and (c) follows from 
a finite number of computations which are easy to perform. The evaluation of  $\Delta_c^{-1} K_{-c}(n \bar{c};2^{k+2})$ for $k \geq 2$ in
parts (a) and (b) will follow from our proof of part (c.5).

If $k \geq 1$ then
\begin{align*}
&K_{-c}(n\bar{c};2^{k+2})\\
&= \sum_{d (\mod 2^{k+2})} \Delta_d^{c} \(\frac{2^{k+2}}{d}\) e\(\frac{n\bar{c}d}{2^{k+2}}\) =\sum_{\substack{d (\mod 2^{k+2})\\d \text{ odd}}} \Delta_d^{c} \(\frac{2^{k+2}}{d}\) e\(\frac{n\bar{c}d}{2^{k+2}}\)\notag\\
&=\sum_{\substack{d (\mod 2^{k+2})\\ d \equiv 1(\mod 8)}} e\(\frac{n\bar{c}d}{2^{k+2}}\)+ (-1)^k i^{c} \sum_{\substack{d (\mod 2^{k+2})\\ d \equiv 3(\mod 8)}} e\(\frac{n\bar{c}d}{2^{k+2}}\)\notag\\
&\hh +(-1)^k\sum_{\substack{d (\mod 2^{k+2})\\ d \equiv 5(\mod 8)}} e\(\frac{n\bar{c}d}{2^{k+2}}\) + i^{c} \sum_{\substack{d (\mod 2^{k+2})\\ d \equiv 7(\mod 8)}} e\(\frac{n\bar{c}d}{2^{k+2}}\)\notag\\
&=\sum_{\substack{d (\mod 2^{k+2})\\ d \equiv 1(\mod 8)}} e\(\frac{n\bar{c}d}{2^{k+2}}\)+ (-1)^k i^{c} \sum_{\substack{d (\mod 2^{k+2})\\ d \equiv 1(\mod 8)}} e\(\frac{n\bar{c}(d+2)}{2^{k+2}}\)\notag\\
&\hh +(-1)^k\sum_{\substack{d (\mod 2^{k+2})\\ d \equiv 1(\mod 8)}} e\(\frac{n\bar{c}(d+4)}{2^{k+2}}\) + i^{c} \sum_{\substack{d (\mod 2^{k+2})\\ d \equiv 1(\mod 8)}} e\(\frac{n\bar{c}(d+6)}{2^{k+2}}\)\notag\\
&=\( 1 + (-1)^k i^{c} e\( \frac{2n \bar{c}}{2^{k+2}} \) + (-1)^k e\( \frac{4 n \bar{c}}{2^{k+2}} \) + i^{c} e\( \frac{6 n \bar{c}}{2^{k+2}} \) \) \notag\\
&\hhh \cdot \sum_{\substack{d (\mod 2^{k+2})\\ d \equiv 1(\mod 8)}} e\(\frac{n\bar{c}d}{2^{k+2}}\).\notag
\end{align*}
Since we can write $n = 2^\ell n'$ where $\gcd(2,n')=1$, this equation can also be written as
\begin{align}
\label{eq:KlooId}
&K_{-c}(n\bar{c};2^{k+2})\\
&=\(1 + (-1)^k i^c e\(\frac{2^{\ell+1}n'\bar{c}}{2^{k+2}}\) + (-1)^k e\(\frac{2^{\ell+2} n' \bar{c}}{2^{k+2}}\) + i^c e\(\frac{2^{\ell+1} \cdot 3 n' \bar{c}}{2^{k+2}}\) \) \notag \\
&\hh \cdot \sum_{\substack{d (\mod 2^{k+2})\\ d \equiv 1(\mod 8)}} e\(\frac{2^\ell n' \bar{c}d}{2^{k+2}}\).\notag
\end{align}
Along with this equation, we shall often use the identity $\Delta_{c}^{-1}(1+i^c) = (1+i) \chi_4(c)$ in
evaluating $K_{-c}(n\bar{c};2^{k+2})$.

Suppose that $k \geq 2$.
\begin{itemize}
	
\item If $\ell \geq k+2$ then by \eqref{eq:KlooId} we have
\begin{align*}
&\Delta_c^{-1} K_{-c}(n \bar{c};2^{k+2}) = \Delta_c^{-1} (1 + (-1)^k i^c + (-1)^k + i^c) 2^{k-1}\\
&=\begin{cases} \Delta_c^{-1} (1+i^{c}) 2^k & \text{if } k \text{ even} \\ 0 &\text{if } k \text{ odd} \end{cases} \h =\begin{cases} (1+i) 2^k \chi_4(c) &\text{if } k \text{ even} \\ 0 &\text{if } k \text{ odd}.\end{cases}\notag
\end{align*}
This proves part (c.1).

\item If $\ell = k+1$ then by \eqref{eq:KlooId} we have
\begin{align*}
&\Delta_c^{-1} K_{-c}(n\bar{c};2^{k+2})\\
&= \Delta_c^{-1} (1 + (-1)^k i^c + (-1)^k + i^c) \sum_{\substack{d (\mod 2^{k+2})\\ d \equiv 1(\mod 8)}} e\(\frac{n' \bar{c}d}{2}\)\notag\\
&=\begin{cases} -\Delta_c^{-1} (1+i^{c}) 2^k &\text{if } k \text{ even} \\ 0 &\text{if } k \text{ odd} \end{cases} \h =\begin{cases} -(1+i)  2^k \chi_4(c) &\text{if } k \text{ even} \\ 0 &\text{if } k \text{ odd}. \end{cases}\notag
\end{align*}
This proves part (c.2).

\item If $\ell = k$ then by \eqref{eq:KlooId} we have
\begin{align*}
&\Delta_c^{-1} K_{-c}(n\bar{c};2^{k+2})\\
&= \Delta_c^{-1} (1 - (-1)^k i^c + (-1)^k - i^c) \sum_{\substack{d (\mod 2^{k+2})\\ d \equiv 1(\mod 8)}} e\(\frac{n' \bar{c}d}{4}\).\notag
\end{align*}
Observe
\begin{equation*}
\Delta_c^{-1}(1-(-1)^k i^c + (-1)^k - i^c) = \begin{cases}2(1-i) &\text{if } k \text{ even}\\ 0 &\text{if } k \text{ odd}, \end{cases}
\end{equation*}
since $\Delta_c^{-1}(1-i^c) = 1-i$ regardless of the value of $c$.
Also observe
\begin{align*}
&\sum_{\substack{d (\mod 2^{k+2})\\ d \equiv 1(\mod 8)}} e\(\frac{n' \bar{c}d}{4}\)=\begin{cases}2^{k-1} i &\text{if } c \equiv n' \equiv 1 (\mod 4)\\ -2^{k-1} i &\text{if } -c \equiv n' \equiv 1 (\mod 4)\\ -2^{k-1} i &\text{if } c \equiv -n' \equiv 1 (\mod 4)\\ 2^{k-1} i &\text{if } c \equiv n' \equiv 3 (\mod 4)\end{cases}\\
&= i 2^{k-1} \chi_4(n'c).\notag
\end{align*}
Thus for $k$ even,
\begin{equation*}
\Delta_c^{-1} K_{-c}(n\bar{c};2^{k+2}) = (1+i) 2^k \chi_4(n'c).
\end{equation*}
This proves part (c.3)

\item If $\ell = k-1$ then by \eqref{eq:KlooId} we have
\begin{align*}
&K_{-c}(n\bar{c};2^{k+2})\\
&= \(1 + (-1)^k i^c e\(\frac{n'\bar{c}}{4}\) + (-1)^k e\(\frac{n'\bar{c}}{2}\) + i^{c}e\(\frac{3n'\bar{c}}{4}\)\)\notag\\
&\hhh \cdot \sum_{\substack{d(\mod 2^{k+2})\\d \equiv 1 (\mod 8)}} e\(\frac{n'\bar{c}d}{8}\)\notag\\
&= \(1 + (-1)^k i^c e\(\frac{n'\bar{c}}{4}\) + (-1)^k e\(\frac{n'\bar{c}}{2}\) + i^{c}e\(\frac{3n'\bar{c}}{4}\)\)\notag\\
&\hhh \cdot \sum_{\substack{d(\mod 2^{k+2})\\d \equiv 1 (\mod 8)}}e\(\frac{n'\bar{c}}{8}\)^d\notag\\
&= 2^{k-1} \(1 + (-1)^k i^c e\(\frac{n'\bar{c}}{4}\) + (-1)^k e\(\frac{n'\bar{c}}{2}\) + i^{c}e\(\frac{3n'\bar{c}}{4}\)\)\\
&\hhh \cdot e\(\frac{n'\bar{c}}{8}\)\notag\\
&= 2^{k-1} \(1 + (-1)^k i^{c+n'\bar{c}} - (-1)^k + i^{c+3n'\bar{c}}\) e\(\frac{n'\bar{c}}{8}\).\notag
\end{align*}
If $n'\bar{c} \equiv 1 (\mod 4)$ then
\begin{equation*}
i^{c+3n'\bar{c}} = i^{c+3} = -i^{c+1}=-i^{c+n'\bar{c}}.
\end{equation*}
If $n'\bar{c} \equiv 3 (\mod 4)$ then
\begin{equation*}
i^{c+3n'\bar{c}}=i^{c+1}=-i^{c+3}=-i^{c+n'\bar{c}}.
\end{equation*}
Therefore, regardless of the values of $n'$ and $c$, we have
\begin{align*}
&1+(-1)^k i^{c+n'\bar{c}}-(-1)^k+i^{c+3n'\bar{c}}\\
&= 1 + (-1)^ki^{c+n'\bar{c}}-(-1)^k-i^{c+n'\bar{c}}\notag\\
&= \begin{cases}0 &\text{if } k \text{ even}\\ 2(1-i^{c+n'\bar{c}}), &\text{if } k \text{ odd}.\end{cases}\notag
\end{align*}
Thus $K_{-c}(n\bar{c};2^{k+2})=0$ if $k$ is even, and if $k$ is odd then
\begin{equation*}
K_{-c}(n\bar{c};2^{k+2})=2^k (1-i^{c+n'\bar{c}})e\(\frac{n'\bar{c}}{8}\).
\end{equation*}
If $n' \equiv 3 (\mod 4)$ then $1-i^{c+n'\bar{c}}=0$, and hence $K_{-c}(n\bar{c};2^{k+2})=0$.
Thus it remains to evaluate $K_{-c}(n\bar{c};2^{k+2})$ for $n' \equiv 1 (\mod 4)$.
Towards this end, observe
\begin{equation*}
(1-i^{c+n'\bar{c}})e\(\frac{n'\bar{c}}{8}\) = (1-i^{c'+n''\bar{c'}})e\(\frac{n''\bar{c'}}{8}\)
\end{equation*}
if $c \equiv c' (\mod 8)$ and $n' \equiv n'' (\mod 8)$.
In light of this, we give the following table for values of 
$\Delta_c^{-1} K_{-c}(n \bar{c};2^{k+2}) = \Delta_c^{-1} 2^k (1-i^{c+n'\bar{c}})e\(\frac{n'\bar{c}}{8}\)$:
\begin{center}
\begin{tabular}{|c|c|c|c|c|}
\hline
$n'\bs c$ & $1$ & $3$ & $5$ & $7$ \\ \hline
$1$ & $\frac{1+i}{\sqrt{2}} 2^{k+1}$ & $\frac{1+i}{\sqrt{2}} 2^{k+1}$ & $-\frac{1+i}{\sqrt{2}} 2^{k+1}$ & $-\frac{1+i}{\sqrt{2}} 2^{k+1}$ \\ \hline
$5$ & $-\frac{1+i}{\sqrt{2}} 2^{k+1}$ & $-\frac{1+i}{\sqrt{2}} 2^{k+1}$  & $\frac{1+i}{\sqrt{2}} 2^{k+1}$ & $\frac{1+i}{\sqrt{2}} 2^{k+1}$ \\ \hline
\end{tabular}
\end{center}
From this we see
\begin{equation*}
\Delta_c^{-1}K_{-c}(n\bar{c};2^{k+2}) = \frac{1+i}{\sqrt{2}} 2^{k+1} \chi_8(n' c).
\end{equation*}
This proves part (c.4).

\item If $0 \leq \ell \leq k-2$ then
\begin{equation*}
\sum_{\substack{d (\mod 2^{k+2})\\ d \equiv 1 (\mod 8)}} e\(\frac{2^\ell n' \bar{c} d}{2^{k+2}}\) = \sum_{\substack{d (\mod 2^{k+2})\\ d \equiv 1 (\mod 8)}} e\(\frac{n' \bar{c}}{2^{k-\ell+2}}\)^d.
\end{equation*}
If $\xi = e\(\frac{1}{2^{k-\ell+2}}\)$ and $m$ is an odd integer then since $1 \leq k-\ell-1$, we have that $\xi^{8 m} \neq 1$.
Furthermore, since
\begin{equation*}
\xi^{8m}\(\sum_{\substack{d (2^{k+2})\\d \equiv 1 (\mod 8)}} \xi^{m d}\) = \sum_{\substack{d (\mod 2^{k+2})\\d \equiv 1 (\mod 8)}} \xi^{m(d+8)} = \sum_{\substack{d (\mod 2^{k+2})\\ d \equiv 1 (\mod 8)}} \xi^{m d},
\end{equation*}
it follows
\begin{equation*}
\sum_{\substack{d (\mod 2^{k+2})\\ d \equiv 1 (\mod 8)}} e\(\frac{2^{\ell}n' \bar{c}d}{2^{k+2}}\) = 0.
\end{equation*}
Hence by \eqref{eq:KlooId}, we have that $\Delta_c^{-1} K_{-c}(n'\bar{c'};2^{k+2}) = 0$.
This proves part (c.5).
Since we have allowed for $\ell = 0$, we have also proven the remaining statements of parts (a) and (b) for $k\geq 2$.\qedhere
\end{itemize}
\end{proof}

\section{The Meromorphic Continuation of Fourier Coefficients}
\label{sec:MeroContFourCoeff}

In this section, we further simplify the Fourier coefficients $a_{\epsilon,\nu}(n)$ (given in \eqref{eq:EisenInfFourCoeff}).
By doing so, we shall see that these Fourier coefficients have meromorphic continuation to $\C$.

Let
\begin{align}
\label{eq:scGdef}
\scG_r(\epsilon,n,2^k;\nu) &= \(1-\(\frac{\epsilon n}{2}\) 2^{-\nu-\frac{1}{2}}\) \prod_{\substack{p \text{ odd prime} \\ p|n}} \(1-p^{-2\nu-1}\)^{-1}\\
&\hhh \cdot \sum_{j \in \Z_{\geq 0}} \chi_r(p^j) \scC_{-\epsilon}\(\Delta_{p^j} G(\epsilon n \bar{2^k};p^j)\) (p^j)^{-\nu-1},\notag
\end{align}
and
\begin{equation}
L\(\nu,\(\frac{n}{\cdot}\)\) = \sum_{d \in \Z_{>0}} \(\frac{n}{d}\) d^{-\nu},
\end{equation}
where $r=4, 8$, $\scC_\epsilon$ is defined in \eqref{eq:scCdef}, 
$\chi_r$ is defined in \eqref{eq:chiDef},
and $\(\frac{\cdot}{\cdot}\)$ is the Kronecker symbol
(whose properties are stated in Proposition \ref{prop:KronProps}).
We assume throughout that $n \neq 0$. 
Recall that $L\(\nu,\(\frac{n}{\cdot}\)\)$ converges for $\Re(\nu)>1$.
We will establish the same domain of convergence for $\scG_4$ and $\scG_8$ momentarily.
In Proposition \ref{prop:SimpFourCoeff}, we will see that 
$a_{\epsilon,\nu}(n)$ can be expressed in terms of $\scG_4$, $\scG_8$, and $L\(\nu,\(\frac{n}{\cdot}\)\)$.
Also in Proposition \ref{prop:SimpFourCoeff}, we will show that
\begin{align}
\label{eq:ComplSer}
&\sum_{\substack{c' \in \Z_{>0} \\ c' \text{ odd}}} \chi_r(c') \scC_{-\epsilon}\(\Delta_{c'} G(\epsilon n \bar{2^k};c')\) (c')^{-\nu-1}
\end{align}
converges absolutely for $\Re(\nu)>1$, where either $r=4$ and $k$ is even, or $r=8$ and $k$ is odd.
But since we have not yet proved this to be the case, the following lemma is given as a conditional statement.

\begin{lem} If \eqref{eq:ComplSer} converges absolutely for $\Re(\nu) > 1$,
then $\scG_4$ and $\scG_8$ converge absolutely for $\Re(\nu)>1$, and
\label{lem:serToLFunc}
\begin{align*}
&\zeta_2(2\nu+1) \sum_{\substack{c' \in \Z_{>0} \\ c' odd}} \chi_r(c') \scC_{-\epsilon}\(\Delta_{c'} G(\epsilon n \bar{2^k};c')\) (c')^{-\nu-1}\\
&= L\(\nu+\frac{1}{2},\(\frac{\epsilon n}{\cdot}\)\) \scG_r(\epsilon, n, 2^k; \nu),
\end{align*}
for either $r=4$ and $k$ even, or $r=8$ and $k$ odd.
\end{lem}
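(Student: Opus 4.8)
The plan is to recognize the summand of \eqref{eq:ComplSer} as a multiplicative function of $c'$, expand the Dirichlet series as an Euler product over odd primes, and evaluate the local factors explicitly. I first dispose of the convergence claim, which does not actually require the hypothesis. Each local factor of $\scG_r$ attached to an odd prime $p \mid n$ is the series $\sum_{j \ge 0} \chi_r(p^j) \scC_{-\epsilon}\(\Delta_{p^j} G(\epsilon n \bar{2^k}; p^j)\)(p^j)^{-\nu-1}$; since $|\chi_r| \le 1$, $|\Delta_{p^j}| \le 1$, and trivially $|G(m;p^j)| \le p^j$, each term is bounded by $(p^j)^{-\Re(\nu)}$, so the series converges geometrically for $\Re(\nu) > 0$. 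As $n \neq 0$, the product over $p \mid n$ is finite, and the factors $1 - \(\frac{\epsilon n}{2}\) 2^{-\nu-1/2}$ and $(1-p^{-2\nu-1})^{-1}$ are harmless, so $\scG_r$ converges absolutely for $\Re(\nu) > 1$.

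For the identity, I would first verify that $c' \mapsto \chi_r(c') \scC_{-\epsilon}\(\Delta_{c'} G(\epsilon n \bar{2^k}; c')\)(c')^{-\nu-1}$ is multiplicative on odd $c'$. Indeed $\chi_r$ and $(c')^{-\nu-1}$ are completely multiplicative, $c' \mapsto \Delta_{c'} G(\epsilon n \bar{2^k}; c')$ is multiplicative by Lemma \ref{lem:GaussSums}(b), and $\scC_{-\epsilon}$ is either the identity or complex conjugation, hence a ring homomorphism that preserves multiplicativity. The dependence of the argument $\epsilon n \bar{2^k}$ on $c'$ is harmless: $\bar{2^k}$ is an inverse of $2^k$ modulo $c'$, which reduces by the Chinese Remainder Theorem to inverses modulo each coprime factor, and $G(m;c')$ depends only on $m$ modulo $c'$, so each local value $f(p^j)$ is intrinsic to $p^j$. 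Granting the assumed absolute convergence of \eqref{eq:ComplSer} (abbreviate this series $S$), it factors as $S = \prod_{p \text{ odd}} E_p$ with $E_p = \sum_{j \ge 0} \chi_r(p^j) \scC_{-\epsilon}\(\Delta_{p^j} G(\epsilon n \bar{2^k}; p^j)\)(p^j)^{-\nu-1}$.

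Next I would split the product into $p \mid n$ and $p \nmid n$. For odd $p \nmid n$ the argument is coprime to $p$, so Lemma \ref{lem:GaussSums}(d) with $\ell = 0$ kills the terms $j \ge 2$, the $j = 0$ term is $1$, and Lemma \ref{lem:GaussSums}(a) evaluates the $j = 1$ term; using $\Delta_p^2 = \chi_4(p)$, $\(\frac{\bar{2^k}}{p}\) = \(\frac{2}{p}\)^k$, and the fact that the argument of $\scC_{-\epsilon}$ is real, this gives $E_p = 1 + \chi_r(p)\chi_4(p)\(\frac{2}{p}\)^k \(\frac{\epsilon n}{p}\) p^{-\nu-1/2}$. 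The crux is the sign identity $\chi_r(p)\chi_4(p)\(\frac{2}{p}\)^k = 1$ for all odd $p$ in the two coupled regimes: for $r = 4$ and $k$ even it reads $\chi_4(p)^2 \(\frac{2}{p}\)^k = 1$, while for $r = 8$ and $k$ odd it is $\chi_8(p)\chi_4(p)\(\frac{2}{p}\) = 1$, which holds because $\chi_8(p) = \chi_4(p)\(\frac{2}{p}\)$ for odd $p$ (a consequence of Proposition \ref{prop:KronProps}(i)). With the sign equal to $1$ and $\(\frac{\epsilon n}{p}\)^2 = 1$, one obtains $(1-p^{-2\nu-1})^{-1} E_p = (1 - \(\frac{\epsilon n}{p}\) p^{-\nu-1/2})^{-1}$, the $p$-th Euler factor of $L\(\nu+\frac12, \(\frac{\epsilon n}{\cdot}\)\)$.

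Finally I would recombine, writing $\zeta_2(2\nu+1) = \prod_{p \text{ odd}}(1-p^{-2\nu-1})^{-1}$. The contributions over $p \mid n$ reproduce $\scG_r$ divided by the prefactor $1 - \(\frac{\epsilon n}{2}\) 2^{-\nu-1/2}$, while those over $p \nmid n$ assemble into $\prod_{p \nmid n \text{ odd}} (1 - \(\frac{\epsilon n}{p}\) p^{-\nu-1/2})^{-1}$. Since the odd primes $p \mid n$ give trivial $L$-factors ($\(\frac{\epsilon n}{p}\) = 0$) and the $p = 2$ Euler factor of $L$ is $(1 - \(\frac{\epsilon n}{2}\) 2^{-\nu-1/2})^{-1}$, this latter product equals $(1 - \(\frac{\epsilon n}{2}\) 2^{-\nu-1/2}) L\(\nu+\frac12, \(\frac{\epsilon n}{\cdot}\)\)$; substituting cancels the prefactor of $\scG_r$ and yields $\zeta_2(2\nu+1) S = L\(\nu+\frac12, \(\frac{\epsilon n}{\cdot}\)\) \scG_r$. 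The main obstacle is the sign bookkeeping: correctly computing $\Delta_p^2$, handling the inverse $\bar{2^k}$ inside the Kronecker symbol, and verifying $\chi_r(p)\chi_4(p)\(\frac{2}{p}\)^k = 1$ in each regime, together with matching the $p = 2$ Euler factor of $L$ to the prefactor of $\scG_r$ so that the cancellation is exact.
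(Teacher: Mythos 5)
Your proof is correct and follows essentially the same route as the paper's: establish multiplicativity of the summand via Lemma \ref{lem:GaussSums}(b), factor into an Euler product over odd primes, evaluate the local factors at $p\nmid n$ using Lemma \ref{lem:GaussSums}(a,d) together with the sign identity $\chi_r(p)\Delta_p^2\(\frac{2^k}{p}\)=1$ in the two coupled regimes, and recombine with $\zeta_2(2\nu+1)$ and the Euler product of $L\(\nu+\tfrac12,\(\frac{\epsilon n}{\cdot}\)\)$ exactly as in \eqref{eq:DirZetaId}. The only (harmless) deviation is that you prove the absolute convergence of $\scG_r$ unconditionally from the trivial bound $|G(m;c)|\leq c$, whereas the paper deduces it from the assumed convergence of \eqref{eq:ComplSer}.
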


\begin{proof}
Throughout the proof, suppose $\Re(\nu)>1$.
By Lemma \ref{lem:GaussSums}(b), it follows that $\chi_r(c') \scC_{-\epsilon}\(\Delta_{c'} G(\epsilon n \bar{2^k};c')\)$ is multiplicative.
Thus
\begin{align*}
&\zeta_2(2\nu+1) \sum_{\substack{c' \in \Z_{>0} \\ c' \text{ odd}}} \chi_r(c') \scC_{-\epsilon}\(\Delta_{c'} G(\epsilon n \bar{2^k};c')\) (c')^{-\nu-1} \\
&=\zeta_2(2\nu+1) \prod_{p \text{ odd prime}} \sum_{j \in \Z_{\geq 0}} \chi_r(p^j) \scC_{-\epsilon}\(\Delta_{p^j} G(\epsilon n \bar{2^k};p^j)\) (p^j)^{-\nu-1}.\notag
\end{align*}
By Lemma \ref{lem:GaussSums}(d), if $p \nmid n$ then $\chi_r(p^j) \scC_{-\epsilon}\(\Delta_{p^j} G(\epsilon n \bar{2^k};p^j)\) (p^j)^{-\nu-1} = 0$
for $j \geq 2$.
By Lemma \ref{lem:GaussSums}(a) and \eqref{eq:WeirdBarCondition}, if $p \nmid n$ then
\begin{equation*}
\scC_{-\epsilon}\(\Delta_{p} G(\epsilon n \bar{2^k};p)\) p^{-\nu-1} = \scC_{-\epsilon}\(\Delta_{p}^2 \(\frac{\epsilon n 2^k}{p}\)\) p^{-\nu-\frac{1}{2}}.
\end{equation*}
Therefore
\begin{align}
\label{eq:ComplSumStep}
&\zeta_2(2\nu+1) \sum_{\substack{c' \in \Z_{>0} \\ c' \text{ odd}}} \chi_r(c') \scC_{-\epsilon}\(\Delta_{c'} G(\epsilon n \bar{2^k};c')\) (c')^{-\nu-1} \\
&=\zeta_2(2\nu+1) \prod_{\substack{p \text{ odd prime}\\ p \nmid n}} \(1+ \chi_r(p) \scC_{-\epsilon}\(\Delta_p^2 \(\frac{\epsilon n 2^k}{p}\)\) p^{-\nu-\frac{1}{2}}\)\notag\\ 
&\hhh \cdot \prod_{\substack{p \text{ odd prime} \\ p|n}} \sum_{j \in \Z_{\geq 0}} \chi_r(p^j) \scC_{-\epsilon} \(\Delta_{p^j} G(\epsilon n \bar{2^k};p^j)\) (p^j)^{-\nu-1}.\notag
\end{align}
In the case where $r = 4$ and $k$ is even, it follows that
$\chi_4(p) \Delta_p^2 = 1$ for all odd primes $p$, and that $\(\frac{2^k}{p}\)=1$.
Thus by \eqref{eq:ComplSumStep},
\begin{align*}
&\zeta_2(2\nu+1) \sum_{\substack{c' \in \Z_{>0} \\ c' odd}} \chi_4(c') \scC_{-\epsilon}\(\Delta_{c'} G(\epsilon n \bar{2^k};c')\) (c')^{-\nu-1} \\
&=\zeta_2(2\nu+1) \prod_{\substack{p \text{ odd prime}\\ p \nmid n}} \(1+\(\frac{\epsilon n}{p}\)p^{-\nu-\frac{1}{2}}\)\notag\\
&\hhh \cdot \prod_{\substack{p \text{ odd prime} \\ p|n}} \sum_{j \in \Z_{\geq 0}} \chi_4(p^j) \scC_{-\epsilon}\(\Delta_{p^j} G(\epsilon n \bar{2^k};p^j)\) (p^j)^{-\nu-1}.\notag
\end{align*}
Since  for $p \nmid n$,
\begin{equation}
\label{eq:DirZetaId}
\(1+\(\frac{\epsilon n}{p}\)p^{-\nu-\frac{1}{2}}\) = \(1-p^{-2\nu-1}\) \cdot \(1-\(\frac{\epsilon n}{p}\)p^{-\nu-\frac{1}{2}}\)^{-1},
\end{equation}
and since 
\begin{equation}
\label{eq:zeta2EulerProd}
\zeta_2(2\nu+1) = \prod_{p \text{ odd prime}} \(1-p^{-2\nu-1}\)^{-1},
\end{equation}
it follows that
\begin{align*}
&\zeta_2(2\nu+1) \sum_{\substack{c' \in \Z_{>0} \\ c' odd}} \chi_4(c') \scC_{-\epsilon}\(\Delta_{c'} G(\epsilon n \bar{2^k};c')\) (c')^{-\nu-1} \\
&= \prod_{\substack{p \text{ odd prime}\\ p \mid n}} \(1-p^{-2\nu-1}\)^{-1} \prod_{\substack{p \text{ odd prime}\\ p \nmid n}} \(1-\(\frac{\epsilon n}{p}\)p^{-\nu-\frac{1}{2}}\)^{-1}\notag\\
&\hhh \cdot \prod_{\substack{p \text{ odd prime} \\ p|n}} \sum_{j \in \Z_{\geq 0}} \chi_4(p^j) \scC_{-\epsilon}\(\Delta_{p^j} G(\epsilon n \bar{2^k};p^j)\) (p^j)^{-\nu-1}\notag\\
&=L\(\nu+\frac{1}{2},\(\frac{\epsilon n}{\cdot}\)\) \(1- \(\frac{\epsilon n}{2}\)2^{-\nu-\frac{1}{2}}\) \prod_{\substack{p \text{ odd prime}\\ p \mid n}} \(1-p^{-2\nu-1}\)^{-1}\notag\\
&\hhh \cdot \prod_{\substack{p \text{ odd prime} \\ p|n}} \sum_{j \in \Z_{\geq 0}} \chi_4(p^j) \scC_{-\epsilon}\(\Delta_{p^j} G(\epsilon n \bar{2^k};p^j)\) (p^j)^{-\nu-1}.\notag
\end{align*}
This proves the formula stated in the lemma for $r=4$ and $k$ even.
We also see that if \eqref{eq:ComplSer} converges absolutely for $\Re(\nu)>1$ then $\scG_4(\epsilon, n, 2^k; \nu)$ converges
absolutely for $\Re(\nu)>1$ and $k$ even.

For the case of $r=8$ and $k$ odd, observe that $\chi_8(p) \Delta_p^2 \(\frac{2^k}{p}\) = \chi_8(p) \Delta_p^2 \(\frac{2}{p}\) = 1$ for all odd primes $p$;
this follows from the fact that $\(\frac{2^k}{p}\) = \(\frac{2}{p}\)$ since $k$ is odd, and the fact that
\begin{equation*}
\(\frac{2}{p}\) = \begin{cases}1 &\text{if } p \equiv 1,7 (\mod 8)\\ -1 &\text{if } p \equiv 3,5 (\mod 8).\end{cases}
\end{equation*}
Thus by \eqref{eq:ComplSumStep},
\begin{align*}
&\zeta_2(2\nu+1) \sum_{\substack{c' \in \Z_{>0} \\ c' \text{ odd}}} \chi_8(c') \scC_{-\epsilon}\(\Delta_{c'} G(\epsilon n \bar{2^k};c')\) (c')^{-\nu-1} \\
&=\zeta_2(2\nu+1) \prod_{\substack{p \text{ odd prime}\\ p \nmid n}} \(1+\(\frac{\epsilon n}{p}\)p^{-\nu-\frac{1}{2}}\)\notag\\
&\hhh \cdot \prod_{\substack{p \text{ odd prime} \\ p|n}} \sum_{j \in \Z_{\geq 0}} \chi_8(p^j) \scC_{-\epsilon}\( \Delta_{p^j} G(\epsilon n \bar{2^k};p^j) \) (p^j)^{-\nu-1}.\notag
\end{align*}
The portion of the proof for $r=4$ and $k$ even, subsequent to \eqref{eq:DirZetaId}, can be reapplied to
complete the proof for $r=8$ and $k$ odd.
\end{proof}

In \eqref{eq:EisenInfFourCoeff}, we showed for $\Re(\nu)>1$,
\begin{align*}
&a_{\epsilon,\nu}(n)\\
&= \epsilon i 4^{-\nu-1} \zeta_2(2\nu+1) \sum_{\substack{c' \in \Z_{>0}\\c' \text{ odd}}} \sum_{k \in \Z_{\geq 0}} (2^k c')^{-\nu-1} \scC_{-\epsilon}\( K_{-c'}(\epsilon n \bar{c'};2^{k+2})G(\epsilon n\bar{2^{k+2}};c') \),
\end{align*}
where $a_{\epsilon,\nu}(n)$ is the $n$-th Fourier coefficient of $\(\tlE_{\epsilon,\nu}^{(\infty)}\)_0$. 
Recall that this series converges absolutely.
By utilizing Lemmas \ref{lem:KlooSums} and \ref{lem:serToLFunc}, we are able to prove the following more detailed formulas for $a_{\epsilon,\nu}(n)$
when $n \neq 0$.

\begin{prop}
For $\Re(\nu)>1$, \eqref{eq:ComplSer} converges absolutely.
Furthermore, for $\Re(\nu)>1$, we have the following formulas for the Fourier coefficients $a_{\epsilon,\nu}(n)$ of $\(\tlE_{\epsilon,\nu}^{(\infty)}\)_0\mathrm{:}$
\label{prop:SimpFourCoeff}

\begin{itemize}[leftmargin=.3in,font=\normalfont\textbf]

\item[(a)]  If $\epsilon n \equiv 3 (\mod 4)$ then
\begin{equation*}
a_{\epsilon,\nu}(n) = -(1+\epsilon i) 4^{-\nu-1} L\(\nu+\frac{1}{2},\(\frac{\epsilon n}{\cdot}\)\) \scG_4(\epsilon, n,4;\nu).
\end{equation*}

\item[(b)] If $\epsilon n \equiv 1 (\mod 4)$ then
\begin{align*}
&a_{\epsilon,\nu}(n) = (1 + \epsilon i) 4^{-\nu-1} L\(\nu+\frac{1}{2},\(\frac{\epsilon n}{\cdot}\)\) \scG_4(\epsilon, n,4;\nu)\\
&\hhh + (1 + \epsilon i) 8^{-\nu-\frac{1}{2}} L\(\nu+\frac{1}{2},\(\frac{\epsilon n}{\cdot}\)\) \scG_8(\epsilon, n,8;\nu).
\end{align*}

\item[(c)] If $n=2 n'$ where $\gcd(n',2)=1$ then
\begin{equation*}
a_{\epsilon,\nu}(n) = - (1+\epsilon i) 4^{-\nu-1} L\(\nu+\frac{1}{2},\(\frac{\epsilon n}{\cdot}\)\) \scG_4(\epsilon, n,4;\nu).
\end{equation*}

\item[(d)] If $n=2^\ell n'$ where $\ell > 1$ and $\gcd(n',2)=1$, then
\begin{align*}
&a_{\epsilon,\nu}(n) = (1+\epsilon i) \sum_{\substack{k=0\\ k \text{ even}}}^{\ell-2} 2^{k} \(2^{k+2}\)^{-\nu-1} L\(\nu+\frac{1}{2},\(\frac{\epsilon n}{\cdot}\)\) \scG_4(\epsilon, n,2^{k+2};\nu)\\
&\hh - \delta_{\ell \equiv 1 (2)} (1+\epsilon i) 2^{\ell-1} \(2^{\ell+1}\)^{-\nu-1} L\(\nu+\frac{1}{2},\(\frac{\epsilon n}{\cdot}\)\) \scG_4(\epsilon, n,2^{\ell+1};\nu) \\
&\hh + \delta_{\ell \equiv 0 (2)} (1+\epsilon i) 2^\ell \(2^{\ell+2}\)^{-\nu-1} \chi_4(\epsilon n') L\(\nu+\frac{1}{2},\(\frac{\epsilon n}{\cdot}\)\) \scG_4(\epsilon, n,2^{\ell+2};\nu) \\
&\hh + \delta_{\ell \equiv 0 (2)} \delta_{n' \equiv 1 (4)} (1+\epsilon i) 2^{\ell+\frac{3}{2}} \(2^{\ell+3}\)^{-\nu-1} \chi_8(\epsilon n') L\(\nu + \frac{1}{2},\(\frac{\epsilon n}{\cdot}\)\)\\
&\hh\hh \cdot \scG_8(\epsilon, n,2^{\ell+3};\nu),
\end{align*}
where $\delta_{\ell \equiv j(m)} = \begin{cases}1 &\text{if } \ell \equiv j (\mod m),\\ 0 &\text{otherwise},\end{cases}$ \h for  $j, \ell, m \in \Z$.
\end{itemize}
\end{prop}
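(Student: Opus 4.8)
The plan is to feed the explicit evaluations of Lemma~\ref{lem:KlooSums} together with the $L$-function identity of Lemma~\ref{lem:serToLFunc} into the absolutely convergent expansion \eqref{eq:EisenInfFourCoeff}, reorganizing the double sum over $(c',k)$ so that each inner $c'$-sum takes the shape \eqref{eq:ComplSer}. The first step is to peel $\Delta_{c'}$ off the Kloosterman sum by writing $K_{-c'}(\epsilon n\bar{c'};2^{k+2}) = \Delta_{c'}\bigl(\Delta_{c'}^{-1}K_{-c'}(\epsilon n\bar{c'};2^{k+2})\bigr)$. Since $\scC_{-\epsilon}$ is either the identity or complex conjugation, hence multiplicative, each summand of \eqref{eq:EisenInfFourCoeff} factors as $\scC_{-\epsilon}\bigl(\Delta_{c'}^{-1}K_{-c'}(\epsilon n\bar{c'};2^{k+2})\bigr)\,\scC_{-\epsilon}\bigl(\Delta_{c'}G(\epsilon n\bar{2^{k+2}};c')\bigr)$. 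By Lemma~\ref{lem:KlooSums} the first factor is, in every nonvanishing case, a constant (a power of $2$ times $(1+i)$ or $\tfrac{1+i}{\sqrt2}$) times $\chi_r(c')$ or $\chi_r(\epsilon n')\chi_r(c')$, with $r=4$ when the relevant $k$ is even and $r=8$ when it is odd, the extra factor $\chi_r(\epsilon n')$ splitting off by multiplicativity of $\chi_r$. Pulling these $c'$-independent constants out of the $c'$-sum leaves exactly $\sum_{c'}\chi_r(c')\scC_{-\epsilon}\bigl(\Delta_{c'}G(\epsilon n\bar{2^{k+2}};c')\bigr)(c')^{-\nu-1}$, which is \eqref{eq:ComplSer}; Lemma~\ref{lem:serToLFunc} then converts $\zeta_2(2\nu+1)$ times this into $L\bigl(\nu+\tfrac12,(\tfrac{\epsilon n}{\cdot})\bigr)\scG_r(\epsilon,n,2^{k+2};\nu)$. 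Throughout, the leading $\epsilon i$ of \eqref{eq:EisenInfFourCoeff} combines with the conjugation through the identity $\epsilon i\,\scC_{-\epsilon}(1+i)=1+\epsilon i$, which is precisely what produces the uniform prefactor $(1+\epsilon i)$ in all four parts.

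Before invoking Lemma~\ref{lem:serToLFunc} one must discharge its hypothesis, namely the absolute convergence of \eqref{eq:ComplSer} for $\Re(\nu)>1$, which is also the first assertion of the proposition. I would deduce this from the already-established absolute convergence of \eqref{eq:EisenInfFourCoeff}. Fix a single $k$ of the parity matching $r$ for which Lemma~\ref{lem:KlooSums} gives a nonzero constant multiple of $\chi_r(c')$ for $\Delta_{c'}^{-1}K_{-c'}(\epsilon n\bar{c'};2^{k+2})$; for $r=4$ one may always take $k=0$, while for $r=8$ a suitable odd $k$ is available whenever a $\scG_8$-term is present. The corresponding $c'$-slice of \eqref{eq:EisenInfFourCoeff} is absolutely convergent, and since $|\scC_{-\epsilon}(z)|=|z|$ and $|G(\epsilon n\bar{2^{k+2}};c')|$ is independent of $k$ (replacing $\bar{2^{k+2}}$ by another unit modulo $c'$ only multiplies $G$ by $\pm1$, as $G(mu;c')=(\tfrac{u}{c'})G(m;c')$), this convergence is exactly that of \eqref{eq:ComplSer}. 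This simultaneously yields the absolute convergence of $\scG_4$ and $\scG_8$ demanded by Lemma~\ref{lem:serToLFunc}.

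It then remains to run the case analysis, applying Lemma~\ref{lem:KlooSums} with its parameter $n$ set equal to $\epsilon n$; thus its $\ell$ is the $2$-adic valuation of $n$ and its odd part is $\epsilon n'$, which is the source of the twists $\chi_4(\epsilon n')$, $\chi_8(\epsilon n')$ in part~(d). When $\epsilon n\equiv\pm1\pmod4$ (parts (a),(b)) or $n=2n'$ (part (c)), one has $\ell\in\{0,1\}$, and Lemma~\ref{lem:KlooSums}(a,b,c) shows only $k=0$ survives (together with $k=1$ when $\epsilon n\equiv1\pmod4$, giving the $\scG_8$-term of (b)); substituting and applying the reduction above produces (a), (b), (c) at once. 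For part~(d), where $\ell>1$, every $k$ potentially contributes and one runs through subcases (c.1)--(c.5): the even $k$ with $0\le k\le\ell-2$ assemble into the first ($\chi_4$) sum via (c.1) and the $k=0$ clause; $k=\ell-1$ (via (c.2), nonzero only for $\ell$ odd) gives the second term; $k=\ell$ (via (c.3), nonzero only for $\ell$ even) gives the third term; $k=\ell+1$ (via (c.4), nonzero only for $\ell$ even and subject to the residue condition of (c.4) on the odd part $\epsilon n'$) gives the $\scG_8$-term; and all $k\ge\ell+2$ vanish by (c.5). The parity restrictions in (c.1)--(c.4) are exactly what force the $\delta_{\ell\equiv0(2)}$ and $\delta_{\ell\equiv1(2)}$ factors.

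The substantive content is entirely bookkeeping: the two lemmas render each individual evaluation routine, so the difficulty lies in organizing part~(d) correctly. The main obstacle I anticipate is matching, for each $k$, the power of $2$ emitted by $\Delta_{c'}^{-1}K_{-c'}$ against the normalizations $4^{-\nu-1}$ and $(2^k)^{-\nu-1}$ so as to land on the stated coefficients $2^k(2^{k+2})^{-\nu-1}$, $2^{\ell-1}(2^{\ell+1})^{-\nu-1}$, $2^\ell(2^{\ell+2})^{-\nu-1}$, and $2^{\ell+3/2}(2^{\ell+3})^{-\nu-1}$, while keeping the parities, the index ranges, and the $\epsilon$-twist in the characters mutually consistent. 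Once these alignments are verified, collecting the surviving terms gives the four formulas.
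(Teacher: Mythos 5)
Your proposal follows the paper's proof essentially verbatim: substitute the Kloosterman evaluations of Lemma \ref{lem:KlooSums} into the absolutely convergent double sum \eqref{eq:EisenInfFourCoeff}, pull the $c'$-independent constants out via the multiplicativity of $\scC_{-\epsilon}$ to expose series of the form \eqref{eq:ComplSer}, deduce their absolute convergence from that of $a_{\epsilon,\nu}(n)$, and then apply Lemma \ref{lem:serToLFunc}; your identification of which $k$ survive in each case, and of the parity conditions producing the $\delta_{\ell\equiv j(2)}$ factors in part (d), matches the paper exactly. Your convergence argument is in fact slightly more explicit than the paper's (isolating a single $k$-slice and noting $|G(\epsilon n\bar{2^{k+2}};c')|$ is independent of the unit), but this is a refinement of detail, not a different route.
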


\begin{proof}
Suppose $\epsilon n \equiv 3 (\mod 4)$.
By \eqref{eq:EisenInfFourCoeff} and Lemma \ref{lem:KlooSums}(b),
\begin{align*}
a_{\epsilon,\nu}(n) &=\epsilon i 4^{-\nu-1} \zeta_2(2\nu+1) \sum_{\substack{c' \in \Z_{>0}\\c' \text{ odd}}} \sum_{k \in \Z_{\geq 0}} (2^k c')^{-\nu-1}\\
&\hh\hh \cdot \scC_{-\epsilon}\( \Delta_{c'}^{-1}  K_{-c'}(\epsilon n \bar{c'};2^{k+2}) \Delta_{c'} G(\epsilon n\bar{2^{k+2}};c')\)\notag\\
&= \epsilon i 4^{-\nu-1} \zeta_2(2\nu+1) \sum_{\substack{c' \in \Z_{>0} \\ c' odd}} (c')^{-\nu-1} \scC_{-\epsilon}\( \Delta_{c'}^{-1} K_{-c'}(\epsilon n \bar{c'};4) \Delta_{c'} G(\epsilon n \bar{4};c') \)\notag\\
&= \epsilon i 4^{-\nu-1} \zeta_2(2\nu+1) \sum_{\substack{c' \in \Z_{>0} \\ c' odd}} (c')^{-\nu-1} \scC_{-\epsilon}\((-1-i) \chi_4(c') \) \scC_{-\epsilon}\(\Delta_{c'} G(\epsilon n \bar{4};c')\)\notag\\
&= \epsilon i 4^{-\nu-1} \zeta_2(2\nu+1) \sum_{\substack{c' \in \Z_{>0} \\ c' odd}} (c')^{-\nu-1} (-1 + \epsilon i) \chi_4(c') \scC_{-\epsilon}\( \Delta_{c'} G(\epsilon n \bar{4};c') \)\notag\\
&= -(1+\epsilon i)4^{-\nu-1} \zeta_2(2\nu+1) \sum_{\substack{c' \in \Z_{>0} \\ c' odd}} \chi_4(c') \scC_{-\epsilon}\( \Delta_{c'} G(\epsilon n \bar{4};c') \) (c')^{-\nu-1}.
\end{align*}
We now see that $a_{\epsilon,\nu}(n)$ has a series of the form \eqref{eq:ComplSer}.
Since $a_{\epsilon,\nu}(n)$ converges absolutely for $\Re(\nu)>1$, this series of the form \eqref{eq:ComplSer}
must also converges absolutely for $\Re(\nu)>1$. Part (a) then follows from Lemma \ref{lem:serToLFunc}.

Suppose $\epsilon n \equiv 1 (\mod 4)$.
By \eqref{eq:EisenInfFourCoeff} and Lemma \ref{lem:KlooSums}(a),
{\allowdisplaybreaks
\begin{align*}
&a_{\epsilon,\nu}(n)\\
&= \epsilon i 4^{-\nu-1} \zeta_2(2\nu+1) \sum_{\substack{c' \in \Z_{>0}\\c' \text{ odd}}} (c')^{-\nu-1} \scC_{-\epsilon}\( \Delta_{c'}^{-1} K_{-c'}(\epsilon n \bar{c'};4) \Delta_{c'} G(\epsilon n\bar{4};c') \)\\*
&\hh + \epsilon i 4^{-\nu-1} \zeta_2(2\nu+1) \sum_{\substack{c' \in \Z_{>0}\\c' \text{ odd}}} (2c')^{-\nu-1} \scC_{-\epsilon}\(\Delta_{c'}^{-1} K_{-c'}(\epsilon n \bar{c'};8) \Delta_{c'} G(\epsilon n\bar{8};c') \)\\
&= \epsilon i 4^{-\nu-1} \zeta_2(2\nu+1) \sum_{\substack{c' \in \Z_{>0}\\c' \text{ odd}}} (c')^{-\nu-1} \scC_{-\epsilon}\((1+i) \chi_4(c')\) \scC_{-\epsilon}\(\Delta_{c'} G(\epsilon n\bar{4};c')\) \\*
&\hh + \epsilon i 4^{-\nu-1} \zeta_2(2\nu+1) \sum_{\substack{c' \in \Z_{>0}\\c' \text{ odd}}} (2c')^{-\nu-1} \scC_{-\epsilon}\(2^{\frac{3}{2}}(1+i)\chi_8(c')\) \scC_{-\epsilon}\(\Delta_{c'} G(\epsilon n\bar{8};c')\)\\
&= \epsilon i 4^{-\nu-1} \zeta_2(2\nu+1) \sum_{\substack{c' \in \Z_{>0}\\c' \text{ odd}}} (c')^{-\nu-1} (1-\epsilon i) \chi_4(c') \scC_{-\epsilon}\(\Delta_{c'} G(\epsilon n\bar{4};c')\) \\*
&\hh + \epsilon i 4^{-\nu-1} \zeta_2(2\nu+1) \sum_{\substack{c' \in \Z_{>0}\\c' \text{ odd}}} (2c')^{-\nu-1} 2^{\frac{3}{2}}(1-\epsilon i)\chi_8(c') \scC_{-\epsilon}\(\Delta_{c'} G(\epsilon n\bar{8};c')\)\\
&= (1+\epsilon i)4^{-\nu-1} \zeta_2(2\nu+1) \sum_{\substack{c' \in \Z_{>0} \\ c' odd}} \chi_4(c') \scC_{-\epsilon}\(\Delta_{c'} G(\epsilon n \bar{4};c')\) (c')^{-\nu-1} \\*
&\hh + (1+\epsilon i) 2^{\frac{3}{2}} 8^{-\nu-1} \zeta_2(2\nu+1) \sum_{\substack{c' \in \Z_{>0}\\c' \text{ odd}}} \chi_8(c') \scC_{-\epsilon}\(\Delta_{c'} G(\epsilon n\bar{8};c')\) (c')^{-\nu-1}.
\end{align*}
}%
Once again, we see that $a_{\epsilon,\nu}(n)$ contains series of the form \eqref{eq:ComplSer}.
Since $a_{\epsilon,\nu}(n)$ converges absolutely for $\Re(\nu)>1$, it follows then that these series of the form \eqref{eq:ComplSer}
must also converges absolutely for $\Re(\nu)>1$. Part (b) then follows from Lemma \ref{lem:serToLFunc}.

Suppose $n = 2 n'$ with $\gcd(n',2)=1$.
By \eqref{eq:EisenInfFourCoeff} and Lemma \ref{lem:KlooSums}(c),
\begin{align*}
a_{\epsilon,\nu}(n) &= \epsilon i 4^{-\nu-1} \zeta_2(2\nu+1) \sum_{\substack{c' \in \Z_{>0}\\c' \text{ odd}}} (c')^{-\nu-1} \scC_{-\epsilon}\(\Delta_{c'}^{-1} K_{-c'}(\epsilon n \bar{c'};4) \Delta_{c'} G(\epsilon n\bar{4};c') \)\\
&= \epsilon i 4^{-\nu-1} \zeta_2(2\nu+1) \sum_{\substack{c' \in \Z_{>0}\\c' \text{ odd}}} (c')^{-\nu-1} \scC_{-\epsilon}\((-1-i) \chi_4(c')\) \scC_{-\epsilon}\(\Delta_{c'} G(\epsilon n\bar{4};c')\)\\
&= -(1+\epsilon i) 4^{-\nu-1} \zeta_2(2\nu+1) \sum_{\substack{c' \in \Z_{>0}\\c' \text{ odd}}} \chi_4(c') \scC_{-\epsilon}\(\Delta_{c'} G(\epsilon n\bar{4};c')\) (c')^{-\nu-1}.
\end{align*}
As was the case with parts (a) and (b), $a_{\epsilon,\nu}(n)$ consists of a series of the form \eqref{eq:ComplSer}.
Since $a_{\epsilon,\nu}(n)$ converges absolutely for $\Re(\nu)>1$, it follows then that this series of the form \eqref{eq:ComplSer}
must also converges absolutely for $\Re(\nu)>1$. Part (c) then follows from Lemma \ref{lem:serToLFunc}.

Suppose $n = 2^\ell n'$ with $\gcd(n',2)=1$ and $\ell > 1$.
By \eqref{eq:EisenInfFourCoeff} and Lemma \ref{lem:KlooSums}(c),
{\small \allowdisplaybreaks
\begin{align*}
a_{\epsilon,\nu}(n) &= \epsilon i 4^{-\nu-1} \zeta_2(2\nu+1) \sum_{\substack{c' \in \Z_{>0}\\c' \text{ odd}}} \sum_{k \in \Z_{\geq 0}} (2^k c')^{-\nu-1}\\*
&\hh\hh \cdot \scC_{-\epsilon}\( \Delta_{c'}^{-1} K_{-c'}(\epsilon n \bar{c'};2^{k+2}) \Delta_{c'} G(\epsilon n \bar{2^{k+2}};c') \)\notag\\
&= \epsilon i 4^{-\nu-1} \zeta_2(2\nu+1) \sum_{\substack{c' \in \Z_{>0}\\c' \text{ odd}}} (c')^{-\nu -1} \scC_{-\epsilon}\((1+i) \chi_4(c')\) \scC_{-\epsilon}\(\Delta_{c'} G(\epsilon n\bar{4};c')\)\notag\\*
&\hh + \epsilon i 4^{-\nu-1} \zeta_2(2\nu+1) \sum_{\substack{c' \in \Z_{>0}\\c' \text{ odd}}} \sum_{\substack{k=2\\ k \text{ even}}}^{\ell-2} (2^k c')^{-\nu-1} \scC_{-\epsilon}\( (1+i) \chi_4(c') 2^k\)\notag\\*
&\hh\hh\cdot \scC_{-\epsilon}\(\Delta_{c'} G(\epsilon n\bar{2^{k+2}};c') \)\notag\\*
&\hh +\delta_{\ell \equiv 1 (2)} \epsilon i 4^{-\nu-1} \zeta_2(2\nu+1) \sum_{\substack{c' \in \Z_{>0}\\c' \text{ odd}}} (2^{\ell-1} c')^{-\nu-1} \scC_{-\epsilon}\( (-1-i)  \chi_4(c') 2^{\ell-1}\)\\*
&\hh\hh \cdot \scC_{-\epsilon}\(\Delta_{c'} G(\epsilon n\bar{2^{\ell+1}};c') \)\notag\\*
&\hh +\delta_{\ell \equiv 0 (2)} \epsilon i 4^{-\nu-1} \zeta_2(2\nu+1) \sum_{\substack{c' \in \Z_{>0}\\c' \text{ odd}}} (2^\ell c')^{-\nu-1} \scC_{-\epsilon}\( (1+i)  \chi_4(\epsilon n'c') 2^\ell\)\notag\\*
&\hh\hh \cdot \scC_{-\epsilon}\(\Delta_{c'} G(\epsilon n\bar{2^{\ell+2}};c') \)\notag\\*
&\hh +\delta_{\ell \equiv 0 (2)} \delta_{n' \equiv 1 (4)} \epsilon i 4^{-\nu-1} \zeta_2(2\nu+1) \sum_{\substack{c' \in \Z_{>0}\\c' \text{ odd}}} (2^{\ell+1} c')^{-\nu-1}\\*
&\hh\hh \cdot \scC_{-\epsilon}\( \frac{1+i}{\sqrt{2}} 2^{\ell+2} \chi_8(\epsilon n'c')\) \scC_{-\epsilon}\(\Delta_{c'} G(\epsilon n \bar{2^{\ell+3}};c') \)\notag\\
&= (1+\epsilon i) 4^{-\nu-1} \zeta_2(2\nu+1) \sum_{\substack{c' \in \Z_{>0}\\c' \text{ odd}}} \chi_4(c') \scC_{-\epsilon}\( \Delta_{c'} G(\epsilon n\bar{4};c')\) (c')^{-\nu-1}\\*
&\hh + (1+\epsilon i) \sum_{\substack{k=2\\ k \text{ even}}}^{\ell-2} 2^k  \(2^{k+2}\)^{-\nu-1} \zeta_2(2\nu+1)\\*
&\hh\hh \cdot \sum_{\substack{c' \in \Z_{>0}\\c' \text{ odd}}} \chi_4(c') \scC_{-\epsilon}\( \Delta_{c'} G(\epsilon n\bar{2^{k+2}};c') \)  (c')^{-\nu-1}\\*
&\hh - \delta_{\ell \equiv 1 (2)} (1+\epsilon i)  2^{\ell-1} \(2^{\ell+1}\)^{-\nu-1} \zeta_2(2\nu+1)\\*
&\hh\hh \cdot \sum_{\substack{c' \in \Z_{>0}\\c' \text{ odd}}}  \chi_4(c') \scC_{-\epsilon}\( \Delta_{c'} G(\epsilon n\bar{2^{\ell+1}};c') \) (c')^{-\nu-1}\\*
&\hh + \delta_{\ell \equiv 0 (2)} (1+\epsilon i) 2^\ell \(2^{\ell+2}\)^{-\nu-1} \chi_4(\epsilon n') \zeta_2(2\nu+1)\\*
&\hh\hh \cdot \sum_{\substack{c' \in \Z_{>0}\\c' \text{ odd}}} \chi_4(c') \scC_{-\epsilon}\( \Delta_{c'} G(\epsilon n\bar{2^{\ell+2}};c') \) (c')^{-\nu-1}\\*
&\hh + \delta_{\ell \equiv 0 (2)} \delta_{n' \equiv 1 (4)} \frac{1+\epsilon i}{\sqrt{2}}  2^{\ell+2} \(2^{\ell+3}\)^{-\nu-1} \chi_8(\epsilon n') \zeta_2(2\nu+1)\\*
&\hh\hh \cdot \sum_{\substack{c' \in \Z_{>0}\\c' \text{ odd}}} \chi_8(c') \scC_{-\epsilon}\( \Delta_{c'} G(\epsilon n\bar{2^{\ell+3}};c') \) (c')^{-\nu-1}.
\end{align*}}%
Notice that $a_{\epsilon,\nu}(n)$ has several series of the form \eqref{eq:ComplSer}.
Since $a_{\epsilon,\nu}(n)$ converges absolutely for $\Re(\nu)>1$, it follows then that these series of the form \eqref{eq:ComplSer}
must also converges absolutely for $\Re(\nu)>1$. Part (d) then follows from Lemma \ref{lem:serToLFunc}.
\end{proof}

For $n \neq 0$, we wish to show that $a_{\epsilon,\nu}(n)$ has a meromorphic continuation to $\C$.  
Proposition \ref{prop:SimpFourCoeff} does not show this as yet, since $\scG_r(\epsilon,n,2^{k+2};\nu)$,
for $r=4,8$, is not seen from its definition (given in \eqref{eq:scGdef}) to have meromorphic continuation.
In what follows, we will analyze the product $L\(\nu+\frac{1}{2},\(\frac{\epsilon n}{\cdot}\)\)\scG_r(\epsilon,n,2^{k+2};\nu)$
for even $k$ when $r = 4$ and for odd $k$ when $r=8$.  We will find that such products have meromorphic continuation, and therefore
by Proposition \ref{prop:SimpFourCoeff}, we will have that $a_{\epsilon,\nu}(n)$ has a meromorphic continuation to $\C$ for $n \neq 0$.

To begin, we write 
\begin{equation}
\label{eq:sandtDef}
\epsilon n = s t \text{ where } s = \prod_{\substack{p^e || n\\ e > 0 \text{ even}}} p^e \text{ and } t = \epsilon \, \sgn(n) \prod_{\substack{p^e || n\\ e \text{ odd}}} p^e.
\end{equation}
For $\Re(\nu)>1$,
\begin{align*}
&L\(\nu + \frac{1}{2}, \(\frac{t}{\cdot}\)\) = \prod_{p \text{ prime}} \(1 - \(\frac{t}{p}\) p^{-\nu-\frac{1}{2}}\)^{-1}\\
&= \prod_{\substack{p \text{ prime}\\ p \mid s}} \(1 - \(\frac{t}{p}\) p^{-\nu-\frac{1}{2}}\)^{-1} \prod_{\substack{p \text{ prime} \\ p \nmid s}} \(1 - \(\frac{t}{p}\) p^{-\nu-\frac{1}{2}}\)^{-1}\\
&= \prod_{\substack{p \text{ prime}\\ p \mid s}} \(1 - \(\frac{t}{p}\) p^{-\nu-\frac{1}{2}}\)^{-1} L\(\nu + \frac{1}{2}, \(\frac{\epsilon n}{\cdot}\)\),
\end{align*}
since $\(\frac{\epsilon n}{p}\) = 0$ for $p|s$ and $\(\frac{\epsilon n}{p}\) = \(\frac{s t}{p}\) = \(\frac{t}{p}\)$
for $p \nmid s$.
Thus
\begin{equation}
\label{eq:LNonPrimtoPrim}
L\(\nu + \frac{1}{2}, \(\frac{\epsilon n}{\cdot}\)\) =  \prod_{\substack{p \text{ prime}\\ p \mid s}} \(1 - \(\frac{t}{p}\) p^{-\nu-\frac{1}{2}}\) L\(\nu + \frac{1}{2}, \(\frac{t}{\cdot}\)\).
\end{equation}
Therefore by \eqref{eq:LNonPrimtoPrim} and \eqref{eq:scGdef},
\begin{align}
\label{eq:Lnu1}
&L\(\nu+\frac{1}{2},\(\frac{\epsilon n}{\cdot}\)\) \scG_r(\epsilon, n,2^k;\nu)\\
&= \scp_{2,\epsilon n} \cdot \(1-\(\frac{\epsilon n}{2}\) 2^{-\nu-\frac{1}{2}}\) L\(\nu + \frac{1}{2}, \(\frac{t}{\cdot}\)\) \( \prod_{\substack{p \text{ odd prime}\\ p \mid s}} \(1 - \(\frac{t}{p}\) p^{-\nu-\frac{1}{2}}\)  \)\notag\\
&\hh \cdot  \(\prod_{\substack{p \text{ odd prime} \\ p|n}} \(1-p^{-2\nu-1}\)^{-1} \sum_{j \in \Z_{\geq 0}} \chi_r(p^j) \scC_{-\epsilon}\(\Delta_{p^j} G(\epsilon n \bar{2^k};p^j)\) (p^j)^{-\nu-1}\),\notag
\end{align}
where $r=4,8$ and
\begin{equation}
\label{eq:scp2nDef}
\scp_{2,\epsilon n} = \begin{cases}\(1 - \(\frac{t}{2}\) 2^{-\nu-\frac{1}{2}}\) &\text{if } 2|s,\\ 1 &\text{otherwise}, \end{cases}
\end{equation}
with $s$ and $t$ as defined in \eqref{eq:sandtDef}.
Since 
\begin{equation}
\(1-\(\frac{t}{p}\) p^{-\nu -\frac{1}{2}} \) \(1 + \(\frac{t}{p}\) p^{-\nu-\frac{1}{2}} \) = 1 - p^{-2\nu - 1}
\end{equation}
it follows 
from \eqref{eq:Lnu1} that
{\allowdisplaybreaks
\begin{align}
\label{eq:LGr}
&L\(\nu+\frac{1}{2},\(\frac{\epsilon n}{\cdot}\)\) \scG_r(\epsilon, n,2^k;\nu)\\
&= \scp_{2,\epsilon n} \cdot  \(1-\(\frac{\epsilon n}{2}\) 2^{-\nu-\frac{1}{2}}\) L\(\nu + \frac{1}{2}, \(\frac{t}{\cdot}\)\) \notag\\
&\hh \cdot \( \prod_{\substack{p \text{ odd prime}\\ p \mid s}} \(\frac{1 - \(\frac{t}{p}\) p^{-\nu-\frac{1}{2}}}{1-p^{-2\nu-1}}\) \sum_{j \in \Z_{\geq 0}} \chi_r(p^j) \scC_{-\epsilon}\(\Delta_{p^j} G(\epsilon n \bar{2^k};p^j)\) (p^j)^{-\nu-1} \)\notag\\*
&\hh \cdot \( \prod_{\substack{p \text{ odd prime} \\ p|t}} \(1-p^{-2\nu-1}\)^{-1} \sum_{j \in \Z_{\geq 0}} \chi_r(p^j) \scC_{-\epsilon}\(\Delta_{p^j} G(\epsilon n \bar{2^k};p^j)\) (p^j)^{-\nu-1} \)\notag\\
&=  \scp_{2,\epsilon n} \cdot \(1-\(\frac{\epsilon n}{2}\) 2^{-\nu-\frac{1}{2}}\) L\(\nu + \frac{1}{2}, \(\frac{t}{\cdot}\)\)\notag\\*
&\hh \cdot \( \prod_{\substack{p \text{ odd prime}\\ p \mid s}} \(1 + \(\frac{t}{p}\) p^{-\nu-\frac{1}{2}}\)^{-1} \sum_{j \in \Z_{\geq 0}} \chi_r(p^j) \scC_{-\epsilon}\(\Delta_{p^j} G(\epsilon n \bar{2^k};p^j)\) (p^j)^{-\nu-1} \)\notag\\*
&\hh \cdot \( \prod_{\substack{p \text{ odd prime} \\ p|t}} \(1-p^{-2\nu-1}\)^{-1} \sum_{j \in \Z_{\geq 0}} \chi_r(p^j) \scC_{-\epsilon}\(\Delta_{p^j} G(\epsilon n \bar{2^k};p^j)\) (p^j)^{-\nu-1} \).\notag
\end{align}
}%
The following lemma allows us to further simplify \eqref{eq:LGr}.

\begin{lem} 
\label{lem:scGsimpl}
Suppose $\Re(\nu)>1$, $n \neq 0$, and $\epsilon = \pm 1$, with $\epsilon n = s t$ as in \eqref{eq:sandtDef}.
\begin{itemize}[leftmargin=.3in,font=\normalfont\textbf]
	
\item[(a)] If $p$ is an odd prime such that $p^e || t$, $e \in \Z_{>0}$, then
\begin{equation*}
\sum_{j \in \Z_{\geq 0}} \chi_r(p^j) \scC_{-\epsilon}\( \Delta_{p^j} G(\epsilon n \bar{2^k};p^j) \) (p^j)^{-\nu-1} = (1 - p^{-2\nu-1}) \sum_{\substack{j=0\\ j \text{ even}}}^{e-1} p^{-j \nu},
\end{equation*}
where $r = 4, 8$.

\item[(b)] Let $p$ be an odd prime such that $p^e || s$, $e \in \Z_{>0}$.
If $r=4$ and $k$ is even, or if $r=8$ and $k$ is odd, then
\begin{align*}
&\sum_{j \in \Z_{\geq 0}} \chi_r(p^j) \scC_{-\epsilon}\( \Delta_{p^j} G(\epsilon n \bar{2^k};p^j) \) (p^j)^{-\nu-1}\\
&= \(1 + \(\frac{t}{p}\) p^{-\nu -\frac{1}{2}}\) \( \(1 - \(\frac{t}{p}\) p^{-\nu -\frac{1}{2}}\) \sum_{\substack{j=0\\ j \text{ even}}}^{e-2} p^{-j \nu} + p^{-e \nu}\).
\end{align*}

\end{itemize}
\end{lem}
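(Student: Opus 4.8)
The plan is to expand the local factor
\[
\Sigma_p \;=\; \sum_{j \in \Z_{\geq 0}} \chi_r(p^j)\,\scC_{-\epsilon}\(\Delta_{p^j} G(\epsilon n \bar{2^k};p^j)\)(p^j)^{-\nu-1}
\]
term by term, read off which summands survive from Lemma \ref{lem:GaussSums}, and then sum the resulting geometric series. First I would record the valuation: since $p$ is odd, $\gcd(\bar{2^k},p)=1$, so $v_p(\epsilon n \bar{2^k}) = v_p(n) = e$ in both parts. Writing $\epsilon n \bar{2^k} = p^e m'$ with $\gcd(m',p)=1$, Lemma \ref{lem:GaussSums}(d) (supplemented by $G(\,\cdot\,;1)=1$ and by $G(a;p)=\sum_x\(\frac{x}{p}\)=0$ when $p\mid a$) shows that $G(\epsilon n \bar{2^k};p^j)$ vanishes unless $j=0$, or $j$ is even with $2\le j\le e$ (in which case $G=p^j-p^{j-1}$), or $j=e+1$ (the boundary case $\ell=j-1$). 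Alongside this I would simplify the auxiliary factors once: by \eqref{eq:DeltaDef}, $\Delta_{p^j}=1$ for $j$ even and $\Delta_{p^j}=\Delta_p$ for $j$ odd (as $p^j\equiv 1,p \pmod 4$ respectively); moreover $\Delta_p^2=\chi_4(p)$ and $\chi_r(p^j)=\chi_r(p)^j$. Since each surviving $\Delta_{p^j}G$ is then real, the conjugation $\scC_{-\epsilon}$ of \eqref{eq:scCdef} acts on it as the identity.

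For part (a) the exponent $e$ is odd, so $j=e+1$ is even and lands in the branch $G=-p^{j-1}$, while the odd values $1\le j\le e$ all contribute $0$. Thus only $j=0$, the even $j$ with $2\le j\le e-1$, and $j=e+1$ survive; for all of these $\chi_r(p^j)=1$, so both choices of $r$ give the same answer. This yields
\[
\Sigma_p \;=\; 1 + (1-p^{-1})\!\!\sum_{\substack{2\le j\le e-1\\ j \text{ even}}}\!\! p^{-j\nu} \; -\; p^{-1-(e+1)\nu},
\]
and collecting the geometric series in $p^{-2\nu}$ and combining the endpoint term gives the claimed $(1-p^{-2\nu-1})\sum_{0\le j\le e-1,\ j\text{ even}} p^{-j\nu}$; this last step is a short identity I would verify by clearing the denominator $1-p^{-2\nu}$.

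For part (b), $e$ is even, so now $j=e+1$ is odd and lands in the branch $G=\(\frac{m'}{p}\)\Delta_p\,p^{j-1/2}$, the only place where the Gauss-sum sign enters. The crux --- and the main obstacle --- is to identify the constant $\chi_r(p)\,\Delta_p^2\,\(\frac{m'}{p}\)$ multiplying this endpoint term. Using multiplicativity (Proposition \ref{prop:KronProps}(e)) and $\epsilon n = st$ as in \eqref{eq:sandtDef} with $p^e\|s$, the factor $s/p^e$ is a product of primes to even powers, so $\(\frac{s/p^e}{p}\)=1$, while $\(\frac{\bar{2^k}}{p}\)=\(\frac{2^k}{p}\)=\(\frac{2}{p}\)^k$; hence $\(\frac{m'}{p}\)=\(\frac{t}{p}\)\(\frac{2}{p}\)^k$. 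Replacing $\Delta_p^2$ by $\chi_4(p)$, the constant becomes $\chi_r(p)\chi_4(p)\(\frac{t}{p}\)\(\frac{2}{p}\)^k$, which collapses to $\(\frac{t}{p}\)$ in both admissible cases: for $r=4$, $k$ even one uses $\chi_4(p)^2=1$ and $\(\frac{2}{p}\)^k=1$, and for $r=8$, $k$ odd one uses the elementary congruence identity $\chi_8(p)=\chi_4(p)\(\frac{2}{p}\)$ (verified on the residue classes mod $8$ via \eqref{eq:chiDef} and Proposition \ref{prop:KronProps}(i)). The endpoint term is therefore $\(\frac{t}{p}\)p^{-1/2-(e+1)\nu}$, and summing it together with the even-$j$ contributions $2\le j\le e$ and factoring produces the stated product form; as in part (a), the final factoring is a one-line algebraic identity obtained after clearing $1-p^{-2\nu}$.
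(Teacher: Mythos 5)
Your proposal is correct and follows essentially the same route as the paper's proof: expand the local sum term by term via Lemma \ref{lem:GaussSums}(d), discard the vanishing odd-index and large-index terms, evaluate the boundary term at $j=e+1$ (with the sign constant $\chi_r(p^{e+1})\Delta_{p^{e+1}}\Delta_p\(\frac{\bar{2^k}}{p}\)$ collapsing to $1$ in both admissible $(r,k)$ cases), and telescope the remaining geometric sums. Your explicit handling of the $j=1$ edge case and of why $\scC_{-\epsilon}$ acts trivially on the surviving real terms is slightly more careful than the paper's, but the argument is the same.
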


\begin{proof}
First we prove part (a). Since $p^e || t$, by \eqref{eq:sandtDef}, we know that $e$ is odd.
Also by \eqref{eq:sandtDef}, we have that $\epsilon n = p^e n''$ for some $n'' \in \Z$ such that $\gcd(n'',p)=1$.
Since $p$ is an odd prime and since $\bar{2^k}$ is an even integer, we also have that $\epsilon n \bar{2^k} = p^e n'$ for some $n' \in \Z$ 
such that $\gcd(n',p) = 1$.
Thus by Lemma \ref{lem:GaussSums}(d),
\begin{align*}
&\sum_{j \in \Z_{\geq 0}} \chi_r(p^j) \scC_{-\epsilon}\( \Delta_{p^j} G(\epsilon n \bar{2^k};p^j) \) (p^j)^{-\nu-1}\\
&=\sum_{\substack{j=0\\ j \text{ even}}}^{e-1} \chi_r(p^j) \scC_{-\epsilon}\( \Delta_{p^j} G(\epsilon n \bar{2^k};p^j) \) (p^j)^{-\nu-1}\\
&\hh + \chi_r(p^{e+1}) \scC_{-\epsilon}\( \Delta_{p^{e+1}} G(\epsilon n \bar{2^k};p^{e+1}) \) (p^{e+1})^{-\nu-1}\\
&=1 + \sum_{\substack{j=2\\ j \text{ even}}}^{e-1} (p^j - p^{j-1}) (p^j)^{-\nu-1} - p^{e} (p^{e+1})^{-\nu-1}.
\end{align*}
In the last equality, we also used the fact that $p^j \equiv 1 (\mod 8)$ for $j$ even, hence
$\chi_r(p^j)=1$ for $j$ even.
Further simplifications show that
\begin{align*}
&\sum_{j \in \Z_{\geq 0}} \chi_r(p^j) \scC_{-\epsilon}\( \Delta_{p^j} G(\epsilon n \bar{2^k};p^j) \) (p^j)^{-\nu-1}\\
&=1 + \sum_{\substack{j=2\\ j \text{ even}}}^{e-1} (p^{-j \nu} - p^{-j \nu-1}) - p^{-(e+1)\nu-1}\\
&=1 - p^{-2\nu-1} + \sum_{\substack{j=2\\ j \text{ even}}}^{e-3} (p^{-j \nu} - p^{-(j+2) \nu-1}) + p^{-(e-1) \nu} - p^{-(e+1)\nu-1}\\
&=(1 - p^{-2\nu-1}) + (1 - p^{-2\nu-1}) \sum_{\substack{j=2\\ j \text{ even}}}^{e-3} p^{-j \nu} + (1 - p^{-2\nu-1}) p^{-(e-1) \nu}\\
&=(1 - p^{-2\nu-1}) \sum_{\substack{j=0\\ j \text{ even}}}^{e-1} p^{-j \nu}.
\end{align*}
This proves part (a).

Next we prove part (b). Since $p^e || s$, by \eqref{eq:sandtDef}, we know that $e$ is even.
Thus, as we argued for in part (a), we have that $\epsilon n \bar{2^k}= p^e n'$ for some $n' \in \Z$ such that $\gcd(n',p)=1$.
Therefore by Lemma \ref{lem:GaussSums}(d),
\begin{align*}
&\sum_{j \in \Z_{\geq 0}} \chi_{r}(p^j) \scC_{-\epsilon}\(\Delta_{p^j} G(\epsilon n \bar{2^k};p^j)\) (p^j)^{-\nu-1}\\
&=\sum_{\substack{j=0\\ j \text{ even}}}^{e} \chi_{r}(p^j) \scC_{-\epsilon}\(\Delta_{p^j} G(\epsilon n \bar{2^k};p^j)\) (p^j)^{-\nu-1}\notag\\
&\hh + \chi_{r}(p^{e+1}) \scC_{-\epsilon}\( \Delta_{p^{e+1}} G(\epsilon n \bar{2^k};p^{e+1})\) (p^{e+1})^{-\nu-1}\notag\\
&=1 + \sum_{\substack{j=2\\ j \text{ even}}}^{e} (p^j - p^{j-1}) (p^j)^{-\nu-1}\notag\\
&\hh + \chi_{r}(p^{e+1}) \scC_{-\epsilon}\( \Delta_{p^{e+1}} \(\frac{2^k n'}{p}\) \Delta_p p^{e + \frac{1}{2}}\) (p^{e+1})^{-\nu-1}.\notag
\end{align*}
For $r=4$ and $k$ even, $\chi_r(p^{e+1}) \Delta_{p^{e+1}} \Delta_p = 1$ for all odd primes $p$ and $\(\frac{2^k}{p}\) = 1$.
For $r=8$ and $k$ odd, $\(\frac{2^k}{p}\) = \(\frac{2}{p}\)$ and $\chi_r(p^{e+1}) \Delta_{p^{e+1}} \(\frac{2}{p}\) \Delta_p = 1$ 
for all odd primes $p$.
Thus for $r=4$ and $k$ even, or $r=8$ and $k$ odd,
\begin{align*}
&\sum_{j \in \Z_{\geq 0}} \chi_{r}(p^j) \scC_{-\epsilon}\(\Delta_{p^j} G(\epsilon n \bar{2^k};p^j)\) (p^j)^{-\nu-1}\\
&=1 + \sum_{\substack{j=2\\ j \text{ even}}}^{e} (p^j - p^{j-1}) (p^j)^{-\nu-1} + \(\frac{n'}{p}\) p^{e + \frac{1}{2}} (p^{e+1})^{-\nu-1}\\
&=1 + \sum_{\substack{j=2\\ j \text{ even}}}^{e} (p^{-j \nu} - p^{-j \nu -1}) + \(\frac{n'}{p}\) p^{-(e+1)\nu -\frac{1}{2}}\\
&=1 - p^{-2 \nu -1} + \sum_{\substack{j=2\\ j \text{ even}}}^{e-2} (p^{-j \nu} - p^{-(j+2) \nu -1}) + p^{-e \nu} + \(\frac{n'}{p}\) p^{-(e+1)\nu -\frac{1}{2}}\\
&= (1 - p^{-2 \nu -1}) \sum_{\substack{j=0\\ j \text{ even}}}^{e-2} p^{-j \nu} + p^{-e \nu}\(1 + \(\frac{n'}{p}\) p^{-\nu -\frac{1}{2}}\)\\
&= \(1 + \(\frac{n'}{p}\) p^{-\nu -\frac{1}{2}}\) \( \(1 - \(\frac{n'}{p}\) p^{-\nu -\frac{1}{2}}\) \sum_{\substack{j=0\\ j \text{ even}}}^{e-2} p^{-j \nu} + p^{-e \nu}\).
\end{align*}
Part (b) now follows since $\(\frac{n'}{p}\) = \(\frac{t}{p}\)$.
\end{proof}

Let
\begin{align}
\label{eq:scLDef0}
&\mathcal{L}(\epsilon n, \nu) = L\(\nu+\frac{1}{2},\(\frac{\epsilon n}{\cdot}\)\) \scG_r(\epsilon, n,2^k;\nu),
\end{align}
where $r=4$ and $k$ is even, or $r=8$ and $k$ is odd.
Notice that in our notation for $\mathcal{L}(\epsilon n, \nu)$ we fail to account for $r$ and $k$.
This is justified since by Lemma \ref{lem:scGsimpl} and \eqref{eq:LGr} we have that
\begin{align}
\label{eq:scLDef}
&\mathcal{L}(\epsilon n, \nu)\\
&=  \scp_{2,\epsilon n} \cdot \(1-\(\frac{\epsilon n}{2}\) 2^{-\nu-\frac{1}{2}}\) L\(\nu + \frac{1}{2}, \(\frac{t}{\cdot}\)\) \prod_{\substack{p \text{ odd prime} \\ p^e||t,\, e>0}} \(\sum_{\substack{j=0\\ j \text{ even}}}^{e-1} p^{-j \nu} \) \notag\\*
&\hh \cdot \prod_{\substack{p \text{ odd prime}\\ p^e || s, \, e>0 }} \( \(1 - \(\frac{t}{p}\) p^{-\nu -\frac{1}{2}}\) \sum_{\substack{j=0\\ j \text{ even}}}^{e-2} p^{-j \nu} + p^{-e \nu} \),\notag
\end{align}
for $\Re(\nu)>1$
(recall that $\scp_{2,\epsilon n}$ is defined in \eqref{eq:scp2nDef} and $s$ and $t$ are defined in \eqref{eq:sandtDef}).
Observe that \eqref{eq:scLDef} is the product of a Dirichlet polynomial and the Dirichlet $L$-function 
$L\(\nu + \frac{1}{2}, \(\frac{t}{\cdot}\)\)$.  If $t \neq 1$ then $L\(\nu + \frac{1}{2}, \(\frac{t}{\cdot}\)\)$ has holomorphic continuation
to all of $\C$.  If $t=1$ then $L\(\nu + \frac{1}{2}, \(\frac{t}{\cdot}\)\)$ has holomorphic continuation to all of $\C$ except for a simple
pole $\nu = \frac{1}{2}$.  Therefore, by Proposition \ref{prop:SimpFourCoeff}, we see that for $n \neq 0$, $a_{\epsilon,\nu}(n)$ has holomorphic continuation
to all of $\C$ except for a simple pole at $\nu = \frac{1}{2}$ if $\epsilon n$ is a square. 
We record this result along with explicit formulas for $a_{\epsilon,\nu}(n)$ in the following theorem.

\begin{thm}
\label{thm:SimplFourCoeff}
The Fourier coefficients $a_{\epsilon,\nu}(n)$ for $n \neq 0$ have holomorphic continuation to $\C$ except
for a simple pole at $\nu = \frac{1}{2}$ when $\epsilon n$ is a square.
Furthermore, $a_{\epsilon,\nu}(n) = \scc_{\epsilon,\nu}(n) \mathcal{L}(\epsilon n; \nu)$
where $\scc_{\epsilon,\nu}(n)$ is defined accordingly:
\begin{itemize}
\item  If $\epsilon n \equiv 3 (\mod 4)$ or $n=2 n'$ where $\gcd(n',2)=1$ then
\begin{equation*}
\scc_{\epsilon,\nu}(n) = -(1+\epsilon i) 4^{-\nu-1}.
\end{equation*}
\item If $\epsilon n \equiv 1 (\mod 4)$ then
\begin{equation*}
\scc_{\epsilon,\nu}(n) = (1 + \epsilon i) \(4^{-\nu-1}+8^{-\nu-\frac{1}{2}}\).
\end{equation*}
\item If $n=2^\ell n'$ where $\ell > 1$ and $\gcd(n',2)=1$, then
\begin{align*}
\scc_{\epsilon,\nu}(n) &= (1+\epsilon i) \Bigg( \sum_{\substack{k=0\\ k \text{ even}}}^{\ell-2} 2^{k} \(2^{k+2}\)^{-\nu-1}
- \delta_{\ell \equiv 1 (2)} 2^{\ell-1} \(2^{\ell+1}\)^{-\nu-1}\\
&+ \delta_{\ell \equiv 0 (2)} \chi_4(\epsilon n') 2^\ell \(2^{\ell+2}\)^{-\nu-1} \\
&+ \delta_{\ell \equiv 0 (2)} \delta_{n' \equiv 1 (4)} \chi_8(\epsilon n') 2^{\ell+\frac{3}{2}} \(2^{\ell+3}\)^{-\nu-1}
\Bigg)
\end{align*}
where $\delta_{\ell \equiv j(m)} = \begin{cases}1 &\text{if } \ell \equiv j (\mod m),\\ 0 &\text{otherwise},\end{cases}$ for  $j, \ell, m \in \Z$.
\end{itemize}
\end{thm}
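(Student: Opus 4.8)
The plan is to obtain the theorem as a direct consequence of Proposition~\ref{prop:SimpFourCoeff} together with the closed form \eqref{eq:scLDef} for $\mathcal{L}(\epsilon n,\nu)$. The conceptually decisive first step is to record that the notation $\mathcal{L}(\epsilon n,\nu)$ is unambiguous: the product $L\(\nu+\frac{1}{2},\(\frac{\epsilon n}{\cdot}\)\)\,\scG_r(\epsilon,n,2^{k};\nu)$ is independent of the admissible pair $(r,k)$, meaning $r=4$ with $k$ even or $r=8$ with $k$ odd. This is exactly what \eqref{eq:scLDef} delivers, since its right-hand side is written purely in terms of $\scp_{2,\epsilon n}$, the $L$-function $L\(\nu+\frac{1}{2},\(\frac{t}{\cdot}\)\)$, and finite products over the odd primes dividing $t$ and $s$---none of which sees $r$ or $k$. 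I would open the proof by stating this invariance, as it is the device that lets the several distinct factors $\scG_r(\epsilon,n,2^{k+2};\nu)$ appearing in part~(d) be identified with one and the same analytic function.

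Next I would factor $\mathcal{L}(\epsilon n,\nu)$ out of each of the formulas (a)--(d) in Proposition~\ref{prop:SimpFourCoeff}. In every case each summand is an elementary prefactor---a power of $2$, possibly multiplied by $1\pm\epsilon i$, by a $\delta$-symbol, or by $\chi_4(\epsilon n')$ or $\chi_8(\epsilon n')$---times a product $L\(\nu+\frac{1}{2},\(\frac{\epsilon n}{\cdot}\)\)\,\scG_r(\epsilon,n,2^{k};\nu)$ with $(r,k)$ admissible. By the invariance just established, each such product equals $\mathcal{L}(\epsilon n,\nu)$, so pulling it out yields $a_{\epsilon,\nu}(n)=\scc_{\epsilon,\nu}(n)\,\mathcal{L}(\epsilon n,\nu)$ with $\scc_{\epsilon,\nu}(n)$ the sum of the surviving prefactors. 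Reading these off case by case reproduces the four displayed formulas for $\scc_{\epsilon,\nu}(n)$; this step is pure bookkeeping and introduces no new estimates.

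For the analytic continuation I would appeal to \eqref{eq:scLDef} once more: it exhibits $\mathcal{L}(\epsilon n,\nu)$ as a finite Dirichlet polynomial in $\nu$ (hence entire) times $L\(\nu+\frac{1}{2},\(\frac{t}{\cdot}\)\)$, while each $\scc_{\epsilon,\nu}(n)$ is itself a finite Dirichlet polynomial and so entire. Classical continuation theory for Dirichlet $L$-functions then finishes matters: if $t\neq 1$ the character $\(\frac{t}{\cdot}\)$ is nontrivial and $L\(\nu+\frac{1}{2},\(\frac{t}{\cdot}\)\)$ continues to an entire function, whereas if $t=1$ one has $\(\frac{1}{d}\)=1$ for every $d>0$, so $L\(\nu+\frac{1}{2},\(\frac{1}{\cdot}\)\)=\zeta\(\nu+\frac{1}{2}\)$, which is holomorphic on $\C$ apart from a simple pole at $\nu=\frac{1}{2}$. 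Combining with the entirety of the elementary factors shows that $a_{\epsilon,\nu}(n)$ extends holomorphically to $\C$ with at most a simple pole at $\nu=\frac{1}{2}$, occurring precisely when $t=1$. To match the statement I would finally read off from \eqref{eq:sandtDef} that $t=1$ forces $\epsilon\,\sgn(n)=1$ and every prime in $|n|$ to appear to an even power, i.e.\ that $t=1$ if and only if $\epsilon n$ is a perfect square; a one-line check that the (explicit, entire) prefactor does not vanish at $\nu=\frac{1}{2}$ confirms the pole is genuine.

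I expect the only real obstacle to be organizational rather than analytic: one must be sure that all the distinct factors $\scG_r(\epsilon,n,2^{k};\nu)$ occurring across the cases---most visibly the four separate values of $2^{k}$ in part~(d)---truly collapse onto the single function $\mathcal{L}(\epsilon n,\nu)$. This rests entirely on the $(r,k)$-independence of \eqref{eq:scLDef}, which in turn is powered by the evaluations in Lemma~\ref{lem:scGsimpl}; once that invariance is in hand, what remains is routine extraction of coefficients followed by an appeal to the standard analytic properties of $L\(\nu+\frac{1}{2},\(\frac{t}{\cdot}\)\)$.
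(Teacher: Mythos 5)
Your proposal is correct and follows essentially the same route as the paper: the paper likewise obtains the theorem by combining Proposition \ref{prop:SimpFourCoeff} with the $(r,k)$-independence of $L\(\nu+\frac{1}{2},\(\frac{\epsilon n}{\cdot}\)\)\scG_r(\epsilon,n,2^k;\nu)$ established via Lemma \ref{lem:scGsimpl} and \eqref{eq:LGr}, then reads off the analytic continuation from the factorization of $\mathcal{L}(\epsilon n,\nu)$ into a Dirichlet polynomial times $L\(\nu+\frac{1}{2},\(\frac{t}{\cdot}\)\)$, with $t=1$ exactly when $\epsilon n$ is a square. Your added check that the prefactor is nonzero at $\nu=\frac{1}{2}$ is a harmless refinement the paper omits.
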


In the following proposition we obtain a formula for the Fourier coefficient $a_{\epsilon,\nu}(0)$.
This formula shows that $a_{\epsilon,\nu}(0)$ also has a holomorphic continuation to all of $\C$
except for a pole at $\nu = \frac{1}{2}$.  This formula for $a_{\epsilon,\nu}(0)$ will play a vital role in establishing 
a functional equation for metaplectic Eisenstein distributions in Section \ref{sec:EisenDistFunctionalEq}.

\begin{prop}
\label{prop:EisenInfConst}
For $\Re(\nu) > 1$,
\begin{equation*}
a_{\epsilon,\nu}(0) = (1+\epsilon i) 2^{-2\nu-2} \zeta(2\nu).
\end{equation*}
\end{prop}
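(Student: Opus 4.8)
The plan is to specialize the explicit Fourier-coefficient formula \eqref{eq:EisenInfFourCoeff} to $n=0$ and then evaluate the two resulting arithmetic sums. Setting $n=0$ in \eqref{eq:EisenInfFourCoeff} gives
\begin{equation*}
a_{\epsilon,\nu}(0) = \epsilon i\, 4^{-\nu-1}\zeta_2(2\nu+1)\sum_{\substack{c'>0\\ c'\text{ odd}}}\sum_{k\ge 0}(2^kc')^{-\nu-1}\scC_{-\epsilon}\(K_{-c'}(0;2^{k+2})\,G(0;c')\),
\end{equation*}
so the task reduces to evaluating the degenerate Gauss sum $G(0;c')$ and the degenerate twisted Kloosterman sum $K_{-c'}(0;2^{k+2})$, neither of which is covered directly by Lemmas \ref{lem:GaussSums} and \ref{lem:KlooSums} (both assume $n\neq 0$).

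For $G(0;c')=\sum_{x\pmod{c'}}\(\frac{x}{c'}\)$ I would observe that $x\mapsto\(\frac{x}{c'}\)$ is a Dirichlet character modulo the odd integer $c'$, and that it is principal precisely when $c'$ is a perfect square; hence $G(0;c')=\varphi(c')$ when $c'$ is an odd perfect square and $G(0;c')=0$ otherwise, where $\varphi$ denotes Euler's totient (this can also be read off from the twisted multiplicativity in Lemma \ref{lem:GaussSums}(b) together with $G(0;p^a)=\varphi(p^a)$ for $a$ even and $0$ for $a$ odd). For $K_{-c'}(0;2^{k+2})$ I would re-run the computation leading to \eqref{eq:KlooId} with $n=0$, so that every exponential $e(\cdot)$ collapses to $1$; counting the $2^{k-1}$ residues $d\pmod{2^{k+2}}$ with $d\equiv 1\pmod 8$ yields $K_{-c'}(0;2^{k+2})=\(1+(-1)^ki^{c'}+(-1)^k+i^{c'}\)2^{k-1}$, which equals $(1+i^{c'})2^k$ for $k$ even and $0$ for $k$ odd (the case $k=0$ being checked by hand).

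Combining these, only even $k$ and odd perfect-square $c'$ survive. Writing $k=2j$ and $c'=m^2$ with $m$ odd, I note that $m^2\equiv 1\pmod 4$ forces $i^{c'}=i$, so $K_{-c'}(0;2^{k+2})G(0;c')=(1+i)2^{2j}\varphi(m^2)$; since $\scC_{-\epsilon}(1+i)=1-\epsilon i$ and $\epsilon i(1-\epsilon i)=1+\epsilon i$, the clean prefactor $1+\epsilon i$ is pulled out. The remaining double sum then factors as the geometric series $\sum_{j\ge 0}2^{-2j\nu}=(1-2^{-2\nu})^{-1}$ times, after using $\varphi(m^2)=m\varphi(m)$, the Dirichlet series $\sum_{m\text{ odd}}\varphi(m)m^{-2\nu-1}$. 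Evaluating this last series via its Euler product over odd primes gives $\sum_{m\text{ odd}}\varphi(m)m^{-s}=\prod_{p\text{ odd}}\frac{1-p^{-s}}{1-p^{1-s}}=\frac{\zeta(2\nu)(1-2^{-2\nu})}{\zeta(2\nu+1)(1-2^{-2\nu-1})}$ with $s=2\nu+1$.

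Finally I would assemble everything: the factor $\zeta_2(2\nu+1)=(1-2^{-2\nu-1})\zeta(2\nu+1)$ cancels the denominator $\zeta(2\nu+1)(1-2^{-2\nu-1})$, the two factors $(1-2^{-2\nu})^{\pm1}$ cancel, and $4^{-\nu-1}=2^{-2\nu-2}$, leaving $a_{\epsilon,\nu}(0)=(1+\epsilon i)2^{-2\nu-2}\zeta(2\nu)$. All series converge absolutely for $\Re(\nu)>1$, which justifies the rearrangements. I expect the main obstacle to be exactly the two $n=0$ evaluations, since they lie outside the stated lemmas and must be recomputed from scratch, followed by the Euler-product bookkeeping that forces the spurious $\zeta(2\nu+1)$ and $2$-power factors to cancel against $\zeta_2(2\nu+1)$.
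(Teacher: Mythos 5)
Your proof is correct, and it converges to the same endgame as the paper's — only perfect squares survive, leaving $\sum_{m \text{ odd}} \phi(m^2) m^{-2\nu-2}$, whose Euler product (the paper's \eqref{eq:eulerPhiSum}) yields $\zeta_2(2\nu)/\zeta_2(2\nu+1)$ and hence the stated formula — but it takes a genuinely different route through the degenerate Kloosterman sum. The paper does not use the factored second line of \eqref{eq:EisenInfFourCoeff} at $n=0$: it reverts to the unfactored sum $K_{-1}(0;4c)=\sum_{d \,(\mod 4c)} \Delta_d \(\frac{4c}{d}\)$ and kills all non-square $c$ at once by exhibiting a unit $d'$ with $\(\frac{4c}{d'}\)=-1$ (orthogonality of a non-principal character), then reads off $\frac{1+i}{2}\phi(4c)$ for square $c$. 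You instead keep the $2$-part and the odd part separate, evaluating $G(0;c')$ via the principal/non-principal dichotomy for the Jacobi symbol and $K_{-c'}(0;2^{k+2})$ by re-running the derivation of \eqref{eq:KlooId} with every exponential equal to $1$; both evaluations are right (the $k$ even/odd split, the hand-check at $k=0$, and your product $(1+i)2^{2j}\phi(m^2)$ all match the paper's $\frac{1+i}{2}\phi(4c^2)$ with $c=2^j m$), and the prefactor bookkeeping $\epsilon i(1-\epsilon i)=1+\epsilon i$ is correct. What your route buys is uniformity with the factored formula used everywhere else in Sections \ref{sec:GaussKloo}--\ref{sec:MeroContFourCoeff}; what it costs is that Lemmas \ref{lem:GaussSums} and \ref{lem:KlooSums} must be extended to $n=0$ by hand, and you are implicitly relying on the Iwaniec factorization $K_{-1}(n;4c)=K_{-c'}(n\bar{c'};2^{k+2})G(n\bar{2^{k+2}};c')$ remaining valid at $n=0$ — which it does (it is a Chinese-remainder statement, and the paper asserts it without restriction on $n$), but this is worth stating explicitly since the paper itself sidesteps the question by never factoring the degenerate sum.
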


\begin{proof}
By \eqref{eq:EisenInfFourCoeff} and \eqref{eq:KlooDef},
\begin{align*}
&a_{\epsilon,\nu}(0) = \epsilon i 4^{-\nu-1} \zeta_2(2\nu+1) \sum_{c \in \Z_{>0}} c^{-\nu-1} \scC_{-\epsilon}\(K_{-1}(0;4c)\) \\
&= \epsilon i 4^{-\nu-1} \zeta_2(2\nu+1) \sum_{c \in \Z_{>0}} c^{-\nu-1} \scC_{-\epsilon}\(\sum_{d \in \Z/4c\Z} \Delta_d \(\frac{4c}{d}\) \).
\end{align*}
If $c$ is a square then
\begin{equation*}
\sum_{d \in \Z/4c\Z} \Delta_d \(\frac{4c}{d}\) =  \sum_{\substack{d \in \Z/4c\Z\\d \text{ odd} \\\gcd(c,d)=1}} \Delta_d = \frac{1+i}{2} \phi(4c),
\end{equation*}
where $\phi$ is the Euler totient function. If $c$ is not a square then we can select $d'' \in (\Z / 4 c \Z)^*$ such that $\(\frac{4c}{d''}\)=-1$.
This follows from the law of quadratic reciprocity (Proposition \ref{prop:KronProps}) and \cite[Theorem 5.2.3]{Ireland84}.
Notice that $d'' \equiv \pm 1 (\mod 4)$.
If $d'' \equiv -1 (\mod 4)$ then $\(\frac{4c}{-d''}\) = -1$ since $\(\frac{4c}{-1}\) = 1$.
Thus we can always find $d' \in (\Z / 4 c \Z)^*$ such that $d' \equiv 1 (\mod 4)$ and $\(\frac{4c}{d'}\)=-1$.
Since
\begin{equation*}
\(\frac{4c}{d'}\) \sum_{\substack{d (\mod 4c)\\d \equiv 1 (4)}} \(\frac{4c}{d}\) = \sum_{\substack{d (\mod 4c)\\d \equiv 1 (4)}} \(\frac{4c}{d}\) \text{ it follows that } \sum_{\substack{d (\mod 4c)\\d \equiv 1 (4)}} \(\frac{4c}{d}\)=0.
\end{equation*}
By the same argument, it also follows that
\begin{equation*}
\sum_{d (\mod 4c)} \(\frac{4c}{d}\)=0, \text{ and thus } \sum_{\substack{d (\mod 4c)\\d \equiv 3 (4)}} \(\frac{4c}{d}\)=0.
\end{equation*}
Therefore if $c$ is not a square then
\begin{equation*}
\sum_{d \in \Z/4c\Z} \Delta_d \(\frac{4c}{d}\) = 0.
\end{equation*}
Thus
{\allowdisplaybreaks
\begin{align*}
&a_{\epsilon,\nu}(0) = \epsilon i 4^{-\nu-1} \zeta_2(2\nu+1) \sum_{\substack{c \in \Z_{>0}\\ c \text{ a square}}} c^{-\nu-1} \frac{1-\epsilon i}{2} \phi(4c)\\
&= \frac{1+\epsilon i}{2} 4^{-\nu-1} \zeta_2(2\nu+1) \sum_{c \in \Z_{>0}} c^{-2\nu-2} \phi(4c^2)\\
&= \frac{1+\epsilon i}{2} 4^{-\nu-1} \zeta_2(2\nu+1) \(\sum_{\substack{c \in \Z_{>0}\\c \text{ odd}}} \sum_{k \in \Z_{\geq 0}}(2^k c)^{-2\nu-2} \phi(2^{2k+2} c^2) \)\\
&= \frac{1+\epsilon i}{2} 4^{-\nu-1} \zeta_2(2\nu+1) \(\sum_{\substack{c \in \Z_{>0}\\c \text{ odd}}} \sum_{k \in \Z_{\geq 0}}(2^k)^{-2\nu-2} c^{-2\nu-2}  2^{2k+1} \phi(c^2) \)\\
&= \frac{1+\epsilon i}{2} 4^{-\nu-1} \zeta_2(2\nu+1) \(\sum_{\substack{c \in \Z_{>0}\\c \text{ odd}}} 2 \sum_{k \in \Z_{\geq 0}} 2^{-2 k \nu} c^{-2\nu-2} \phi(c^2) \)\\
&= (1+\epsilon i) 2^{-2\nu-2} (1-2^{-2\nu})^{-1} \zeta_2(2\nu+1) \(\sum_{\substack{c \in \Z_{>0}\\c \text{ odd}}} c^{-2\nu-2} \phi(c^2) \).
\end{align*}}%
Since $\phi(c^2)$ is multiplicative,
\begin{align*}
a_{\epsilon,\nu}(0) &= (1+\epsilon i) 2^{-2\nu - 2} \(1 - 2^{-2\nu}\)^{-1} \zeta_2(2\nu+1) \prod_{p \text{ odd prime}} \sum_{k \in \Z_{\geq 0}} (p^k)^{-2\nu-2} \phi(p^{2k}).
\end{align*}
Observe that for odd primes $p$,
\begin{align}
\label{eq:eulerPhiSum}
&\sum_{k \in \Z_{\geq 0}} (p^k)^{-2\nu-2} \phi(p^{2k}) = 1 + \sum_{k \in \Z_{\geq 1}} p^{-2k\nu-2k} (p^{2k} - p^{2k-1})
= \frac{1 - p^{-2\nu-1}}{1-p^{-2\nu}}.
\end{align}
Using this expression and the Euler product expansion for $\zeta_2(2\nu+1)$, we find that
\begin{align*}
&a_{\epsilon,\nu}(0) = (1+\epsilon i) 2^{-2\nu-2} (1-2^{-2\nu})^{-1} \zeta_2(2\nu+1) \prod_{p \text{ odd prime}} \frac{1 - p^{-2\nu-1}}{1-p^{-2\nu}}\\
&= (1+\epsilon i) 2^{-2\nu-2} (1-2^{-2\nu})^{-1} \prod_{p \text{ odd prime}} (1-p^{-2\nu})^{-1}\\
&= (1+\epsilon i) 2^{-2\nu-2} \zeta(2\nu).\qedhere
\end{align*}
\end{proof}

\section{The Meromorphic Continuation of \texorpdfstring{$\tlE_{\epsilon,\nu}^{(\infty)}$}{the Metaplectic Eisenstein Distribution at Infinity}}
\label{sec:MeroContEisenInf}

Now that we have established the meromorphic continuation of the Fourier coefficients $a_{\epsilon,\nu}(n)$, it remains to show that
$\tlE_{\epsilon,\nu}^{(\infty)}$ also has a meromorphic continuation.  
The first step to accomplishing this is to establish some bounds for $|a_{\epsilon,\nu}(n)|$.
We begin by finding bounds for $|\mathcal{L}(\epsilon n, \nu)|$, which is defined in \eqref{eq:scLDef}.

{\allowdisplaybreaks
For $\Re(\nu) \geq 0$, we have the following (crude) bounds (recall that $\scp_{2,\epsilon n}$ was defined in \eqref{eq:scp2nDef},
 and $s$ and $t$ are defined in \eqref{eq:sandtDef}):
\begin{align}
&\left|\scp_{2,\epsilon n} \cdot \(1-\(\frac{\epsilon n}{2}\) 2^{-\nu-\frac{1}{2}}\)\right| \leq \(1+ 2^{-\Re(\nu)-\frac{1}{2}}\)^2 \leq \(1+ 2^{-\frac{1}{2}}\)^2 < 4,\\[.2in]
&\left|\prod_{\substack{p \text{ odd prime} \\ p^e || t, \, e >0}}  \sum_{\substack{j=0\\ j \text{ even}}}^{e-1} p^{-j \nu}\right| \leq \prod_{\substack{p \text{ odd prime} \\ p^e || t, \, e>0}}  \sum_{\substack{j=0\\ j \text{ even}}}^{e-1} 1 < \prod_{\substack{p \text{ odd prime} \\ p^e || t, \, e > 0}} p^{e} \leq |n|,\\[.2in]
&\left| \prod_{\substack{p \text{ odd prime}\\ p^e || s, \, e > 0}}  \( \(1 - \(\frac{t}{p}\) p^{-\nu -\frac{1}{2}}\) \sum_{\substack{j=0\\ j \text{ even}}}^{e-2} p^{-j \nu} + p^{-e \nu}\) \right|\\
&= \left| \prod_{\substack{p \text{ odd prime}\\ p^e || s, \, e > 0}}  \( \sum_{\substack{j=0\\ j \text{ even}}}^{e} p^{-j \nu}  - \(\frac{t}{p}\) p^{-\nu -\frac{1}{2}} \sum_{\substack{j=0\\ j \text{ even}}}^{e-2} p^{-j \nu}\)\right|\notag\\
&\leq \prod_{\substack{p \text{ odd prime}\\ p^e || s, \, e > 0}}\( \left| \sum_{\substack{j=0\\ j \text{ even}}}^{e} p^{-j \nu} \right| +  \left| \(\frac{t}{p}\) p^{-\nu -\frac{1}{2}} \sum_{\substack{j=0\\ j \text{ even}}}^{e-2} p^{-j \nu}\right|\)\notag\\
&\leq \prod_{\substack{p \text{ odd prime}\\ p^e || s, \, e > 0}} \( \sum_{\substack{j=0\\ j \text{ even}}}^{e} 1 +  p^{-\frac{1}{2}} \sum_{\substack{j=0\\ j \text{ even}}}^{e-2} 1 \)\notag\\
&< \prod_{\substack{p \text{ odd prime}\\ p^e || s, \, e > 0}}\(p^{e} + p^e\) < \prod_{\substack{p \text{ odd prime}\\ p^e || s, \, e > 0}} p^{e + 1} 
\leq |n|^2.\notag
\end{align}
For $\Re(\nu) \leq 0$, we have the following bounds:
\begin{align}
&\left|\scp_{2,\epsilon n} \cdot \(1-\(\frac{\epsilon n}{2}\) 2^{-\nu-\frac{1}{2}}\)\right| \leq \(1+ 2^{-\Re(\nu)-\frac{1}{2}}\)^2\\
&< \(2 \cdot 2^{-\Re(\nu)}\)^2 = 4^{-\Re(\nu)+1}, \notag\\[.2in]
&\left|\prod_{\substack{p \text{ odd prime} \\ p^e || t, \, e > 0}}  \sum_{\substack{j=0\\ j \text{ even}}}^{e-1} p^{-j \nu}\right|
< \prod_{\substack{p \text{ odd prime} \\ p^e || t, \, e > 0}}  e \cdot p^{-e \Re(\nu)}
< \prod_{\substack{p \text{ odd prime} \\ p^e || t, \, e > 0}}  p^{e (-\Re(\nu)+1)}\\[.1in]
&\leq |n|^{-\Re(\nu)+1},\notag\\[.2in]
&\left| \prod_{\substack{p \text{ odd prime}\\ p^e || s, \, e > 0}}  \( \(1 - \(\frac{t}{p}\) p^{-\nu -\frac{1}{2}}\) \sum_{\substack{j=0\\ j \text{ even}}}^{e-2} p^{-j \nu} + p^{-e \nu}\) \right|\\
&= \left| \prod_{\substack{p \text{ odd prime}\\ p^e || s, \, e > 0}}  \( \sum_{\substack{j=0\\ j \text{ even}}}^{e} p^{-j \nu}  - \(\frac{t}{p}\) p^{-\nu -\frac{1}{2}} \sum_{\substack{j=0\\ j \text{ even}}}^{e-2} p^{-j \nu}\)\right|\notag\\
&\leq \prod_{\substack{p \text{ odd prime}\\ p^e || s, \, e > 0}} \( \left| \sum_{\substack{j=0\\ j \text{ even}}}^{e} p^{-j \nu} \right| +  \left| \(\frac{t}{p}\) p^{-\nu -\frac{1}{2}} \sum_{\substack{j=0\\ j \text{ even}}}^{e-2} p^{-j \nu}\right|\)\notag\\
&\leq \prod_{\substack{p \text{ odd prime}\\ p^e || s, \, e > 0}} \( \sum_{\substack{j=0\\ j \text{ even}}}^{e} p^{-e \Re(\nu)}  +  p^{-\Re(\nu) -\frac{1}{2}} \sum_{\substack{j=0\\ j \text{ even}}}^{e-2} p^{-e \Re(\nu)}\)\notag\\
&\leq \prod_{\substack{p \text{ odd prime}\\ p^e || s, \, e > 0}} \(2  p^{-\Re(\nu)}  \sum_{\substack{j=0\\ j \text{ even}}}^{e} p^{-e \Re(\nu)}\)
< \prod_{\substack{p \text{ odd prime}\\ p^e || s, \, e > 0}} p \cdot p^{-\Re(\nu)} e p^{-e \Re(\nu)}\notag\\
&<\prod_{\substack{p \text{ odd prime}\\ p^e || s, \, e > 0}}  p \cdot p^{-\Re(\nu)} p^e p^{-e \Re(\nu)} \notag\\
&=\prod_{\substack{p \text{ odd prime}\\ p^e || s, \, e > 0}} p^{(e+1)(-\Re(\nu)+1)}\notag
\leq |n^2|^{-\Re(\nu)+1}.
\end{align}
By these inequalities, it follows from \eqref{eq:scLDef} that
\begin{align}
\label{eq:LfuncBound1}
&\left| \mathcal{L}(\epsilon n, \nu) \right| \\
&<\begin{cases}
4 |n|^3 \left|L\(\nu + \frac{1}{2}, \(\frac{t}{\cdot}\)\)\right|  &\text{if } \Re(\nu) \geq 0\\
4^{-\Re(\nu)-1} |n|^{-\Re(\nu)+1} |n^2|^{-\Re(\nu)+1} \left|L\(\nu + \frac{1}{2}, \(\frac{t}{\cdot}\)\)\right|  &\text{if } \Re(\nu) < 0
\end{cases}\notag\\
&<\begin{cases}
4 |n|^3 \left|L\(\nu + \frac{1}{2}, \(\frac{t}{\cdot}\)\)\right|  &\text{if } \Re(\nu) \geq 0\\
|4 n^3|^{-\Re(\nu)+1} \left|L\(\nu + \frac{1}{2}, \(\frac{t}{\cdot}\)\)\right|  &\text{if } \Re(\nu) < 0.
\end{cases}\notag
\end{align}
}%
By the standard convexity bound for $L$-functions \cite[(2)]{Friedlander95},
for $\nu \in \C_{\neq \frac{1}{2}}$,
\begin{equation}
\label{eq:LfuncBound3}
\left| L\(\nu+\frac{1}{2},\(\frac{t}{\cdot}\)\)\right| \ll |t|^m
\end{equation}
with both the implied constant and $m$ having continuous dependence upon $\nu$.
This, along with \eqref{eq:LfuncBound1}, shows that
\begin{equation}
\label{eq:LfuncBound4}
\left| \mathcal{L}(\epsilon n,\nu) \right| \ll |4n|^m,
\end{equation}
where both the implied constant and $m$ are continuously dependent upon $\nu$.

In Theorem \ref{thm:SimplFourCoeff} we see that
for $n \neq 0$, $a_{\epsilon,\nu}(n)$ is a finite linear combination with 
bounded scalar coefficients and terms of the form
\begin{equation}
\label{eq:FourCoeffTerms}
2^k \(2^{k+2}\)^{-\nu-1} \mathcal{L}(\epsilon n,\nu)
\end{equation}
with $k$ such that $k \leq \ell + 1$ where
$n = 2^{\ell}n'$ with $\gcd(2,n') = 1$.
Also observe that in Theorem \ref{thm:SimplFourCoeff}, the number of summands of the form \eqref{eq:FourCoeffTerms} 
for a given $a_{\epsilon,\nu}(n)$ is bounded by  $|2n|$.
Thus for $\nu \neq \frac{1}{2}$,
\begin{equation}
\label{eq:FourCoeffBound}
|a_{\epsilon,\nu}(n)| \ll |4n|^m,
\end{equation}
where both the implied constant and $m$ are continuously dependent upon $\nu$.
We will utilize \eqref{eq:FourCoeffBound} to justify the convergence of $\tlE_{\epsilon,\nu}^{(\infty)}$ later in this section.

To prove that $\tlE^{(\infty)}_{\epsilon,\nu}$ has meromorphic continuation to $\C$,
we will need to make reference to the notion of a distribution \textit{vanishing to a given order at a point}.
See \cite{Miller04} for the relevant definitions and results.
We will also use the notation we established in Section \ref{sec:DblCvrsSL2}; namely,
 we let $f_0$ and $f_\infty$ denote the restrictions of $f \in V_{\epsilon,\nu}^{-\infty}$ to
$\tlN$ and $\tls^{-1} \tlN$ (respectively), where $\tls$ is defined in \eqref{eq:tlsdef}.
By \eqref{eq:sigmaIndRepUnbddIso}, we define $\dse_{\epsilon,\nu,n} \in V_{\epsilon,\nu}^\infty$
such that
\begin{equation}
\label{eq:dseDef}
\(\dse_{\epsilon,\nu,n}\)_0(x) = e(n x) \text{ and } \(\dse_{\epsilon,\nu,n}\)_\infty(x) = \sgn(-x)^{\epsilon/2} |x|^{\nu-1} \(\dse_{\epsilon,\nu,n}\)_0(x).
\end{equation}
By Propositions 2.19 and 2.26 of \cite{Miller04}, it follows that
\begin{equation}
\label{eq:dseVanish}
\(\dse_{\epsilon,\nu,n}\)_\infty \text{ vanishes to infinite order at $0$.}
\end{equation}
In light of this, the element $\dse_{\epsilon,\nu,n}$ is referred to as the \textit{canonical extension} of $e(nx)$ to infinity.
The aforementioned results of \cite{Miller04} in conjunction with \eqref{eq:FourCoeffBound}
show that 
\begin{align}
\label{eq:EisenFourSerConv}
&\sum_{n \in \Z_{\neq 0}} a_{\epsilon,\nu}(n) \dse_{\epsilon,\nu,n} \text{ converges in } V_{\epsilon,\nu}^{-\infty}, \text{ and}\\
&\(\sum_{n \in \Z_{\neq 0}} a_{\epsilon,\nu}(n) \dse_{\epsilon,\nu,n}\)_\infty \text{ vanishes to infinite order at } 0 
\text{ for all } \nu \in \C_{\neq \frac{1}{2}}.\notag
\end{align}
Likewise, for $\nu \in \C - \Z_{\leq 0}$, $\epsilon \in \{\pm 1\}$, we claim that
the constant function can be identified with the distribution
$\onebb_{\epsilon,\nu} \in V_{\epsilon,\nu}^{-\infty}$, where 
\begin{equation}
\label{eq:onebbDef}
(\onebb_{\epsilon,\nu})_0(x) = 1 \text{ and } 
\(\onebb_{\epsilon,\nu}\)_\infty(x) = \sgn(-x)^{\epsilon/2} |x|^{\nu - 1}
\end{equation}
See \eqref{eq:sigmaIndRepUnbddIso} for why, at least formally, this defines an element of $V_{\epsilon,\nu}^{-\infty}$.
At first glance, it appears that $\onebb_{\epsilon,\nu}$ is not defined for  $\nu$ with $\Re(\nu) \leq 0$,
since integration of a smooth function of compact support on $\R$ against $\(\onebb_{\epsilon,\nu}\)_\infty$
does not necessarily converge.
However, a formal application of integration by parts
allows us to extend the definition of $\(\onebb_{\epsilon,\nu}\)_\infty$ to all $\nu \in \C - \Z_{\leq 0}$.
Furthermore, one can show that as a function of $\nu$, $\onebb_{\epsilon,\nu}$ has simple poles at $\nu \in \Z_{\leq 0}$.
We adopt the convention that $\dse_{\epsilon,\nu,0} = \onebb_{\epsilon,\nu}$.

In the following lemma we utilize the distribution $\delta_{\epsilon,\nu,\infty}$, whose relevant properties we recall from \eqref{eq:tlsdeltprop}.

\begin{lem}
\label{lem:ComplEisenFourSeries}
For $\Re(\nu) > 1$,
\begin{equation}
\label{eq:ComplEisenFourSeries}
\tlE^{(\infty)}_{\epsilon,\nu} = \sum_{n \in \Z} a_{\epsilon,\nu}(n) \dse_{\epsilon,\nu,n} + \zeta_2(2 \nu + 1) \delta_{\epsilon,\nu,\infty}.
\end{equation}
\end{lem}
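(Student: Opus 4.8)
The plan is to exploit the identification \eqref{eq:sigmaIndRepUnbddIso}, whereby an element of $V_{\epsilon,\nu}^{-\infty}$ is pinned down by its pair of restrictions $(f_0,f_\infty)$, subject to the linkage \eqref{eq:sigmaIndRepUnbddEq} on $\R_{\neq 0}$. Since that linkage says nothing about the value of $f_\infty$ at the single point $x=0$ (the image of the cusp at $\infty$), I would prove the asserted identity by first matching the two sides on $\tlN$ and then separately pinning down the part of the $(\cdot)_\infty$-restriction supported at $0$. As a preliminary, I would check that the right-hand side is a genuine element of $V_{\epsilon,\nu}^{-\infty}$: the series $\sum_{n\neq 0} a_{\epsilon,\nu}(n)\dse_{\epsilon,\nu,n}$ converges there by \eqref{eq:EisenFourSerConv}, the term $a_{\epsilon,\nu}(0)\onebb_{\epsilon,\nu}$ is a well-defined element because $\Re(\nu)>1$ keeps $\nu$ away from $\Z_{\leq 0}$, and $\delta_{\epsilon,\nu,\infty}\in V_{\epsilon,\nu}^{-\infty}$ by \eqref{eq:tlsdeltprop}.

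Next, because the series converges in $V_{\epsilon,\nu}^{-\infty}$ and the restriction map $f\mapsto f_0$ is continuous, I would apply it termwise. Using $\(\dse_{\epsilon,\nu,n}\)_0(x)=e(nx)$ from \eqref{eq:dseDef}, the convention $\dse_{\epsilon,\nu,0}=\onebb_{\epsilon,\nu}$, and $\(\delta_{\epsilon,\nu,\infty}\)_0=0$ from \eqref{eq:tlsdeltprop}, the $(\cdot)_0$-restriction of the right-hand side is $\sum_{n\in\Z}a_{\epsilon,\nu}(n)e(nx)$, which is exactly $\(\tlE_{\epsilon,\nu}^{(\infty)}\)_0$ by the Fourier expansion \eqref{eq:tlEInfFourSeries}. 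Writing $D$ for the difference of the two sides, this gives $D_0=0$, so by \eqref{eq:sigmaIndRepUnbddEq} we get $D_\infty=0$ on $\R_{\neq 0}$; hence $D_\infty$ is supported at $\{0\}$ and has the form $\sum_{j=0}^{N}c_j\delta_0^{(j)}$.

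It then remains to show every $c_j$ vanishes, and here I would invoke the notion of a distribution vanishing to a given order at a point from \cite{Miller04}. The point is that any nonzero distribution supported at $\{0\}$ vanishes to order at most $-1$ at the origin, so it suffices to exhibit $D_\infty$ as a difference of two distributions each vanishing to non-negative order at $0$. On one side, $\(\tlE_{\epsilon,\nu}^{(\infty)}\)_\infty-\zeta_2(2\nu+1)\,\epsilon i\,\delta_0$ equals, by \eqref{eq:EisenInfDistInf}, the sum $\epsilon i\,\zeta_2(2\nu+1)\sum_{(c,d)}(-c,d)_H\(\frac{c}{d}\)|d|^{-\nu-1}\sgn(d)^{\epsilon/2}\delta_{c/d}$ of point masses located at the nonzero rationals $c/d$; for $\Re(\nu)>1$ the weights are summable near the origin, so this is a finite measure with no atom at $0$ and hence vanishes to non-negative order there. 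On the other side, $\(\sum_{n}a_{\epsilon,\nu}(n)\dse_{\epsilon,\nu,n}\)_\infty$ splits as $a_{\epsilon,\nu}(0)\(\onebb_{\epsilon,\nu}\)_\infty$, which vanishes to order $\Re(\nu)-1>0$ by \eqref{eq:onebbDef}, plus $\(\sum_{n\neq 0}a_{\epsilon,\nu}(n)\dse_{\epsilon,\nu,n}\)_\infty$, which vanishes to infinite order by \eqref{eq:EisenFourSerConv}. Subtracting the common $\zeta_2(2\nu+1)\,\epsilon i\,\delta_0$ contributed by $\delta_{\epsilon,\nu,\infty}$ via \eqref{eq:tlsdeltprop}, I would conclude that $D_\infty$ is a difference of distributions vanishing to non-negative order, forcing all $c_j=0$ and hence $D=0$.

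I expect the main obstacle to be the claim that $\(\tlE_{\epsilon,\nu}^{(\infty)}\)_\infty-\zeta_2(2\nu+1)\,\epsilon i\,\delta_0$ genuinely vanishes to non-negative order at $0$: the point masses $\delta_{c/d}$ accumulate at the origin, so one must control this accumulation (much as in the footnote to Proposition \ref{prop:pairingToCondConvIntegral}, via \cite[Lemma 3.1]{Miller04} and the summability granted by $\Re(\nu)>1$) to rule out a spurious derivative of $\delta_0$ hidden in the limit. This accumulation is precisely the \emph{behavior at infinity} that the Fourier series alone cannot detect, and it is what the correction term $\zeta_2(2\nu+1)\delta_{\epsilon,\nu,\infty}$ is designed to supply.
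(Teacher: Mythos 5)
Your architecture is the same as the paper's: match the two sides on $\tlN$, conclude that the difference $D$ has $D_\infty$ supported in $\{0\}$, and then kill $D_\infty$ by showing each constituent vanishes to non-negative order at $0$ and invoking \cite[Lemma 2.8]{Miller04}. Your treatment of the Fourier-series side also matches the paper (infinite-order vanishing for the $n\neq 0$ part, positive order for $a_{\epsilon,\nu}(0)\onebb_{\epsilon,\nu}$). The gap is in the one step you yourself flag as the main obstacle, and the heuristic you offer there does not close it. You argue that $\(\tlE^{(\infty)}_{\epsilon,\nu}-\zeta_2(2\nu+1)\delta_{\epsilon,\nu,\infty}\)_\infty$ is a finite measure with no atom at $0$ ``and hence vanishes to non-negative order there.'' That implication is false in the Miller--Schmid calculus: a finite measure near $0$ is the distributional derivative of its bounded cumulative function, which a priori yields vanishing only to order $-1$ --- exactly the order of $\delta_0$ itself --- so Lemma 2.8 cannot be applied to the difference. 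Concretely, $\mu=\sum_{k\ge 2}k^{-1}(\log k)^{-2}\delta_{1/k}$ is a finite measure with no atom at $0$, yet its cumulative function decays like $1/\log(1/x)$ as $x\to 0^{+}$, which is not $O(|x|^{\alpha})$ for any $\alpha>0$; this $\mu$ vanishes to order $-1$ and no better. Absence of an atom controls only the value of the cumulative function at $0$, not its rate of decay, and it is the rate of decay that determines the order of vanishing.

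What is actually needed --- and what the paper's proof supplies --- is a quantitative bound on the mass that the point masses $\delta_{c/d}$ place in $[-x,x]$. The paper writes the antiderivative of the $d>0$ part of \eqref{eq:eisendposExp} as $\epsilon i\,\zeta_2(2\nu+1)\sum_{d} d^{-\nu}\,q_d(x)/d$ with $q_d(x)=\sum_{c}\(\frac{c}{d}\)h_{\frac{c}{d}}(x)$, and shows $|q_d(x)/d|\le |x|$ by observing that the number of admissible numerators $c$ with $0<c/d\le x$ is $O(dx)$; summing against $d^{-\Re(\nu)}$, convergent precisely because $\Re(\nu)>1$, gives an antiderivative that is $O(|x|)$, hence vanishes to order $1$ by \cite[Lemma 3.1]{Miller04}, hence the measure itself vanishes to order $0$. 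This counting of the rationals $c/d$ accumulating at the origin against the weights $|d|^{-\nu-1}$ is the missing content of your step; ``summable weights'' records only that the total mass is finite, not that the mass near $0$ decays linearly. (Your outline could alternatively be rescued by pairing $D_\infty$ directly with dilated test functions $t^{j}\phi(x/t)$ and using dominated convergence over the countable family of point masses to show each coefficient of $\delta_0^{(j)}$ vanishes; but that is a different argument from the one you state, and it bypasses rather than establishes the order-of-vanishing claim.)
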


\begin{proof}
For $\Re(\nu) > 1$, we have already have that
$\(\tlE^{(\infty)}_{\epsilon,\nu}\)_0 = \(\sum_{n \in \Z} a_{\epsilon,\nu}(n) \dse_{\epsilon,\nu,n}\)_0$.
Since $\(\delta_{\epsilon,\nu,\infty}\)_0 = 0$, it also follows that 
\begin{equation}
\label{eq:EisenFourDiff1}
\(\tlE^{(\infty)}_{\epsilon,\nu}\)_0 = \(\sum_{n \in \Z} a_{\epsilon,\nu}(n) \dse_{\epsilon,\nu,n} + \zeta_2(2 \nu + 1) \delta_{\epsilon,\nu,\infty}\)_0.
\end{equation}
This proves that \eqref{eq:ComplEisenFourSeries} holds when restricted to $\tlN$.
It remains to establish the same equality when we restrict to $\tls^{-1} \tlN$.

By \eqref{eq:EisenFourDiff1} and \eqref{eq:sigmaIndRepUnbddEq}, it follows that
\begin{equation}
\label{eq:EisenFourDiff2}
\(\tlE^{(\infty)}_{\epsilon,\nu} - \sum_{n \in \Z} a_{\epsilon,\nu}(n) \dse_{\epsilon,\nu,n} - \zeta_2(2\nu + 1) \delta_{\epsilon,\nu,\infty} \)_\infty
\end{equation}
is the zero distribution when restricted $\R_{\neq 0}$, and therefore as a distribution on $\R$ has support contained in $\{0\}$.
By \cite[Lemma 3.1]{Miller04}, we have that $\(\onebb_{\epsilon,\nu}\)_\infty$ vanishes to order $0$ at $0$.
Since $\(\sum_{n \in \Z_{\neq 0}} a_{\epsilon,\nu}(n) \dse_{\epsilon,\nu,n}\)_\infty$ vanishes to infinite order at $0$, it follows then that
$\(\sum_{n \in \Z} a_{\epsilon,\nu}(n) \dse_{\epsilon,\nu,n}\)_\infty$ vanishes to order $0$ at $0$.
Later we will show that 
\begin{equation}
\label{eq:vanishZeroExp}
\(\tlE^{(\infty)}_{\epsilon,\nu} - \zeta_2(2 \nu + 1) \delta_{\epsilon,\nu,\infty}\)_\infty \text{ vanishes to order 0 at 0}.
\end{equation}
Assuming that this is the case, it then follows that \eqref{eq:EisenFourDiff2} vanishes to order $0$ at $0$, in addition to having
support contained in $\{0\}$.
Hence by \cite[Lemma 2.8]{Miller04}, it follows that \eqref{eq:EisenFourDiff2} is the zero distribution on $\R$,
which then proves the lemma.
Thus it remains to prove \eqref{eq:vanishZeroExp}.

By \eqref{eq:EisenInfDistInf} and \eqref{eq:tlsdeltprop},
\begin{subequations}
\begin{align}
&\(\tlE^{(\infty)}_{\epsilon,\nu} - \zeta_2(2 \nu + 1) \delta_{\epsilon,\nu,\infty}\)_\infty \notag\\
&= \epsilon i \zeta_2(2\nu + 1) \sum_{\substack{(c,d)\in \Z_{\neq 0} \times \Z \\ \gcd(c,d)=1 \\ d \equiv 1 (\mod 4) \\ c \equiv 0 (\mod 4)}}
(-c,d)_H \(\frac{c}{d}\)|d|^{-\nu-1} \sgn(d)^{\epsilon/2} \delta_{\frac{c}{d}}\notag \\
\label{eq:eisendposExp}
&= \epsilon i  \zeta_2(2\nu+1) \sum_{\substack{(c,d)\in \Z_{\neq 0} \times \Z_{>0} \\ \gcd(c,d)=1 \\ d \equiv 1 (\mod 4) \\ c \equiv 0 (\mod 4)}} 
\(\frac{c}{d}\) d^{-\nu-1}  \delta_{\frac{c}{d}}\\
\label{eq:eisendnegExp}
&\hh - \zeta_2(2\nu+1) \sum_{\substack{(c,d)\in \Z_{\neq 0} \times \Z_{<0} \\ \gcd(c,d)=1 \\ d \equiv 1 (\mod 4) \\ c \equiv 0 (\mod 4)}} \sgn(-c) 
\(\frac{c}{d}\)|d|^{-\nu-1} \delta_{\frac{c}{d}},
\end{align}
\end{subequations}
where \eqref{eq:eisendposExp} is a summation with $d > 0$ and \eqref{eq:eisendnegExp} is a summation with $d<0$.
As the comments subsequent to \eqref{eq:EisenInfDistInf} indicate, the above series converge uniformly and absolutely
against compactly supported test functions on $\R$.

We begin by showing that \eqref{eq:eisendposExp} vanishes to order $0$ at $0$.
Towards this end, observe 
\begin{align*}
&\sum_{\substack{(c,d)\in \Z_{\neq 0} \times \Z_{>0} \\ \gcd(c,d)=1 \\ d \equiv 1 (\mod 4) \\ c \equiv 0 (\mod 4)}} 
\(\frac{c}{d}\) d^{-\nu-1}  \delta_{\frac{c}{d}}
=\sum_{\substack{(c,d)\in \Z_{> 0} \times \Z_{>0} \\ \gcd(c,d)=1 \\ d \equiv 1 (\mod 4) \\ c \equiv 0 (\mod 4)}} 
\(\(\frac{c}{d}\) d^{-\nu-1}  \delta_{\frac{c}{d}} + \(\frac{-c}{d}\) d^{-\nu-1}  \delta_{\frac{-c}{d}}\)\\
&=\sum_{\substack{(c,d)\in \Z_{> 0} \times \Z_{>0} \\ \gcd(c,d)=1 \\ d \equiv 1 (\mod 4) \\ c \equiv 0 (\mod 4)}} 
\(\frac{c}{d}\) d^{-\nu-1}  (\delta_{\frac{c}{d}} + \delta_{\frac{-c}{d}}),
\end{align*}
where in the last line we used the fact that $\(\frac{-1}{d}\) = 1$ for $d \equiv 1 (\mod 4)$.
Observe that for $\phi \in C_c^\infty(\R)$,
\begin{equation*}
\int_\R  (\delta_{\frac{c}{d}}(x) + \delta_{\frac{-c}{d}}(x)) \phi(x) \, dx = -\int_\R h_{\frac{c}{d}}(x) \phi'(x) \, dx,
\end{equation*}
where
\begin{equation*}
h_{\frac{c}{d}}(x) = \begin{cases} 1 & \text{if } \frac{c}{d} < x\\ 0 & \text{if } -\frac{c}{d} \leq x \leq \frac{c}{d}\\ -1 & \text{if } x < -\frac{c}{d}.\end{cases}
\end{equation*}
For $d > 0$ and $d \equiv 1 (\mod 4)$, let
\begin{equation}
q_d(x) = \sum_{\substack{c \in \Z_{> 0} \\ \gcd(c,d)=1 \\ c \equiv 0 (\mod 4)}} \(\frac{c}{d}\) h_{\frac{c}{d}}(x).
\end{equation}
The above analysis shows that the antiderivative of \eqref{eq:eisendposExp} is equal to
\begin{equation}
\label{eq:antiDeriv1}
\epsilon i  \zeta_2(2\nu+1) \sum_{\substack{d \in \Z_{>0}\\d \equiv 1 (\mod 4)}} d^{-\nu-1} q_d(x)
= \epsilon i  \zeta_2(2\nu+1) \sum_{\substack{d \in \Z_{>0}\\d \equiv 1 (\mod 4)}} d^{-\nu} \frac{q_d(x)}{d}.
\end{equation}

If $0 \leq x \leq \frac{4}{d}$ then $q_d(x) = 0$.
If $\frac{4}{d} < x$ then there exists maximal $m \in \Z$ such that $m \equiv 0 (\mod 4)$, $\gcd(m,d) = 1$,
and $\frac{m}{d} < x$. For such $x$, 
\begin{align*}
h_{\frac{c}{d}}(x) = 0 &\text{ for } \frac{c}{d} > \frac{m}{d}, \text{ and}\\
h_{\frac{c}{d}}(x) = 1 &\text{ for } \frac{c}{d} \leq \frac{m}{d},
\end{align*}
where $\gcd(c,d)=1$ and $c \equiv 0 (\mod 4)$.
Thus for such $x$,
\begin{equation*}
q_d(x) = \sum_{\substack{c \in \Z_{> 0} \\ \gcd(c,d)=1 \\ c \equiv 0 (\mod 4)}} \(\frac{c}{d}\) h_{\frac{c}{d}}(x) = \sum_{\substack{0 < c \leq m\\ \gcd(c,d)=1 \\ c \equiv 0 (\mod 4)}} \(\frac{c}{d}\) \cdot 1  = \sum_{\substack{0 < c \leq m\\ \gcd(c,d)=1 \\ c \equiv 0 (\mod 4)}} \(\frac{c}{d}\).
\end{equation*}
By this equality, it follows that for any $x \in \R_{> 0}$, we have
$\left|\frac{q_d(x)}{d}\right| \leq \frac{m}{d} < x$.
Since $q_d(x)$ is an odd function, it follows that 
$\left| \frac{q_d(x)}{d} \right| \leq |x|$ for all $x \in \R$.
Thus
\begin{align*}
\left|\epsilon i  \zeta_2(2\nu+1) \sum_{\substack{d \in \Z_{>0}\\d \equiv 1 (\mod 4)}} d^{-\nu} \frac{q_d(x)}{d} \right|
\leq |x| \cdot \left|\epsilon i  \zeta_2(2\nu+1) \sum_{\substack{d \in \Z_{>0}\\d \equiv 1 (\mod 4)}} d^{-\nu} \right|,
\end{align*}
which implies \eqref{eq:antiDeriv1} vanishes (at least) to order $1$ at $0$ by \cite[Lemma 3.1(c)]{Miller04} (recall that we are assuming $\Re(\nu)>1$,
so the above series converges).
Therefore \eqref{eq:eisendposExp}, which is the derivative of \eqref{eq:antiDeriv1}, must vanish to order $0$ at $0$.
An almost identical argument shows that \eqref{eq:eisendnegExp} also vanishes to order $0$ at $0$,
proving \eqref{eq:vanishZeroExp} and thereby completing the proof.
\end{proof}

By \eqref{eq:EisenFourSerConv} and Theorem \ref{thm:SimplFourCoeff},
\begin{equation*}
\sum_{n \in \Z_{\neq 0}} a_{\epsilon,\nu}(n) \dse_{\epsilon,\nu,n} + \zeta_2(2\nu+1) \delta_{\epsilon,\nu,\infty}
\end{equation*}
is well-defined for $\nu \in \C - \{0,\frac{1}{2}\}$.
Notice that $a_{\epsilon,\nu}(0) \onebb_{\epsilon,\nu}$ is missing from this summation.
Although $\onebb_{\epsilon,\nu}$ has simple poles at $\nu \in \Z_{\leq 0}$ (see discussion following \eqref{eq:onebbDef}),
since by Proposition \ref{prop:EisenInfConst}, $a_{\epsilon,\nu}(0)$ has simple zeros at $\nu \in \Z_{\leq 0}$,
it follows that
\begin{equation*}
\sum_{n \in \Z} a_{\epsilon,\nu}(n) \dse_{\epsilon,\nu,n} + \zeta_2(2\nu+1) \delta_{\epsilon,\nu,\infty}
\end{equation*}
is well-defined for $\nu \in \C - \{0,\frac{1}{2}\}$.
Thus by Lemma \ref{lem:ComplEisenFourSeries}
and the uniqueness of meromorphic continuation, we obtain the following proposition.

\begin{prop}
\label{prop:EisenInfMeroCont}
The metaplectic Eisenstein distribution $\tlE_{\epsilon,\nu}^{(\infty)} \in V^{-\infty}_{\epsilon,\nu}$ has holomorphic continuation
to $\C$ except for simple poles on $\{0,\frac{1}{2}\}$.
In particular,
\begin{equation*}
\tlE^{(\infty)}_{\epsilon,\nu} = \sum_{n \in \Z} a_{\epsilon,\nu}(n) \dse_{\epsilon,\nu,n} + a_{\epsilon,\nu}(\infty) \delta_{\epsilon,\nu,\infty},
\end{equation*}
where 
\begin{equation}
\label{eq:aInfCoeff}
a_{\epsilon,\nu}(\infty) = \zeta_2(2\nu+1) = (1-2^{-2\nu-1}) \zeta(2\nu + 1).
\end{equation}
\end{prop}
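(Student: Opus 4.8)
The plan is to combine Lemma~\ref{lem:ComplEisenFourSeries}, Theorem~\ref{thm:SimplFourCoeff}, and Proposition~\ref{prop:EisenInfConst} and then invoke uniqueness of analytic continuation. First I would take the identity supplied by Lemma~\ref{lem:ComplEisenFourSeries}, namely $\tlE^{(\infty)}_{\epsilon,\nu} = \sum_{n\in\Z} a_{\epsilon,\nu}(n)\dse_{\epsilon,\nu,n} + \zeta_2(2\nu+1)\delta_{\epsilon,\nu,\infty}$ valid for $\Re(\nu)>1$, and argue that its right-hand side extends to a meromorphic $V^{-\infty}_{\epsilon,\nu}$-valued function of $\nu$ on all of $\C$ whose poles are confined to simple poles at $\nu\in\{0,\tfrac{1}{2}\}$. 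The continuation of $\tlE^{(\infty)}_{\epsilon,\nu}$ will then be read off from this extension.

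Next I would analyze the three summands separately. For $\sum_{n\neq 0} a_{\epsilon,\nu}(n)\dse_{\epsilon,\nu,n}$ I would combine the holomorphic continuation of each $a_{\epsilon,\nu}(n)$, $n\neq 0$, with at most a simple pole at $\nu=\tfrac{1}{2}$ (Theorem~\ref{thm:SimplFourCoeff}), the polynomial bound $|a_{\epsilon,\nu}(n)|\ll|4n|^m$ with constants locally uniform in $\nu\neq\tfrac{1}{2}$ (the bound~\eqref{eq:FourCoeffBound}), and the properties of the canonical extensions $\dse_{\epsilon,\nu,n}$ recorded in~\eqref{eq:EisenFourSerConv}; together these give convergence in the strong distribution topology, locally uniform in $\nu$, hence a holomorphic family on $\C_{\neq 1/2}$ with at most a simple pole at $\tfrac{1}{2}$. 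For the $n=0$ term I would note that $\dse_{\epsilon,\nu,0}=\onebb_{\epsilon,\nu}$ has simple poles exactly at $\nu\in\Z_{\leq 0}$ (the discussion after~\eqref{eq:onebbDef}), whereas $a_{\epsilon,\nu}(0)=(1+\epsilon i)2^{-2\nu-2}\zeta(2\nu)$ vanishes simply at each such point (the trivial zeros of $\zeta(2\nu)$) and is otherwise holomorphic apart from a simple pole at $\nu=\tfrac{1}{2}$ (Proposition~\ref{prop:EisenInfConst}); so the product is holomorphic on $\C_{\neq 1/2}$ with at most a simple pole there. Finally, $\delta_{\epsilon,\nu,\infty}$ is, by~\eqref{eq:tlsdeltprop}, the $\nu$-independent datum $(0,\epsilon i\,\delta_0)$ in the $(f_0,f_\infty)$ model, hence entire in $\nu$, while $\zeta_2(2\nu+1)=(1-2^{-2\nu-1})\zeta(2\nu+1)$ is holomorphic away from a simple pole at $\nu=0$. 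Adding the three pieces gives the claimed meromorphic extension.

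To conclude I would invoke uniqueness of analytic continuation: since $\tlE^{(\infty)}_{\epsilon,\nu}$ is determined by its restrictions $(\tlE^{(\infty)}_{\epsilon,\nu})_0$ and $(\tlE^{(\infty)}_{\epsilon,\nu})_\infty$, which take values in the fixed space $C^{-\infty}(\R)$, and these agree with the corresponding restrictions of the right-hand side for $\Re(\nu)>1$, the two families must coincide throughout their common domain; setting $a_{\epsilon,\nu}(\infty):=\zeta_2(2\nu+1)$ then yields the displayed formula together with~\eqref{eq:aInfCoeff}. I expect the remaining work here to be essentially bookkeeping rather than analysis, since the genuine analytic input---the meromorphic continuation and polynomial bounds for the Fourier coefficients, the convergence of the Fourier series in $V^{-\infty}_{\epsilon,\nu}$, and the behavior ``at infinity''---has already been carried out in Sections~\ref{sec:MeroContFourCoeff}--\ref{sec:MeroContEisenInf} and in Lemma~\ref{lem:ComplEisenFourSeries}. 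The two points that genuinely require care are the zero/pole cancellation in the $n=0$ term, so that the continuation of the full series picks up no spurious poles on $\Z_{\leq 0}$, and the fact that ``meromorphic continuation'' of a family of distributions living in $\nu$-dependent spaces only becomes meaningful once one passes to a fixed model such as the pair $(f_0,f_\infty)$.
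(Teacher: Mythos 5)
Your proposal is correct and follows essentially the same route as the paper: the paper likewise combines Lemma \ref{lem:ComplEisenFourSeries} with the continuation and polynomial bounds of Theorem \ref{thm:SimplFourCoeff} and \eqref{eq:EisenFourSerConv}, uses Proposition \ref{prop:EisenInfConst} to cancel the poles of $\onebb_{\epsilon,\nu}$ at $\Z_{\leq 0}$ against the zeros of $a_{\epsilon,\nu}(0)$, and concludes by uniqueness of meromorphic continuation. Your additional remark about working in the fixed $(f_0,f_\infty)$ model to make the $\nu$-dependence meaningful is a point the paper leaves implicit, but it does not change the argument.
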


In the prior proposition we introduced the notation $a_{\epsilon,\nu}(\infty)$ for the coefficient of $\delta_{\epsilon,\nu,\infty}$.
Throughout this paper, we will encounter Fourier series of the form
\begin{equation*}
\sum_{n \in \Z} c_{\epsilon,\nu}(n) \dse_{\epsilon,\nu,n} + h(\epsilon,\nu) \delta_{\epsilon,\nu,\infty},
\end{equation*}
where $h(\epsilon,\nu)$ is a meromorphic function in $\nu$.
When this occurs, we denote $h(\epsilon,\nu)$ by $c_{\epsilon,\nu}(\infty)$.

\section{The Metaplectic Eisenstein Distribution at \texorpdfstring{$0$}{Zero}}
\label{sec:MetaAtZero}

Recall that $\tlE^{(\infty)}_{\epsilon,\nu}$ is the distributional analogue of the usual metaplectic Eisenstein series based at the 
cusp $\infty$.
Next we will define $\tlE^{(0)}_{\epsilon,\nu}$, which will be a distributional analogue of the metaplectic Eisenstein series based
at the cusp $0$.
This Eisenstein distribution will appear in the functional equation (Theorem \ref{thm:FuncEq}).

Let
\begin{equation}
\label{eq:tlxiZeroDef}
\tlxi_{0} =\tla_2^{-1} \tls = \tls \tla_2 = \(\mm{0}{-2^{-1}}{2}{0},1\)
\end{equation}
where $\tls = \(\mm{0}{-1}{1}{0},1\)$ and $\tla_{2} = \(\mm{2}{0}{0}{2^{-1}},1\)$ 
(as defined in \eqref{eq:tlsdef} and \eqref{eq:sltl2elems} respectively).
Let 
\begin{equation}
\label{eq:tlGamma0Def}
\tlGamma_{0} = \tlxi_0^{-1} \tlGamma_\infty \tlxi_0 =  \tlxi_{0} \tlGamma_\infty \tlxi_{0}^{-1} = \left\{\(\mm{1}{0}{-4n}{1},1\):n \in \Z\right\},
\end{equation}
which one can check is a subgroup of $\tlGamma_1(4)$.
We define the \textit{metaplectic Eisenstein distribution at $0$} to be the following
distribution in $V_{\epsilon,\nu}^{-\infty}$:
\begin{equation}
\label{eq:MetaEisenAtZeroDef}
\tlE^{(0)}_{\epsilon,\nu} = \zeta_2(2 \nu + 1) \sum_{\tlgamma \in \tlGamma_1(4) / \tlGamma_{0}} \pi\(\tlgamma \tlxi_{0}\) \delta_{\epsilon,\nu,\infty},
\end{equation}
where $\Re(\nu) > 1$.
Since $\delta_{\epsilon,\nu,\infty}$ is invariant under left translation by $\tlGamma_\infty$,
it follows that $\pi(\tlxi_{0}) \delta_{\epsilon,\nu,\infty}$
is invariant under left translation by $\tlGamma_{0}$ (by virtue of \eqref{eq:tlGamma0Def}),
justifying the appearance of $\tlGamma_1(4) / \tlGamma_{0}$ in the definition of $\tlE^{(0)}_{\epsilon,\nu}$.
By construction, we see that $\tlE^{(0)}_{\epsilon,\nu}$ is formally $\tlGamma_1(4)$-invariant.
We will justify the convergence of the series defining $\tlE^{(0)}_{\epsilon,\nu}$ momentarily.

Since $\tlGamma_1(4)$ is generated by 
\begin{align}
&\(\mm{1}{1}{0}{1},1\) = \tlxi_{0} \(\mm{1}{0}{-4}{1},1\) \tlxi_{0}^{-1} \text{ and }\\
&\(\mm{1}{0}{-4}{1},1\) = \tlxi_{0} \(\mm{1}{1}{0}{1},1\) \tlxi_{0}^{-1},\notag
\end{align}
it follows 
that $\tlxi_{0} \tlGamma_1(4) \tlxi_{0}^{-1} = \tlGamma_1(4)$.
Since $\tlxi_{0} \tlGamma_1(4) \tlxi_{0}^{-1} = \tlGamma_1(4)$ and 
$\tlxi_{0} \tlGamma_\infty \tlxi_{0}^{-1} = \tlGamma_{0}$,
\begin{equation}
\label{eq:EZeroToEInf}
\tlE^{(0)}_{\epsilon,\nu} = \zeta_2(2 \nu + 1) \sum_{\tlgamma \in \tlGamma_1(4) / \tlGamma_{\infty}} \pi\(\tlxi_{0} \tlgamma\) \delta_{\epsilon,\nu,\infty} = \pi\(\tlxi_{0}\) \tlE^{(\infty)}_{\epsilon,\nu}.
\end{equation}
Therefore the series defining $\tlE^{(0)}_{\epsilon,\nu}$ converges for $\Re(\nu)>1$.
Furthermore, we see that $\tlE^{(0)}_{\epsilon,\nu}$ inherits the meromorphic continuation of $\tlE^{(\infty)}_{\epsilon,\nu}$
(stated in Proposition \ref{prop:EisenInfMeroCont}),
and therefore $\tlE^{(0)}_{\epsilon,\nu}$ is holomorphic on $\C$ except for simple poles on $\{0,\frac{1}{2}\}$.

Recall that in Section \ref{sec:DblCvrsSL2}, we let $f_0$ and $f_\infty$ denote the restrictions of $f \in V_{\epsilon,\nu}^{-\infty}$ to
$\tlN$ and $\tls^{-1} \tlN$.
Since $\tlE_{\epsilon,\nu}^{(0)}$ is $\tlGamma_1(4)$-invariant, it follows that
$\(\tlE^{(0)}_{\epsilon,\nu}\)_0$ is periodic, and consequently, has the following Fourier series expansion:
\begin{equation}
\(\tlE^{(0)}_{\epsilon,\nu}\)_0(x) = \sum_{n \in \Z} b_{\epsilon,\nu}(n) e(nx),
\end{equation}
where
\begin{equation}
b_{\epsilon,\nu}(n) = \int_0^1 \(\tlE^{(0)}_{\epsilon,\nu}\)_0(x) e(-nx) \, dx.
\end{equation}
In \eqref{eq:dseDef}, we extended the definition of $e(n x)$ to an element $\dse_{\epsilon,\nu,n}$ of $V_{\epsilon,\nu}^{-\infty}$. 
In Proposition \ref{prop:EisenInfMeroCont}, we gave a series
expansion for $\tlE^{(\infty)}_{\epsilon,\nu}$ in terms of $\dse_{\epsilon,\nu,n}$ and $\delta_{\epsilon,\nu,\infty}$.
We claim that a similar series expansion holds for $\tlE^{(0)}_{\epsilon,\nu}$:
\begin{equation}
\label{eq:ComplFourSerEisAtZero}
\tlE^{(0)}_{\epsilon,\nu} = \sum_{n \in \Z} b_{\epsilon,\nu}(n) \dse_{\epsilon,\nu,n},
\end{equation}
where
\begin{equation}
\label{eq:ZeroEisenFourCoeff}
b_{\epsilon,\nu}(n) = \int_0^1 \(\tlE^{(0)}_{\epsilon,\nu}\)_0(x) e(-n x) \, dx.
\end{equation}
Notice that \eqref{eq:ComplFourSerEisAtZero} has no $\delta_{\epsilon,\nu,\infty}$ term (in contrast to
the series given in Proposition \ref{prop:EisenInfMeroCont}).
Thus
\begin{equation}
\label{eq:bInfCoeff}
b_{\epsilon,\nu}(\infty) = 0,
\end{equation}
in terms of the notation of Proposition \ref{prop:EisenInfMeroCont}.

To show that \eqref{eq:ComplFourSerEisAtZero} holds, we must first establish that 
$\displaystyle \sum_{n \in \Z} b_{\epsilon,\nu}(n) \dse_{\epsilon,\nu,n}$ does indeed converge for all
$\nu \in \C - \{0,\frac{1}{2}\}$.
To accomplish this, we need only show that $b_{\epsilon,\nu}(n)$ is bounded polynomially in $n$ for each 
$\nu \in \C - \{0,\frac{1}{2}\}$, just as we did for $a_{\epsilon,\nu}(n)$ in \eqref{eq:FourCoeffBound}. 
Since we already have the meromorphic continuation
of $\tlE^{(0)}_{\epsilon,\nu}$, it follows immediately from  \cite[(9.15)]{Folland99} that $b_{\epsilon,\nu}(n)$ is bounded polynomially 
for such $\nu$.
Having established the convergence of the Fourier series, we then observe that for $\Re(\nu)>1$,
\begin{equation}
\label{eq:MetaEisenZeroSerDiff}
\(\tlE^{(0)}_{\epsilon,\nu}\)_\infty - \(\sum_{n \in \Z} b_{\epsilon,\nu}(n) \dse_{\epsilon,\nu,n} \)_\infty
\end{equation}
has support contained in $\{0\}$. By \cite[Lemma 2.8]{Miller04},
if we can show that both $\(\tlE^{(0)}_{\epsilon,\nu}\)_\infty$ and $\(\sum_{n \in \Z} b_{\epsilon,\nu}(n) \dse_{\epsilon,\nu,n} \)_\infty$
vanish to order $0$ at $0$, then \eqref{eq:MetaEisenZeroSerDiff} equals zero for $\Re(\nu)>0$, whereupon 
the uniqueness of meromorphic continuation then establishes this identity for all $\nu \in \C - \{0,\frac{1}{2}\}$.
This in turn, would immediately imply \eqref{eq:ComplFourSerEisAtZero}.

In \eqref{eq:EisenFourSerConv}, we stated that $\(\sum_{n \in \Z_{\neq 0}} a_{\epsilon,\nu}(n) \dse_{\epsilon,\nu,n} \)_\infty$ 
vanishes to infinite order at $0$. This result relied upon \cite[Proposition 2.19]{Miller04}, the proof of which only uses
the fact that $a_{\epsilon,\nu}(n)$ is polynomially bounded in $n$.
Since we already have such a bound for $b_{\epsilon,\nu}(n)$, a reapplication of this argument shows that
$\(\sum_{n \in \Z_{\neq 0}} b_{\epsilon,\nu}(n) \dse_{\epsilon,\nu,n} \)_\infty$ vanishes to infinite order at $0$,
from which it then follows that $\(\sum_{n \in \Z} b_{\epsilon,\nu}(n) \dse_{\epsilon,\nu,n}\)_\infty$
vanishes to order $0$ at $0$.

Proving that $\(\tlE^{(0)}_{\epsilon,\nu}\)_\infty$ vanishes to order $0$ at $0$ is more involved.
To obtain this result from our previous computations, observe that by \eqref{eq:EZeroToEInf} and the transformation law in \eqref{eq:indRep}
defined by \eqref{eq:sigmaDef} and \eqref{eq:omegaepsnu}, we have
\begin{align*}
&\(\tlE^{(0)}_{\epsilon,\nu}\)_\infty = \(\pi(\tlxi_{0}) \tlE^{(\infty)}_{\epsilon,\nu}\)_\infty
= \(\pi\(\tls \tla_2\) \tlE^{(\infty)}_{\epsilon,\nu}\)_\infty 
= \(\pi\(\tls^2 \tla_2\) \tlE^{(\infty)}_{\epsilon,\nu}\)_0\\
&= \(\pi(\tla_2) \pi(\tlm_{-1,-1}) \tlE^{(\infty)}_{\epsilon,\nu}\)_0
=  \sigma_\epsilon(\tlm_{-1,-1}) \(\pi\(\tla_2\) \tlE^{(\infty)}_{\epsilon,\nu}\)_0\\
&= \epsilon i  \(\pi\(\tla_2\) \tlE^{(\infty)}_{\epsilon,\nu}\)_0.
\end{align*}
From this, we can conclude that $\(\tlE^{(0)}_{\epsilon,\nu}\)_\infty$ vanishes to order $0$ at $0$ if
$\(\tlE^{(\infty)}_{\epsilon,\nu}\)_0$ vanishes to order $0$ at $0$.%
\footnote{The action of $\tla_2$ preserves the order of vanishing at $0$ since 
it simply scales the $x$-variable by a constant factor.}
We will spare the reader a detailed proof that such vanishing occurs,
but this can be done by modifying the proof given in Lemma \ref{lem:ComplEisenFourSeries}.
One can check that the key lies in not having a $\delta_0$ term in $\(\tlE^{(\infty)}_{\epsilon,\nu}\)_0$.
By \eqref{eq:EisenInfDistZero}, we see that this is indeed the case.

To prove the functional equation in Theorem \ref{thm:FuncEq}, 
we need to compute an explicit formula for the Fourier coefficient $b_{\epsilon,\nu}(0)$.
The derivation of this formula makes considerable use of the Kronecker symbol.  We refer the reader  back to Proposition \ref{prop:KronProps},
where the relevant properties of the Kronecker symbol are stated.

\begin{prop}
\label{prop:b0coeff}
For $\Re(\nu) > 1$,
\begin{equation}
\label{eq:b0coeff}
b_{\epsilon,\nu}(0) =  \epsilon i 2^{-\nu-1} \zeta_2(2\nu) = \epsilon  i 2^{-\nu-1} (1-2^{-2\nu}) \zeta(2 \nu).
\end{equation}
\end{prop}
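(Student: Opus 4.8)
The plan is to compute $b_{\epsilon,\nu}(0)$ directly from the definition $b_{\epsilon,\nu}(0) = \int_0^1 \bigl(\tlE^{(0)}_{\epsilon,\nu}\bigr)_0(x)\,dx$ by first expressing $\bigl(\tlE^{(0)}_{\epsilon,\nu}\bigr)_0$ as an explicit sum of Dirac distributions, exactly as was done for $\bigl(\tlE^{(\infty)}_{\epsilon,\nu}\bigr)_0$ and $\bigl(\tlE^{(\infty)}_{\epsilon,\nu}\bigr)_\infty$ in \eqref{eq:EisenInfDistZero} and \eqref{eq:EisenInfDistInf}. Since $\tlE^{(0)}_{\epsilon,\nu} = \pi(\tlxi_0)\tlE^{(\infty)}_{\epsilon,\nu}$ by \eqref{eq:EZeroToEInf}, and since $\tlxi_0 = \tls\tla_2$, I would use Lemma~\ref{lem:MetaUnbddAct} to push the action of $\pi(\tla_2)$ and $\pi(\tls)$ through. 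The cleanest route is: $\bigl(\tlE^{(0)}_{\epsilon,\nu}\bigr)_0 = \bigl(\pi(\tls\tla_2)\tlE^{(\infty)}_{\epsilon,\nu}\bigr)_0$; writing $\pi(\tls\tla_2) = \pi(\tls)\pi(\tla_2)$ and noting that $\bigl(\pi(\tls)F\bigr)_0 = F_\infty$ composed with the $\tla_2$-scaling, this reduces $\bigl(\tlE^{(0)}_{\epsilon,\nu}\bigr)_0$ to a rescaled version of $\bigl(\tlE^{(\infty)}_{\epsilon,\nu}\bigr)_\infty$, whose explicit form is \eqref{eq:EisenInfDistInf}. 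Concretely, using Lemma~\ref{lem:MetaUnbddAct}(b) for $\tlg^{-1}$ corresponding to $\tla_2$ (so $d = 2^{-1}$, $b=0$, $c=0$, $a=2$), the $\tla_2$-action multiplies by $|2^{-1}|^{\nu-1}\sgn(2^{-1})^{\epsilon/2}$ and replaces the argument $x$ by $4x$; combined with the $\delta_0$ term and the sum over $(c,d)$ in \eqref{eq:EisenInfDistInf}, one obtains a sum of the form $\epsilon i\,\zeta_2(2\nu+1)\,2^{?}\bigl(\delta_0 + \sum (-c,d)_H \bigl(\tfrac{c}{d}\bigr)|d|^{-\nu-1}\sgn(d)^{\epsilon/2}\delta_{\frac{c}{4d}}\bigr)$ after tracking the scaling of the Dirac points.

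Once $\bigl(\tlE^{(0)}_{\epsilon,\nu}\bigr)_0$ is in this explicit form, the integral $\int_0^1$ picks out, from each term $\delta_{\alpha}$, the indicator that the point $\alpha$ (reduced mod $1$) lands in the fundamental domain $[0,1)$. The $\delta_0$ term contributes the leading constant. For the remaining terms, the condition that the argument of the Dirac delta, taken modulo $1$, contributes exactly once to $\int_0^1$ is controlled by the coprimality $\gcd(c,d)=1$ together with the congruences $c \equiv 0 \pmod 4$, $d \equiv 1 \pmod 4$; summing over the appropriate representatives collapses the sum. I expect the resulting sum to take the shape $\epsilon i\,\zeta_2(2\nu+1)\sum_{d}\bigl(\tfrac{\text{something}}{d}\bigr)|d|^{-\nu-1}(\cdots)$ where the inner sum over $c \bmod d$ of Kronecker symbols is a Gauss-type sum, which by Lemma~\ref{lem:GaussSums} vanishes unless $d$ is (essentially) a perfect square. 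This is the same mechanism that produced the clean answer $a_{\epsilon,\nu}(0) = (1+\epsilon i)2^{-2\nu-2}\zeta(2\nu)$ in Proposition~\ref{prop:EisenInfConst}: there, $c$ had to be a square for the character sum over $d$ to survive; here, by the duality of the two cusps, the roles of $c$ and $d$ are swapped, so $d$ must be a square.

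After the square constraint is imposed, I would write $d = d'^2$ (with sign and $2$-adic bookkeeping), factor the character-sum evaluation using Lemma~\ref{lem:GaussSums}, and assemble an Euler product. The local factor at each odd prime $p$ should be $\sum_{k\ge 0} p^{k\cdot(\text{exponent})}\phi(p^{2k})/(\text{power})$, which by the computation \eqref{eq:eulerPhiSum} equals $(1-p^{-2\nu-1})/(1-p^{-2\nu})$; multiplying against $\zeta_2(2\nu+1) = \prod_{p\text{ odd}}(1-p^{-2\nu-1})^{-1}$ telescopes to $\prod_{p\text{ odd}}(1-p^{-2\nu})^{-1}$. Handling the power of $2$ separately — tracking the $2^{-\nu-1}$ prefactor coming from the $\tla_2$-rescaling and the $(1-2^{-2\nu})$ arising from the $2$-part of the zeta function — should give the stated $\epsilon i\,2^{-\nu-1}(1-2^{-2\nu})\zeta(2\nu) = \epsilon i\,2^{-\nu-1}\zeta_2(2\nu)$.

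The main obstacle, I expect, is the bookkeeping in the first step: correctly translating $\bigl(\tlE^{(0)}_{\epsilon,\nu}\bigr)_0$ into an explicit Dirac series via the $\tlxi_0 = \tls\tla_2$ decomposition, keeping track of the extra constants $\sigma_\epsilon(\tlm_{-1,-1}) = \epsilon i$ (which accounts for the $\epsilon i$ rather than $1+\epsilon i$ in the final answer, reflecting that the $\delta_0$ contribution is no longer symmetrized with a reflected copy as it was at the cusp $\infty$), and the rescaling of the support points $\frac{c}{d} \mapsto \frac{c}{4d}$ and its effect on which points fall in $[0,1)$. Once the Dirac series is correctly in hand, the character-sum evaluation and Euler-product assembly run exactly parallel to the proof of Proposition~\ref{prop:EisenInfConst}, so I would keep those steps brief and reference that proof.
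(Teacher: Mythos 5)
Your route is correct, and it is genuinely different from the paper's. The paper proves the proposition by a direct coset computation: it parametrizes $\tlGamma_1(4)/\tlGamma_0$ by pairs $(a,b)$ with $a\equiv 1\ (\mathrm{mod}\ 4)$, $\gcd(a,b)=1$, establishes the Kronecker-symbol identity $\left(\tfrac{c}{d}\right)=\left(\tfrac{b}{a}\right)(a,-bc)_H$ from quadratic reciprocity, and then redoes the entire delta-function change-of-variables calculation of Lemma \ref{lem:deltaInfProp} for the elements $\tlgamma\tlxi_0$, arriving at $\left(\pi(\tlgamma\tlxi_0)\delta_{\epsilon,\nu,\infty}\right)_0=\lambda(a,b)\sgn(-a)^{\epsilon/2}|2a|^{-\nu-1}\delta_{-b/a}$ before summing. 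You instead recycle \eqref{eq:EisenInfDistInf}: since $\left(\tlE^{(0)}_{\epsilon,\nu}\right)_0=\left(\pi(\tla_2)\tlE^{(\infty)}_{\epsilon,\nu}\right)_\infty(x)=2^{1-\nu}\left(\tlE^{(\infty)}_{\epsilon,\nu}\right)_\infty(4x)$, the rescaling $\delta_p(4x)=\tfrac14\delta_{p/4}(x)$ produces your $2^{-\nu-1}$ prefactor and support points $\tfrac{c}{4d}=\tfrac{c'}{d}$ (writing $c=4c'$), and $\left(\tfrac{4c'}{d}\right)=\left(\tfrac{c'}{d}\right)$ for odd $d$ reduces $\int_0^1$ to the character sum $\sum_{c'\ (\mathrm{mod}\ d)}\left(\tfrac{c'}{d}\right)$, which survives only for $d$ a square; the Euler product then telescopes exactly as in Proposition \ref{prop:EisenInfConst}. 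What your approach buys is the complete avoidance of the reciprocity identity \eqref{eq:tlgmabKron}, the new coset parametrization, and the second long delta-function computation; what it costs is the care you already flag, namely deciding which rescaled points land in $[0,1)$. Two small corrections to your bookkeeping: the relevant case of Lemma \ref{lem:MetaUnbddAct} is part (d) applied to $f_\infty$ with $\tlg^{-1}=\tla_2^{-1}$ (so $a=\tfrac12$, giving the factor $|\tfrac12|^{\nu-1}=2^{1-\nu}$ and argument $4x$), not part (b) with $\tlg^{-1}=\tla_2$ — the numbers you wrote are right but the citation is off. And the reason a single $\epsilon i$ survives rather than $1+\epsilon i$ is slightly cleaner than your heuristic: the $d<0$ half of \eqref{eq:EisenInfDistInf} vanishes outright because $|d|\equiv 3\ (\mathrm{mod}\ 4)$ is never a square, so only the $\epsilon i$ prefactor of \eqref{eq:EisenInfDistInf} remains.
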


\begin{proof}
Let $\tlgamma^{-1} = \(\mm{a}{b}{c}{d},\(\frac{c}{d}\)\) \in \tlGamma_1(4)$ with $c \neq 0$.
To prove \eqref{eq:b0coeff} we will first prove \eqref{eq:MetaEistlgmAct1b}, which provides
a convenient expression for $\(\pi\(\tlgamma \tlxi_0\) \delta_{\epsilon,\nu,\infty}\)_0$;
recall that $\tlE^{(0)}_{\epsilon,\nu}$, which we defined in \eqref{eq:MetaEisenAtZeroDef}, is a sum of
$\pi\(\tlgamma \tlxi_0\) \delta_{\epsilon,\nu,\infty}$ for such $\tlgamma^{-1}$.
In order to prove \eqref{eq:MetaEistlgmAct1b}, we first need to establish \eqref{eq:tlgmabKron}.

Since $\tlgamma^{-1} \in \tlGamma_1(4)$, $ad \equiv 1 (\mod c)$, which implies $\(\frac{c}{a}\) \(\frac{c}{d}\) = \(\frac{c}{ad}\) = \(\frac{c}{1}\) = 1$
by parts (f), (h), and (c) of Proposition \ref{prop:KronProps}.
Thus 
\begin{equation}
\label{eq:b0coeffStep}
\(\frac{c}{d}\) = \(\frac{c}{a}\).
\end{equation}
Next we evaluate $\(\frac{-bc}{a}\)$ for $b \neq 0$.
Since we are already assuming $c \neq 0$ then $bc \neq 0$.
Since $\tlgamma^{-1} \in \tlGamma_1(4)$, $-bc \equiv 1 (\mod a)$.
If $a>0$ then by Proposition \ref{prop:KronProps}(g),
$\(\frac{-bc}{a}\) = \(\frac{1}{a}\)=1$.
If $a<0$ then by parts (f), (c), and (g) of Proposition \ref{prop:KronProps},
\begin{equation*}
\(\frac{-bc}{a}\) = \(\frac{-bc}{-1}\) \(\frac{-bc}{-a}\) = \sgn(-bc)\(\frac{1}{-a}\) = \sgn(-bc).
\end{equation*}
Therefore
\begin{equation}
\label{eq:b0coeffStep2}
\(\frac{-b}{a}\)\(\frac{c}{a}\) = \(\frac{-bc}{a}\) = (a,-bc)_H.
\end{equation}
By parts (e) and (i) of Proposition \ref{prop:KronProps}, $\(\frac{-b}{a}\) = \(\frac{-1}{a}\) \(\frac{b}{a}\) = \(\frac{b}{a}\)$
since $a \equiv 1 (\mod 4)$.
Thus by \eqref{eq:b0coeffStep} and \eqref{eq:b0coeffStep2},
\begin{equation}
\label{eq:tlgmabKron}
\(\frac{c}{d}\) = \(\frac{b}{a}\)(a,-bc)_H,
\end{equation}
for $\tlgamma^{-1} \in \tlGamma_1(4)$ with $bc \neq 0$.

Having proved \eqref{eq:tlgmabKron}, we turn our attention to proving \eqref{eq:MetaEistlgmAct1b}.
Consider once more $\tlgamma^{-1} \in \tlGamma_1(4)$ with $c \neq 0$.
Observe
\begin{equation}
\tlgamma \tlxi_{0} \tls =  \( \mm{-\frac{d}{2}}{2b}{\frac{c}{2}}{-2a}, \sgn(c) \(\frac{c}{d}\) \) =\( \mm{-2a}{-2b}{-\frac{c}{2}}{-\frac{d}{2}}, \sgn(c) \(\frac{c}{d}\) \)^{-1}.
\end{equation}
Thus by Lemma \ref{lem:MetaUnbddAct}(a),
\begin{align}
\label{eq:tlgammaxiDeltaAct}
&\(\pi\(\tlgamma\tlxi_{0}\)\delta_{\epsilon,\nu,\infty}\)_0(x) = \(\pi\(\tlgamma \tlxi_{0} \tls\) \delta_{\epsilon,\nu,0}\)_0(x)\\
&= \sgn(c) \(\frac{c}{d}\) \(\frac{c}{2}, -\frac{c}{2}x - \frac{d}{2}\)_H \left|-\frac{c}{2}x-\frac{d}{2}\right|^{\nu-1} \sgn\(-\frac{c}{2}x - \frac{d}{2}\)^{\epsilon/2} \delta_0\(\frac{-2a x -2b}{-\frac{c}{2}x -\frac{d}{2}}\)\notag\\
&= \sgn(c)  \(\frac{c}{d}\) (c, -cx - d)_H 2^{-\nu+1}|cx+d|^{\nu-1} \sgn(-cx - d)^{\epsilon/2} \delta_0\(4 \(\frac{a x + b}{cx + d}\)\),\notag
\end{align}
as an equality between distributions on $\R_{\neq \frac{-d}{c}}$ since $c \neq 0$.
Let $\phi$ be a test function of compact support on $\R_{\neq \frac{-d}{c}}$. Observe 
{\allowdisplaybreaks
\begin{align}
\label{eq:MetEisen0Calc}
&\int_{\R_{\neq \frac{-d}{c}}} \sgn(c) \(\frac{c}{d}\) (c, -cx - d)_H 2^{-\nu + 1}|cx+d|^{\nu-1} \sgn(-cx - d)^{\epsilon/2}\\*
&\hh \cdot \delta_0\(4 \(\frac{a x + b}{cx + d}\)\) \phi(x) \, dx\notag\\
&= \int_{\R_{\neq 0}} \sgn(c) \(\frac{c}{d}\)(c, -cx)_H 2^{-\nu + 1}|cx|^{\nu-1} \sgn(-cx)^{\epsilon/2}\notag\\*
&\hh \cdot \delta_0\(4 \(\frac{a x - \frac{ad}{c} + b}{cx}\)\) \phi\(x-\frac{d}{c}\) \, dx\notag\\
&= \int_{\R_{\neq 0}} \sgn(c) \(\frac{c}{d}\) (c, -cx)_H 2^{-\nu + 1}|cx|^{\nu-1} \sgn(-cx)^{\epsilon/2} \delta_0\( \frac{4a}{c} - \frac{4}{c^2 x}\)\notag\\*
&\hh \cdot \phi\(x-\frac{d}{c}\) \, dx\notag\\
&= \int_{\R_{\neq 0}} \sgn(c) \(\frac{c}{d}\)  \(c, \frac{-4x}{c}\)_H 2^{-\nu+1} 4^{\nu} |c|^{-\nu-1} |x|^{\nu-1} \sgn\(\frac{-4x}{c}\)^{\epsilon/2}\notag\\*
&\hh \cdot \delta_0\( \frac{4a}{c} - \frac{1}{x}\) \phi\(\frac{4x}{c^2}-\frac{d}{c}\) \, dx\notag\\
&= \int_{\R_{\neq 0}} \sgn(c)  \(\frac{c}{d}\) \(c, \frac{4}{cx}\)_H 2^{\nu+1} |c|^{-\nu-1} |x|^{-\nu-1} \sgn\(\frac{4}{cx}\)^{\epsilon/2} \delta_0\( \frac{4a}{c} + x\)\notag\\*
&\hh \cdot \phi\(\frac{-4}{c^2 x}-\frac{d}{c}\) \, dx\notag\\
&= \sgn(c) \(\frac{c}{d}\) \(c, \frac{-1}{a}\)_H 2^{\nu+1} |4a|^{-\nu-1} \sgn\(\frac{-1}{a}\)^{\epsilon/2} \phi\(\frac{1}{a c}-\frac{d}{c}\)\notag\\
&= \sgn(c)  \(\frac{c}{d}\)\(c, -a\)_H  |2a|^{-\nu-1} \sgn\(-a\)^{\epsilon/2} \phi\(\frac{-b}{a}\).\notag
\end{align}}%
Therefore by \eqref{eq:tlgammaxiDeltaAct} and \eqref{eq:MetEisen0Calc},
\begin{align}
\label{eq:MetEisen1Calc}
&\(\pi\(\tlgamma\tlxi_{0}\)\delta_{\epsilon,\nu,\infty}\)_0(x)
= \sgn(c)  \(\frac{c}{d}\)\(c, -a\)_H  |2a|^{-\nu-1} \sgn\(-a\)^{\epsilon/2} \delta_{\frac{-b}{a}}(x)
\end{align}
as an equality between distributions on $\R_{\neq \frac{-d}{c}}$ for $\tlgamma^{-1} \in \tlGamma_1(4)$ with $c \neq 0$.

We can further simplify \eqref{eq:MetEisen1Calc} by using \eqref{eq:tlgmabKron}; in particular,
if $b \neq 0$,
\begin{align*}
&\sgn(c)  \(\frac{c}{d}\)\(c, -a\)_H \sgn\(-a\)^{\epsilon/2} = \sgn(c)  (a,-bc)_H \(c, -a\)_H \(\frac{b}{a}\) \sgn\(-a\)^{\epsilon/2}.
\end{align*}
If $a>0$ then 
\begin{equation*}
\sgn(c) (a,-bc)_H \(c, -a\)_H = \sgn(c)^2 = 1,
\end{equation*}
but if $a<0$ then
\begin{equation*}
\sgn(c) (a,-bc)_H (c,-a)_H = \sgn(c) \sgn(-bc) = \sgn(-b).
\end{equation*}
Thus 
\begin{equation*}
\sgn(c) (a,-bc)_h (c,-a)_H = (a,-b)_H,
\end{equation*}
and hence
\begin{equation*}
\sgn(c)  \(\frac{c}{d}\)\(c, -a\)_H \sgn\(-a\)^{\epsilon/2} = \(\frac{b}{a}\)(a,-b)_H \sgn(-a)^{\epsilon/2}.
\end{equation*}
By this equality and \eqref{eq:MetEisen1Calc}, for $b \neq 0$ we have
\begin{equation}
\label{eq:MetaEistlgmAct1}
\(\pi\(\tlgamma\tlxi_{0}\)\delta_{\epsilon,\nu,\infty}\)_0 = \(\frac{b}{a}\) (a,-b)_H \sgn\(-a\)^{\epsilon/2} |2a|^{-\nu-1} \delta_{\frac{-b}{a}},
\end{equation}
as an equality between distributions on $\R_{\neq \frac{-d}{c}}$ for $\tlgamma^{-1} \in \tlGamma_1(4)$ with $c \neq 0$.
If instead $b=0$ then since $b=0$ implies $a=d=1$, we have that
\begin{align*}
&\sgn(c)  \(\frac{c}{d}\)\(c, -a\)_H  = \sgn(c) \sgn(c) = 1 = \(\frac{b}{a}\).
\end{align*}
Thus for all $\tlgamma^{-1} \in \tlGamma_1(4)$ with $c \neq 0$,
\begin{equation}
\label{eq:MetaEistlgmAct1b}
\(\pi\(\tlgamma\tlxi_{0}\)\delta_{\epsilon,\nu,\infty}\)_0 = \lambda(a,b) \sgn\(-a\)^{\epsilon/2} |2a|^{-\nu-1} \delta_{\frac{-b}{a}},
\end{equation}
where
\begin{equation}
\lambda(a,b) = \begin{cases} \(\frac{b}{a}\) (a,-b)_H &  ab \neq 0\\ \(\frac{b}{a}\) & \text{otherwise}.\end{cases}
\end{equation}
Notice $\lambda(a,b) = 0$ if $\gcd(a,b) \neq 1$ since $\(\frac{b}{a}\) = 0$ if $\gcd(a,b) \neq 1$ (by Proposition \ref{prop:KronProps}(a)).

We wish to describe $\(\pi\(\tlgamma \tlxi_{0}\)\delta_{\epsilon,\nu,\infty}\)_0$ about the point $\frac{-d}{c}$.
To do this, observe
\begin{equation}
\tlgamma \tlxi_{0} = \(\mm{-2b}{-\frac{d}{2}}{2a}{\frac{c}{2}}, \(\frac{c}{d}\) (a,c)_H \) = \(\mm{\frac{c}{2}}{\frac{d}{2}}{-2a}{-2b},\(\frac{c}{d}\) (a,c)_H \)^{-1}.
\end{equation}
Since $\(\delta_{\epsilon,\nu,\infty}\)_0(x) = 0$, then by Lemma \ref{lem:MetaUnbddAct}(a),
\begin{equation*}
\(\pi\(\tlgamma \tlxi_{0}\)\delta_{\epsilon,\nu,\infty}\)_0 = 0,
\end{equation*}
as an equality between distributions on $\R_{\neq \frac{-b}{a}}$.
Since $\frac{-d}{c} \neq \frac{-b}{a}$, it follows that $\(\pi\(\tlgamma\tlxi_{0}\)\delta_{\epsilon,\nu,\infty}\)_0$ vanishes about the point $\frac{-d}{c}$.
Thus \eqref{eq:MetaEistlgmAct1b} holds as  an equality between distributions on $\R$.

Since in \eqref{eq:MetaEistlgmAct1b}, $\(\pi\(\tlgamma \tlxi_0\) \delta_{\epsilon,\nu,\infty}\)_0$ is expressed
in terms of $a$ and $b$, and since $\tlE^{(0)}_{\epsilon,\nu}$ is defined as a summation over $\tlGamma_1(4) / \tlGamma_{0}$,
we wish to index the cosets of $\tlGamma_1(4) / \tlGamma_{0}$ in terms of $a$ and $b$ as well.
Observe 
\begin{align}
&\tlgamma \cdot \(\mm{1}{0}{-4n}{1},1 \) = \(\mm{d}{-b}{-c}{a}, \(\frac{c}{d}\)\) \cdot \(\mm{1}{0}{-4n}{1},1 \)\\
&= \(\mm{d+4bn}{-b}{-c-4an}{a},\(\frac{-c}{a}\) \).\notag
\end{align}
From this equality we see that to each coset of $\tlGamma_1(4) / \tlGamma_{0}$ there corresponds $(a,b) \in \Z^2$ such that 
$\gcd(a,b)=1$ and $a \equiv 1(\mod 4)$.  
This correspondence is unique, for if
\begin{equation*}
\tlgamma' = \(\mm{*}{-b}{*}{a},*\),
\end{equation*}
as must be the case if $\tlgamma' \tlGamma_{0}$ and $\tlgamma \tlGamma_{0}$ corresponded to the same $(a,b) \in \Z^2$,
then
\begin{equation*}
(\tlgamma')^{-1} \tlgamma = \(\mm{a}{b}{*}{*},*\) \(\mm{*}{-b}{*}{a},*\) = \(\mm{*}{0}{*}{*},*\).
\end{equation*}
Since the matrix coordinate of this element must have determinant $1$, and since the diagonal entries are each congruent to $1$ modulo $4$,
it follows that $(\tlgamma')^{-1} \tlgamma \in \tlGamma_{0}$.
Therefore the correspondence of cosets of 
$\tlGamma / \tlGamma_{0}$  with such $(a,b) \in \Z^2$ is in fact unique.
Conversely, when given $(a,b) \in \Z^2$ such that $\gcd(a,b)=1$ and $a \equiv 1 (\mod 4)$, it follows that $\gcd(a,4b)=1$. Thus
there exists $c',d \in \Z$ such that $ad - 4bc' = ad - b(4c') =1$.  Since $a \equiv 1 (\mod 4)$ then $d \equiv 1(\mod 4)$.
If we let $c=4c'$ then $c \equiv 0 (\mod 4)$.  Thus we are able to construct $\tlgamma$ which corresponds to such $(a,b)$.
Therefore
\begin{equation}
\label{eq:cosetIndex}
\tlGamma_1(4) / \tlGamma_{0} \cong \{(a,b) \in \Z^2: \gcd(a,b)=1, a \equiv 1 (\mod 4)\}.
\end{equation}

By \eqref{eq:MetaEisenAtZeroDef}, \eqref{eq:MetaEistlgmAct1b}, and \eqref{eq:cosetIndex},
\begin{align*}
&\(\tlE_{\epsilon,\nu}^{(0)}\)_0 = \zeta_2(2\nu+1) \sum_{\substack{(a,b) \in \Z^2 \\ a \equiv 1 (\mod 4)}} \lambda(a,b) |2a|^{-\nu-1} \sgn\(-a\)^{\epsilon/2} \delta_{\frac{-b}{a}}
\end{align*}
(recall $\lambda(a,b) = 0$ if $\gcd(a,b) \neq 1$).
Observe
\begin{align*}
&\frac{b_{\epsilon,\nu}(0)}{\zeta_2(2\nu + 1)} = \int_0^1 \sum_{\substack{(a,b)\in \Z^2 \\ a \equiv 1 (\mod 4)}} \lambda(a,b) |2a|^{-\nu-1} \sgn\(-a\)^{\epsilon/2} \delta_{\frac{-b}{a}}(x) \, dx\\
&= \epsilon i \sum_{\substack{(a,b) \in \Z_{>0} \times \Z \\ 0 \leq -b < a \\ a \equiv 1 (\mod 4)}} \lambda(a,b) |2a|^{-\nu-1} + \sum_{\substack{(a,b) \in \Z_{<0} \times \Z \\ 0 \geq -b > a \\ a \equiv 1 (\mod 4)}} \lambda(a,b) |2a|^{-\nu-1}\\
&=\epsilon i \sum_{\substack{(a,b) \in \Z_{>0} \times \Z \\ 0 \leq b < a \\ a \equiv 1 (\mod 4)}} \lambda(a,-b) |2a|^{-\nu-1} + \sum_{\substack{(a,b) \in \Z_{>0} \times \Z \\ 0 \leq b < a \\ a \equiv 3 (\mod 4)}} 
\lambda(-a,b) |2a|^{-\nu-1}.
\end{align*}
If $a \equiv 1(\mod 4)$ and $a > 0$ then by parts (e) an (i) of Proposition \ref{prop:KronProps},
\begin{equation*}
\lambda(a,-b) = \(\frac{-b}{a}\) = \(\frac{-1}{a}\)\(\frac{b}{a}\) = \(\frac{b}{a}\),
\end{equation*}
both for $b \neq 0$ and $b = 0$.
Likewise, if $a \equiv 3 (\mod 4)$, $a > 0$, $b \geq 0$, then by parts (f) and (c) of Proposition \ref{prop:KronProps},
\begin{equation*}
\lambda(-a,b) = -\(\frac{b}{-a}\) = -\(\frac{b}{-1}\) \(\frac{b}{a}\) = -\(\frac{b}{a}\),
\end{equation*}
both for $b > 0$ and $b = 0$.
Thus
\begin{align*}
&\frac{b_{\epsilon,\nu}(0)}{\zeta_2(2\nu + 1)} = \epsilon i \sum_{a \in \Z_{>0}} \(\sum_{b \in \Z/a\Z} \(\frac{b}{a}\) \) \Delta_a^{\epsilon}  |2a|^{-\nu-1},
\end{align*}
where $\Delta_a$ is defined in \eqref{eq:DeltaDef}.
Observe that if $a$ is a square then 
\begin{equation*}
\sum_{b \in \Z/a\Z} \(\frac{b}{a}\)= \phi(a),
\end{equation*}
but if $a$ is not a square then
\begin{equation*}
\sum_{b \in \Z/a\Z} \(\frac{b}{a}\)= 0.
\end{equation*}
The latter case follows since if $a$ is not a square then there exists $b' \in \Z$ such that $\gcd(a,b')=1$ and $\(\frac{b'}{a}\)=-1$, and thus
\begin{equation*}
-\(\sum_{b \in \Z/a\Z} \(\frac{b}{a}\)\) = \(\frac{b'}{a}\) \(\sum_{b \in \Z/a\Z} \(\frac{b}{a}\)\) =  \(\sum_{b \in \Z/a\Z} \(\frac{b'b}{a}\)\) = \(\sum_{b \in \Z/a\Z} \(\frac{b}{a}\)\).
\end{equation*}
Therefore
\begin{align*}
&b_{\epsilon,\nu}(0) = \epsilon i \zeta_2(2\nu+1) \sum_{a \in \Z_{>0}}  \Delta_{a^2}^{\epsilon} \phi(a^2) |2a^2|^{-\nu-1}\\
&= \epsilon i \zeta_2(2\nu+1) 2^{-\nu-1} \sum_{\substack{a \in \Z_{>0}\\a \text{ odd}}} \phi(a^2) |a|^{-2\nu-2}.
\end{align*}
Since $\phi(a^2)$ is multiplicative, it follows from \eqref{eq:eulerPhiSum} that
\begin{align*}
&b_{\epsilon,\nu}(0) = \epsilon i \zeta_2(2\nu+1) 2^{-\nu-1} \prod_{p \text{ odd prime}} \sum_{k \in \Z_{\geq 0}} \phi(p^{2k}) (p^k)^{-2\nu-2}\\
&= \epsilon i \zeta_2(2\nu+1) 2^{-\nu-1} \prod_{p \text{ odd primes}} \frac{1-p^{-2\nu-1}}{1-p^{-2\nu}} = \epsilon i  2^{-\nu-1} \zeta_2(2\nu).\qedhere
\end{align*}
\end{proof}

\section{Intertwining Operators}
\label{sec:Intertwine}

In this section, we give results concerning intertwining operators on $\tlSL_2(\R)$.
Much of this material is standard \cite{Knapp01,Wallach88a,Wallach88b}, but hard to find in print.
Throughout, we make reference to much of the notation defined in Section \ref{sec:DblCvrsSL2},
in particular \eqref{eq:sltl2elems}.
Recall that \eqref{eq:TransLawsB} gives transformation laws for $f \in V_{\epsilon,\nu}^{-\infty}$.
From this we deduce a similar statement for $f \in V_{\epsilon,-\nu}^{-\infty}$
(notice that we have $-\nu$ instead of $\nu$):
\begin{align}
\label{eq:TransLaws}
&f(\tlg \, \tla_u \tlm_{-1,\kappa} \tln_-) = \epsilon \kappa i u^{\nu+1} f(\tlg),\\
&f(\tlg \, \tla_u \tlm_{1,\kappa} \tln_-) = \kappa u^{\nu+1} f(\tlg),\notag
\end{align}where $\kappa, \epsilon \in \{\pm 1\}$, $\nu \in \C$, and $u \in \R_{>0}$.
This transformation law, along with \eqref{eq:TransLawsB}, will be used repeatedly and implicitly throughout this section.

For $\Re(\nu)>0$, let $I_{\epsilon,\nu}:V_{\epsilon,-\nu}^{\infty} \to V_{\epsilon,\nu}^{\infty}$ denote the
intertwining operator
\begin{equation}
\label{eq:InuDef}
I_{\epsilon,\nu} f(\tilde{g}) = \int_{-\infty}^\infty f(\tilde{g} \, \tls \, \tilde{n}_t) \, dt.
\end{equation}
It is a well-known result in representation theory that the integral in \eqref{eq:InuDef} converges absolutely for $\Re(\nu)>0$
\cite[\S VII.6]{Knapp01}.
It remains to justify our statement regarding the codomain of $I_{\epsilon,\nu}$. Observe that by changing variables and
employing \eqref{eq:TransLaws},
\begin{align*}
&I_{\epsilon,\nu}f(\tlg \, \tla_u \tln_{-,r}) = \int_{-\infty}^\infty f(\tlg \, \tla_u \tln_{-,r} \tls \, \tln_t) \, dt 
= \int_{-\infty}^\infty f\(\tlg \, \tls \, \tln\(\frac{t-r}{u^2}\) \tla(u^{-1})\) \, dt\\
&= (u^{-1})^{\nu+1} \int_{-\infty}^\infty f\(\tlg \, \tls \, \tln\(\frac{t}{u^2}\)\) \, dt 
=  u^{-\nu-1} u^2 \int_{-\infty}^\infty f\(\tlg \, \tls \, \tln_t\) \, dt \\
&= u^{-\nu+1} I_{\epsilon,\nu}f (\tlg).
\end{align*}
Since $\tlM$ is the center of $\tlSL_2(\R)$, then by \eqref{eq:TransLaws},
\begin{align*}
&I_{\epsilon,\nu}f(\tlg \tlm_{-1,\kappa}) = \int_{-\infty}^\infty f(\tlg \, \tls \, \tln_t \tlm_{-1,\kappa}) \, dt
= \epsilon \kappa i I_{\epsilon,\nu}f(\tlg), \text{ and}\\
&I_{\epsilon,\nu}f(\tlg \tlm_{1,\kappa}) = \int_{-\infty}^\infty f(\tlg \, \tls \, \tln_t \, \tlm_{1,\kappa}) \, dt
= \kappa I_{\epsilon,\nu}f(\tlg).
\end{align*}
By these equalities, we see that $I_{\epsilon,\nu}f$ satisfies \eqref{eq:TransLawsB} (or equivalently,
\eqref{eq:TransLaws} for $V_{\epsilon,\nu}^{-\infty}$).
It is straightforward to confirm that $I_{\epsilon,\nu}f$ is smooth, since differentiation can be expressed
in terms of the left regular representation.
Thus $I_{\epsilon,\nu}$ does indeed have $V_{\epsilon,\nu}^{\infty}$ as its codomain.

A well-known result from representation theory states that $I_{\epsilon,\nu}$ can be meromorphically continued to all of $\C$ 
\cite[\S VII]{Knapp01}.
To be more precise, one identifies elements of $V_{\epsilon,\nu}^\infty$ as elements of $C^\infty(\tlK)$ via restriction to $\tlK$ 
(the so-called ``compact picture").  Under this identification,
the intertwining operators $I_{\epsilon,\nu}$ become a family of operators on $C^\infty(\tlK)$ indexed by $\epsilon$ and $\nu$.
The statement of meromorphic continuation asserts the existence of intertwining operators $I_{\epsilon,\nu}$ for all $\nu \in \C$, except
possibly for a subset
of discrete points.
The statement also asserts
that for any fixed $f \in C^\infty(\tlK)$ and $\tlk \in \tlK$,
the map $\nu \mapsto I_{\epsilon,\nu}f(\tlk)$ extends to a meromorphic function from $\C$ to $\C$.

Since $I_{\epsilon,\nu}$ is continuous on $V_{\epsilon,-\nu}^\infty$ \cite{Casselman89} and since $V_{\epsilon,-\nu}^{\infty}$ is dense in 
$V^{-\infty}_{\epsilon,-\nu}$, it follows that $I_{\epsilon,\nu}$ can be extended to a continuous, intertwining operator
from $V^{-\infty}_{\epsilon,-\nu}$ to $V^{-\infty}_{\epsilon,-\nu}$.
In this section, we wish to give a Fourier series expansion for $I_{\epsilon,\nu}\(\tlE^{(\infty)}_{\epsilon,-\nu}\)$ 
similar to the one given for $\tlE^{(\infty)}_{\epsilon,\nu}$ in Proposition \ref{prop:EisenInfMeroCont}.
Recall that the pairing $\<\cdot,\cdot\>_{\epsilon,\nu}$ defined in \eqref{eq:pairingDef} can be extended to 
$V_{\epsilon,\nu}^{-\infty} \times V_{-\epsilon,-\nu}^\infty$.
The following lemma describes how the intertwining operator $I_{\epsilon,\nu}$ behaves with respect to this pairing.

\begin{lem}
\label{lem:AlmostAdjoint}
For $f_1 \in V_{\epsilon,-\nu}^{-\infty}$ and $f_2 \in V_{-\epsilon,-\nu}^{\infty}$,
\begin{equation*}
\<I_{\epsilon,\nu} f_1, f_2\>_{\epsilon,\nu} = -\epsilon i \<f_1, I_{-\epsilon,\nu} f_2\>_{\epsilon,-\nu}.
\end{equation*}
\end{lem}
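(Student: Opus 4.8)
The plan is to unwind both sides to integrals over $\R$ against test functions and reduce the identity to a change of variables. First I would exploit the fact that $I_{\epsilon,\nu}f_1$ has support properties controlled by the integral defining it, and that $I_{-\epsilon,\nu}f_2 \in V_{-\epsilon,\nu}^\infty$, so the pairing $\<I_{\epsilon,\nu}f_1, f_2\>_{\epsilon,\nu}$ can be written using \eqref{eq:cmpctToUnbdd} or, more carefully, via \eqref{eq:phi1phi2breakup} since neither factor need have support in a single affine chart. Because $I_{\epsilon,\nu}$ is continuous from $V_{\epsilon,-\nu}^\infty$ to $V_{\epsilon,\nu}^\infty$ and $V_{\epsilon,-\nu}^\infty$ is dense in $V_{\epsilon,-\nu}^{-\infty}$, and since both sides of the claimed identity are separately continuous in $f_1$ (the left side because $\<\cdot,\cdot\>_{\epsilon,\nu}$ is separately continuous by \eqref{eq:sepCts} and $I_{\epsilon,\nu}$ is continuous, the right side likewise), it suffices to prove the identity for $f_1 \in V_{\epsilon,-\nu}^\infty$ and, by meromorphic continuation in $\nu$, for $\Re(\nu)$ large enough that all the integrals in \eqref{eq:InuDef} converge absolutely. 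This reduces everything to a computation with honest functions.

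With $f_1, f_2$ smooth and $\Re(\nu) > 0$, I would write
\begin{align*}
\<I_{\epsilon,\nu}f_1, f_2\>_{\epsilon,\nu} &= \int_{-\pi/2}^{\pi/2} \left(\int_{-\infty}^\infty f_1(\tlk_\theta \tls \tln_t)\,dt\right) f_2(\tlk_\theta)\,d\theta,
\end{align*}
then use that $d\theta$ over $[-\tfrac{\pi}{2},\tfrac{\pi}{2})$ represents Haar measure on $\tlK/\tlM$, convert the $\theta$-integral into an integral over $\tlN$ using \eqref{eq:tlkTotlnId} (as in the computation \eqref{eq:argCmptToUnbdd}), and arrive at a double integral $\int_\R \int_\R f_1(\tln_x \tls \tln_t) f_2(\tln_x)\, \rho(x)\, dt\, dx$ for an explicit Jacobian-type factor $\rho$ coming from the transformation laws \eqref{eq:TransLaws}. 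On the other side,
\begin{align*}
\<f_1, I_{-\epsilon,\nu}f_2\>_{\epsilon,-\nu} &= \int_{-\pi/2}^{\pi/2} f_1(\tlk_\theta) \left(\int_{-\infty}^\infty f_2(\tlk_\theta \tls \tln_t)\,dt\right)d\theta,
\end{align*}
which by the same conversion becomes $\int_\R \int_\R f_1(\tln_y) f_2(\tln_y \tls \tln_t)\, \rho(y)\, dt\, dy$. The goal is then to identify these two double integrals. The key algebraic input is the Bruhat-type relation: for generic $x,t$ one can write $\tln_x \tls \tln_t$ in the form $\tln_{x'} \cdot (\text{element of }\tlB)$ where $x' = x'(x,t)$, and similarly $\tln_y \tls \tln_t = \tln_{y'} \cdot (\text{element of }\tlB)$; applying the transformation laws \eqref{eq:TransLawsB}, \eqref{eq:TransLaws} turns $f_1(\tln_x \tls \tln_t)$ into $(\text{scalar})\cdot f_1(\tln_{x'})$ and $f_2(\tln_y \tls \tln_t)$ into $(\text{scalar})\cdot f_2(\tln_{y'})$. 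After substituting and performing the change of variables $(x,t) \mapsto (x', t')$ (or directly using the group relation $\tls \tln_t \tls^{-1}$ expressed in $\tln_-\tla\tln$ form), the two expressions should match up to the constant $-\epsilon i$; tracking that constant amounts to keeping careful account of the cocycle $\alpha$ and of the character $\sigma_\epsilon$ evaluated on the $\tlM$-part, where the $\epsilon$-dependence and the factor of $i$ enter exactly through $\sigma_\epsilon(\tlm_{-1,-1}) = \epsilon i$ as in \eqref{eq:tlsdeltprop}.

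The main obstacle I anticipate is precisely this metaplectic bookkeeping: in the linear ($\SL_2(\R)$) setting the analogous identity is a clean self-adjointness-up-to-sign statement, but here every time one moves an $\tls$ past an $\tln_t$ one picks up a factor of $\varepsilon(\theta)$ or a Hilbert symbol from $\alpha$, and when one lands in $\tlB$ the central part is evaluated by $\sigma_\epsilon$, which is genuinely $\epsilon$-dependent and complex-valued. So the heart of the proof is to carry out the Bruhat decomposition $\tln_x \tls \tln_t = \tln_{*}\,\tla_{*}\,\tlm_{*,*}\,\tln_{-,*}$ with the full metaplectic data attached — using the multiplication law $(g_1,\epsilon_1)(g_2,\epsilon_2) = (g_1g_2, \alpha(g_1,g_2)\epsilon_1\epsilon_2)$ — verify that the $\nu$-homogeneity factors ($u^{-\nu+1}$ versus $u^{\nu+1}$) cancel correctly between $f_1 \in V_{\epsilon,-\nu}$ and $f_2 \in V_{-\epsilon,-\nu}$, and check that the leftover scalar is exactly $-\epsilon i$ rather than, say, $\epsilon i$ or $-i$. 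I would organize this by first doing the bare $\SL_2(\R)$ computation to fix the $\nu$-factors and the $\pm$ sign, then separately computing the $\{\pm 1\}$-component via $\alpha$ and $\sigma_{\pm\epsilon}$ to pin down the factor of $i$ and the $\epsilon$; the convergence issues and the passage from smooth to distributional $f_1$ are routine given the continuity and density statements already recorded in Section~\ref{sec:DblCvrsSL2} and the meromorphic continuation of $I_{\epsilon,\nu}$.
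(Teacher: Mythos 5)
Your outline is correct, but it follows a genuinely different route from the paper. The paper stays entirely in the compact picture: changing variables $t=\tan\phi$ in \eqref{eq:InuDef} and using \eqref{eq:TransLaws} rewrites the intertwining operator as $I_{\epsilon,\nu}f(\tlg)=\int_{-\pi/2}^{\pi/2}f(\tlg\,\tls\,\tlk_{-\phi})\cos(\phi)^{\nu-1}\,d\phi$, after which the adjointness identity is nothing more than translation invariance of the Haar measure $d\theta$ on the abelian group $\tlK/\tlM$ together with the evenness of $\cos(\phi)^{\nu-1}$; the constant $-\epsilon i$ drops out in one line from $\tls=\tls^{-1}\tlm_{-1,-1}$ and the central character, with no Hilbert symbols and no convergence issues beyond $\Re(\nu)>0$. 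Your plan instead works in the non-compact picture, where $I_{\epsilon,\nu}$ becomes convolution against the kernel $\sgn(x-t)^{-\epsilon/2}|x-t|^{\nu-1}$ (this is exactly the Bruhat computation the paper carries out later in \eqref{eq:InterDseStep2}), and the identity reduces to the kernel symmetry $\sgn(-u)^{-\epsilon/2}=-\epsilon i\,\sgn(u)^{\epsilon/2}$, which does produce the correct constant. This works, but it buys you nothing over the compact-picture argument and costs you more: the double integral over $\R^2$ is absolutely convergent only in a strip such as $0<\Re(\nu)<\tfrac12$ (the smooth vector $(f_1)_0$ grows like $|t|^{\Re(\nu)-1}$ and the kernel contributes another $|t|^{\Re(\nu)-1}$), so the Fubini interchange you need must be justified there and the full statement recovered by meromorphic continuation, and you must also invoke Proposition \ref{prop:pairingToCondConvIntegral} or the decomposition \eqref{eq:phi1phi2breakup} to pass from the pairing over $\tlK/\tlM$ to integrals over $\R$ in the first place. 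Your reductions to smooth $f_1$ via density and \eqref{eq:sepCts}, and your identification of where the $\epsilon$-dependent factor of $i$ enters, are both sound; I would only caution that in your approach the constant arises from the sign asymmetry of the kernel under $u\mapsto -u$ combined with the flip from exponent $-\epsilon/2$ to $+\epsilon/2$ (reflecting the passage from $V_{\epsilon,\cdot}$ to $V_{-\epsilon,\cdot}$), rather than from a single evaluation of $\sigma_\epsilon$ on $\tlm_{-1,-1}$, so the bookkeeping you rightly identify as the heart of the matter is spread over a two-case sign check rather than concentrated in one group identity.
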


\begin{proof}
It suffices to prove this lemma for the case where $f_1 \in V_{\epsilon,-\nu}^\infty$.
To see why this is sufficient, recall that the pairing
$\<\cdot,\cdot\>_{\epsilon,\nu}$ is separately continuous on both factors of $V_{\epsilon,\nu}^{-\infty} \times V_{-\epsilon,-\nu}^\infty$
(see \eqref{eq:sepCts}) and that $I_{\epsilon,\nu}$ is continuous on $V_{\epsilon,-\nu}^{-\infty}$.
Once we establish this identity for smooth $f_1$, the lemma then follows for 
$f_1 \in V_{\epsilon,-\nu}^{-\infty}$ by the aforementioned continuity and
the fact that $V_{\epsilon,-\nu}^\infty$ is dense in $V_{\epsilon,-\nu}^{-\infty}$.

Suppose $\Re(\nu)>0$.
By performing a change of variables,
\begin{equation*}
I_{\epsilon,\nu} f_1(\tlg) = \int_{-\infty}^\infty f_1(\tlg \, \tls \, \tln_t) \, dt = \int_{-\pi/2}^{\pi/2} f_1(\tlg \, \tls \, \tln_{\tan(\phi)}) \sec(\phi)^2 \, d\phi.
\end{equation*}
Since
\begin{equation}
\tln_{\tan(\phi)} = \tlk_{-\phi} \tla_{\cos(\phi)} \tln_{-,\cos(\phi) \sin(\phi)} 
\text{ for } \phi \in \(-\frac{\pi}{2},\frac{\pi}{2}\),
\end{equation}
it then follows 
from \eqref{eq:TransLaws} that
\begin{align}
\label{eq:InterTwinedf}
&I_{\epsilon,\nu} f_1(\tlg) = \int_{-\pi/2}^{\pi/2} f_1\(\tlg \tls \tlk_{- \phi}\tla_{\cos(\phi)} \tln_{-,\cos(\phi) \sin(\phi)}\) \sec(\phi)^2 \, d\phi\\
&= \int_{-\pi/2}^{\pi/2} f_1(\tlg \, \tls \, \tlk_{- \phi}) \cos(\phi)^{\nu+1} \sec(\phi)^2 \, d\phi\notag\\
&= \int_{-\pi/2}^{\pi/2} f_1(\tlg \, \tls \, \tlk_{- \phi}) \cos(\phi)^{\nu-1} \, d\phi.\notag
\end{align}
By changing variables and using the fact that $\tlK$ is an abelian subgroup,
\begin{align*}
&\<I_{\epsilon,\nu} f_1, f_2\>_{\epsilon,\nu} = \int_{-\pi/2}^{\pi/2} \int_{-\pi/2}^{\pi/2} f_1(\tlk_\theta \tls  \tlk_{-\phi}) \cos(\phi)^{\nu-1} \, d\phi \, f_2(\tlk_\theta) \, d\theta\\
&= \int_{-\pi/2}^{\pi/2} \int_{-\pi/2}^{\pi/2} f_1(\tlk_\theta \tls^{-1}  \tlk_{-\phi} \tlm_{-1,-1})\cos(\phi)^{\nu-1} \, d\phi \, f_2(\tlk_\theta) \, d\theta\\
&= -\epsilon i \int_{-\pi/2}^{\pi/2} \int_{-\pi/2}^{\pi/2} f_1(\tlk_\theta \tls^{-1}  \tlk_{-\phi}) \cos(\phi)^{\nu-1} \, f_2(\tlk_\theta) \, d\theta \, d\phi\\
&= -\epsilon i \int_{-\pi/2}^{\pi/2} \int_{-\pi/2}^{\pi/2} f_1(\tlk_\theta ) f_2(\tlk_\theta \tls  \tlk_{\phi}) \cos(\phi)^{\nu-1} \, d\theta  \, d\phi\\
&= -\epsilon i \int_{-\pi/2}^{\pi/2} f_1(\tlk_\theta )  \int_{-\pi/2}^{\pi/2} f_2(\tlk_\theta \tls  \tlk_{-\phi}) \cos(\phi)^{\nu-1} \, d\phi \, d\theta.
\end{align*}
The change in the $\theta$ variable does not result in a change in the bounds of integration since the integrand is
$\tlM$-invariant and $d\theta$ is a Haar measure for $\tlK /\tlM$ (as mentioned in the discussion
following \eqref{eq:pairingDef}).
By \eqref{eq:InterTwinedf}, which is also applicable to $f_2 \in V_{-\epsilon,-\nu}^{\infty}$,
\begin{align*}
&\<I_{\epsilon,\nu} f_1, f_2\>_{\epsilon,\nu} = -\epsilon i \int_{-\pi/2}^{\pi/2} f_1(\tlk_\theta ) I_{-\epsilon,\nu}f_2(\tlk_\theta) \, d\theta\\
&=-\epsilon i \<f_1, I_{-\epsilon,\nu}f_2\>_{\epsilon,-\nu}.
\end{align*}
This proves the lemma for the case when $\Re(\nu)> 0$.
Invoking the uniqueness of meromorphic continuation completes the proof.
\end{proof}

By Schur's lemma, we know that $I_{\epsilon,-\nu} \circ I_{\epsilon,\nu}$ is a scalar operator.
The following lemma, which we will use for computing this scalar operator, will also be used in Section \ref{sec:EisenSeriesFunctionalEq} when
we describe  how to recover the classical metaplectic Eisenstein series from the metaplectic Eisenstein distributions.

\begin{lem}
\label{lem:KFiniteLem}
For $\ell \in 2 \Z$, there exists unique $\phi_{\epsilon,\nu,\ell} \in V_{\epsilon,\nu}^\infty$ such
that 
\begin{equation}
\label{eq:phiDef}
\phi_{\epsilon,\nu,\ell}(\tlk_\theta) = \exp\(-\epsilon \(\frac{1}{2}+\ell\) i \theta\).
\end{equation}
Furthermore, 
\begin{equation*}
I_{\epsilon,\nu}\phi_{\epsilon,-\nu,\ell} = i^{-\epsilon \ell} \frac{1-\epsilon i}{\sqrt{2}} \cdot \frac{\pi  2^{1-\nu } \Gamma (\nu )}{\Gamma\left(-\frac{{1}}{2} \left(\ell +\frac{1}{2}\right) + \frac{\nu +1}{2} \right) \Gamma \left(\frac{1}{2} \left(\ell +\frac{1}{2}\right) +\frac{\nu +1}{2}\right)} \phi_{\epsilon,\nu,\ell}
\end{equation*}
for $\nu \in \C - \Z_{\leq 0}$.
\end{lem}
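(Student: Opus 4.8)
The plan is to first settle the existence and uniqueness of $\phi_{\epsilon,\nu,\ell}$ using the Iwasawa decomposition, then to argue on representation-theoretic grounds that $I_{\epsilon,\nu}\phi_{\epsilon,-\nu,\ell}$ is forced to be a scalar multiple of $\phi_{\epsilon,\nu,\ell}$, and finally to identify that scalar by evaluating at the identity and computing a classical trigonometric integral. For the first step, recall that $\theta\mapsto\tlk_\theta$ identifies $\tlK$ with $\R/4\pi\Z$, that $\tlM=\{\tlk_{-2\pi},\tlk_{-\pi},\tlk_0,\tlk_\pi\}=\tlK\cap\tlB$, and that $\tlK\tlB=\tlSL_2(\R)$; hence restriction to $\tlK$ identifies $V_{\epsilon,\nu}^\infty$ with the space of $h\in C^\infty(\tlK)$ satisfying $h(\tlk_\theta\tlm)=\omega_{\epsilon,\nu}(\tlm^{-1})h(\tlk_\theta)$ for all $\tlm\in\tlM$ (a condition independent of $\nu$, since $\omega_{\epsilon,\nu}|_{\tlM}=\sigma_\epsilon$). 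I would take $h(\tlk_\theta)=\exp(-\epsilon(\tfrac{1}{2}+\ell)i\theta)$, check that it is well defined on $\R/4\pi\Z$, and verify the four equivariance identities above by comparing with the values of $\sigma_\epsilon$ recorded in \eqref{eq:sigmaDef}; in both checks the hypothesis $\ell\in 2\Z$ is exactly what is needed, and this produces the unique $\phi_{\epsilon,\nu,\ell}\in V_{\epsilon,\nu}^\infty$ satisfying \eqref{eq:phiDef}.

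For the identity itself, I would use that $I_{\epsilon,\nu}$ intertwines the $\tlSL_2(\R)$-actions, in particular commutes with left translation by $\tlK$, while \eqref{eq:phiDef} gives $\pi(\tlk_\psi)\phi_{\epsilon,\pm\nu,\ell}=\exp(\epsilon(\tfrac{1}{2}+\ell)i\psi)\,\phi_{\epsilon,\pm\nu,\ell}$ (the $\tlK$-action on $V_{\epsilon,\pm\nu}^\infty$, read off on functions on $\tlK$, does not involve $\nu$). Since $V_{\epsilon,\nu}^\infty$ restricted to $\tlK$ is $\mathrm{Ind}_{\tlM}^{\tlK}\sigma_\epsilon$, which is multiplicity-free because $\tlK$ is abelian, the $\tlK$-weight subspace attached to this character is one-dimensional and spanned by $\phi_{\epsilon,\nu,\ell}$; hence $I_{\epsilon,\nu}\phi_{\epsilon,-\nu,\ell}=c_{\epsilon,\nu,\ell}\,\phi_{\epsilon,\nu,\ell}$ for a scalar $c_{\epsilon,\nu,\ell}$ which, for $\Re(\nu)>0$, equals $\int_{-\infty}^\infty\phi_{\epsilon,-\nu,\ell}(\tls\,\tln_t)\,dt$.

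Substituting $t=\tan\phi$ and using $\tln_{\tan\phi}=\tlk_{-\phi}\tla_{\cos\phi}\tln_{-,\cos(\phi)\sin(\phi)}$ on $(-\tfrac{\pi}{2},\tfrac{\pi}{2})$ together with \eqref{eq:TransLaws}, exactly as in the derivation of \eqref{eq:InterTwinedf}, turns this into $\int_{-\pi/2}^{\pi/2}\phi_{\epsilon,-\nu,\ell}(\tls\,\tlk_{-\phi})\cos(\phi)^{\nu-1}\,d\phi$; since $\tls=\tlk_{\pi/2}$ gives $\tls\tlk_{-\phi}=\tlk_{\pi/2-\phi}$, \eqref{eq:phiDef} then yields
\[
c_{\epsilon,\nu,\ell}=e^{-\epsilon(\frac{1}{2}+\ell)i\pi/2}\int_{-\pi/2}^{\pi/2}e^{\epsilon(\frac{1}{2}+\ell)i\phi}\cos(\phi)^{\nu-1}\,d\phi .
\]
Now I would invoke the classical evaluation
\[
\int_{-\pi/2}^{\pi/2}e^{i\beta\phi}\cos(\phi)^{\mu-1}\,d\phi=\frac{\pi\,\Gamma(\mu)}{2^{\mu-1}\,\Gamma(\tfrac{\mu+\beta+1}{2})\,\Gamma(\tfrac{\mu-\beta+1}{2})},\qquad \Re(\mu)>0,
\]
with $\mu=\nu$ and $\beta=\epsilon(\tfrac{1}{2}+\ell)$: since $\beta$ occurs only through $\pm\beta$, the denominator becomes $\Gamma(\tfrac{\nu+1}{2}-\tfrac{1}{2}(\ell+\tfrac{1}{2}))\Gamma(\tfrac{\nu+1}{2}+\tfrac{1}{2}(\ell+\tfrac{1}{2}))$ for both signs of $\epsilon$, and the prefactor simplifies by $e^{-\epsilon\ell i\pi/2}=i^{-\epsilon\ell}$ and $e^{-\epsilon i\pi/4}=\tfrac{1-\epsilon i}{\sqrt{2}}$ (both using $\ell\in 2\Z$). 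Assembling these gives the stated formula for $\Re(\nu)>0$, and uniqueness of meromorphic continuation — both sides being meromorphic in $\nu$, the left one because $I_{\epsilon,\nu}$ is — extends it to $\C-\Z_{\leq 0}$.

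The transformation-law verifications and the Beta-integral are routine; the step I expect to require the most care is the phase bookkeeping at the end, namely confirming that $e^{-\epsilon(\frac{1}{2}+\ell)i\pi/2}$ is precisely $i^{-\epsilon\ell}\tfrac{1-\epsilon i}{\sqrt{2}}$ and that the two Gamma arguments really do reshuffle, uniformly in $\epsilon=\pm1$, into the symmetric pair $\mp\tfrac{1}{2}(\ell+\tfrac{1}{2})+\tfrac{\nu+1}{2}$ displayed in the lemma.
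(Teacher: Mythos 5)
Your proposal is correct and follows essentially the same route as the paper: existence and uniqueness via the Iwasawa decomposition and the transformation laws, reduction of the intertwining integral to a single scalar using the character property of $\phi|_{\tlK}$, and evaluation of a beta-type integral followed by meromorphic continuation. The only differences are cosmetic — you justify the proportionality by multiplicity-one of $\tlK$-types and evaluate at the identity, where the paper factors the character out of $I_{\epsilon,\nu}\phi_{\epsilon,-\nu,\ell}(\tlk_\theta)$ directly, and your trigonometric integral is the paper's integral $\int_{-\infty}^\infty(1-ix)^{-\frac{\epsilon}{2}(\frac{1}{2}+\ell)-\frac{\nu+1}{2}}(1+ix)^{\frac{\epsilon}{2}(\frac{1}{2}+\ell)-\frac{\nu+1}{2}}\,dx$ after the substitution $x=\tan\phi$.
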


\begin{proof}
Define
\begin{equation}
\label{eq:phiDefB}
\phi_{\epsilon,\nu,\ell}(\tlk_\theta \tla_u \tln_-) = u^{-\nu+1} \exp\(-\epsilon \(\frac{1}{2}+\ell\) i \theta\),
\end{equation}
which determines $\phi_{\epsilon,\nu,\ell}$ completely since $\tlSL_2(\R) = \tlK \tlA \tlN_-$.
Notice that $\phi_{\epsilon,\nu,\ell}|_{\tlK}$ is a character.
Thus if $\tlg = \tlk_{\theta} \tla_u \tln_-$ for $\tlk_\theta \in \tlK$, $\tla_u \in \tlA$ and $\tln_- \in \tlN_-$,
then since $\tlM \subset \tlK$ is the center of $\tlSL_2(\R)$,
\begin{align}
\label{eq:IsotypicPartA}
&\phi_{\epsilon,\nu,\ell}(\tlg \tlm) 
= \phi_{\epsilon,\nu,\ell}(\tlm \tlk_\theta \tla_u \tln_-)
= \phi_{\epsilon,\nu,\ell}(\tlm \tlk_\theta) u^{-\nu+1}\\
&= \phi_{\epsilon,\nu,\ell}(\tlm) \phi_{\epsilon,\nu,\ell}(\tlk_\theta) u^{-\nu+1}
= \phi_{\epsilon,\nu,\ell}(\tlm) \phi_{\epsilon,\nu,\ell}(\tlg),\notag
\end{align}
for all $\tlm \in \tlM$. With this in mind, observe
\begin{align}
\label{eq:IsotypicPartB}
&\phi_{\epsilon,\nu,\ell}(\tlm_{-1,\kappa}) = \phi_{\epsilon,\nu,\ell}(\tlk_{-\kappa \pi}) = \exp\( \frac{\epsilon \kappa i \pi}{2} \) = \epsilon \kappa i,\\
&\phi_{\epsilon,\nu,\ell}(\tlm_{1,\kappa}) = \phi_{\epsilon,\nu,\ell}(\tlk_{(\kappa-1) \pi})= \exp\(-\frac{\epsilon(\kappa-1) i \pi}{2}\) = \kappa, \notag
\end{align}
for $\kappa \in \{\pm 1\}$.
It follows from the definition that
\begin{equation}
\label{eq:IsotypicPartC}
\phi_{\epsilon,\nu,\ell}(\tlg \tla_u \tln_-) = u^{-\nu+1} \phi_{\epsilon,\nu,\ell}(\tlg).
\end{equation}
Thus by \eqref{eq:IsotypicPartA}, \eqref{eq:IsotypicPartB}, and \eqref{eq:IsotypicPartC},
we see that $\phi_{\epsilon,\nu,\ell}$ satisfies \eqref{eq:TransLawsB}
(or equivalently, \eqref{eq:TransLaws} for $V_{\epsilon,\nu}^{-\infty}$).
Since it is also clear that $\phi_{\epsilon,\nu,\ell}$ is smooth, it follows that
$\phi_{\epsilon,\nu,\ell} \in V_{\epsilon,\nu}^\infty$.
Clearly $\phi_{\epsilon,\nu,\ell}$ is the unique element of $V_{\epsilon,\nu}^\infty$ which satisfies
\begin{equation*}
\phi(\tlk_\theta) = \exp\(-\epsilon \(\frac{1}{2}+\ell\) i \theta\),
\end{equation*}
since elements of $V_{\epsilon,\nu}^\infty$ are determined by their restriction to $\tlK$.

Since
\begin{equation}
\tln_x = \tlk\(-\arctan(x)\) \, \tln_{-,x} \, \tla\((1+x^2)^{-1/2}\)
\end{equation}
then for $\Re(\nu)>0$,
\begin{align}
\label{eq:KFiniteLemStep1}
&I_{\epsilon,\nu} \phi_{\epsilon,-\nu,\ell}(\tlk_\theta) = \int_{-\infty}^\infty \phi_{\epsilon,-\nu,\ell}(\tlk_\theta \tls \, \tln_x) \, dx\\
&= \int_{-\infty}^\infty \phi_{\epsilon,-\nu,\ell}\Big(\tlk_\theta \tls \tlk(-\arctan(x))\Big) (1+x^2)^{-\frac{\nu+1}{2}}\, dx\notag\\
&= \phi_{\epsilon,-\nu,\ell}(\tlk_\theta) \phi_{\epsilon,-\nu,\ell}(\tls) \int_{-\infty}^\infty \phi_{\epsilon,-\nu,\ell}\Big(\tlk(-\arctan(x))\Big) (1+x^2)^{-\frac{\nu+1}{2}}\, dx\notag\\
&= \phi_{\epsilon,\nu,\ell}(\tlk_\theta) i^{-\epsilon \ell} \frac{1-\epsilon i}{\sqrt{2}} \int_{-\infty}^\infty \phi_{\epsilon,-\nu,\ell}\Big(\tlk(-\arctan(x))\Big) (1+x^2)^{-\frac{\nu+1}{2}}\, dx,\notag
\end{align}
where in the last equality we used that
\begin{align*}
&\phi_{\epsilon,-\nu,\ell}(\tlk_\theta) = \phi_{\epsilon,\nu,\ell}(\tlk_\theta),\\
&\phi_{\epsilon,-\nu,\ell}(\tls) = \phi_{\epsilon,-\nu,\ell}(\tlk_{\pi/2}) = \exp\(\frac{-\epsilon \ell i \pi}{2}\) \exp\(\frac{-\epsilon i \pi}{4}\)
= i^{-\epsilon \ell}  \frac{1-\epsilon i}{\sqrt{2}}.
\end{align*}
It remains to evaluate the integral
\begin{equation*}
\int_{-\infty}^\infty \phi_{\epsilon,-\nu,\ell}\Big(\tlk(-\arctan(x))\Big) (1+x^2)^{-\frac{\nu+1}{2}}\, dx.
\end{equation*}
Using the identity $\arctan(x) = \frac{i}{2}(\log(1-ix)-\log(1+ix))$, we find
\begin{align}
\label{eq:phiArcTanToI}
&\phi_{\epsilon,-\nu,\ell}\Big(\tlk(-\arctan(x))\Big) = \exp\(\epsilon \(\frac{1}{2}+\ell\) i \arctan(x)\)\\
&=\exp\(-\frac{\epsilon}{2} \(\frac{1}{2}+\ell\)(\log(1-ix)-\log(1+ix)) \)\notag\\
&= (1-i x)^{-\frac{\epsilon}{2} \(\frac{1}{2}+\ell\)} (1+i x)^{\frac{\epsilon}{2}\(\frac{1}{2}+\ell\)}.\notag
\end{align}
Because of our choice of branch cut for $\log$ given in \eqref{eq:branchCut}, we also have
\begin{equation}
\label{eq:branchCutResult}
(1+x^2)^{-\frac{\nu+1}{2}} = (1-ix)^{-\frac{\nu+1}{2}}(1+ix)^{-\frac{\nu+1}{2}}.
\end{equation}
Thus
\begin{align}
\label{eq:InterTwinedKFinite}
&\int_{-\infty}^\infty \phi_{\epsilon,-\nu,\ell}\Big(\tlk(-\arctan(x))\Big) (1+x^2)^{-\frac{\nu+1}{2}}\, dx\\
&= \int_{-\infty}^\infty (1-i x)^{-\frac{\epsilon}{2} \(\frac{1}{2}+\ell\)-\frac{\nu+1}{2}} (1+i x)^{\frac{\epsilon}{2}\(\frac{1}{2}+\ell\)-\frac{\nu+1}{2}} dx\notag\\
&= \frac{\pi  2^{1-\nu } \Gamma (\nu )}{\Gamma\left(-\frac{\epsilon}{2} \left(\ell +\frac{1}{2}\right) + \frac{\nu +1}{2} \right) \Gamma \left(\frac{\epsilon}{2} \left(\ell +\frac{1}{2}\right) +\frac{\nu +1}{2}\right)}\notag
\end{align}
for $\Re(\nu)>0$, where this last equality follows from \cite[(1.3)]{Gasper89}.
Since $\epsilon = \pm 1$, by symmetry we can simply take $\epsilon = 1$ in the last line of \eqref{eq:InterTwinedKFinite}.
Combining \eqref{eq:KFiniteLemStep1} and \eqref{eq:InterTwinedKFinite} proves the assertion of the lemma
when both sides are restricted to $\tlK$ for $\Re(\nu)>0$.
We complete the proof by noting that elements of $V_{\epsilon,\nu}^\infty$ are uniquely determined by their
restriction to $\tlK$ (hence our formula holds for all of $\tlSL_2(\R)$) and by invoking the uniqueness of meromorphic continuation.
\end{proof}

\begin{rem}
Observe that $\{\tlk_\theta \mapsto \exp\(\frac{j}{2} \theta\): j \in \Z\}$ is the set of all characters on $\tlK$.
This follows from the fact that $\theta \mapsto \tlk_\theta$ is an isomorphism from $\R / [-2\pi,2\pi)$ onto $\tlK$.
A character is said to factor over $K$, the maximal compact subgroup of $\SL_2$,
if the map $k_\theta \mapsto \exp\(\frac{j}{2} \theta\)$ is also a character on $K$.
The characters of $\tlK$ that do not factor over $K$ (i.e. the genuine characters on $\tlK$)
are precisely those for which $j \notin 2\Z$.
Of these, one can show via \eqref{eq:TransLawsB} (or equivalently, \eqref{eq:TransLaws} for $V_{\epsilon,\nu}^\infty$) 
that $\{\phi_{\epsilon,\nu,\ell}|_{\tlK}: \ell \in 2\Z\}$ is the set of all genuine characters contained in $V_{\epsilon,\nu}^\infty$.
\end{rem}

By applying Lemma \ref{lem:KFiniteLem} twice and utilizing standard identities for the $\Gamma$ function, we find that
\begin{equation*}
I_{\epsilon,-\nu} I_{\epsilon,\nu} \phi_{\epsilon,-\nu,0} = \frac{2 \pi \epsilon i \cot(\pi \nu)}{\nu} \phi_{\epsilon,-\nu,0}.
\end{equation*}
Since we know  by Schur's lemma that $I_{\epsilon,-\nu} I_{\epsilon,\nu}$ is a scalar operator,
it follows that $I_{\epsilon,-\nu} I_{\epsilon,\nu} f = \frac{2 \pi \epsilon i \cot(\pi \nu)}{\nu}f$ for all $f \in V_{\epsilon,-\nu}^\infty$.
By applying a density argument, we can extend this result to $V_{\epsilon,-\nu}^{-\infty}$:

\begin{lem} 
\label{lem:IntertwinFactor}
For $f \in V_{\epsilon,-\nu}^{-\infty}$,
\begin{equation*}
I_{\epsilon,-\nu} I_{\epsilon,\nu} f = \frac{2 \pi \epsilon i \cot(\pi \nu)}{\nu} f
\end{equation*}
for $\nu \in \C - \Z$.
\end{lem}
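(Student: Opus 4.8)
The plan is to prove the identity first on the dense subspace $V_{\epsilon,-\nu}^\infty$ of $\tlK$-finite-generated smooth vectors, where it reduces to a single explicit computation on one $\tlK$-type, and then to pass to $V_{\epsilon,-\nu}^{-\infty}$ by continuity and density. All of the hard work is already done in Lemma \ref{lem:KFiniteLem}; what remains is to assemble the scalar and run the density argument.

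First I would compute $I_{\epsilon,-\nu}I_{\epsilon,\nu}\phi_{\epsilon,-\nu,0}$. Applying Lemma \ref{lem:KFiniteLem} with $\ell = 0$ gives $I_{\epsilon,\nu}\phi_{\epsilon,-\nu,0} = A\,\phi_{\epsilon,\nu,0}$ with
\[
A = \frac{1-\epsilon i}{\sqrt 2}\cdot\frac{\pi\,2^{1-\nu}\,\Gamma(\nu)}{\Gamma\!\left(\tfrac{\nu}{2}+\tfrac14\right)\Gamma\!\left(\tfrac{\nu}{2}+\tfrac34\right)},
\]
and applying Lemma \ref{lem:KFiniteLem} again with $\nu$ replaced by $-\nu$ (so that $I_{\epsilon,-\nu}$ acts on $\phi_{\epsilon,\nu,0}$) gives $I_{\epsilon,-\nu}\phi_{\epsilon,\nu,0} = B\,\phi_{\epsilon,-\nu,0}$ with the same expression under $\nu\mapsto-\nu$. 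The remaining step is the $\Gamma$-function bookkeeping: the Legendre duplication formula turns $\Gamma(\tfrac{\nu}{2}+\tfrac14)\Gamma(\tfrac{\nu}{2}+\tfrac34)$ into $2^{1/2-\nu}\sqrt\pi\,\Gamma(\nu+\tfrac12)$ (and likewise with $\nu\mapsto-\nu$), so that $A = (1-\epsilon i)\sqrt\pi\,\Gamma(\nu)/\Gamma(\nu+\tfrac12)$ and $B = (1-\epsilon i)\sqrt\pi\,\Gamma(-\nu)/\Gamma(-\nu+\tfrac12)$. Then $(1-\epsilon i)^2 = -2\epsilon i$, while $\Gamma(\nu)\Gamma(-\nu) = -\pi/(\nu\sin(\pi\nu))$ and $\Gamma(\nu+\tfrac12)\Gamma(-\nu+\tfrac12) = \pi/\cos(\pi\nu)$ by the reflection formula, so $AB = -2\epsilon i\,\pi\cdot\bigl(-\cot(\pi\nu)/\nu\bigr) = 2\pi\epsilon i\cot(\pi\nu)/\nu$, valid for $\nu\in\C-\Z$. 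This establishes $I_{\epsilon,-\nu}I_{\epsilon,\nu}\phi_{\epsilon,-\nu,0} = \frac{2\pi\epsilon i\cot(\pi\nu)}{\nu}\phi_{\epsilon,-\nu,0}$.

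Next, for $\nu\notin\Z$ the principal series $(\pi,V_{\epsilon,-\nu}^\infty)$ is irreducible, so by Schur's lemma the intertwining operator $I_{\epsilon,-\nu}\circ I_{\epsilon,\nu}$ on $V_{\epsilon,-\nu}^\infty$ is scalar; since it takes the value $\frac{2\pi\epsilon i\cot(\pi\nu)}{\nu}$ on the nonzero vector $\phi_{\epsilon,-\nu,0}$, that scalar is $\frac{2\pi\epsilon i\cot(\pi\nu)}{\nu}$, which proves the identity for all $f\in V_{\epsilon,-\nu}^\infty$. Finally I would extend to $V_{\epsilon,-\nu}^{-\infty}$: the space $V_{\epsilon,-\nu}^\infty$ is dense in $V_{\epsilon,-\nu}^{-\infty}$ in the strong distribution topology, and both $I_{\epsilon,\nu}$ and $I_{\epsilon,-\nu}$ extend to continuous operators on these distributional principal series (as recalled in the paragraph preceding Lemma \ref{lem:AlmostAdjoint}), so the identity $I_{\epsilon,-\nu}I_{\epsilon,\nu}f = \frac{2\pi\epsilon i\cot(\pi\nu)}{\nu}f$ is preserved under limits and holds on all of $V_{\epsilon,-\nu}^{-\infty}$ for $\nu\in\C-\Z$.

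The main obstacle is purely the $\Gamma$-function simplification — tracking the powers of $2$, the $\sqrt\pi$ from the duplication formula, and the $(1-\epsilon i)^2 = -2\epsilon i$ collapse — together with checking that $\C-\Z$ is exactly the right domain: the pole of $\Gamma(\nu)$ at $\nu=0$ and the poles of $\cot(\pi\nu)$ at nonzero integers force the exclusion of $\Z$, while at half-integers the scalar is $0$ (consistent with $\cos(\pi\nu)=0$), so nothing further need be excluded. There is no conceptual difficulty once Lemma \ref{lem:KFiniteLem}, Schur's lemma, and the continuity of the intertwining operators are in hand.
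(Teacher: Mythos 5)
Your proof is correct and follows the paper's argument essentially verbatim: the text preceding the lemma computes $I_{\epsilon,-\nu}I_{\epsilon,\nu}\phi_{\epsilon,-\nu,0}$ by two applications of Lemma \ref{lem:KFiniteLem} together with the same $\Gamma$-function identities (duplication plus reflection, with $(1-\epsilon i)^2=-2\epsilon i$), invokes Schur's lemma to conclude the composition is scalar on $V_{\epsilon,-\nu}^\infty$, and extends to $V_{\epsilon,-\nu}^{-\infty}$ by density and continuity. The one caveat is your claim that the principal series is irreducible for all $\nu\in\C-\Z$: the genuine principal series is in fact reducible at $\nu\in\frac{1}{2}+\Z$ (where the half-integral-weight constituents split off), so strictly one should apply Schur's lemma only for generic $\nu$ and then extend the scalar identity to all of $\C-\Z$ by meromorphic continuation in $\nu$ --- a repair the paper itself also leaves implicit.
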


Recall that we defined $\onebb_{\epsilon,\nu} \in V^{-\infty}_{\epsilon,\nu}$ in \eqref{eq:onebbDef} and
$\delta_{\epsilon,\nu,\infty} \in V^{-\infty}_{\epsilon,\nu}$ just prior to \eqref{eq:tlsdeltprop}.

\begin{lem}
\label{lem:InterTwinDeltInfOnebb}
\hspace{2em}
\begin{itemize}[leftmargin=.3in,font=\normalfont\textbf]
\item[(a)] $\displaystyle I_{\epsilon,\nu}\delta_{\epsilon,-\nu,\infty} = \onebb_{\epsilon,\nu}$,
\item[(b)] $\displaystyle I_{\epsilon,\nu}\onebb_{\epsilon,-\nu} = \frac{2 \pi \epsilon i \cot(\pi \nu)}{\nu} \delta_{\epsilon,\nu,\infty}$.
\end{itemize}
\end{lem}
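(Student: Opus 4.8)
The plan is to establish part (a) by testing both sides against smooth vectors and using the non-degeneracy of the pairing, and then to obtain part (b) cheaply by composing with a second intertwining operator. For (a), I would fix $f_2 \in V_{-\epsilon,-\nu}^\infty$ and $\Re(\nu)>0$ (so that all the relevant integrals converge), and show that $\<I_{\epsilon,\nu}\delta_{\epsilon,-\nu,\infty},f_2\>_{\epsilon,\nu}$ and $\<\onebb_{\epsilon,\nu},f_2\>_{\epsilon,\nu}$ both equal $\int_{-\infty}^\infty (f_2)_0(t)\,dt$; since $V_{-\epsilon,-\nu}^\infty$-vectors separate points of $V_{\epsilon,\nu}^{-\infty}$, this forces $I_{\epsilon,\nu}\delta_{\epsilon,-\nu,\infty}=\onebb_{\epsilon,\nu}$ in the region of convergence, and the uniqueness of meromorphic continuation then gives the identity for all admissible $\nu$.

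For the left-hand pairing, Lemma \ref{lem:AlmostAdjoint} rewrites it as $-\epsilon i\,\<\delta_{\epsilon,-\nu,\infty},I_{-\epsilon,\nu}f_2\>_{\epsilon,-\nu}$. Because $\delta_{\epsilon,-\nu,\infty}=\pi(\tls)\delta_{\epsilon,-\nu,0}$ is supported in $\tls^{-1}\tlN\tlB$ (its $0$-restriction vanishes), equation \eqref{eq:cmpctToUnbdd2} together with \eqref{eq:tlsdeltprop} collapses this to $-\epsilon i\cdot\epsilon i\,(I_{-\epsilon,\nu}f_2)_\infty(0)=(I_{-\epsilon,\nu}f_2)_\infty(0)$; and by the definition \eqref{eq:InuDef}, $(I_{-\epsilon,\nu}f_2)_\infty(0)=I_{-\epsilon,\nu}f_2(\tls^{-1})=\int_{-\infty}^\infty f_2(\tln_t)\,dt=\int_{-\infty}^\infty (f_2)_0(t)\,dt$. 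For the right-hand pairing, note that $(\onebb_{\epsilon,\nu})_0\equiv 1\in L^\infty(\R)$, and for $\Re(\nu)>0$ the function $(\onebb_{\epsilon,\nu})_\infty=\sgn(-x)^{\epsilon/2}|x|^{\nu-1}$ is locally integrable on $\R$, so \eqref{eq:sigmaIndRepUnbddEq} holds for $\onebb_{\epsilon,\nu}$ as an equality of distributions on all of $\R$ (not merely on $\R_{\neq 0}$); hence Proposition \ref{prop:pairingToCondConvIntegral} applies and yields $\<\onebb_{\epsilon,\nu},f_2\>_{\epsilon,\nu}=\int_{-\infty}^\infty (f_2)_0(x)\,dx$, using that $(f_2)_0\in L^1(\R)$ by \eqref{eq:phi0Bnd}. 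The two computations agree, which proves (a).

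For part (b), I would replace $\nu$ by $-\nu$ in (a) to get $I_{\epsilon,-\nu}\delta_{\epsilon,\nu,\infty}=\onebb_{\epsilon,-\nu}$, then apply the (continuous) operator $I_{\epsilon,\nu}$ to both sides. Lemma \ref{lem:IntertwinFactor}, applied with $\nu$ replaced by $-\nu$ and using that $\cot$ is odd so that the scalar $\tfrac{2\pi\epsilon i\cot(-\pi\nu)}{-\nu}$ equals $\tfrac{2\pi\epsilon i\cot(\pi\nu)}{\nu}$, gives $I_{\epsilon,\nu}\onebb_{\epsilon,-\nu}=I_{\epsilon,\nu}I_{\epsilon,-\nu}\delta_{\epsilon,\nu,\infty}=\tfrac{2\pi\epsilon i\cot(\pi\nu)}{\nu}\,\delta_{\epsilon,\nu,\infty}$, which is exactly the assertion. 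The main obstacle is purely the bookkeeping in (a): one must verify carefully that $\supp(\delta_{\epsilon,-\nu,\infty})\subset\tls^{-1}\tlN\tlB$ so that \eqref{eq:cmpctToUnbdd2} legitimately applies, that the hypotheses of Proposition \ref{prop:pairingToCondConvIntegral} for $\onebb_{\epsilon,\nu}$ are genuinely met (the distributional identity on $\R$), and that $\Re(\nu)$ is simultaneously large enough for the defining integrals of $I_{\epsilon,\nu}$, $I_{-\epsilon,\nu}$, and the $\onebb_{\epsilon,\nu}$-pairing to converge before continuing meromorphically; everything else is formal.
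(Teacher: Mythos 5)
Your proposal is correct and follows essentially the same route as the paper's proof: part (a) is obtained by moving $I_{\epsilon,\nu}$ onto the test vector via Lemma \ref{lem:AlmostAdjoint}, using the support of $\delta_{\epsilon,-\nu,\infty}$ to reduce the pairing to the point evaluation $I_{-\epsilon,\nu}f_2(\tls^{-1})=\int f_2(\tln_t)\,dt$, matching this with $\<\onebb_{\epsilon,\nu},f_2\>_{\epsilon,\nu}$ (the paper relegates the appeal to Proposition \ref{prop:pairingToCondConvIntegral} to a footnote, which you make explicit), and then invoking non-degeneracy and meromorphic continuation; part (b) follows from (a) and Lemma \ref{lem:IntertwinFactor} exactly as in the paper, up to the trivial relabeling $\nu\mapsto-\nu$. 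The only cosmetic difference is that you evaluate the delta pairing through \eqref{eq:cmpctToUnbdd2} and $(\delta_{\epsilon,-\nu,\infty})_\infty=\epsilon i\,\delta_0$, while the paper performs the equivalent change of variables in the $\tlK$-integral and computes $I_{-\epsilon,\nu}f(\tls)$ instead; the two bookkeepings agree.
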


\begin{proof}
Let $f \in V_{-\epsilon,-\nu}^{\infty}$ with $\Re(\nu)>0$.  
By Lemma \ref{lem:AlmostAdjoint} and \eqref{eq:cmpctToUnbdd},
\begin{align*}
&\<I_{\epsilon,\nu}\delta_{\epsilon,-\nu,\infty}, f\>_{\epsilon, \nu} 
= -\epsilon i \<\delta_{\epsilon,-\nu,\infty}, I_{-\epsilon,\nu} f\>_{\epsilon,-\nu}\\
&= -\epsilon i \int_{-\pi/2}^{\pi/2}\delta_{\epsilon,-\nu,0}(\tls^{-1} \tlk_\theta) I_{-\epsilon,\nu} f(\tlk_\theta) d\theta \\
&= -\epsilon i \int_{-\pi/2}^{\pi/2}\delta_{\epsilon,-\nu,0}(\tlk_\theta) I_{-\epsilon,\nu} f(\tls \tlk_\theta) d\theta\\
&= -\epsilon i \int_{-\infty}^\infty\delta_{\epsilon,-\nu,0}(\tln_{t}) I_{-\epsilon,\nu} f(\tls \, \tln_{t}) dt = -\epsilon i  I_{-\epsilon,\nu} f(\tls).
\end{align*}
Once again, the change in the $\theta$ variable does not result in a change in the bounds of integration since the integrand is
$\tlM$-invariant and $d\theta$ is a Haar measure for $\tlK /\tlM$.
Since $\tls^{\,2} = \tlm_{-1,-1} \in \tlM$ and since $\tlM$ is the center of $\tlSL_2(\R)$,
\begin{align*}
&I_{-\epsilon,\nu} f(\tls) = \int_{-\infty}^\infty f(\tls^{\,2} \tln_x) \, dt = \int_{-\infty}^\infty f(\tln_x \tlm_{-1,-1}) \, dt = \epsilon i \int_{-\infty}^\infty f(\tln_t) \, dt.
\end{align*}
Thus
\begin{equation*}
\<I_{\epsilon,\nu} \delta_{\epsilon,-\nu,\infty}, f\>_{\epsilon,\nu} = -\epsilon i \cdot \epsilon i \int_{-\infty}^\infty f(\tln_t) \, dt = \int_{-\infty}^\infty f(\tln_t) \, dt.
\end{equation*}
This last equality shows that $\int_{-\infty}^\infty \onebb_{\epsilon,\nu}(\tln_t) f(\tln_t) \, dt$ converges for any 
$f \in V_{-\epsilon,-\nu}^{\infty}$.%
\footnote{This can also be seen from Proposition \ref{prop:pairingToCondConvIntegral}.}
Since the pairing $\< \cdot, \cdot \>_{\epsilon,\nu}$ is non-degenerate, it follows that for $\Re(\nu)>0$,
$I_{\epsilon,\nu} \delta_{\epsilon,-\nu,\infty}  = \onebb_{\epsilon,\nu}$.
Since $\nu \mapsto \onebb_{\epsilon,\nu}$ is a meromorphic function on $\C$, the lemma then follows from the uniqueness
of meromorphic continuation.

Lemma \ref{lem:IntertwinFactor} applied to $\delta_{\epsilon,-\nu,\infty}$ shows that
\begin{equation*}
I_{\epsilon,-\nu} I_{\epsilon,\nu}\delta_{\epsilon,-\nu,\infty} = \frac{2 \pi \epsilon i \cot(\pi \nu)}{\nu}\delta_{\epsilon,-\nu,\infty}.
\end{equation*}
Therefore by part (a), 
\begin{equation*}
I_{\epsilon,-\nu} \onebb_{\epsilon,\nu} = \frac{2 \pi \epsilon i \cot(\pi \nu)}{\nu} \delta_{\epsilon,-\nu,\infty},
\end{equation*}
which yields part (b) when we replace $\nu$ with $-\nu$.
\end{proof}

Let
\begin{align}
\label{eq:GammaFactorDefs}
&G_0(\nu) = \frac{2 \Gamma(\nu)}{(2\pi)^\nu} \cos\(\frac{\pi \nu}{2}\),\\
&G_1(\nu) = \frac{2 i \Gamma(\nu)}{(2 \pi)^{\nu}} \sin\(\frac{\pi \nu}{2}\),\notag \\
&G_{(\epsilon_1,\epsilon_2)}(\nu) = \frac{(1-\epsilon_1 i) \Gamma(\nu)}{(2\pi)^\nu} \(\cos\(\frac{\pi \nu}{2}\) + \epsilon_1 \epsilon_2 \sin\(\frac{\pi \nu}{2}\)\), \notag
\end{align}
where $\epsilon_1, \epsilon_2 \in \{\pm 1\}$.  
It follows directly from the definition of $G_{(\epsilon_1,\epsilon_2)}(\nu)$ that
\begin{equation}
\label{eq:Geps1eps2Id1}
G_{(\epsilon_1,\epsilon_2)}(\nu) = -\epsilon_1 i G_{(-\epsilon_1,-\epsilon_2)}(\nu).
\end{equation}
Notice that $G_0(\nu)$ has simple poles at the non-positive even integers, $G_1(\nu)$ has simple poles at the non-positive odd integers,
and $G_{(\epsilon_1,\epsilon_2)}(\nu)$ has simple poles at all the non-positive integers.

For $0 < \Re(\nu) < 1$, it is well-known that
\begin{equation}
\label{eq:GammaCondConvIntegral}
G_\eta(\nu) = \lim_{m_1, m_2 \to \infty} \int_{-m_1}^{m_2}  \sgn(t)^\eta |t|^{\nu-1} e(t) \, dt
\end{equation}
for $\eta \in \{0,1\}$.%
\footnote{We obtain \eqref{eq:GammaCondConvIntegral} by noting that
\begin{equation*}
\int_{-m_1}^{m_2}  \sgn(t)^\eta |t|^{\nu-1} e(t) \, dt 
= \int_{0}^{m_2}  |t|^{\nu-1} e(t) \, dt + (-1)^\eta \int_{0}^{m_1} |t|^{\nu-1} e(-t) \, dt,
\end{equation*}
and recalling that
$\displaystyle \lim_{m \to \infty} \int_0^m t^{\nu-1} e(t) \, dt = \frac{\Gamma(\nu)}{(2\pi)^\nu} e(\nu/4)$
for $0 < \Re(\nu) < 1$,
the latter of which follows by changing variables in the definition of 
$\displaystyle \Gamma(\nu) = \lim_{m \to \infty} \int_0^m t^{\nu-1}e^{-t} \, dt$.}
The following lemma shows that a similar integral representation holds for $G_{(\epsilon_1,\epsilon_2)}(\nu)$.

\begin{lem}
\label{lem:InterIdentGeps1eps2}
For $0 < \Re(\nu) <1$,
\begin{equation*}
G_{(\epsilon_1,\epsilon_2)}(\nu) = \lim_{m_1, m_2 \to \infty} \int_{-m_1}^{m_2} \sgn(-\epsilon_2 t)^{-\epsilon_1 / 2} |t|^{\nu-1} e(t) \, dt.
\end{equation*}
\end{lem}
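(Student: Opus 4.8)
The plan is to reduce the claimed integral representation to the already-established one for $G_0$ and $G_1$ in \eqref{eq:GammaCondConvIntegral}, by rewriting the sign factor $\sgn(-\epsilon_2 t)^{-\epsilon_1/2}$ as a linear combination of the constant function $1$ and $\sgn(t)$. First I would unwind the branch-cut convention of \eqref{eq:branchCut}: since $\epsilon_2 = \pm 1$ we have $\sgn(-\epsilon_2 t) = -\epsilon_2 \sgn(t)$, and $(-1)^{-\epsilon_1/2} = \exp\!\left(-\tfrac{\epsilon_1 i \pi}{2}\right) = -\epsilon_1 i$. Hence $\sgn(-\epsilon_2 t)^{-\epsilon_1/2}$ takes the value $1$ when $-\epsilon_2 t > 0$ and the value $-\epsilon_1 i$ when $-\epsilon_2 t < 0$, so solving the two-by-two linear system for the coefficients gives
\[
\sgn(-\epsilon_2 t)^{-\epsilon_1/2} = \frac{1-\epsilon_1 i}{2} - \epsilon_2 \, \frac{1+\epsilon_1 i}{2} \, \sgn(t).
\]

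Next I would substitute this decomposition into the integral and split it into two pieces. Because $0 < \Re(\nu) < 1$, each of the two resulting conditionally convergent integrals converges by \eqref{eq:GammaCondConvIntegral}, yielding
\[
\lim_{m_1,m_2 \to \infty} \int_{-m_1}^{m_2} \sgn(-\epsilon_2 t)^{-\epsilon_1/2} |t|^{\nu-1} e(t) \, dt = \frac{1-\epsilon_1 i}{2} G_0(\nu) - \epsilon_2 \, \frac{1+\epsilon_1 i}{2} G_1(\nu).
\]
Then I would insert the definitions from \eqref{eq:GammaFactorDefs} and check that the right-hand side equals $G_{(\epsilon_1,\epsilon_2)}(\nu)$. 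Pulling out the common factor $\tfrac{\Gamma(\nu)}{(2\pi)^\nu}$, the $\cos\!\left(\tfrac{\pi\nu}{2}\right)$ terms agree immediately, and matching the $\sin\!\left(\tfrac{\pi\nu}{2}\right)$ terms reduces to the elementary identity $-\epsilon_2 i (1 + \epsilon_1 i) = \epsilon_1 \epsilon_2 (1 - \epsilon_1 i)$, both sides being equal to $\epsilon_2(\epsilon_1 - i)$.

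This argument is essentially a routine computation, so there is no serious obstacle; the only point demanding care is the evaluation of $(-1)^{-\epsilon_1/2}$ with respect to the branch of the logarithm fixed in \eqref{eq:branchCut}, which is what makes the constant term in the decomposition $\tfrac{1-\epsilon_1 i}{2}$ rather than something real. (One could alternatively derive the statement from \eqref{eq:GammaCondConvIntegral} together with the symmetry \eqref{eq:Geps1eps2Id1}, but the direct split above is cleaner.)
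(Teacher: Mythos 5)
Your proof is correct and follows essentially the same route as the paper: the paper likewise writes $\sgn(-\epsilon_2 t)^{-\epsilon_1/2} = \tfrac{1}{2}(1-\epsilon_2\sgn(t)) - \tfrac{\epsilon_1 i}{2}(1+\epsilon_2\sgn(t))$, which is exactly your decomposition $\tfrac{1-\epsilon_1 i}{2} - \epsilon_2\tfrac{1+\epsilon_1 i}{2}\sgn(t)$, and then invokes \eqref{eq:GammaCondConvIntegral} and simplifies to $G_{(\epsilon_1,\epsilon_2)}(\nu)$. Your evaluation of $(-1)^{-\epsilon_1/2} = -\epsilon_1 i$ under the branch convention \eqref{eq:branchCut} and the final trigonometric matching both check out.
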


\begin{proof}
If $\epsilon_2 = 1$ then
\begin{equation*}
\sgn(-\epsilon_2 t)^{-\epsilon_1/2} = \sgn(-t)^{-\epsilon_1/2} = \frac{1}{2}(1-\sgn(t)) - \frac{\epsilon_1 i}{2}(1 + \sgn(t)).
\end{equation*}
If $\epsilon_2 = -1$ then
\begin{equation*}
\sgn(-\epsilon_2 t)^{-\epsilon_1/2} = \sgn(t)^{-\epsilon_1/2} = \frac{1}{2}(1+\sgn(t)) - \frac{\epsilon_1 i}{2}(1 - \sgn(t)).
\end{equation*}
Thus in either case,
\begin{equation*}
\sgn(-\epsilon_2 t)^{-\epsilon_1/2} = \frac{1}{2}(1- \epsilon_2 \sgn(t)) - \frac{\epsilon_1 i}{2}(1 + \epsilon_2 \sgn(t)).
\end{equation*}
Therefore by \eqref{eq:GammaCondConvIntegral}, for $0 < \Re(\nu) < 1$,
\begin{align*}
&\lim_{m_1,m_2 \to \infty} \int_{-m_1}^{m_2} \sgn(-\epsilon_2 t)^{-\epsilon_1 / 2} e(t) |t|^{\nu-1} \, dt\\
&= \lim_{m_1,m_2 \to \infty} \int_{-m_1}^{m_2} \( \frac{1}{2}(1- \epsilon_2 \sgn(t)) - \frac{\epsilon_1 i}{2}(1 + \epsilon_2 \sgn(t)) \) |t|^{\nu-1} e(t) \, dt\\
&=\frac{1}{2} \(G_0(\nu) - \epsilon_2 G_1(\nu)\) - \frac{\epsilon_1 i}{2} \(G_0(\nu) + \epsilon_2 G_1(\nu)\)\\
&=\(\(\cos\(\frac{\pi \nu}{2}\) - \epsilon_2 i \sin\(\frac{\pi \nu}{2}\)\) - \epsilon_1 i \(\cos\(\frac{\pi \nu}{2}\) + \epsilon_2 i \sin\(\frac{\pi \nu}{2}\)\)\)\frac{\Gamma(\nu)}{(2\pi)^\nu}\\
&=(1-\epsilon_1 i)\(\cos\(\frac{\pi\nu}{2}\)+\epsilon_1 \epsilon_2 \sin\(\frac{\pi\nu}{2}\)\) \frac{\Gamma(\nu)}{(2\pi)^\nu}\\
&=G_{(\epsilon_1,\epsilon_2)}(\nu).\qedhere
\end{align*}
\end{proof}

\begin{lem}
\label{lem:InterTwinDse}
For $n \neq 0$, 
\begin{equation*}
I_{\epsilon,\nu} \dse_{\epsilon,-\nu,n} =  G_{(\epsilon,\sgn(n))}(\nu) |n|^{-\nu} \dse_{\epsilon,\nu,n},
\end{equation*}
for all $\nu \in \C - \Z_{\leq 0}$.
Recall that $\dse_{\epsilon,\nu,n}$ is defined in \eqref{eq:dseDef}.
\end{lem}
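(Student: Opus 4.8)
The plan is to compute $I_{\epsilon,\nu}\dse_{\epsilon,-\nu,n}$ by working in the noncompact picture via its restrictions to $\tlN$ and $\tls^{-1}\tlN$, using the isomorphism \eqref{eq:sigmaIndRepUnbddIso}. Since $\dse_{\epsilon,-\nu,n}\in V_{\epsilon,-\nu}^\infty$ and $I_{\epsilon,\nu}$ maps into $V_{\epsilon,\nu}^\infty$, the image is completely determined by $(I_{\epsilon,\nu}\dse_{\epsilon,-\nu,n})_0$, so it suffices to compute this restriction and recognize it as a multiple of $e(nx)=(\dse_{\epsilon,\nu,n})_0$. First I would assume $0<\Re(\nu)<1$ so that the defining integral \eqref{eq:InuDef} converges absolutely and Lemma \ref{lem:InterIdentGeps1eps2} is available; at the end the general case follows by uniqueness of meromorphic continuation (both sides being meromorphic in $\nu$ away from $\Z_{\leq 0}$).

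The computation itself: starting from $(I_{\epsilon,\nu}\dse_{\epsilon,-\nu,n})_0(x) = I_{\epsilon,\nu}\dse_{\epsilon,-\nu,n}(\tln_x) = \int_{-\infty}^\infty \dse_{\epsilon,-\nu,n}(\tln_x \tls\, \tln_t)\, dt$. I would use the identity $\tln_x \tls = \tls^{-1}\tlm_{-1,-1}\tln_x$ (a special case of \eqref{eq:tlsComm} with $a=d=1,b=c=0$, $\kappa=1$), or more directly compute $\tln_x\tls\tln_t$ in Iwasawa-type coordinates relative to $\tlN \tlB$. Expanding $\tln_x\tls\tln_t = \tln_{x - \frac{1}{t}}\cdot(\text{something in }\tlB)$ when $t\neq 0$, and tracking the $\tlB$-part through the transformation law \eqref{eq:TransLawsB}, one finds that $\dse_{\epsilon,-\nu,n}(\tln_x\tls\tln_t)$ equals $\sgn(-t)^{\epsilon/2}|t|^{-\nu-1}$ times $(\dse_{\epsilon,-\nu,n})_0$ evaluated at an argument of the form $x - \frac{1}{t}$ — this is exactly the content of Lemma \ref{lem:MetaUnbddAct}(a) applied with $\tlg^{-1}$ the matrix giving $\tln_x\tls$. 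Since $(\dse_{\epsilon,-\nu,n})_0(y) = e(ny)$, the integrand becomes $\sgn(-t)^{\epsilon/2}|t|^{-\nu-1}e\big(n(x-\tfrac1t)\big) = e(nx)\sgn(-t)^{\epsilon/2}|t|^{-\nu-1}e(-n/t)$. Then the change of variables $t \mapsto -1/t$ (which sends $dt \mapsto t^{-2}\,dt$, $|t|^{-\nu-1}\mapsto |t|^{\nu+1}$, $\sgn(-t)\mapsto \sgn(t)$, and $-n/t \mapsto nt$) turns the integral into $e(nx)\int_{-\infty}^\infty \sgn(t)^{\epsilon/2}|t|^{\nu-1}e(nt)\,dt$; a further scaling $t\mapsto t/|n|$ pulls out $|n|^{-\nu}$ and leaves $\int \sgn(\sgn(n)t)^{\epsilon/2}|t|^{\nu-1}e(t)\,dt$. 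Matching signs — noting $\sgn(\sgn(n)t)^{\epsilon/2} = \sgn(-\sgn(n)\cdot(-t))^{\epsilon/2}$, so one should be careful about whether the exponent is $\epsilon/2$ or $-\epsilon/2$ and absorb a sign accordingly — this last integral is precisely $G_{(\epsilon,\sgn(n))}(\nu)$ by Lemma \ref{lem:InterIdentGeps1eps2} (with $\epsilon_1 = \epsilon$, $\epsilon_2 = \sgn(n)$, after checking $\sgn(-\epsilon_2 t)^{-\epsilon_1/2}$ matches $\sgn(\sgn(n)t)^{\epsilon/2}$ using $\sgn(\cdot)^{\pm\epsilon/2}$ conventions and that $(\,\cdot\,)^{-\epsilon_1/2}$ vs $(\,\cdot\,)^{\epsilon/2}$ differ only by the branch choice built into the definition). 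Hence $(I_{\epsilon,\nu}\dse_{\epsilon,-\nu,n})_0 = G_{(\epsilon,\sgn(n))}(\nu)|n|^{-\nu}(\dse_{\epsilon,\nu,n})_0$, which gives the claimed identity on $\tlN$; since elements of $V_{\epsilon,\nu}^\infty$ are determined by their restriction to $\tlN$ together with \eqref{eq:SmoothSigmaIndRepUnbddEq} — and both sides transform the same way — the identity holds on all of $\tlSL_2(\R)$.

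The main obstacle I anticipate is bookkeeping of the various sign factors and branch-of-logarithm conventions: the factor $\sgn(-t)^{\epsilon/2}$ coming out of the cocycle via Lemma \ref{lem:MetaUnbddAct}, the sign change under $t\mapsto -1/t$, and matching against the precise form $\sgn(-\epsilon_2 t)^{-\epsilon_1/2}$ in Lemma \ref{lem:InterIdentGeps1eps2}. One must use $z^t = \exp(\log(z)t)$ with the branch cut from \eqref{eq:branchCut} consistently — in particular $\sgn(w)^{-\epsilon/2}$ and $\sgn(w)^{\epsilon/2}$ are related by a fourth root of unity, and getting the identity rather than its complex conjugate (or a sign-off version) requires care. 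A clean way to sidestep part of this is to verify the identity by pairing against test functions supported in $\tlN\tlB$ using \eqref{eq:cmpctToUnbdd}, reducing everything to the one-variable integrals already handled in Lemma \ref{lem:InterIdentGeps1eps2} and the footnote preceding it. Once $0<\Re(\nu)<1$ is settled, extending to $\nu\in\C-\Z_{\leq 0}$ is immediate from meromorphic continuation of $I_{\epsilon,\nu}$ (cited from \cite{Knapp01}) and of $\dse_{\epsilon,\nu,n}$, $G_{(\epsilon,\sgn(n))}(\nu)$, and $|n|^{-\nu}$.
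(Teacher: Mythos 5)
Your formal computation lands on the right integral and the right Gamma factor, but there is a genuine gap at the very first step: you treat $\dse_{\epsilon,-\nu,n}$ as a smooth vector to which the absolutely convergent integral formula \eqref{eq:InuDef} applies. It is not one. By construction (the ``canonical extension'' of \eqref{eq:dseDef} and \eqref{eq:dseVanish}), $\(\dse_{\epsilon,-\nu,n}\)_\infty$ behaves like $\sgn(-x)^{\epsilon/2}|x|^{-\nu-1}e(-n/x)$ near $x=0$, which for $\Re(\nu)>0$ is not locally $L^1$ and exists only as a distribution vanishing to infinite order at $0$. Correspondingly, in your integral $\int_{-\infty}^{\infty}\sgn(-t)^{\epsilon/2}|t|^{-\nu-1}e\(n(x-\tfrac{1}{t})\)\,dt$ the group element $\tln_x\tls\,\tln_t$ exits the cell $\tlN\tlB$ as $t\to 0$, the integrand fails to be absolutely integrable near $t=0$, and pointwise evaluation of $\dse_{\epsilon,-\nu,n}$ there is not defined. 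Your change of variables $t\mapsto -1/t$ converts this into a conditionally convergent integral at infinity, but that manipulation --- and the interchange of the regularized integral with the distributional pairing --- is precisely the analytic content that has to be proved; it cannot be done by inspection.

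The paper's proof is structured to avoid exactly this. It never evaluates $I_{\epsilon,\nu}\dse_{\epsilon,-\nu,n}$ by the integral formula; instead it uses the adjoint relation of Lemma \ref{lem:AlmostAdjoint} to move the intertwining operator onto a genuine test function $\phi$, where $\(I_{-\epsilon,\nu}\phi\)_0$ is an absolutely convergent convolution, and then invokes Proposition \ref{prop:pairingToCondConvIntegral} to write $\<\dse_{\epsilon,-\nu,n},I_{-\epsilon,\nu}\phi\>_{\epsilon,-\nu}$ as a limit of hard-cutoff integrals. The heart of the argument is the pair of uniform bounds $|h_m'(t)|\ll 1$ and $|h_m(t)|\ll|t|$ that justify passing the $m\to\infty$ limit inside the $t$-integral before applying Lemma \ref{lem:InterIdentGeps1eps2}. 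Your closing remark about ``pairing against test functions supported in $\tlN\tlB$'' is in fact the necessary route, not an optional way to tidy the signs. Note also that along that route an extra factor $-\epsilon i$ appears from Lemma \ref{lem:AlmostAdjoint}, the integral first produces $G_{(-\epsilon,-\sgn(n))}(\nu)$, and \eqref{eq:Geps1eps2Id1} converts it to $G_{(\epsilon,\sgn(n))}(\nu)$; so the sign bookkeeping you flag is genuinely different between the two routes and would have to be redone from scratch if you insisted on the direct one.
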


\begin{proof}
Assume $0 < \Re(\nu) < 1$.
Once we prove the identity for $\nu$ in this range,
the lemma then follows by invoking the uniqueness of meromorphic continuation.

By Lemma \ref{lem:AlmostAdjoint},
\begin{equation}
\label{eq:InterDseStep1}
\<I_{\epsilon,\nu} \dse_{\epsilon,-\nu,n}, \phi\>_{\epsilon,\nu} =  - \epsilon i \<\dse_{\epsilon,-\nu,n}, I_{-\epsilon,\nu} \phi\>_{\epsilon,-\nu}
\end{equation}
for $\phi \in V^\infty_{-\epsilon,-\nu}$.
Suppose $\supp(\phi) \subset \tlN \tlB$.
Consequently, $\phi_0 \in C_c^\infty(\R)$ (see \eqref{eq:Sub0SubInfDef} for the definition $\phi_0$).
Since for $t \neq 0$,
\begin{equation}
\tln_x \tls\,\tln_t = \tln(-t^{-1}+x) \tla(|t|^{-1}) \tlm_{\sgn(t),\sgn(t)} \tln_-(t^{-1}),
\end{equation}
it then follows from \eqref{eq:InuDef} and \eqref{eq:TransLaws} for $V_{-\epsilon,-\nu}^\infty$,
\begin{align}
\label{eq:InterDseStep2}
&\(I_{-\epsilon,\nu} \phi\)_0(x) = \int_{-\infty}^\infty \sgn(t)^{\epsilon / 2} \phi_0(-t^{-1}+x) |t|^{-\nu-1} \, dt\\
&= \int_{-\infty}^\infty \sgn(-t)^{\epsilon / 2} |t|^{\nu-1}  \phi_0(t+x) \, dt  \notag\\
&= \int_{-\infty}^\infty \sgn(x-t)^{\epsilon / 2} |x-t|^{\nu-1}  \phi_0(t) \, dt,\notag
\end{align}
with the above integrals converging absolutely.

By \eqref{eq:dseDef}, we see that $\dse_{\epsilon,-\nu,n}$ satisfies the hypotheses of Proposition \ref{prop:pairingToCondConvIntegral}.
Thus by \eqref{eq:InterDseStep1}, \eqref{eq:InterDseStep2}, and Proposition \ref{prop:pairingToCondConvIntegral},
\begin{align}
\label{eq:InterDseStep3}
&\<I_{\epsilon,\nu} \dse_{\epsilon,-\nu,n}, \phi\>_{\epsilon,\nu}
= -\epsilon i \lim_{m \to \infty} \int_{-m}^m \(I_{-\epsilon,\nu} \phi\)_0(x)  \(\dse_{\epsilon,-\nu,n}\)_0(x) \, dx \\
&= -\epsilon i \lim_{m \to \infty} \int_{-m}^m \int_{-\infty}^\infty \sgn(x-t)^{\epsilon / 2} |x-t|^{\nu-1}  \phi_0(t) \, dt \, e(nx) \, dx.\notag
\end{align}
Since we've assumed $\phi_0$ has compact support, we can think of the integration over $\R$ 
in the last line of \eqref{eq:InterDseStep3} as
actually occurring over a finite interval, say $[-R,R]$ for some $R>0$.
Hence by Fubini's Theorem, we can swap the order of integration, and then perform the following changes of variables:
\begin{align}
\label{eq:InterDseStep4}
&\<I_{\epsilon,\nu} \dse_{\epsilon,-\nu,n}, \phi\>_{\epsilon,\nu}\\
&= -\epsilon i \lim_{m \to \infty} \int_{-\infty}^\infty \int_{-m}^m \sgn(x-t)^{\epsilon / 2} |x-t|^{\nu-1}  \phi_0(t) e(nx) \, dx \, dt\notag\\
&= -\epsilon i \lim_{m \to \infty} \int_{-\infty}^\infty \int_{-m-t}^{m-t} \sgn(x)^{\epsilon / 2} |x|^{\nu-1}  \phi_0(t) e(nx + nt) \, dx \, dt\notag\\
&= -\epsilon i \lim_{m \to \infty} \int_{-\infty}^\infty \phi_0(t) e(nt) \int_{-m-t}^{m-t} \sgn(x)^{\epsilon / 2} |x|^{\nu-1} e(nx) \, dx \, dt\notag\\
&= -\epsilon i \lim_{m \to \infty} \int_{-\infty}^\infty \phi_0(t) e(nt) |n|^{-\nu} \int_{-|n|(m+t)}^{|n|(m-t)} \sgn(nx)^{\epsilon / 2} |x|^{\nu-1} e(x) \, dx \, dt\notag\\
&= -\epsilon i |n|^{-\nu} \lim_{m \to \infty} \int_{-\infty}^\infty \phi_0(t) e(nt) h_m(t) \, dt,\notag
\end{align}
where
\begin{equation}
h_m(t) = \int_{-|n|(m+t)}^{|n|(m-t)} \sgn(nx)^{\epsilon / 2} |x|^{\nu-1} e(x) \, dx.
\end{equation}
For $m>2R$ and $t \in [-R,R]$, we have that $m-t$, $m+t > R$.
Thus
\begin{equation}
\label{eq:hmPrimeBound}
|h_m'(t)| \leq |n|^\nu \(|m-t|^{\nu-1} + |m+t|^{\nu-1}\) \ll 1,
\end{equation}
with the implied constant independent of $m$, but dependent upon $n$ and $\nu$.
This bound follows since $\Re(\nu)<1$.
By Lemma \ref{lem:InterIdentGeps1eps2},
\begin{equation}
\label{eq:hmLimit}
\lim_{m \to \infty} h_m(t) = G_{(-\epsilon,-\sgn(n))}(\nu).
\end{equation}
Since $m \mapsto h_m(0)$ is a continuous function, $(m \mapsto h_m(0)) \ll 1$,
with the implied constant independent of $m$.
By this and \eqref{eq:hmPrimeBound}, we have that
\begin{equation}
\label{eq:hmBound}
|h_m(t)| \ll |t|,
\end{equation}
with the implied constant independent of $m$.

By \eqref{eq:hmBound} and \eqref{eq:hmLimit},
we can pass the limit in $m$ past the integral given in the last line of \eqref{eq:InterDseStep4}.
Hence
\begin{equation}
\label{eq:InterDseStep5}
\<I_{\epsilon,\nu} \dse_{\epsilon,-\nu,n}, \phi\>_{\epsilon,\nu}
= -\epsilon i  G_{(-\epsilon,-\sgn(n))}(\nu) |n|^{-\nu} \int_{-\infty}^\infty \phi_0(t) e(nt)  \, dt.
\end{equation}
We can further simplify \eqref{eq:InterDseStep5} by invoking \eqref{eq:Geps1eps2Id1}.
This, together with the fact that
$\<\dse_{\epsilon,\nu,n}, \phi\>_{\epsilon,\nu} = \int_{-\infty}^\infty \phi_0(t) e(nt)  \, dt$,
shows that
\begin{equation}
\label{eq:InterDseStep6}
\<I_{\epsilon,\nu} \dse_{\epsilon,-\nu,n}, \phi\>_{\epsilon,\nu}  = G_{\epsilon,\sgn(n)}(\nu) |n|^{-\nu} \<\dse_{\epsilon,\nu,n}, \phi\>_{\epsilon,\nu}
\end{equation}
for all $\phi \in V_{-\epsilon,-\nu}^\infty$ such that $\supp(\phi) \subset \tlN \tlB$.

By suitably modifying the above argument, one can show that \eqref{eq:InterDseStep6} also holds for
$\phi$ with $\supp(\phi) \subset \tls^{-1} \tlN \tlB$.%
\footnote{One merely replaces $\phi_0$ with $\phi_\infty$ and verifies that \eqref{eq:InterDseStep2} holds for $\phi_\infty$.}
Thus by \eqref{eq:phi1phi2breakup} (and the surrounding discussion), \eqref{eq:InterDseStep6} holds for all $\phi \in V_{-\epsilon,-\nu}^\infty$.
The lemma then follows from the non-degeneracy of the pairing $\<\cdot,\cdot\>_{\epsilon,\nu}$.
\end{proof}

The functional equation for the metaplectic Eisenstein distributions (given in Theorem \ref{thm:FuncEq})
will include $I_{\epsilon,\nu} \tlE^{(\infty)}_{\epsilon,-\nu}$ as one of its terms. 
The following proposition gives a series expansion for $I_{\epsilon,\nu} \tlE^{(\infty)}_{\epsilon,-\nu}$ similar to the
one given for $\tlE_{\epsilon,\nu}^{(\infty)}$ in Proposition \ref{prop:EisenInfMeroCont}.

\begin{prop}
\label{prop:InterEisenInfMeroCont}
$\displaystyle I_{\epsilon,\nu} \tlE^{(\infty)}_{\epsilon,-\nu} = \sum_{n \in \Z} c_{\epsilon,\nu}(n) \dse_{\epsilon,\nu,n}
+ c_{\epsilon,\nu}(\infty) \delta_{\epsilon,\nu,\infty},$
where
\begin{align*}
&c_{\epsilon,\nu}(n) =  G_{(\epsilon,\sgn(n))}(\nu) |n|^{-\nu} a_{\epsilon,-\nu}(n) \text{ for } n \neq 0,\\
&c_{\epsilon,\nu}(0) = (1-2^{2\nu-1}) G_0(2\nu) \zeta(2 \nu), \text{ and}\\
&c_{\epsilon,\nu}(\infty) =  (1-\epsilon i) 2^{2\nu-1} G_0(2\nu) \zeta(2\nu+1).
\end{align*}
\end{prop}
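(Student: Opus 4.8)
The plan is to obtain the series expansion by applying the intertwining operator $I_{\epsilon,\nu}$ term by term to the Fourier expansion of $\tlE^{(\infty)}_{\epsilon,-\nu}$ supplied by Proposition~\ref{prop:EisenInfMeroCont}, and then to recognize the resulting $n=0$ and $n=\infty$ coefficients via the functional equation of $\zeta$.

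First I would work with $\nu$ in the strip $0<\Re(\nu)<1$, a range in which $I_{\epsilon,\nu}$ is the honest absolutely convergent integral of \eqref{eq:InuDef}, in which $\tlE^{(\infty)}_{\epsilon,-\nu}$ is holomorphic in $\nu$, and in which, by (the meromorphic continuation asserted in) Proposition~\ref{prop:EisenInfMeroCont} applied with $-\nu$ in place of $\nu$,
\[
\tlE^{(\infty)}_{\epsilon,-\nu} = \sum_{n\in\Z} a_{\epsilon,-\nu}(n)\,\dse_{\epsilon,-\nu,n} + a_{\epsilon,-\nu}(\infty)\,\delta_{\epsilon,-\nu,\infty},\qquad a_{\epsilon,-\nu}(\infty)=\zeta_2(1-2\nu),
\]
the series converging in $V^{-\infty}_{\epsilon,-\nu}$. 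Since $I_{\epsilon,\nu}$ extends to a continuous linear map $V^{-\infty}_{\epsilon,-\nu}\to V^{-\infty}_{\epsilon,\nu}$, it commutes with this convergent series, so (using the convention $\dse_{\epsilon,\nu,0}=\onebb_{\epsilon,\nu}$)
\[
I_{\epsilon,\nu}\tlE^{(\infty)}_{\epsilon,-\nu} = \sum_{n\neq 0} a_{\epsilon,-\nu}(n)\,I_{\epsilon,\nu}\dse_{\epsilon,-\nu,n} \;+\; a_{\epsilon,-\nu}(0)\,I_{\epsilon,\nu}\onebb_{\epsilon,-\nu} \;+\; a_{\epsilon,-\nu}(\infty)\,I_{\epsilon,\nu}\delta_{\epsilon,-\nu,\infty}.
\]
Substituting the three evaluations already available---Lemma~\ref{lem:InterTwinDse} for $I_{\epsilon,\nu}\dse_{\epsilon,-\nu,n}=G_{(\epsilon,\sgn(n))}(\nu)|n|^{-\nu}\dse_{\epsilon,\nu,n}$ when $n\neq 0$, Lemma~\ref{lem:InterTwinDeltInfOnebb}(a) for $I_{\epsilon,\nu}\delta_{\epsilon,-\nu,\infty}=\onebb_{\epsilon,\nu}=\dse_{\epsilon,\nu,0}$, and Lemma~\ref{lem:InterTwinDeltInfOnebb}(b) for $I_{\epsilon,\nu}\onebb_{\epsilon,-\nu}=\tfrac{2\pi\epsilon i\cot(\pi\nu)}{\nu}\delta_{\epsilon,\nu,\infty}$---and collecting, I read off $c_{\epsilon,\nu}(n)=G_{(\epsilon,\sgn(n))}(\nu)|n|^{-\nu}a_{\epsilon,-\nu}(n)$ for $n\neq 0$ (already the claimed formula), $c_{\epsilon,\nu}(0)=a_{\epsilon,-\nu}(\infty)=\zeta_2(1-2\nu)$, and $c_{\epsilon,\nu}(\infty)=\tfrac{2\pi\epsilon i\cot(\pi\nu)}{\nu}\,a_{\epsilon,-\nu}(0)$.

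It remains to put the $n=0$ and $n=\infty$ coefficients into the asserted closed forms and then propagate. For $c_{\epsilon,\nu}(0)$ I would use $\zeta_2(1-2\nu)=(1-2^{2\nu-1})\zeta(1-2\nu)$ together with the asymmetric functional equation $\zeta(1-s)=2(2\pi)^{-s}\cos(\tfrac{\pi s}{2})\Gamma(s)\zeta(s)$ at $s=2\nu$, which, by the definition of $G_0$ in \eqref{eq:GammaFactorDefs}, reads exactly $\zeta(1-2\nu)=G_0(2\nu)\zeta(2\nu)$, giving $c_{\epsilon,\nu}(0)=(1-2^{2\nu-1})G_0(2\nu)\zeta(2\nu)$. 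For $c_{\epsilon,\nu}(\infty)$ I would insert $a_{\epsilon,-\nu}(0)=(1+\epsilon i)2^{2\nu-2}\zeta(-2\nu)$ from Proposition~\ref{prop:EisenInfConst}, use the elementary identities $(1+\epsilon i)\epsilon i=-(1-\epsilon i)$, $\sin(\pi\nu)\cot(\pi\nu)=\cos(\pi\nu)$, and $\Gamma(2\nu+1)=2\nu\Gamma(2\nu)$, and apply the functional equation at $s=2\nu+1$ in the form $\zeta(-2\nu)=-2(2\pi)^{-2\nu-1}\sin(\pi\nu)\Gamma(2\nu+1)\zeta(2\nu+1)$; the constants then collapse to $c_{\epsilon,\nu}(\infty)=(1-\epsilon i)2^{2\nu-1}G_0(2\nu)\zeta(2\nu+1)$. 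Finally, both sides of the asserted identity are meromorphic in $\nu$ on all of $\C$ (the left side by the meromorphic continuation of $\tlE^{(\infty)}_{\epsilon,-\nu}$ and of $I_{\epsilon,\nu}$, the right side by Theorem~\ref{thm:SimplFourCoeff}, Proposition~\ref{prop:EisenInfConst}, and the explicit Gamma factors), so the identity established for $0<\Re(\nu)<1$ extends to all $\nu$ by uniqueness of meromorphic continuation.

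I expect the only real subtlety to be in the first step: one must check that $I_{\epsilon,\nu}$ genuinely passes inside the Fourier series, which rests on the continuity of $I_{\epsilon,\nu}$ on $V^{-\infty}_{\epsilon,-\nu}$ and on the series in Proposition~\ref{prop:EisenInfMeroCont} converging in the (strong, equivalently weak) distribution topology, and that the strip $0<\Re(\nu)<1$ really lies in the common domain of validity of Lemmas~\ref{lem:InterTwinDse} and~\ref{lem:InterTwinDeltInfOnebb}, of the convergent-integral description of $I_{\epsilon,\nu}$, and of the Fourier expansion of $\tlE^{(\infty)}_{\epsilon,-\nu}$. Once the term-by-term computation is licensed, the formula for $c_{\epsilon,\nu}(n)$ with $n\neq 0$ is immediate, and the closed forms for $c_{\epsilon,\nu}(0)$ and $c_{\epsilon,\nu}(\infty)$ are routine manipulations with the $\zeta$ functional equation and standard $\Gamma$-function identities.
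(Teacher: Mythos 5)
Your proposal is correct and follows essentially the same route as the paper: apply $I_{\epsilon,\nu}$ termwise to the expansion of Proposition \ref{prop:EisenInfMeroCont} (justified by continuity of the extended intertwining operator), invoke Lemmas \ref{lem:InterTwinDse} and \ref{lem:InterTwinDeltInfOnebb}, and then rewrite $a_{\epsilon,-\nu}(\infty)$ and $\tfrac{2\pi\epsilon i\cot(\pi\nu)}{\nu}a_{\epsilon,-\nu}(0)$ via the functional equation of $\zeta$. The only cosmetic difference is that the paper packages the trigonometric and Gamma manipulations into the identity $\tfrac{\pi\cot(\pi\nu)}{\nu}G_0(2\nu+1)=-G_0(2\nu)$, whereas you carry them out directly; the computations are equivalent.
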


\begin{proof}
We apply $I_{\epsilon,\nu}$ to the series expansion for $\tlE^{(\infty)}_{\epsilon,-\nu}$ given in Proposition \ref{prop:EisenInfMeroCont}.
By Lemma \ref{lem:InterTwinDeltInfOnebb} and Lemma \ref{lem:InterTwinDse},
\begin{equation*}
I_{\epsilon,\nu} \tlE^{(\infty)}_{\epsilon,-\nu} = \sum_{n \in \Z} c_{\epsilon,\nu}(n) \dse_{\epsilon,\nu,n}
+ c_{\epsilon,\nu}(\infty) \delta_{\epsilon,\nu,\infty},
\end{equation*}
where
\begin{align*}
&c_{\epsilon,\nu}(n) =  G_{(\epsilon,\sgn(n))}(\nu) |n|^{-\nu} a_{\epsilon,-\nu}(n) \text{ for } n \neq 0,\\
&c_{\epsilon,\nu}(\infty) = \frac{2 \pi \epsilon i \cot(\pi \nu)}{\nu} a_{\epsilon,-\nu}(0), \text{ and}\\
&c_{\epsilon,\nu}(0) = a_{\epsilon,-\nu}(\infty).
\end{align*}
Notice that $c_{\epsilon,\nu}(n)$ for $n \neq 0$ satisfies the equality asserted
in the statement of the proposition.
However, more justification is required for our statements regarding $c_{\epsilon,\nu}(0)$
and $c_{\epsilon,\nu}(\infty)$.

In what follows, we shall use the following identities, the latter of which is the functional equation for
the Riemann zeta function:
\begin{align}
\label{eq:GammaZetaIds}
&\frac{\pi \cot(\pi \nu)}{\nu} G_0(2\nu + 1) = - G_0(2\nu),\\
&\zeta(1-\nu) = G_0(\nu) \zeta(\nu).\notag
\end{align}
By these identities and Proposition \ref{prop:EisenInfConst}, it follows that
\begin{align*}
c_{\epsilon,\nu}(\infty) &= \frac{2 \pi \epsilon i \cot(\pi \nu)}{\nu} a_{\epsilon,-\nu}(0)\\ 
&=  \frac{2 \pi \epsilon i \cot(\pi \nu)}{\nu} (1+\epsilon i) 2^{2 \nu - 2} \zeta(-2\nu) \notag \\
&= \epsilon i (1+\epsilon i) 2^{2\nu - 1} \frac{\pi \cot(\pi \nu)}{\nu}  G_0(2 \nu + 1)  \zeta(2 \nu + 1) \notag \\
&= - \epsilon i (1+\epsilon i) 2^{2\nu-1} G_0(2\nu) \zeta(2\nu+1) \notag\\
&=  (1-\epsilon i) 2^{2\nu-1} G_0(2\nu) \zeta(2\nu+1). \notag
\end{align*}
Similarly, by \eqref{eq:GammaZetaIds} and Proposition \ref{prop:EisenInfMeroCont}, it follows that
\begin{align*}
c_{\epsilon,\nu}(0) &= a_{\epsilon,-\nu}(\infty) = (1-2^{2\nu-1}) \zeta(1 -2 \nu) \\
&= (1-2^{2\nu-1}) G_0(2\nu) \zeta(2 \nu)\notag.
\end{align*}
From these computations we obtain the formulas for $c_{\epsilon,\nu}(0)$ and $c_{\epsilon,\nu}(\infty)$ given in the lemma.
\end{proof}

We will also need to have explicit formulas for the Fourier coefficients of 
$I_{\epsilon,\nu} \tlE_{\epsilon,-\nu}^{(0)}$.
Recall that $\tlE_{\epsilon,\nu}^{(0)}$ was defined in \eqref{eq:MetaEisenAtZeroDef} and has a Fourier
series given in \eqref{eq:ComplFourSerEisAtZero}.

\begin{prop} 
\label{prop:InterEisenZeroMeroCont}
$\displaystyle I_{\epsilon,\nu} \tlE^{(0)}_{\epsilon,-\nu} = \sum_{n \in \Z} d_{\epsilon,\nu}(n) \dse_{\epsilon,\nu,n}
+ d_{\epsilon,\nu}(\infty) \delta_{\epsilon,\nu,\infty},$
where
\begin{align*}
&d_{\epsilon,\nu}(n) =  G_{(\epsilon,\sgn(n))}(\nu) |n|^{-\nu} b_{\epsilon,-\nu}(n) \text{ for } n \neq 0,\\
&d_{\epsilon,\nu}(0) = 0, \text{ and}\\
&d_{\epsilon,\nu}(\infty) = 2^{\nu} (1-2^{2\nu}) G_0(2\nu) \zeta(2 \nu + 1).
\end{align*}
\end{prop}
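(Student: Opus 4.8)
The plan is to mirror the proof of Proposition \ref{prop:InterEisenInfMeroCont}, with the series expansion of $\tlE^{(\infty)}_{\epsilon,-\nu}$ from Proposition \ref{prop:EisenInfMeroCont} replaced by that of $\tlE^{(0)}_{\epsilon,-\nu}$ supplied by \eqref{eq:ComplFourSerEisAtZero}. First I would write, for the relevant range of $\nu$,
\[
\tlE^{(0)}_{\epsilon,-\nu} = \sum_{n \in \Z} b_{\epsilon,-\nu}(n)\, \dse_{\epsilon,-\nu,n},
\]
a series that converges in $V^{-\infty}_{\epsilon,-\nu}$ by the discussion in Section \ref{sec:MetaAtZero} (polynomial boundedness of $b_{\epsilon,-\nu}(n)$). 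Since $I_{\epsilon,\nu}$ is continuous on $V^{-\infty}_{\epsilon,-\nu}$, it may be applied term by term. For $n \neq 0$, Lemma \ref{lem:InterTwinDse} gives $I_{\epsilon,\nu}\dse_{\epsilon,-\nu,n} = G_{(\epsilon,\sgn(n))}(\nu)|n|^{-\nu}\dse_{\epsilon,\nu,n}$; for the $n=0$ term, using the convention $\dse_{\epsilon,\nu,0} = \onebb_{\epsilon,\nu}$ and Lemma \ref{lem:InterTwinDeltInfOnebb}(b), $I_{\epsilon,\nu}\onebb_{\epsilon,-\nu} = \frac{2\pi\epsilon i\cot(\pi\nu)}{\nu}\delta_{\epsilon,\nu,\infty}$. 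Collecting terms yields
\[
I_{\epsilon,\nu}\tlE^{(0)}_{\epsilon,-\nu} = \sum_{n \in \Z_{\neq 0}} G_{(\epsilon,\sgn(n))}(\nu)|n|^{-\nu} b_{\epsilon,-\nu}(n)\,\dse_{\epsilon,\nu,n} + \frac{2\pi\epsilon i\cot(\pi\nu)}{\nu}\, b_{\epsilon,-\nu}(0)\,\delta_{\epsilon,\nu,\infty},
\]
so that $d_{\epsilon,\nu}(n) = G_{(\epsilon,\sgn(n))}(\nu)|n|^{-\nu}b_{\epsilon,-\nu}(n)$ for $n \neq 0$, while $d_{\epsilon,\nu}(0) = 0$ because the $n=0$ contribution lands entirely on $\delta_{\epsilon,\nu,\infty}$ rather than on $\dse_{\epsilon,\nu,0}$, and $d_{\epsilon,\nu}(\infty) = \frac{2\pi\epsilon i\cot(\pi\nu)}{\nu}\, b_{\epsilon,-\nu}(0)$.

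Next I would evaluate $d_{\epsilon,\nu}(\infty)$ explicitly, exactly as $c_{\epsilon,\nu}(\infty)$ was computed in Proposition \ref{prop:InterEisenInfMeroCont} but now with $b_{\epsilon,-\nu}(0)$ in place of $a_{\epsilon,-\nu}(0)$. By Proposition \ref{prop:b0coeff}, $b_{\epsilon,-\nu}(0) = \epsilon i\, 2^{\nu-1}(1-2^{2\nu})\zeta(-2\nu)$. Then, using $\epsilon i \cdot \epsilon i = -1$, the functional equation $\zeta(-2\nu) = \zeta(1-(2\nu+1)) = G_0(2\nu+1)\zeta(2\nu+1)$ from \eqref{eq:GammaZetaIds}, and the identity $\frac{\pi\cot(\pi\nu)}{\nu}G_0(2\nu+1) = -G_0(2\nu)$ from the same display, I obtain
\[
d_{\epsilon,\nu}(\infty) = -\frac{\pi\cot(\pi\nu)}{\nu}\,2^{\nu}(1-2^{2\nu})\,G_0(2\nu+1)\zeta(2\nu+1) = 2^{\nu}(1-2^{2\nu})\,G_0(2\nu)\,\zeta(2\nu+1),
\]
which is the asserted formula.

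Finally, these identities are established first for $\nu$ in the common domain where every step is legitimate (the intertwining operator holomorphic and given by its integral, and both the source Fourier series and the resulting one convergent), and the full statement follows by the uniqueness of meromorphic continuation, precisely as at the end of Proposition \ref{prop:InterEisenInfMeroCont}. I expect the main — and essentially only — obstacle to be justifying that $I_{\epsilon,\nu}$ commutes with the infinite sum $\sum_n b_{\epsilon,-\nu}(n)\dse_{\epsilon,-\nu,n}$; this follows because the partial sums converge in the strong distribution topology on $V^{-\infty}_{\epsilon,-\nu}$ and $I_{\epsilon,\nu}$ is continuous there, so the remaining content is the routine $\Gamma$- and $\zeta$-function bookkeeping already carried out for $\tlE^{(\infty)}$.
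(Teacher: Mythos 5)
Your proposal is correct and follows essentially the same route as the paper: expand $\tlE^{(0)}_{\epsilon,-\nu}$ via \eqref{eq:ComplFourSerEisAtZero}, apply $I_{\epsilon,\nu}$ term by term using Lemma \ref{lem:InterTwinDse} for $n\neq 0$ and Lemma \ref{lem:InterTwinDeltInfOnebb} for the $n=0$ and $\delta$ terms (the latter being absent since $b_{\epsilon,-\nu}(\infty)=0$, which is exactly why $d_{\epsilon,\nu}(0)=0$), and then evaluate $d_{\epsilon,\nu}(\infty)$ with Proposition \ref{prop:b0coeff} and the identities \eqref{eq:GammaZetaIds}. Your $\Gamma$- and $\zeta$-bookkeeping matches the paper's computation exactly.
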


\begin{proof}
We shall only justify the statements regarding $d_{\epsilon,\nu}(0)$ and $d_{\epsilon,\nu}(\infty)$ since
the first statement follows just as in the proof of Proposition \ref{prop:InterEisenInfMeroCont}.
Once again, we will use the identities in \eqref{eq:GammaZetaIds} to prove our results.
By Lemma \ref{lem:InterTwinDeltInfOnebb} and Proposition \ref{prop:b0coeff},
\begin{align*}
I_{\epsilon,\nu}(b_{\epsilon,-\nu}(0) \onebb_{\epsilon,-\nu}) &= \frac{2 \pi \epsilon i \cot(\pi \nu)}{\nu} b_{\epsilon,-\nu}(0) \delta_{\epsilon,\nu,\infty}\\ 
&= \frac{2 \pi \epsilon i \cot(\pi \nu)}{\nu}  \epsilon  i 2^{\nu-1} (1-2^{2\nu}) \zeta(-2 \nu) \delta_{\epsilon,\nu,\infty}\\
&= \frac{2 \pi \epsilon i \cot(\pi \nu)}{\nu} \epsilon  i 2^{\nu-1} (1-2^{2\nu}) G_0(2\nu+1) \zeta(2 \nu + 1) \delta_{\epsilon,\nu,\infty}\\
&= (-2 \epsilon i) \epsilon  i 2^{\nu-1} (1-2^{2\nu}) G_0(2\nu) \zeta(2 \nu + 1) \delta_{\epsilon,\nu,\infty}\\
&= 2^{\nu} (1-2^{2\nu}) G_0(2\nu) \zeta(2 \nu + 1) \delta_{\epsilon,\nu,\infty}.
\end{align*}
Similarly, by Lemma \ref{lem:InterTwinDeltInfOnebb} and \eqref{eq:bInfCoeff},
\begin{equation*}
I_{\epsilon,\nu}(b_{\epsilon,-\nu}(\infty) \delta_{\epsilon,-\nu,\infty}) = I_{\epsilon,\nu}(0) = 0 \cdot \onebb_{\epsilon,\nu}.\qedhere
\end{equation*}
\end{proof}

\section{A Functional Equation for  Metaplectic Eisenstein Distribution}
\label{sec:EisenDistFunctionalEq}

The following theorem gives a functional equation for metaplectic Eisenstein distributions,
the proof of which relies upon the calculation of the Fourier coefficients of 
$\tlE_{\epsilon,\nu}^{(\infty)}$  (Proposition \ref{prop:EisenInfConst}, Proposition \ref{prop:EisenInfMeroCont}), 
$\tlE_{\epsilon,\nu}^{(0)}$ (Proposition \ref{prop:b0coeff}, \eqref{eq:bInfCoeff}),
$I_{\epsilon,\nu} \tlE_{\epsilon,-\nu}^{(\infty)}$ (Proposition \ref{prop:InterEisenInfMeroCont}), and
$I_{\epsilon,\nu} \tlE_{\epsilon,-\nu}^{(0)}$ (Proposition \ref{prop:InterEisenZeroMeroCont}).

\begin{thm} \label{thm:FuncEq}
For $\epsilon \in \{\pm 1\}$ and $\nu \in \C$,
\begin{align*}
I_{\epsilon,\nu} \tlE^{(\infty)}_{\epsilon,-\nu} &= \Big((1-\epsilon i) 2^{2\nu-1}(1-2^{-2\nu-1})^{-1} G_0(2 \nu)\Big)\\
&\hh \cdot \(\tlE^{(\infty)}_{\epsilon,\nu} + (1-\epsilon i) 2^{-\nu}(1 - 2^{2\nu}) \tlE^{(0)}_{\epsilon,\nu}\),
\end{align*}
where $\tlE_{\epsilon,\nu}^{(\infty)}$ is defined in \eqref{eq:EisenInfDist}, 
$\tlE_{\epsilon,\nu}^{(0)}$ is defined in \eqref{eq:MetaEisenAtZeroDef}, and
$G_0(\nu)$ is defined in \eqref{eq:GammaFactorDefs}.
\end{thm}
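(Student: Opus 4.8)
The plan is to prove the identity by comparing the complete Fourier expansions (in the sense of Proposition \ref{prop:EisenInfMeroCont}, i.e. including the $\delta_{\epsilon,\nu,\infty}$ term and recording the coefficient as a ``value at $\infty$'') of both sides. Since every distribution appearing is an element of $V_{\epsilon,\nu}^{-\infty}$ and is uniquely determined by its restrictions to $\tlN$ and $\tls^{-1}\tlN$, and since the proof of Lemma \ref{lem:ComplEisenFourSeries} (together with the discussion at the end of Section \ref{sec:MetaAtZero}) shows that $\tlE_{\epsilon,\nu}^{(\infty)}$, $\tlE_{\epsilon,\nu}^{(0)}$, $I_{\epsilon,\nu}\tlE_{\epsilon,-\nu}^{(\infty)}$ and $I_{\epsilon,\nu}\tlE_{\epsilon,-\nu}^{(0)}$ all admit a series expansion $\sum_n (\cdot)(n)\,\dse_{\epsilon,\nu,n} + (\cdot)(\infty)\,\delta_{\epsilon,\nu,\infty}$, it suffices to verify the scalar identity, for every $n \in \Z \cup \{\infty\}$,
\begin{align*}
c_{\epsilon,\nu}(n) &= \Big((1-\epsilon i) 2^{2\nu-1}(1-2^{-2\nu-1})^{-1} G_0(2\nu)\Big) \Big(a_{\epsilon,\nu}(n) + (1-\epsilon i) 2^{-\nu}(1-2^{2\nu}) b_{\epsilon,\nu}(n)\Big),
\end{align*}
where $a_{\epsilon,\nu}$, $b_{\epsilon,\nu}$, $c_{\epsilon,\nu}$ are the Fourier coefficients of $\tlE_{\epsilon,\nu}^{(\infty)}$, $\tlE_{\epsilon,\nu}^{(0)}$, $I_{\epsilon,\nu}\tlE_{\epsilon,-\nu}^{(\infty)}$ respectively (for $I_{\epsilon,\nu}\tlE_{\epsilon,-\nu}^{(0)}$ we use $d_{\epsilon,\nu}$). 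Once the identity is established for $\Re(\nu)$ in a strip where all series converge, the uniqueness of meromorphic continuation extends it to all $\nu \in \C$.

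First I would dispose of the case $n \neq 0$, which is where the arithmetic content lives. By Proposition \ref{prop:InterEisenInfMeroCont}, $c_{\epsilon,\nu}(n) = G_{(\epsilon,\sgn(n))}(\nu)\,|n|^{-\nu}\,a_{\epsilon,-\nu}(n)$, while by Proposition \ref{prop:InterEisenZeroMeroCont} the $n$-th coefficient of $I_{\epsilon,\nu}\tlE_{\epsilon,-\nu}^{(0)}$ is $G_{(\epsilon,\sgn(n))}(\nu)\,|n|^{-\nu}\,b_{\epsilon,-\nu}(n)$. So the desired relation for $n\neq 0$ reduces to an identity relating $a_{\epsilon,-\nu}(n)$, $b_{\epsilon,-\nu}(n)$ (at parameter $-\nu$) to $a_{\epsilon,\nu}(n)$, $b_{\epsilon,\nu}(n)$ — in other words, a functional equation purely among the Eisenstein-series Fourier coefficients. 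By Theorem \ref{thm:SimplFourCoeff}, $a_{\epsilon,\nu}(n) = \scc_{\epsilon,\nu}(n)\,\mathcal{L}(\epsilon n;\nu)$, and since the functional equation for the Dirichlet $L$-function $L(\nu+\tfrac12,(\tfrac{t}{\cdot}))$ inside $\mathcal{L}$ converts $\nu \mapsto -\nu$, the ratio $\mathcal{L}(\epsilon n;-\nu)/\mathcal{L}(\epsilon n;\nu)$ will be an explicit product of $\Gamma$- and power-factors (depending on whether $\epsilon n$ is a square, and on the conductor $t$ of the character). I expect that, after combining this with the explicit elementary coefficients $\scc_{\epsilon,\nu}(n)$ and with the analogous formula for $b_{\epsilon,\nu}(0)$-type data extended to nonzero $n$ — or more cleanly, by using the classical theta-type functional equation relating $\tlE^{(\infty)}$ and $\tlE^{(0)}$ at the level of Fourier coefficients — the two sides match termwise. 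The cleanest route may be to avoid re-deriving the $L$-function functional equation and instead observe that $a_{\epsilon,-\nu}(n)$ and $b_{\epsilon,-\nu}(n)$, being Fourier coefficients of $I_{\epsilon,\nu}$ applied to the two Eisenstein distributions, are already computed; then the claimed theorem is the statement that $I_{\epsilon,\nu}\tlE_{\epsilon,-\nu}^{(\infty)}$ lies in the span of $\tlE_{\epsilon,\nu}^{(\infty)}$ and $\tlE_{\epsilon,\nu}^{(0)}$ with the stated coefficients, which is pinned down by matching just the $n=0$ and $n=\infty$ coefficients together with one convenient nonzero $n$ (using that the space of $\tlGamma_1(4)$-invariant distributions in $V_{\epsilon,\nu}^{-\infty}$ with the appropriate support/growth is spanned by these two Eisenstein distributions).

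For the $n = 0$ and $n = \infty$ coefficients I would simply substitute the closed forms already proved: $a_{\epsilon,\nu}(0) = (1+\epsilon i)2^{-2\nu-2}\zeta(2\nu)$ (Proposition \ref{prop:EisenInfConst}), $a_{\epsilon,\nu}(\infty) = (1-2^{-2\nu-1})\zeta(2\nu+1)$ (Proposition \ref{prop:EisenInfMeroCont}), $b_{\epsilon,\nu}(0) = \epsilon i\,2^{-\nu-1}(1-2^{-2\nu})\zeta(2\nu)$ (Proposition \ref{prop:b0coeff}), $b_{\epsilon,\nu}(\infty) = 0$ (\eqref{eq:bInfCoeff}), $c_{\epsilon,\nu}(0) = (1-2^{2\nu-1})G_0(2\nu)\zeta(2\nu)$, $c_{\epsilon,\nu}(\infty) = (1-\epsilon i)2^{2\nu-1}G_0(2\nu)\zeta(2\nu+1)$ (Proposition \ref{prop:InterEisenInfMeroCont}). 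The verification is then a finite manipulation using the Riemann functional equation $\zeta(1-\nu) = G_0(\nu)\zeta(\nu)$ and the elementary identity $(1+\epsilon i)(1-\epsilon i) = 2$; for instance on the $n=\infty$ line one checks that $(1-\epsilon i)2^{2\nu-1}(1-2^{-2\nu-1})^{-1}G_0(2\nu)\cdot\big(a_{\epsilon,\nu}(\infty) + 0\big) = (1-\epsilon i)2^{2\nu-1}G_0(2\nu)\zeta(2\nu+1) = c_{\epsilon,\nu}(\infty)$, and on the $n=0$ line one expands $a_{\epsilon,\nu}(0) + (1-\epsilon i)2^{-\nu}(1-2^{2\nu})b_{\epsilon,\nu}(0)$ and simplifies the scalar prefactor times $\zeta(2\nu)$ down to $(1-2^{2\nu-1})G_0(2\nu)\zeta(2\nu)$. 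The main obstacle, and the step that warrants the most care, is the nonzero-$n$ comparison: one must confirm that the $\Gamma$-factor $G_{(\epsilon,\sgn n)}(\nu)$ coming out of the intertwining operator (Lemma \ref{lem:InterTwinDse}) combines with the sign/parity-dependent elementary coefficients $\scc_{\epsilon,\nu}(n)$ and the $L$-function functional equation to yield exactly the $\nu$-independent-in-$n$ scalar $(1-\epsilon i)2^{2\nu-1}(1-2^{-2\nu-1})^{-1}G_0(2\nu)$ times $\big(\scc_{\epsilon,\nu}(n)\mathcal{L}(\epsilon n;\nu) + (1-\epsilon i)2^{-\nu}(1-2^{2\nu})b_{\epsilon,\nu}(n)\big)$; keeping track of the four cases of Theorem \ref{thm:SimplFourCoeff} (and the distinction square vs.\ non-square $\epsilon n$, which controls whether the pole at $\nu=\tfrac12$ appears) is the bookkeeping-heavy part. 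Having matched all coefficients including the $\infty$-term, the equality of the two sides as elements of $V_{\epsilon,\nu}^{-\infty}$ follows, and meromorphic continuation finishes the proof for all $\nu\in\C$.
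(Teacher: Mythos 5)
Your verification of the $n=0$ and $n=\infty$ coefficients is correct and matches computations the paper also performs, but your primary route --- a full termwise comparison of Fourier coefficients --- has a genuine gap at exactly the point you flag as ``the main obstacle.'' For $n \neq 0$ you would need to prove the identity
\begin{equation*}
G_{(\epsilon,\sgn(n))}(\nu)\,|n|^{-\nu}\,a_{\epsilon,-\nu}(n) \;=\; \Big((1-\epsilon i) 2^{2\nu-1}(1-2^{-2\nu-1})^{-1} G_0(2\nu)\Big)\Big(a_{\epsilon,\nu}(n) + (1-\epsilon i)2^{-\nu}(1-2^{2\nu})\,b_{\epsilon,\nu}(n)\Big),
\end{equation*}
and you offer only the expectation that the $L$-function functional equation will make the two sides match. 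This is not a small bookkeeping step: it is essentially equivalent to the functional equations of all the quadratic Dirichlet $L$-functions $L(\nu+\tfrac12,(\tfrac{t}{\cdot}))$ combined with the case analysis of Theorem \ref{thm:SimplFourCoeff}, and --- more fundamentally --- the paper never computes $b_{\epsilon,\nu}(n)$ for $n \neq 0$ (only $b_{\epsilon,\nu}(0)$ and $b_{\epsilon,\nu}(\infty)$ are derived), so you would first have to carry out an entire additional Fourier-coefficient computation for $\tlE^{(0)}_{\epsilon,\nu}$ before the comparison could even be attempted.

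The paper avoids this entirely. It forms the difference $\tau_{\epsilon,\nu}$ of the two sides and shows it is a \emph{cuspidal} automorphic distribution: cuspidality at a singular cusp $\fa$ is equivalent to the vanishing of $p_{\epsilon,\nu,\fa}(0)$ and $\sigma_{\epsilon,\nu,\fa}$ (the $n=0$ and $\delta_{\epsilon,\nu,\infty}$ coefficients of $\pi(\tlxi_\fa^{-1})\tau$), the cusp $\tfrac12$ is non-singular so cuspidality there is automatic, and the discreteness of the cuspidal spectrum forces $\tau_{\epsilon,\nu}\equiv 0$ for almost all $\nu$, whence for all $\nu$ by meromorphic continuation. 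The price is that one must check the constant terms at the cusp $0$ as well as at $\infty$ --- using $\pi(\tlxi_0^{-1})\tlE^{(\infty)}_{\epsilon,\nu} = -\epsilon i\,\tlE^{(0)}_{\epsilon,\nu}$, $\pi(\tlxi_0^{-1})\tlE^{(0)}_{\epsilon,\nu} = \tlE^{(\infty)}_{\epsilon,\nu}$, and the coefficients $d_{\epsilon,\nu}(0)$, $d_{\epsilon,\nu}(\infty)$ of Proposition \ref{prop:InterEisenZeroMeroCont} --- a step absent from your outline. Your parenthetical fallback (pin down the identity from finitely many coefficients because the relevant space of invariant distributions is spanned by the two Eisenstein distributions) is the right instinct, but as stated it is an unproved spanning claim; the correct and available substitute is precisely the cuspidality-plus-spectral-discreteness argument, and to run it you must add the cusp-$0$ constant-term verification.
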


We begin by presenting an outline for the proof of Theorem \ref{thm:FuncEq}.
First, we will show that the distribution
\begin{align}
\label{eq:tauDef}
\tau_{\epsilon,\nu} = I_{\epsilon,\nu} \tlE^{(\infty)}_{\epsilon,-\nu} &- \Big((1-\epsilon i) 2^{2\nu-1}(1-2^{-2\nu-1})^{-1} G_0(2 \nu)\Big)\\
&\hh \cdot \(\tlE^{(\infty)}_{\epsilon,\nu} + (1-\epsilon i) 2^{-\nu}(1 - 2^{2\nu}) \tlE^{(0)}_{\epsilon,\nu}\),\notag
\end{align}
is ``cuspidal'' for all $\nu$ (in a sense that will be defined shortly).
We will then argue from the discreteness of the spectrum of cusp forms that $\tau_{\epsilon,\nu} \equiv 0$ for almost all $\nu$.
The uniqueness of meromorphic continuation then establishes that $\tau_{\epsilon,\nu} \equiv 0$ for all $\nu$, 
from which Theorem \ref{thm:FuncEq} then follows.
To make all this rigorous, we will need to recall the notion of an \textit{automorphic distribution}.
See \cite{Miller06, Miller04, Miller12b, Miller08} for more details on this subject.
Since we will be considering automorphic distributions that are automorphic with respect to $\tlGamma_1(4)$ (defined
in \eqref{eq:tlGammaDef}), we will also need to comment on the relationship between automorphic distributions
and the cusps associated with $\tlGamma_1(4)$.

In what follows, we recall the definition for automorphic distribution as stated in \cite[\S 2]{Miller12b}.
Let $(\pi,V)$ be an admissible, finite length representation of $\tlSL_2(\R)$, where $V$ is a reflexive Banach space.
One can show that the dual representation $(\pi',V')$ is also
an admissible, finite length representation of $\tlSL_2(\R)$, with $V'$ a reflexive Banach space.
Let $(V')^\infty$ denote the space of smooth vectors of $(\pi',V')$, equipped with the topology inherited from $C^\infty(G,V')$
via the inclusion $v' \mapsto (\tlg \mapsto \pi'(\tlg)v')$.
Let $V^{-\infty}$ denote the space of distributions on $(V')^\infty$.
We identify elements of $V$ with elements of $(V')'$ in the usual way.
Thus, as linear functionals on $(V')^\infty$,
\begin{equation}
V^\infty \subset V \subset V^{-\infty}.
\end{equation}
In this way we are able to think of distributions as generalized functions in the sense of Gelfand \textit{et al}.
The dual action of $(\pi',(V')^\infty)$ defines an action of $\tlSL_2(\R)$ on $V^{-\infty}$, which we denote by $\pi$. 
When working with $V^{-\infty}$, one typically equips it with the weak* topology.  
However we shall use the finer topology known as the
\textit{strong distribution topology} \cite[\S 19]{Treves06}
that was previously discussed in Section \ref{sec:DblCvrsSL2}. 
One can check that the action $\pi$ on $V^{-\infty}$
is (strongly) continuous with respect to the strong distribution topology \cite[\S A]{Miller12b} and that
$V^\infty$ (and hence also $V$) is dense in $V^{-\infty}$ with respect to this topology.
Let $(V^{-\infty})^{\tlGamma_1(4)}$ denote the subspace of $V^{-\infty}$ 
consisting of elements that are $\tlGamma_1(4)$-invariant under the action of $\pi$.
Using the terminology established in \cite{Miller12b}, elements of $(V^{-\infty})^{\tlGamma_1(4)}$ are 
called \textit{automorphic distributions}.
This definition of automorphic distribution slightly generalizes the definition of automorphic distribution that
appears in earlier papers on this topic \cite{Miller06, Miller08}.

Recall that $\tlgamma \in \tlGamma_1(4)$, or rather its $\SL_2$-coordinate $\gamma$, acts upon $\R \cup \{\infty\}$
as a linear fractional transformation. To each cusp $\fa \in \Q$ of $\tlGamma_1(4)$, we let
$\tlGamma_\fa$ denote the stabilizer of $\fa$ in $\tlGamma_1(4)$ under this action.
It is well-known that $\tlGamma_\fa$ is a cyclic group, and hence $\tlGamma_\fa$ has a generator $\tlgamma_\fa \in \tlGamma_1(4)$.
The cusps of $\tlGamma_1(4)$ are $0$, $\frac{1}{2}$, and $\infty$.  
For each cusp $\fa$, there exists $\tlxi_\fa \in \tlSL_2(\R)$ such that 
\begin{equation}
\label{eq:scalingMatrix}
\xi_\fa \cdot \infty = \fa \text{ and } \tlxi_\fa^{-1} \tlgamma_\fa^r \tlxi_\fa = \tln_1 = \(\mm{1}{1}{0}{1},1\),
\end{equation}
for some $r \in \Z_{> 0}$.
For $\fa = \infty$, we have $\tlxi_\infty = \(\id,1\)$.
For $\fa = 0$, we use $\tlxi_{0}$ as defined in \eqref{eq:tlxiZeroDef}.
It is straightforward to check that $\tlxi_\infty$ and $\tlxi_0$ both satisfy \eqref{eq:scalingMatrix},
and that in particular, $r = 1$ for both of these cusps.
As for $\fa = \frac{1}{2}$, one can compute that
\begin{equation*}
\tlGamma_{\frac{1}{2}} = \left\{\(\mm{1-4n}{2n}{-8n}{1+4n},\(\frac{-8n}{1+4n}\)\),n \in \Z\right\} = \left\<\(\mm{-3}{2}{-8}{5},-1\)\right\>.
\end{equation*}
In this case, \eqref{eq:scalingMatrix} holds with
\begin{equation}
\tlxi_{\frac{1}{2}} = \(\mm{2}{0}{4}{2^{-1}},1\),\h \tlgamma_{\frac{1}{2}} = \(\mm{-3}{2}{-8}{5},-1\), \text{ and } r = 2.
\end{equation}
One can show that $r=2$ is the minimal value of $r$ for which \eqref{eq:scalingMatrix} holds for $\fa = \frac{1}{2}$.%

Recall that $V_{\epsilon,\nu}$ comes equipped with
the action of $\tlSL_2(\R)$ by the left regular representation, which we denote by $\pi$.
Observe $\(\pi,V_{\epsilon,\nu}\)$ is an admissible representation of finite length, with $V_{\epsilon,\nu}$ a reflexive Banach space.
Let $\tau \in V_{\epsilon,\nu}^{-\infty}$ such that $\tau$ is $\tlGamma_1(4)$-invariant,
so that $\tau$ is an automorphic distribution for $\tlGamma_1(4)$.
We say that $\tau$ is \textit{cuspidal} at $\fa$ if
\begin{equation}
\label{eq:CuspDistIntegralCond1}
\int_0^1  \pi\(\tln_x^{-1} \tlxi_\fa^{-1}\) \tau \, dx = 0.
\end{equation}
Notice that the integral in \eqref{eq:CuspDistIntegralCond1} is an integral of a $V_{\epsilon,\nu}^{-\infty}$-valued function.
This is well-defined since $V_{\epsilon,\nu}^{-\infty}$ is a complete, locally convex,
Hausdorff topological vector space with $\tlSL_2(\R)$ acting strongly continuously \cite[\S A]{Miller12b}.
Recall that the dual representation of $\(\pi,V_{\epsilon,\nu}\)$ can be identified via the pairing 
$\< \cdot , \cdot \>_{\epsilon,\nu}$
(defined in \eqref{eq:pairingDef}) as $\(\pi',V_{-\epsilon,-\nu}\)$,
where $\pi'$ denotes the left regular representation.
Thus by \cite[(2.15)]{Miller12b},
\begin{equation}
F_{\tau,\phi}(\tlg) = \<\pi(\tlg^{-1})\tau, \phi\>_{\epsilon,\nu} = \<\tau, \pi'(\tlg)\phi\>_{\epsilon,\nu}
\end{equation}
is a smooth automorphic form for $\tlGamma_1(4)$, for any $\phi \in V_{-\epsilon,-\nu}^\infty$.
Recall that a smooth automorphic form $F$ is cuspidal at $\fa$ if
\begin{equation}
\int_0^1 F(\tlxi_\fa \tln_x \tlg) \, dx = 0,
\end{equation}
for all $\tlg \in \tlSL_2(\R)$.
Thus if $\tau$ is cuspidal at $\fa$ then so also $F_{\tau,\phi}$ must be cuspidal at $\fa$.
We say that $\tau$ is a \textit{cuspidal automorphic distribution} if
\eqref{eq:CuspDistIntegralCond1} holds for all cusps $\fa$.

In the classical setting, a cusp $\fa$ is said to be \textit{singular} relative to a multiplier system,
if and only if the multiplier system is trivial on the stabilizer for that cusp \cite[(2.65)]{Iwaniec97}.
Translated to the present setting,
a cusp $\fa$ is \textit{singular} if and only if the second coordinate of $\tlgamma_\fa$ is equal to $1$.
Thus for $\tlGamma_1(4)$, we find that $\infty$ and $0$ are singular cusps (see \eqref{eq:tlGamma0Def}), while $\frac{1}{2}$ is non-singular.
In the classical setting, every automorphic form (relative to a multiplier system) is automatically cuspidal at the non-singular cusps
\cite[\S 2.7]{Iwaniec97}.
The same is true in our setting, and hence in particular, $\tau$ is automatically cuspidal at $\fa = \frac{1}{2}$.
This claim can also be seen directly.
Indeed, since
\begin{equation}
\tlxi_{\frac{1}{2}} \tln_{\frac{1}{2}} = \tlgamma_{\frac{1}{2}} \tlxi_{\frac{1}{2}} \tlm_{1,-1},
\end{equation}
it follows from \eqref{eq:TransLawsB} and the fact that $\tlm_{-1,-1} = (-\id,-1) \in \tlM$, which
is the center of $\tlSL_2(\R)$, that
\begin{align}
&\int_0^1 \tau(\tlxi_{\frac{1}{2}} \tln_x \tlg) \, dx
= \int_0^{\frac{1}{2}} \tau(\tlxi_{\frac{1}{2}} \tln_x \tlg) \, dx + \int_0^{\frac{1}{2}} \tau(\tlxi_{\frac{1}{2}} \tln_{\frac{1}{2}} \tln_x \tlg) \, dx \\
&= \int_0^{\frac{1}{2}} \tau(\tlxi_{\frac{1}{2}} \tln_x \tlg) \, dx - \int_0^{\frac{1}{2}} \tau(\tlxi_{\frac{1}{2}} \tln_x \tlg) \, dx = 0. \notag
\end{align}
Hence $\tau$ is automatically cuspidal at $\fa = \frac{1}{2}$ as claimed.

In \eqref{eq:dseDef}, we defined $\dse_{\epsilon,\nu,n} \in V_{\epsilon,\nu}^{-\infty}$
such that $\(\dse_{\epsilon,\nu,n}\)_0 = e(nx)$ for all $n \in \Z_{\neq 0}$.%
\footnote{Recall that if $f \in V_{\epsilon,\nu}^{-\infty}$ then $f_0$ simply denotes the restriction of $f$ to $\tlN$.}
Since $\(\pi\(\tlxi_\fa^{-1}\) \tau\)_0$ is periodic by \eqref{eq:scalingMatrix},
it follows that
\begin{equation*}
\(\pi\(\tlxi_\fa^{-1}\) \tau\)_0 = \(\sum_{n \in \Z} p_{\epsilon,\nu,\fa}(n) \dse_{\epsilon,\nu,n}\)_0,
\end{equation*}
where
\begin{equation*}
p_{\epsilon,\nu,\fa}(n) = \int_0^1 \(\pi_{\epsilon,\nu}\(\tlxi_\fa^{-1}\) \tau \)_0(x) e\(-n x\) \, dx.
\end{equation*}
Thus
\begin{equation}
\pi\(\tlxi_\fa^{-1}\) \tau = \sum_{n \in \Z} p_{\epsilon,\nu,\fa}(n) \dse_{\epsilon,\nu,n} + \sigma_{\epsilon,\nu,\fa}
\end{equation}
where
$\sigma_{\epsilon,\nu,\fa}$ is a distribution supported in $\tls^{-1} \tlB$.
Since $\pi(\tln_{x}^{-1}) \dse_{\epsilon,\nu,n} = e(nx) \dse_{\epsilon,\nu,n}$,%
\footnote{To see that this is the case, observe that  $\(\pi(\tln_{x}^{-1}) \dse_{\epsilon,\nu,n}\)_0 = \(e(nx) \dse_{\epsilon,\nu,n}\)_0$
on $\R$, and hence by \eqref{eq:sigmaIndRepUnbddEq}, $\(\pi(\tln_{x}^{-1}) \dse_{\epsilon,\nu,n}\)_\infty = \(e(nx) \dse_{\epsilon,\nu,n}\)_\infty$
on $\R_{\neq 0}$. One can check that both distributions in this latter equality vanish to non-negative order at $0$.
Thus by \cite[Lemma 2.8]{Miller04}, they must in fact be equal on all of $\R$.}
it follows that
\begin{equation}
\int_0^1 \pi\(\tln_x^{-1}\) \dse_{\epsilon,\nu,n} \, dx = \( \int_0^1 e(nx) \, dx \) \dse_{\epsilon,\nu,n} = 0 
\text{ for } n \neq 0.
\end{equation}
Also observe that $\int_0^1 \pi\(\tln_x^{-1}\) \onebb_{\epsilon,\nu} \, dx$ is supported on $\tlSL_2(\R)$ while
$\int_0^1 \pi\(\tln_x^{-1}\) \sigma_{\epsilon,\nu,\fa} \, dx$ is supported on $\tls^{-1} \tlB$.
Thus by \eqref{eq:CuspDistIntegralCond1},
\begin{equation}
\label{eq:CuspDistIntegralCond3}
p_{\epsilon,\nu,\fa}(0)=0 \text{ and } \sigma_{\epsilon,\nu,\fa} = 0
\end{equation}
if and only if $\tau$ is cuspidal at a singular cusp $\fa$.
As was mentioned earlier, we will prove Theorem \ref{thm:FuncEq} by showing that $\tau_{\epsilon,\nu}$ 
(defined in \eqref{eq:tauDef}) is a cuspidal automorphic distribution.
By the above discussion, it suffices to show that $\tau_{\epsilon,\nu}$ is cuspidal at the cusps $0$ and $\infty$.

\begin{proof}[Proof of Theorem \ref{thm:FuncEq}]
We begin by showing that $\tau_{\epsilon,\nu}$ is cuspidal at the cusp $\infty$.
Since $\tlxi_\infty = \tlm_{1,1} = (\id,1)$, it follows from Proposition \ref{prop:EisenInfMeroCont},
Proposition \ref{prop:EisenInfConst}, \eqref{eq:ComplFourSerEisAtZero}, 
 Proposition \ref{prop:b0coeff}, and Proposition \ref{prop:InterEisenInfMeroCont},
\begin{align*}
&p_{\epsilon,\nu,\infty}(0)\\
&= c_{\epsilon,\nu}(0) - \Big((1-\epsilon i) 2^{2\nu-1}(1-2^{-2\nu-1})^{-1} G_0(2 \nu)\Big)\notag\\
&\hh \cdot \(a_{\epsilon,\nu}(0) + (1-\epsilon i) 2^{-\nu}(1 - 2^{2\nu}) b_{\epsilon,\nu}(0)\)\notag\\
&=(1-2^{2\nu-1}) G_0(2\nu) \zeta(2 \nu) - \Big((1-\epsilon i) 2^{2\nu-1}(1-2^{-2\nu-1})^{-1} G_0(2 \nu)\Big)\notag\\
&\hh \cdot \((1+\epsilon i) 2^{-2\nu-2} \zeta(2\nu) + (1-\epsilon i) 2^{-\nu}(1 - 2^{2\nu})\epsilon i 2^{-\nu-1} (1-2^{-2\nu}) \zeta(2\nu)\)
= 0.
\end{align*}
Likewise, by Proposition \ref{prop:EisenInfMeroCont},  \eqref{eq:bInfCoeff}, and Proposition \ref{prop:InterEisenInfMeroCont},
\begin{align*}
&\sigma_{\epsilon,\nu,\infty}\\
&= c_{\epsilon,\nu}(\infty) \delta_{\epsilon,\nu,\infty} - \Big((1-\epsilon i) 2^{2\nu-1}(1-2^{-2\nu-1})^{-1} G_0(2 \nu)\Big)\notag\\
&\hh \cdot \(a_{\epsilon,\nu}(\infty) + (1-\epsilon i) 2^{-\nu}(1 - 2^{2\nu}) b_{\epsilon,\nu}(\infty)\) \delta_{\epsilon,\nu,\infty} \notag\\
&= (1-\epsilon i) 2^{2\nu-1} G_0(2\nu) \zeta(2\nu+1) \delta_{\epsilon,\nu,\infty}\\
&\hh - \Big((1-\epsilon i) 2^{2\nu-1} (1-2^{-2\nu-1})^{-1} G_0(2 \nu)\Big) \cdot (1-2^{-2\nu-1}) \zeta(2\nu + 1) \delta_{\epsilon,\nu,\infty}
= 0.
\end{align*}
Thus $\tau_{\epsilon,\nu}$ is cuspidal at the cusp $\infty$ by \eqref{eq:CuspDistIntegralCond3}.

It remains to show that $\tau$ is cuspidal at the cusp $0$.  
Recall that in \eqref{eq:tlxiZeroDef} we defined
\begin{equation*}
\tlxi_{0} =\tla_2^{-1} \tls = \tls \, \tla_2 = \(\mm{0}{-2^{-1}}{2}{0},1\).
\end{equation*}
Since $\tlxi_0 (\tlm_{-1,-1})^{-1} = \tlxi_0 (-\id,-1)^{-1} = \tlxi_0^{-1}$,
then by \eqref{eq:EZeroToEInf} and \eqref{eq:TransLawsB},
\begin{equation}
\pi(\tlxi_0^{-1}) \tlE^{(\infty)}_{\epsilon,\nu} = -\epsilon i \tlE^{(0)}_{\epsilon,\nu} \text{ and } \pi(\tlxi_0^{-1}) \tlE^{(0)}_{\epsilon,\nu} = \tlE^{(\infty)}_{\epsilon,\nu}.
\end{equation}
Thus
\begin{align}
\pi\(\tlxi_0^{-1}\) \tau = -\epsilon i I_{\epsilon,\nu} \tlE^{(0)}_{\epsilon,-\nu}  &- \Big((1-\epsilon i) 2^{2\nu-1}(1-2^{-2\nu-1})^{-1} G_0(2 \nu)\Big)\\
&\hh \cdot \(-\epsilon i \tlE^{(0)}_{\epsilon,\nu} + (1-\epsilon i) 2^{-\nu}(1 - 2^{2\nu}) \tlE^{(\infty)}_{\epsilon,\nu}\).\notag
\end{align}
By Proposition \ref{prop:EisenInfConst}, Proposition \ref{prop:b0coeff}, and Proposition \ref{prop:InterEisenZeroMeroCont}, 
\begin{align*}
&p_{\epsilon,\nu,0}(0)\\
&= -\epsilon i d_{\epsilon,\nu}(0) - \Big((1-\epsilon i) 2^{2\nu-1}(1-2^{-2\nu-1})^{-1} G_0(2 \nu)\Big)\\
&\hh \cdot \(- \epsilon ib_{\epsilon,\nu}(0) + (1-\epsilon i) 2^{-\nu}(1 - 2^{2\nu}) a_{\epsilon,\nu}(0)\)\\
&= -\Big((1-\epsilon i) 2^{2\nu-1}(1-2^{-2\nu-1})^{-1} G_0(2 \nu)\Big)\\
&\hh \cdot \(2^{-\nu-1} (1-2^{-2\nu}) \zeta(2 \nu) + (1-\epsilon i) 2^{-\nu}(1 - 2^{2\nu})  (1+\epsilon i) 2^{-2\nu-2} \zeta(2\nu)\)
= 0.
\end{align*}
Likewise, by Proposition \ref{prop:EisenInfMeroCont},  \eqref{eq:bInfCoeff}, and Proposition \ref{prop:InterEisenZeroMeroCont},
\begin{align*}
&\sigma_{\epsilon,\nu,0}\\
&= - \epsilon i d_{\epsilon,\nu}(\infty) \delta_{\epsilon,\nu,\infty} - \Big((1-\epsilon i) 2^{2\nu-1}(1-2^{-2\nu-1})^{-1} G_0(2 \nu)\Big)\\
&\hh \cdot \(-\epsilon i b_{\epsilon,\nu}(\infty) + (1-\epsilon i) 2^{-\nu}(1 - 2^{2\nu}) a_{\epsilon,\nu}(\infty)\) \delta_{\epsilon,\nu,\infty}\\
&=-\epsilon i 2^{\nu} (1-2^{2\nu}) G_0(2\nu) \zeta(2 \nu + 1) \delta_{\epsilon,\nu,\infty}\\
&\hh - \Big((1-\epsilon i) 2^{2\nu-1}(1-2^{-2\nu-1})^{-1} G_0(2 \nu)\Big)\\
&\hh \cdot \((1-\epsilon i) 2^{-\nu}(1 - 2^{2\nu})(1-2^{-2\nu-1}) \zeta(2\nu+1) \) \delta_{\epsilon,\nu,\infty}
= 0.
\end{align*}
Thus $\tau_{\epsilon,\nu}$ is a cuspidal automorphic distribution.
Since for almost all $\nu$, the only cuspidal automorphic distribution is $0$,
it follows that for almost all $\nu$ we have $\tau_{\epsilon,\nu} \equiv 0$.
The uniqueness of meromorphic continuation then shows that $\tau_{\epsilon,\nu} \equiv 0$ for all $\nu$.
\end{proof}

\section{A Functional Equation for  Metaplectic Eisenstein Series}
\label{sec:EisenSeriesFunctionalEq}

For $\ell \in 2 \Z$, the weight $\frac{1}{2} - \ell$ (normalized) classical metaplectic Eisenstein series is
\begin{align}
\label{eq:ClassicEisenDef}
&\dsE_{\fa,\ell}(z,\nu)\\
&= \zeta_2(4\nu - 1) \sum_{\gamma \in \Gamma_\fa \bs \Gamma_1(4)} \vartheta(\gamma) w_{\frac{1}{2}-\ell}(\xi_\fa,\xi_\fa^{-1} \gamma) \(\frac{j_{\tlxi_\fa^{-1} \gamma}(z)}{\left|j_{\tlxi_\fa^{-1} \gamma}(z)\right|}\)^{\ell-\frac{1}{2}} \(\Im \, \tlxi_\fa^{-1} \gamma  z\)^\nu,\notag
\end{align}
where 
$\fa$ is either the cusp $\infty$ or $0$, 
$\xi_\infty = \id$ and $\xi_0 = \mm{0}{-2^{-1}}{2}{0}$ (in compatibility with \eqref{eq:tlxiZeroDef}),
$z \in \dsH = \{z \in \C: \Im(z)>0\}$, $\nu \in \C$ with $\Re(\nu) > 1$,
$w_{\frac{1}{2}-\ell}(\xi_\fa,\xi_\fa^{-1} \gamma)$ is defined
in \cite[(2.48)]{Iwaniec97}, $\vartheta(\gamma) = \(\frac{c}{d}\)$,
and $j_\gamma(z) = cz + d$ for $\gamma = \mm{a}{b}{c}{d}$.
This definition is compatible with the definition given in \cite[(13.4)]{Iwaniec97} except for the fact that we have a normalizing factor of
$\zeta_2(4 \nu - 1)$, that $\vartheta$ in \eqref{eq:ClassicEisenDef} is the $\bar{\eta}$ in \cite[(13.4)]{Iwaniec97}, and that the
$\vartheta$  in \cite[(13.4)]{Iwaniec97} is set as the trivial multiplier system in \eqref{eq:ClassicEisenDef}.
It is well-known that $\dsE_{\fa,\ell}(z,\nu)$ can be meromorphically continued to the entire complex plane \cite{Goldfeld85,Moeglin95}.
It is easy to infer that $w_{\frac{1}{2}-\ell}(\xi_\fa,\xi_\fa^{-1} \gamma) = 1$ when $\fa = \infty$ \cite[(13.8)]{Iwaniec97}.
For $\fa = 0$, one can show that $w_{\frac{1}{2}-\ell}(\xi_\fa,\xi_\fa^{-1} \gamma) = 1$
whenever we have $\gamma$ such that $c>0$.%
\footnote{For such $\gamma$, there always exist $z \in \dsH$ such that $|\arg(cz+d)|, |\arg(-az-b)| < \frac{\pi}{2}$.
From this, one can show that $\arg(j_{\xi_0}(\xi_0^{-1} \gamma z))$, $\arg(j_{\xi_0^{-1} \gamma}(z))$, $\arg(j_\gamma(z))$
are all in $\(-\frac{\pi}{2},\frac{\pi}{2}\)$.
From the definitions of $\omega$ and $w_{\frac{1}{2} - \ell}$ given in \cite[\S 2.6]{Iwaniec97},
it then follows from \cite[(2.40)]{Iwaniec97} that $\omega(\xi_0,\xi_0^{-1} \gamma) = 0$
for all such $\gamma$.
This implies (by \cite[(2.48)]{Iwaniec97}) that $w_{\frac{1}{2}-\ell}(\xi_0,\xi_0^{-1} \gamma)= 1$
for all such $\gamma$.}
Given that \eqref{eq:ClassicEisenDef} is a defined as as summation over $\Gamma_0 \bs \Gamma_1(4)$,
we can always arrange for this to be the case.
Consequently, we find that \eqref{eq:ClassicEisenDef} simplifies to the following:
\begin{align}
\label{eq:ClassicEisenInfDef}
&\dsE_{\infty,\ell}(z,\nu) = \zeta_2(4\nu - 1) \sum_{\gamma \in \Gamma_\infty \bs \Gamma_1(4)} \vartheta(\gamma)  \(\frac{j_{\gamma}(z)}{\left|j_{\gamma}(z)\right|}\)^{\ell-\frac{1}{2}} \(\Im \, \gamma  z\)^\nu,\\
\label{eq:ClassicEisenZeroDef}
&\dsE_{0,\ell}(z,\nu) = \zeta_2(4\nu - 1) \sum_{\substack{\gamma \in \Gamma_0 \bs \Gamma_1(4)\\c > 0}} \vartheta(\gamma) \(\frac{j_{\tlxi_0^{-1} \gamma}(z)}{\left|j_{\tlxi_0^{-1} \gamma}(z)\right|}\)^{\ell-\frac{1}{2}} \(\Im \, \tlxi_0^{-1} \gamma  z\)^\nu,
\end{align}
where in the summation of \eqref{eq:ClassicEisenZeroDef}, $\gamma = \mm{a}{b}{c}{d}$.

We will show that Theorem \ref{thm:FuncEq} implies a functional equation for the classical
metaplectic Eisenstein series.  To do this, we will use Lemma \ref{lem:DistToSeries}, which relates metaplectic
Eisenstein distributions to these classical metaplectic Eisenstein series.
The following lemma will be needed in the proof of Lemma \ref{lem:DistToSeries}.

\begin{lem} \label{lem:deltaInvN}
For $\tln_t \in \tlN$, 
$\pi(\tln_t) \delta_{\epsilon,\nu,\infty} = \delta_{\epsilon,\nu,\infty}$.
\end{lem}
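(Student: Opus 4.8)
The plan is to prove $\pi(\tln_t)\delta_{\epsilon,\nu,\infty}=\delta_{\epsilon,\nu,\infty}$ by checking the equality on the restrictions to $\tlN$ and to $\tls^{-1}\tlN$, using the isomorphism \eqref{eq:sigmaIndRepUnbddIso}: an element of $V_{\epsilon,\nu}^{-\infty}$ is completely determined by the pair $(f_0,f_\infty)$. The case $t=0$ is trivial, since $\tln_0$ is the identity, so assume $t\neq 0$. Set $\tlg=\tln_t$, so that $\tlg^{-1}=\tln_{-t}$ has lower-left entry $c=0$ and upper-right entry $b=-t\neq 0$. For the restriction to $\tlN$, Lemma \ref{lem:MetaUnbddAct}(b) gives $(\pi(\tln_t)\delta_{\epsilon,\nu,\infty})_0(x)=(\delta_{\epsilon,\nu,\infty})_0(x-t)$ as distributions on $\R$, and this vanishes because $(\delta_{\epsilon,\nu,\infty})_0=0$ by \eqref{eq:tlsdeltprop}; this matches $(\delta_{\epsilon,\nu,\infty})_0$.

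For the restriction to $\tls^{-1}\tlN$, I would invoke Lemma \ref{lem:MetaUnbddAct}(c), which applies since $b\neq 0$ but involves the sign $\kappa'$ defined by \eqref{eq:tlsComm} through $\tln_{-t}\,\tls^{-1}=\tls^{-1}\left(\mm{1}{0}{t}{1},\kappa'\right)$. A short computation of the values of the $2$-cocycle $\alpha$ entering this relation — in which each Hilbert symbol that occurs has a positive first argument, hence equals $1$ — gives $\kappa'=1$. Lemma \ref{lem:MetaUnbddAct}(c) then yields
\[
(\pi(\tln_t)\delta_{\epsilon,\nu,\infty})_\infty(x)=(-t,\,tx+1)_H\,|tx+1|^{\nu-1}\,\sgn(tx+1)^{\epsilon/2}\,(\delta_{\epsilon,\nu,\infty})_\infty\!\left(\frac{x}{tx+1}\right)
\]
as distributions on $\R_{\neq -1/t}$. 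Substituting $(\delta_{\epsilon,\nu,\infty})_\infty=\epsilon i\,\delta_0$ from \eqref{eq:tlsdeltprop}, I would note that $x\mapsto x/(tx+1)$ is a diffeomorphism of $\R_{\neq -1/t}$ with a single zero at $x=0$ and derivative $1$ there, so the pullback $\delta_0(x/(tx+1))$ equals $\delta_0(x)$; the prefactor $(-t,tx+1)_H\,|tx+1|^{\nu-1}\,\sgn(tx+1)^{\epsilon/2}$ is identically $1$ on a neighbourhood of $0$, so multiplying $\delta_0$ by it leaves it unchanged. Hence $(\pi(\tln_t)\delta_{\epsilon,\nu,\infty})_\infty=\epsilon i\,\delta_0$ as distributions on $\R_{\neq -1/t}$.

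Finally I would upgrade this to an equality of distributions on all of $\R$. Since $(\pi(\tln_t)\delta_{\epsilon,\nu,\infty})_0=0$, the relation \eqref{eq:sigmaIndRepUnbddEq} forces $(\pi(\tln_t)\delta_{\epsilon,\nu,\infty})_\infty$ to vanish on $\R_{\neq 0}$; as $t\neq 0$ we have $\R_{\neq 0}\cup\R_{\neq -1/t}=\R$, and on both open sets the distribution agrees with $\epsilon i\,\delta_0$, so the sheaf property of distributions gives $(\pi(\tln_t)\delta_{\epsilon,\nu,\infty})_\infty=\epsilon i\,\delta_0=(\delta_{\epsilon,\nu,\infty})_\infty$ on $\R$. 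By \eqref{eq:sigmaIndRepUnbddIso} the two pairs $(f_0,f_\infty)$ coincide, which proves the lemma. I expect the computation pinning down $\kappa'=1$ to be the main obstacle, since it is easy to mishandle the cocycle $\alpha$; the gluing step at $x=-1/t$ is the other point requiring a little care.
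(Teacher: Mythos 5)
Your proof is correct and follows essentially the same route as the paper: the restriction to $\tlN$ vanishes by Lemma \ref{lem:MetaUnbddAct}(b), the restriction to $\tls^{-1}\tlN$ is computed via Lemma \ref{lem:MetaUnbddAct}(c), and the two open sets $\R_{\neq 0}$ and $\R_{\neq -1/t}$ are glued exactly as in the paper. The only cosmetic differences are that you evaluate the transformed delta by the standard Jacobian formula for pulling back $\delta_0$ under a diffeomorphism (with $g'(0)=1$), where the paper carries out an explicit chain of changes of variables against a test function, and that you make the computation $\kappa'=1$ explicit (correctly, though note one of the Hilbert symbols that arises has negative first argument and is $1$ only because its second argument is positive).
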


\begin{proof}
Consider $t \neq 0$.
By Lemma \ref{lem:MetaUnbddAct}(b) and \eqref{eq:tlsdeltprop}, we see that 
\begin{equation}
\(\pi(\tln_t) \delta_{\epsilon,\nu,\infty}\)_0(x) = \(\delta_{\epsilon,\nu,\infty}\)_0(x) = 0.
\end{equation}
By this and \eqref{eq:SmoothSigmaIndRepUnbddEq}, we then know that $\(\pi(\tln_t) \delta_{\epsilon,\nu,\infty}\)_\infty(x) = 0$ on $\R_{\neq 0}$.
Thus it remains to determine $\(\pi(\tln_t) \delta_{\epsilon,\nu,\infty}\)_\infty(x)$ on a neighborhood about $0$.
Let $\phi$ a test function on $\R_{\neq \frac{-1}{t}}$.
By Lemma \ref{lem:MetaUnbddAct}(c), 
\begin{equation*}
\(\pi(\tln_t) \delta_{\epsilon,\nu,\infty}\)_\infty(x)
=  (t,tx+1)_H |tx+1|^{\nu-1} \sgn(tx+1)^{\epsilon/2}  \(\delta_{\epsilon,\nu,\infty}\)_\infty\(\frac{x}{tx+1}\),
\end{equation*}
as an equality between distributions on $\R_{\neq \frac{-1}{t}}$.
Thus by \eqref{eq:tlsdeltprop} and changing variables,
\begin{align*}
&\int_{\R_{\neq \frac{-1}{t}}} \(\pi(\tln_t) \delta_{\epsilon,\nu,\infty}\)_\infty(x) \phi(x) \, dx\\
&=\int_{\R_{\neq \frac{-1}{t}}} (t,tx+1)_H |tx+1|^{\nu-1} \sgn(tx+1)^{\epsilon/2} \epsilon i \delta_0\(\frac{x}{tx+1}\) \phi(x) \, dx \\
&=\int_{\R_{\neq 0}} (t,tx)_H |tx|^{\nu-1} \sgn(tx)^{\epsilon/2} \epsilon i \delta_0\(\frac{x-t^{-1}}{tx}\) \phi(x-t^{-1}) \, dx \\
&=\int_{\R_{\neq 0}} (t,tx)_H |tx|^{\nu-1} \sgn(tx)^{\epsilon/2} \epsilon i \delta_0\(t^{-1} - t^{-2} x^{-1}\) \phi(x-t^{-1}) \, dx \\
&=\int_{\R_{\neq 0}} (t,-tx^{-1})_H |t|^{\nu-1} |x|^{-\nu-1} \sgn\(-tx^{-1}\)^{\epsilon/2} \epsilon i \delta_0\(t^{-1} + t^{-2} x\) \phi(-x^{-1}-t^{-1}) \, dx \\
&=\int_{\R_{\neq 0}} (t,-t^{-1}x^{-1})_H |t|^{-\nu-1} |x|^{-\nu-1} \sgn\(-t^{-1}x^{-1}\)^{\epsilon/2} \epsilon i \delta_0\(t^{-1} + x\)\\
&\hhh \cdot \phi(-t^{-2}x^{-1}-t^{-1}) \, dx \\
&=(t,t^{-2})_H |t|^{-\nu-1} |t^{-1}|^{-\nu-1} \sgn\(1\)^{\epsilon/2} \epsilon i \phi(0) = \epsilon i \phi(0)\\
&=\int_{\R_{\neq \frac{-1}{t}}} \(\delta_{\epsilon,\nu,\infty}\)_\infty(x) \phi(x) \, dx.
\end{align*}
This shows that
\begin{equation}
\(\pi(\tln_t) \delta_{\epsilon,\nu,\infty}\)_\infty(x) = \(\delta_{\epsilon,\nu,\infty}\)_\infty(x),
\end{equation}
thereby completing the proof.
\end{proof}

\begin{lem}
\label{lem:DistToSeries}
Let $\phi_{\epsilon,\nu,\ell} \in V_{\epsilon,\nu}^\infty$ such that \eqref{eq:phiDef} holds.
For $\phi = \phi_{-1,-\nu,-\ell}$, $z = x + iy \in \dsH$, and $\fa = 0$ or $\infty$,
the following identities hold:
\begin{itemize}[leftmargin=.3in,font=\normalfont\textbf]
\item[(a)] $\displaystyle \int_{-\pi/2}^{\pi/2} \tlE_{1,\nu}^{(\fa)}\(\tln_x \tla(y^{1/2}) \tlk_\theta \) \phi(\tlk_\theta) \, d\theta = i^{-\ell} \frac{1+i}{\sqrt{2}} \dsE_{\fa,\ell}\(z,\frac{\nu+1}{2} \)$,

\item[(b)] $\displaystyle \begin{aligned}[t]&\int_{-\pi/2}^{\pi/2} I_{1,\nu} \tlE_{1,-\nu}^{(\infty)}\(\tln_x \tla(y^{1/2}) \tlk_\theta \) \phi(\tlk_\theta) \, d\theta\\ &= \frac{\pi  2^{1-\nu} \Gamma (\nu )}{\Gamma\left( \frac{3}{4} - \frac{\ell}{2} + \frac{\nu}{2} \right) \Gamma \left( \frac{1}{4} + \frac{\ell}{2} + \frac{\nu}{2} \right)} \dsE_{\infty,\ell}\(z,\frac{-\nu+1}{2} \).\end{aligned}$
\end{itemize}
\end{lem}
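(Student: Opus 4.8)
The plan is to establish (a) by unfolding the distributional Eisenstein series against $\phi_{-1,-\nu,-\ell}$ and recognizing the classical series, and then to deduce (b) from (a) together with Lemmas \ref{lem:AlmostAdjoint} and \ref{lem:KFiniteLem}. Throughout put $\tlg = \tln_x\tla(y^{1/2})$, so $g\cdot i = z$. Since $\phi_{-1,-\nu,-\ell}\in V_{-1,-\nu}^\infty$ and $(\pi,V_{-1,-\nu})$ is the dual of $(\pi,V_{1,\nu})$ under $\<\cdot,\cdot\>_{1,\nu}$, the integral on the left of (a) is precisely $\<\pi(\tlg^{-1})\tlE^{(\fa)}_{1,\nu},\phi_{-1,-\nu,-\ell}\>_{1,\nu}$. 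Substituting the definitions \eqref{eq:EisenInfDist} and \eqref{eq:MetaEisenAtZeroDef} (with $\tlxi_\infty=(\id,1)$) reduces this to $\zeta_2(2\nu+1)\sum_{\tlgamma\in\tlGamma_1(4)/\tlGamma_\fa}\<\pi(\tlg^{-1}\tlgamma\tlxi_\fa)\delta_{1,\nu,\infty},\phi_{-1,-\nu,-\ell}\>_{1,\nu}$. The key elementary fact is that for every $\psi\in V_{-\epsilon,-\nu}^\infty$ and $h\in\tlSL_2(\R)$,
\[
\<\pi(h)\delta_{\epsilon,\nu,\infty},\psi\>_{\epsilon,\nu} = \epsilon i\,\psi(h\tls^{-1}),
\]
which follows from \eqref{eq:tlsdeltprop} (giving $(\delta_{\epsilon,\nu,\infty})_\infty=\epsilon i\,\delta_0$, $(\delta_{\epsilon,\nu,\infty})_0=0$), from \eqref{eq:cmpctToUnbdd2} applied to the support condition $\supp\delta_{\epsilon,\nu,\infty}\subset\tls^{-1}\tlN\tlB$, and from the adjointness $\<\pi(h)v,w\>=\<v,\pi(h^{-1})w\>$ of the dual action. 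Lemma \ref{lem:deltaInvN} shows that $\tlgamma\mapsto\phi_{-1,-\nu,-\ell}(\tlg^{-1}\tlgamma\tlxi_\fa\tls^{-1})$ depends only on the coset $\tlgamma\tlGamma_\fa$, so the left side of (a) equals
\[
i\,\zeta_2(2\nu+1)\sum_{\tlgamma\in\tlGamma_1(4)/\tlGamma_\fa}\phi_{-1,-\nu,-\ell}\big(\tlg^{-1}\tlgamma\tlxi_\fa\tls^{-1}\big).
\]

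Next I would identify each summand with the corresponding term of \eqref{eq:ClassicEisenInfDef}, \eqref{eq:ClassicEisenZeroDef}. After reindexing $\tlgamma\mapsto\tlgamma^{-1}$ (turning $\tlGamma_1(4)/\tlGamma_\fa$ into $\tlGamma_\fa\bs\tlGamma_1(4)$) and using $\tlxi_\infty\tls^{-1}=\tls^{-1}$, resp.\ $\tlxi_0\tls^{-1}=\tls\tla_2\tls^{-1}\in\tlA\tlM$, one decomposes the resulting metaplectic element according to $\tlSL_2(\R)=\tlK\tlA\tlN_-$ and evaluates $\phi_{-1,-\nu,-\ell}$ by \eqref{eq:phiDefB}, i.e.\ $\phi_{-1,-\nu,-\ell}(\tlk_\theta\tla_u\tln_-)=u^{\nu+1}\exp((\tfrac12-\ell)i\theta)$. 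Via the standard correspondence between the line/circle model of the (metaplectic) principal series and the upper half-plane, the $\tlA$-component contributes $(\Im\,\tlxi_\fa^{-1}\gamma z)^{(\nu+1)/2}$ — so $\zeta_2(2\nu+1)=\zeta_2\big(4\cdot\tfrac{\nu+1}{2}-1\big)$ is exactly the normalizing factor of $\dsE_{\fa,\ell}$ at $\tfrac{\nu+1}{2}$ — the $\tlK$-component contributes the phase $\big(j_{\tlxi_\fa^{-1}\gamma}(z)/|j_{\tlxi_\fa^{-1}\gamma}(z)|\big)^{\ell-\frac12}$, and the second $\{\pm1\}$-coordinate, built from the cocycle $\alpha$ and the datum $(\tfrac cd)$ in \eqref{eq:tlGammaDef}, contributes $\vartheta(\gamma)=(\tfrac cd)$ together with $w_{\frac12-\ell}(\xi_\fa,\xi_\fa^{-1}\gamma)$, which by the discussion preceding \eqref{eq:ClassicEisenInfDef} equals $1$ for all $\gamma$ occurring here. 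The leftover scalar is $i\cdot\phi_{-1,-\nu,-\ell}(\tls^{-1})=i^{-\ell}\tfrac{1+i}{\sqrt2}$, using $\tls^{-1}=\tlk_{-\pi/2}$, $\phi_{-1,-\nu,-\ell}(\tlk_{-\pi/2})=i^{\ell}\tfrac{1-i}{\sqrt2}$, and $\ell\in2\Z$. Comparing term by term with \eqref{eq:ClassicEisenInfDef}, \eqref{eq:ClassicEisenZeroDef} proves (a).

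For (b), the left side equals $\<I_{1,\nu}\tlE^{(\infty)}_{1,-\nu},\pi(\tlg)\phi_{-1,-\nu,-\ell}\>_{1,\nu}$, which by Lemma \ref{lem:AlmostAdjoint} is $-i\,\<\tlE^{(\infty)}_{1,-\nu},I_{-1,\nu}\pi(\tlg)\phi_{-1,-\nu,-\ell}\>_{1,-\nu}=-i\,\<\tlE^{(\infty)}_{1,-\nu},\pi(\tlg)\,I_{-1,\nu}\phi_{-1,-\nu,-\ell}\>_{1,-\nu}$, the last step by the intertwining property. Lemma \ref{lem:KFiniteLem} with $\epsilon=-1$ and $\ell$ replaced by $-\ell$ gives $I_{-1,\nu}\phi_{-1,-\nu,-\ell}=c_\ell(\nu)\,\phi_{-1,\nu,-\ell}$ with
\[
c_\ell(\nu)=i^{-\ell}\frac{1+i}{\sqrt2}\cdot\frac{\pi\,2^{1-\nu}\Gamma(\nu)}{\Gamma\!\big(\tfrac34-\tfrac\ell2+\tfrac\nu2\big)\Gamma\!\big(\tfrac14+\tfrac\ell2+\tfrac\nu2\big)},
\]
and part (a) with $\nu$ replaced by $-\nu$ (whose test function is $\phi_{-1,-(-\nu),-\ell}=\phi_{-1,\nu,-\ell}$) identifies $\<\tlE^{(\infty)}_{1,-\nu},\pi(\tlg)\phi_{-1,\nu,-\ell}\>_{1,-\nu}$ with $i^{-\ell}\tfrac{1+i}{\sqrt2}\,\dsE_{\infty,\ell}(z,\tfrac{-\nu+1}{2})$. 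Since $-i\cdot i^{-\ell}\tfrac{1+i}{\sqrt2}\cdot i^{-\ell}\tfrac{1+i}{\sqrt2}=-i\,(i^{-\ell})^2\,\tfrac{(1+i)^2}{2}=-i\cdot1\cdot i=1$ (using $\ell\in2\Z$ and $(1+i)^2=2i$), this leaves exactly the $\Gamma$-prefactor of $c_\ell(\nu)$ times $\dsE_{\infty,\ell}(z,\tfrac{-\nu+1}{2})$, which is (b).

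The main obstacle is the term-by-term identification in (a): writing the metaplectic element $\tlg^{-1}\tlgamma\tlxi_\fa\tls^{-1}$ in $\tlK\tlA\tlN_-$-coordinates and verifying that its second ($\{\pm1\}$-)coordinate, after the cocycle corrections incurred in commuting the various factors, reproduces exactly $\vartheta(\gamma)\,w_{\frac12-\ell}(\xi_\fa,\xi_\fa^{-1}\gamma)$ along with the branch of $j_{\tlxi_\fa^{-1}\gamma}(z)^{\ell-\frac12}$ dictated by \eqref{eq:branchCut}. Absolute convergence of all the series for $\Re(\nu)>1$ is already available from Section \ref{sec:MetaEisenDist}, so everything beyond the metaplectic sign bookkeeping is routine.
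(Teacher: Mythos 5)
Your overall strategy coincides with the paper's: unfold the sum over $\tlGamma_1(4)/\tlGamma_\fa$, reduce the pairing of each translate of $\delta_{1,\nu,\infty}$ against $\phi_{-1,-\nu,-\ell}$ to a single point evaluation of $\phi$, match the resulting terms with \eqref{eq:ClassicEisenInfDef}--\eqref{eq:ClassicEisenZeroDef}, and obtain (b) from (a) at $-\nu$ via Lemmas \ref{lem:AlmostAdjoint} and \ref{lem:KFiniteLem}. Your part (b) is complete and agrees with the paper's computation, including the cancellation $-i\cdot(i^{-\ell})^2\cdot\tfrac{(1+i)^2}{2}=1$ and the specialization of Lemma \ref{lem:KFiniteLem} to $\epsilon=-1$, $\ell\mapsto-\ell$. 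Your identity $\<\pi(h)\delta_{\epsilon,\nu,\infty},\psi\>_{\epsilon,\nu}=\epsilon i\,\psi(h\tls^{-1})$ is a clean repackaging of the paper's evaluation of $f_\nu$ (the paper instead computes $f_\nu(\tln_x\tla(y^{1/2}))$ directly as an integral over $\tlN$ and then propagates it to $f_\nu(\tlh\tln_x\tla(y^{1/2}))$ using that $\phi|_{\tlK}$ is a character), and the scalars you extract agree with the paper's, since $i^{\ell}=i^{-\ell}$ for $\ell\in2\Z$.

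The gap is that the step you set aside as routine sign bookkeeping is where the paper does almost all of the work in part (a), and it is not routine. Concretely, the paper (i) writes $\tlh\tln_x\tla(y^{1/2})=\tln(\Re(hz))\,\tla(\Im(hz)^{1/2})\,\scK_{\tlh,z}$ and must determine which $\tlk_\theta$ equals $\scK_{\tlh,z}$: the answer $\theta=(1-\kappa)\pi+\theta'$ depends on the second coordinate $\kappa$ of $\tlh$ through \eqref{eq:epsDef}, and only after this (together with the branch convention \eqref{eq:branchCut} applied to $\exp(i\theta')^{1/2}$) does $\phi(\scK_{\tlh,z})=\kappa\,(j_h(z)/|j_h(z)|)^{1/2-\ell}$ emerge; (ii) for $\fa=0$ it must compute the second coordinate of $\tlxi_0^{-1}\tlgamma$ explicitly as $(a,c)_H\big(\tfrac{c}{d}\big)$ and then observe that coset representatives can be normalized so that $c>0$, which kills the Hilbert symbol and matches the convention $w_{1/2-\ell}=1$ adopted in \eqref{eq:ClassicEisenZeroDef}. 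Without these two computations, the claimed identification of the $\{\pm1\}$-coordinate with $\vartheta(\gamma)\,w_{\frac12-\ell}(\xi_\fa,\xi_\fa^{-1}\gamma)$, and of the $\tlK$-phase with the correct branch of $j_{\xi_\fa^{-1}\gamma}(z)^{\ell-\frac12}$, is an assertion rather than a proof. Everything else in your write-up is sound.
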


\begin{proof}
By \eqref{eq:EisenInfDist} and \eqref{eq:MetaEisenAtZeroDef},
\begin{equation*}
\tlE_{1,\nu}^{(\fa)}(\tlg) = \zeta_2(2 \nu + 1) \sum_{\tlgamma \in \tlGamma_1(4) / \tlGamma_{\fa}} \pi\(\tlgamma \tlxi_{\fa}\) \delta_{1,\nu,\infty}(\tlg)
\end{equation*}
where $\fa = 0$ or $\infty$ and
\begin{equation*}
\tlxi_{0} = \(\mm{0}{-2^{-1}}{2}{0},1\) \text{ and } \tlxi_{\infty} = \tlm_{1,1} = (\id,1).
\end{equation*}
For $\Re(\nu)>1$, observe
\begin{align}
\label{eq:EisDistToSer1}
&\int_{-\pi/2}^{\pi/2} \tlE_{1,\nu}^{(\fa)} \(\tln_x \tla(y^{1/2}) \tlk_\theta \) \phi(\tlk_\theta) \, d\theta\\
&= \zeta_2(2\nu+1) \sum_{\tlgamma \in \tlGamma_1(4) / \tlGamma_\fa} \int_{-\pi/2}^{\pi/2} \pi(\tlgamma \tlxi_\fa) \delta_{1,\nu,\infty} \(\tln_x \tla(y^{1/2}) \tlk_\theta \) \phi(\tlk_\theta) \, d\theta\notag\\
&= \zeta_2(2\nu+1) \sum_{\tlgamma \in \tlGamma_1(4) / \tlGamma_\fa} \int_{-\pi/2}^{\pi/2} \delta_{1,\nu,\infty} \(\tlxi_\fa^{-1} \tlgamma^{-1} \tln_x \tla(y^{1/2}) \tlk_\theta \) \phi(\tlk_\theta) \, d\theta\notag\\
&= \zeta_2(2\nu+1) \sum_{\tlgamma \in \tlGamma_\fa \bs \tlGamma_1(4)} f_{\nu}\(\tlxi_\fa^{-1} \tlgamma \tln_x \tla(y^{1/2})\),\notag
\end{align}
where
\begin{equation}
\label{eq:fDef}
f_{\nu}(\tlg) =  \int_{-\pi/2}^{\pi/2} \delta_{1,\nu,\infty} \(\tlg \tlk_\theta \) \phi(\tlk_\theta) \, d\theta \text{ with }
\phi = \phi_{-1,-\nu,-\ell}.
\end{equation}
In order to evaluate $f_{\nu}\(\tlxi_\fa^{-1} \tlgamma \tln_x \tla(y^{1/2})\)$ we first evaluate 
the simpler expression $f_{\nu}\(\tln_x \tla(y^{1/2})\)$.
Since $\delta_{1,\nu,\infty}$ is under $\tlN$ by Lemma \ref{lem:deltaInvN},
it follows that 
\begin{equation}
\label{eq:fEvalStep1}
f_{\nu}\(\tln_x \tla(y^{1/2})\) = f_{\nu}\(\tla(y^{1/2})\).
\end{equation}
Since
$\tls^{-1} \tla(y^{1/2}) = \tla(y^{-1/2}) \tls^{-1}$
then it follows from \eqref{eq:fEvalStep1}, the fact that $\delta_{\epsilon,\nu,\infty} = \pi(\tls) \delta_{\epsilon,\nu,0}$,
and a change of variables, that
\begin{align}
\label{eq:fEvalStep1b}
&f_{\nu}\(\tln_x \tla(y^{1/2})\) = f_{\nu}\(\tla(y^{1/2})\) = \int_{-\pi/2}^{\pi/2} \delta_{1,\nu,0} \(\tls^{-1} \tla(y^{1/2}) \tlk_\theta \) \phi(\tlk_\theta) \, d\theta\\
&= \int_{-\pi/2}^{\pi/2} \delta_{1,\nu,0} \(\tla(y^{-1/2}) \tls^{-1} \tlk_\theta \) \phi(\tlk_\theta) \, d\theta
= \int_{-\pi/2}^{\pi/2} \delta_{1,\nu,0} \(\tla(y^{-1/2}) \tlk_\theta \) \phi(\tls \tlk_\theta) \, d\theta.\notag
\end{align}
In this last equality we changed variables in $\theta$ (so as to remove $\tls^{-1}$ into the argument of $\delta_{1,\nu,0}$), but
we did not change the domain of integration. This is justified by the discussion following \eqref{eq:pairingDef}.

Next, observe
\begin{align}
\label{eq:fEvalStep2}
&\tla(y^{-1/2}) \tlk_\theta = \tln\(\frac{-\tan(\theta)}{y}\) \tla\(\frac{|\sec(\theta)|}{y^{1/2}}\) \tln_-(\tan(\theta)),
\end{align}
for $\theta \in \(-\frac{\pi}{2}, \frac{\pi}{2}\)$.
It follows from \eqref{eq:fEvalStep2} and \eqref{eq:TransLawsB}, that
\begin{align}
\label{eq:fEvalStep2b}
&\delta_{1,\nu,0} \( \tla(y^{-1/2}) \tlk_\theta \) = \delta_{1,\nu,0} \( \tln\(\frac{-\tan(\theta)}{y}\) \) \left|\frac{\sec(\theta)}{\sqrt{y}}\right|^{-\nu+1},\\
&\phi(\tls \tlk_\theta) =  \phi\Big(\tls \, \tln(-\tan(\theta))\Big) \left| \sec(\theta) \right|^{\nu+1}.\notag
\end{align}
Thus by \eqref{eq:fEvalStep1b}, \eqref{eq:fEvalStep2b}, and a change of variables,
\begin{align}
\label{eq:fEval}
&f_{\nu}\(\tln_x \tla(y^{1/2})\)\\
&= \int_{-\pi/2}^{\pi/2} \delta_{1,\nu,0} \( \tln\(\frac{-\tan(\theta)}{y}\) \) \left|\frac{\sec(\theta)}{\sqrt{y}}\right|^{-\nu+1} \phi\Big(\tls \, \tln(-\tan(\theta))\Big) \left| \sec(\theta) \right|^{\nu+1} \, d\theta\notag\\
&= y^{\frac{\nu-1}{2}} \int_{-\infty}^{\infty} \delta_{1,\nu,0} \( \tln\(\frac{-t}{y}\) \) \left|\sqrt{1+t^2}\right|^{-\nu+1} \phi\(\tls \, \tln(-t)\) \left| \sqrt{1+t^2} \right|^{\nu+1} \, \frac{dt}{1+t^2}\notag\\
&= y^{\frac{\nu-1}{2}} \int_{-\infty}^{\infty} \delta_{1,\nu,0} \( \tln\(\frac{t}{y}\) \) \phi\(\tls \, \tln(t)\) \, dt\notag\\
&= y^{\frac{\nu-1}{2}+1} \int_{-\infty}^{\infty} \delta_{1,\nu,0} \( \tln_t \) \phi\(\tls \, \tln(t y) \) \, dt 
= y^{\frac{\nu+1}{2}} \phi(\tls)\notag\\
&= y^{\frac{\nu+1}{2}} \phi_{-1,-\nu,-\ell}(\tlk_{\frac{\pi}{2}}) = i^{-\ell} e^{\frac{\pi i}{4}} y^{\frac{\nu+1}{2}}\notag\\
&= i^{-\ell} \frac{1+i}{\sqrt{2}} \Im(z)^{\frac{\nu+1}{2}}, \notag
\end{align}
where $\tls$ is defined in \eqref{eq:tlsdef}.

Let
\begin{equation*}
\tlh = \(\mm{a_0}{b_0}{c_0}{d_0}, \kappa\) \in \tlSL_2(\R);
\end{equation*}
later we will specialize $\tlh$ to be $\tlxi_{\fa}^{-1} \tlgamma$.
Observe
\begin{equation}
\tlh \tln_x \tla(y^{1/2}) = \tln\( \Re\( h z \) \) \tla\( \Im\( h z \)^{1/2} \) \scK_{\tlh,z},
\end{equation}
where $h$ is the $\SL_2$ component of $\tlh$, $z = x + i y$, and
\begin{equation}
\scK_{\tlh,z} = \(\mm{\frac{c_0 x+d_0}{|c_0 z+d_0|}}{\frac{-c_0 y}{|c_0 z+d_0|}}{\frac{c_0 y}{|c_0 z+d_0|}}{\frac{c_0x+d_0}{|c_0z+d_0|}},\kappa\) \in \tlK.
\end{equation}
Thus by changing variables in \eqref{eq:fDef}, utilizing the fact that $\phi|_{\tlK}$ is a character,
and applying \eqref{eq:fEval}, we have
\begin{align}
\label{eq:fEval2}
&f_{\nu}\(\tlh \tln_x \tla(y^{1/2})\) = f_{\nu}\(\tln\( \Re\( h z \) \) \tla\( \Im\( h z \)^{1/2} \) \scK_{\tlh,z}\)\\
&=f_{\nu}\(\tln\( \Re\( h z \) \) \tla\( \Im\( h z \)^{1/2} \) \) \phi\(\scK_{\tlh,z}\)^{-1} \notag \\
&= i^{-\ell} \frac{1+i}{\sqrt{2}} \Im\( h z \)^{\frac{\nu+1}{2}}  \phi\(\scK_{\tlh,z}\)^{-1}. \notag
\end{align}
Since $\scK_{\tlh,z} \in \tlK$ then there exists $\theta \in [-2\pi,2\pi)$ such that $\scK_{\tlh,z} = \tlk_\theta$.
In particular, there exists $\theta' \in (-\pi, \pi)$ such that
\begin{equation*}
\sin(\theta') = \frac{c_0 y}{|c_0 z+d_0|} \text{ and } \cos(\theta') = \frac{c_0 x+d_0}{|c_0z+d_0|},
\end{equation*}
provided $\tlh = \tlxi_{\fa}^{-1} \tlgamma$ 
where $\fa = 0$ or $\infty$.%
\footnote{Notice that we have excluded $-\pi$ from range of possible values for $\theta'$. 
If we have $\tlh = \tlxi_\infty^{-1} \tlgamma$ for $\displaystyle \tlgamma = \(\mm{a}{b}{c}{d},\(\frac{c}{d}\)\) \in \tlGamma_1(4)$
then $c_0 = c$. In this case, if $\theta' = -\pi$ then we must have $c=0$ which then implies $a=d=1$ since $ad-bc=1$ and $a \equiv d \equiv 1 (\mod 4)$.
We would then have that $\scK_{\tlh,z} = \(\id,*\)$, but this clearly does not equal $\tlk_{-\pi}$.
If on the other hand we have $\tlh = \tlxi_0^{-1} \tlgamma$ then $c_0  = -2a \neq 0$ since $a \equiv 1 (\mod 4)$, so clearly we
cannot have $\theta' = -\pi$ in this case either.}
It then follows that $\theta = (1 - \kappa)\pi + \theta'$ by \eqref{eq:epsDef}.
Since $\phi(\tlk_\theta) = \phi_{-1,-\nu,-\ell}(\tlk_\theta) = \exp\(\(\frac{1}{2}-\ell\) i \theta\)$, $\ell \in 2\Z$, 
and $\kappa \in \{\pm 1\}$,
\begin{align*}
&\phi(\scK_{\tlh,z}) = \phi(\tlk_\theta) = \exp\(\(\frac{1}{2}-\ell\) i  (1 - \kappa)\pi) \) \exp\(\(\frac{1}{2}-\ell\) i \theta' \)\\
&= \kappa \exp\(\frac{i \theta'}{2}\) \exp\(-\ell i \theta' \).
\end{align*}
By \eqref{eq:branchCut},
\begin{equation*}
\exp\(i \theta' \)^{\frac{1}{2}} = \exp\(\frac{1}{2} \log(\exp\(i \theta' \) ) \) = \exp\(\frac{i \theta'}{2} \).
\end{equation*}
Thus
\begin{align}
&\phi(\scK_{\tlh,z}) = \kappa \exp\(i \theta' \)^{\frac{1}{2}} \exp\(i \theta' \)^{-\ell} =  \kappa \exp\(i \theta' \)^{\frac{1}{2} - \ell}\\
&= \kappa \(\cos(\theta') + i \sin(\theta')\)^{\frac{1}{2}-\ell} = \kappa \(\frac{c_0z+d_0}{|c_0z+d_0|}\)^{\frac{1}{2}-\ell}\notag
= \kappa \(\frac{j_{h}(z)}{|j_h(z)|}\)^{\frac{1}{2}-\ell}.\notag
\end{align}
Therefore by \eqref{eq:fEval2},
\begin{align}
\label{eq:fEval3}
&f_{\nu}\(\tlh \tln_x \tla(y^{1/2})\) = i^{-\ell} \frac{1+i}{\sqrt{2}} \Im\( h z \)^{\frac{\nu+1}{2}} \kappa \(\frac{j_{h}(z)}{|j_h(z)|}\)^{\ell - \frac{1}{2}},
\end{align}
where $j_h(z) = c_0 z + d_0$.

Let $\tlgamma = \(\mm{a}{b}{c}{d},\(\frac{c}{d}\)\)$.
By \eqref{eq:EisDistToSer1} and \eqref{eq:fEval3} for $\tlh = \tlxi_\infty^{-1} \tlgamma = \tlgamma$,
\begin{align}
\label{eq:EisDistToSer1b}
&\int_{-\pi/2}^{\pi/2} \tlE_{1,\nu}^{(\infty)}\(\tln_x \tla(y^{1/2}) \tlk_\theta \) \phi(\tlk_\theta) \, d\theta\\
&= i^{-\ell} \frac{1+i}{\sqrt{2}} \zeta_2(2\nu+1) \sum_{\tlgamma \in \tlGamma_\infty \bs \tlGamma_1(4)} \(\frac{c}{d}\) \(\frac{j_{\gamma}(z)}{|j_\gamma(z)|}\)^{\ell - \frac{1}{2}} \Im\( \gamma z \)^{\frac{\nu+1}{2}}\notag\\
&= i^{-\ell} \frac{1+i}{\sqrt{2}} \dsE_{\infty,\ell}\(z, \frac{\nu+1}{2}\)\notag
\end{align}
for $\Re(\nu) > 1$. The uniqueness of meromorphic continuation proves part (a) for $\fa = \infty$.

For the case of $\fa = 0$, observe
\begin{equation}
\tlxi_0^{-1} \tlgamma = \(\mm{\frac{c}{2}}{\frac{d}{2}}{-2a}{-2b},(a,c)_H \(\frac{c}{d}\)\).
\end{equation}
Thus by \eqref{eq:EisDistToSer1} and \eqref{eq:fEval3} for $\tlh = \tlxi_0^{-1} \tlgamma$,
\begin{align}
&\int_{-\pi/2}^{\pi/2} \tlE_{1,\nu}^{(\infty)}\(\tln_x \tla(y^{1/2}) \tlk_\theta \) \phi(\tlk_\theta) \, d\theta\notag\\
&= i^{-\ell} \frac{1+i}{\sqrt{2}} \zeta_2(2\nu+1) \sum_{\tlgamma \in \tlGamma_0 \bs \tlGamma_1(4)} (a,c)_H \(\frac{c}{d}\) \(\frac{j_{\tlxi_0^{-1} \gamma}(z)}{|j_{\tlxi_0^{-1} \gamma}(z)|}\)^{\ell - \frac{1}{2}}  \Im\( \tlxi_0^{-1} \gamma z \)^{\frac{\nu+1}{2}}\notag\\
&= i^{-\ell} \frac{1+i}{\sqrt{2}} \dsE_{0,\ell}\(z, \frac{\nu+1}{2}\),\notag
\end{align}
for $\Re(\nu) > 1$ and $\phi = \phi_{-1,-\nu,-\ell}$.
For this last equality we used the fact that for any coset of $\tlGamma_0 \bs \tlGamma_1(4)$, we can always choose
$\tlgamma \in \tlGamma_1(4)$ such that $c > 0$.
For such $\tlgamma$ it follows that $(a,c)_H = 1$.
The uniqueness of meromorphic continuation then proves part (a) for $\fa = 0$.

The proof of part (b) will largely follow from \eqref{eq:EisDistToSer1b}.
Observe that by \eqref{eq:pairingDef},
\begin{align}
&\int_{-\pi/2}^{\pi/2} I_{1,\nu} \tlE_{1,-\nu}^{(\infty)}\(\tln_x \tla(y^{1/2}) \tlk_\theta \) \phi(\tlk_\theta) \, d\theta \\
&= \left\< I_{1,\nu} \(\pi\(\tln_x \tla(y^{1/2})\)^{-1} \tlE_{1,-\nu}^{(\infty)}\), \phi_{-1,-\nu,-\ell} \right\>_{1,\nu}.\notag
\end{align}
Thus by Lemma \ref{lem:AlmostAdjoint} and Lemma \ref{lem:KFiniteLem},
\begin{align}
&\int_{-\pi/2}^{\pi/2} I_{1,\nu} \tlE_{1,-\nu}^{(\infty)}\(\tln_x \tla(y^{1/2}) \tlk_\theta \) \phi(\tlk_\theta) \, d\theta \\
&= -i \left\< \pi\(\tln_x \tla(y^{-1/2})\)^{-1} \tlE_{1,-\nu}^{(\infty)}, I_{-1,\nu} \phi_{-1,-\nu,-\ell} \right\>_{1,-\nu} \notag\\
&= -i i^{\ell} \frac{1 + i}{\sqrt{2}} \cdot \frac{\pi  2^{1-\nu } \Gamma (\nu )}{\Gamma\left(-\frac{1}{2} \left(-\ell +\frac{1}{2}\right)+ \frac{\nu +1}{2} \right) \Gamma \left(\frac{1}{2} \left(-\ell +\frac{1}{2}\right) +\frac{\nu +1}{2}\right)}\notag\\
&\hh \cdot \left\< \pi\(\tln_x \tla(y^{1/2})\)^{-1} \tlE_{1,-\nu}^{(\infty)}, \phi_{-1,\nu,-\ell} \right\>_{1,-\nu} \notag\\
&= -i^{\ell+1} \frac{1 + i}{\sqrt{2}} \cdot \frac{\pi  2^{1-\nu } \Gamma (\nu )}{\Gamma \left( \frac{1}{4} + \frac{\ell}{2} + \frac{\nu}{2} \right) \Gamma\left( \frac{3}{4} - \frac{\ell}{2} + \frac{\nu}{2} \right)}\notag\\
&\hh \cdot \int_{-\pi/2}^{\pi/2} \tlE_{1,-\nu}^{(\infty)}\(\tln_x \tla(y^{1/2}) \tlk_\theta \) \phi_{-1,\nu,-\ell}(\tlk_\theta) \, d\theta. \notag
\end{align}
For $\Re(\nu)<-1$, it follows from \eqref{eq:EisDistToSer1b} that
\begin{align}
&\int_{-\pi/2}^{\pi/2} I_{1,\nu} \tlE_{1,-\nu}^{(\infty)}\(\tln_x \tla(y^{1/2}) \tlk_\theta \) \phi(\tlk_\theta) \, d\theta \\
&= -i^{\ell+1} \frac{1 + i}{\sqrt{2}} \cdot \frac{\pi  2^{1-\nu } \Gamma (\nu )}{\Gamma \left( \frac{1}{4} + \frac{\ell}{2} + \frac{\nu}{2} \right) \Gamma\left( \frac{3}{4} - \frac{\ell}{2} + \frac{\nu}{2} \right)}
i^{-\ell} \frac{1+i}{\sqrt{2}} \dsE_{\infty,\ell}\(z, \frac{-\nu+1}{2}\)\notag\\
&= \frac{\pi  2^{1-\nu } \Gamma (\nu )}{\Gamma \left( \frac{1}{4} + \frac{\ell}{2} + \frac{\nu}{2} \right) \Gamma\left( \frac{3}{4} - \frac{\ell}{2} + \frac{\nu}{2} \right)} \dsE_{\infty,\ell}\(z,\frac{-\nu+1}{2}\).\notag
\end{align}
The uniqueness of meromorphic continuation then proves part (b).
\end{proof}

By Theorem \ref{thm:FuncEq},
\begin{align*}
&\int_{-\pi/2}^{\pi/2} I_{1,\nu} \tlE_{1,-\nu}^{(\infty)}\(\tln_x \tla(y^{1/2}) \tlk_\theta \) \phi_{-1,-\nu,-\ell}(\tlk_\theta) \, d\theta\\
&= \Big((1-i) 2^{2\nu-1}(1-2^{-2\nu-1})^{-1} G_0(2 \nu)\Big) \\
&\hh \int_{-\pi/2}^{\pi/2} \(\tlE^{(\infty)}_{1,\nu}\(\tln_x \tla(y^{1/2}) \tlk_\theta\)  + (1-i) 2^{-\nu}(1 - 2^{2\nu}) \tlE^{(0)}_{1,\nu}\(\tln_x \tla(y^{1/2}) \tlk_\theta\)  \)\\
&\hh\hh \cdot \phi_{-1,-\nu,-\ell}(\tlk_\theta) \, d\theta.
\end{align*}
After applying Lemma \ref{lem:DistToSeries}, this equation becomes
\begin{align}
\label{eq:preFuncEq1}
&\frac{\pi  2^{1-\nu } \Gamma (\nu)}{\Gamma \left( \frac{1}{4} + \frac{\ell}{2} + \frac{\nu}{2} \right) \Gamma\left( \frac{3}{4} - \frac{\ell}{2} + \frac{\nu}{2} \right)} \dsE_{\infty,\ell}\(z,\frac{-\nu+1}{2} \)\\
&= i^{-\ell} \frac{1+i}{\sqrt{2}} \Big((1-i) 2^{2\nu-1}(1-2^{-2\nu-1})^{-1} G_0(2 \nu)\Big)\notag\\
&\hh \cdot \(\dsE_{\infty,\ell}\(z,\frac{\nu+1}{2} \) + (1-i) 2^{-\nu}(1 - 2^{2\nu}) \dsE_{0,\ell}\(z,\frac{\nu+1}{2} \)\).\notag
\end{align}
By the duplication formula for $\Gamma(\nu)$,
\begin{equation}
\frac{G_0(2\nu)}{\Gamma(\nu)} =  \pi^{-\frac{1}{2}-2\nu} \cos(\pi \nu) \Gamma\(\frac{1}{2} + \nu\).
\end{equation}
By applying this identity and simplifying \eqref{eq:preFuncEq1}, we find that
\begin{align}
&\dsE_{\infty,\ell}\(z,\frac{-\nu+1}{2} \)\\
&= i^{-\ell} \frac{2^{3 \nu -\frac{3}{2}} \pi ^{-2 \nu -\frac{3}{2}}}{1-2^{-2\nu-1}}
\cos(\pi \nu) \Gamma\(\nu + \frac{1}{2}\) \Gamma \( \frac{1}{4} + \frac{\ell}{2} + \frac{\nu}{2} \) \Gamma\( \frac{3}{4} - \frac{\ell}{2} + \frac{\nu}{2} \)\notag\\
&\hh \cdot \(\dsE_{\infty,\ell}\(z,\frac{\nu+1}{2} \) + (1-i) 2^{-\nu}(1 - 2^{2\nu}) \dsE_{0,\ell}\(z,\frac{\nu+1}{2} \)\).\notag
\end{align}
Replacing $\nu$ with $2s-1$ yields a more traditional form of the functional equation:

\begin{thm}
For $z \in \dsH$, $s \in \C$, $\ell \in 2\Z$,
\begin{align*}
&\dsE_{\infty,\ell}\(z, 1-s \)\\
&= - i^{-\ell} \frac{ 2^{6 s-\frac{9}{2}} \pi ^{\frac{1}{2}-4 s} }{ 1 - 2^{1 - 4 s} } \cos(2 \pi s)
\Gamma\(2s - \frac{1}{2}\) \Gamma \( s - \frac{1}{4} + \frac{\ell}{2} \) \Gamma\( s + \frac{1}{4} - \frac{\ell}{2} \)\notag\\
&\hh \cdot \Big(\dsE_{\infty,\ell}\(z,s\) + (1-i) 2^{-2 s-1} \(4-16^s\) \dsE_{0,\ell}\(z, s\)\Big).\notag
\end{align*}
\end{thm}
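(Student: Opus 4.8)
The plan is to observe that the asserted identity is obtained from the functional equation already exhibited in the display immediately preceding the theorem statement — the one expressing $\dsE_{\infty,\ell}(z,\tfrac{-\nu+1}{2})$ in terms of $\dsE_{\infty,\ell}(z,\tfrac{\nu+1}{2})$ and $\dsE_{0,\ell}(z,\tfrac{\nu+1}{2})$ — by the single change of variable $\nu = 2s-1$. Hence the only work is a term-by-term simplification of that display under this substitution.

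For completeness I would first recall how that $\nu$-parametrized display itself arises, since that is where the real content sits. One evaluates both sides of Theorem \ref{thm:FuncEq} (with $\epsilon = 1$) at $\tln_x\tla(y^{1/2})\tlk_\theta$, integrates in $\theta$ over $[-\pi/2,\pi/2]$ against $\phi_{-1,-\nu,-\ell}(\tlk_\theta)$, and invokes Lemma \ref{lem:DistToSeries}. Part (b) rewrites the left-hand term $I_{1,\nu}\tlE^{(\infty)}_{1,-\nu}$ as $\dsE_{\infty,\ell}(z,\tfrac{-\nu+1}{2})$ times an explicit ratio of $\Gamma$-functions, while part (a) rewrites $\tlE^{(\infty)}_{1,\nu}$ and $\tlE^{(0)}_{1,\nu}$ as $\dsE_{\infty,\ell}(z,\tfrac{\nu+1}{2})$ and $\dsE_{0,\ell}(z,\tfrac{\nu+1}{2})$, each carrying the common factor $i^{-\ell}\tfrac{1+i}{\sqrt2}$. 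Cancelling that common factor and using the Legendre duplication formula in the shape $G_0(2\nu)/\Gamma(\nu) = \pi^{-1/2-2\nu}\cos(\pi\nu)\,\Gamma(\tfrac12+\nu)$ to absorb the $G_0(2\nu)$ coming from Theorem \ref{thm:FuncEq} against the $\Gamma(\nu)$ produced by Lemma \ref{lem:DistToSeries}(b) yields precisely the $\nu$-parametrized functional equation.

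Then I would substitute $\nu = 2s-1$ and simplify each factor in turn: the arguments $\tfrac{-\nu+1}{2}$ and $\tfrac{\nu+1}{2}$ become $1-s$ and $s$; the prefactor $2^{3\nu-3/2}\pi^{-2\nu-3/2}(1-2^{-2\nu-1})^{-1}$ becomes $2^{6s-9/2}\pi^{1/2-4s}(1-2^{1-4s})^{-1}$; the three $\Gamma$-arguments $\nu+\tfrac12,\ \tfrac14+\tfrac\ell2+\tfrac\nu2,\ \tfrac34-\tfrac\ell2+\tfrac\nu2$ become $2s-\tfrac12,\ s-\tfrac14+\tfrac\ell2,\ s+\tfrac14-\tfrac\ell2$; and $2^{-\nu}(1-2^{2\nu}) = 2^{1-2s}-2^{2s-1} = 2^{-2s-1}(4-16^s)$. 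The only non-arithmetic point is the sign: $\cos(\pi\nu) = \cos(2\pi s - \pi) = -\cos(2\pi s)$, which is exactly the source of the leading minus sign in the stated theorem.

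I do not anticipate any genuine obstacle. The single slightly delicate step is the duplication-formula manipulation that merges the $\Gamma$-factor of Lemma \ref{lem:DistToSeries}(b) with $G_0(2\nu)$; everything after that — including the passage to the variable $s$ — is routine bookkeeping of powers of $2$, powers of $\pi$, and half-integer shifts of $\Gamma$-arguments. Since the $\nu$-parametrized identity is already available in the excerpt, effectively nothing remains but to record the substitution.
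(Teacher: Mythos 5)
Your proposal is correct and matches the paper's argument exactly: the paper derives the $\nu$-parametrized functional equation by combining Theorem \ref{thm:FuncEq} with Lemma \ref{lem:DistToSeries} and the duplication-formula identity for $G_0(2\nu)/\Gamma(\nu)$, and then obtains the theorem by the substitution $\nu = 2s-1$. All of your simplifications (including the sign from $\cos(\pi\nu) = -\cos(2\pi s)$ and the identity $2^{-\nu}(1-2^{2\nu}) = 2^{-2s-1}(4-16^s)$) check out.
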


The following proposition will allow us to express $\dsE_{\fa,\ell}$ in a Fourier series expansion.

\begin{prop} 
\label{prop:DistTermsToSeriesTerms}
For $x \in \R$ and $y \in \R_{>0}$,
\begin{itemize}[leftmargin=.3in,font=\normalfont\textbf]
\item[(a)] $\displaystyle \left\<\pi\(\tln_x \tla(y^{1/2})\)^{-1} \onebb_{1,\nu}, \phi_{-1,-\nu,-\ell} \right\>_{1,\nu} \\
\hhh = \frac{\pi  2^{1-\nu}  \Gamma (\nu )}{\Gamma\left(\frac{1}{2} \left(\frac{1}{2} - \ell\right) + \frac{\nu +1}{2} \right) \Gamma \left(-\frac{1}{2} \left(\frac{1}{2} - \ell\right) +\frac{\nu +1}{2}\right)} y^{\frac{-\nu + 1}{2}}$,

\item[(b)] $\displaystyle \left\<\pi\(\tln_x \tla(y^{1/2})\)^{-1} \delta_{1,\nu,\infty}, \phi_{-1,-\nu,-\ell} \right\>_{1,\nu} = i^\ell \frac{1+i}{\sqrt{2}} y^{\frac{\nu + 1}{2}}$,

\item[(c)] $\displaystyle \left\<\pi\(\tln_x \tla(y^{1/2})\)^{-1} \dse_{1,\nu,n}, \phi_{-1,-\nu,-\ell} \right\>_{1,\nu} = e(n x) y^{\frac{-\nu+1}{2}} W_\nu(ny)$,
\end{itemize}
where
\begin{equation}
\label{eq:WhittakerFunction}
W_\nu(y) = \int_{-\infty}^\infty e(ty) (1-it)^{\frac{1}{2} \(\frac{1}{2}-\ell\) - \frac{\nu+1}{2}} (1+it)^{-\frac{1}{2}\(\frac{1}{2}-\ell\) - \frac{\nu+1}{2}} \, dt.
\end{equation}
\end{prop}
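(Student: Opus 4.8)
The plan is to reduce all three identities to computations already carried out in the excerpt. Throughout write $\tlg=\tln_x\tla(y^{1/2})$, and note that, since the pairing $\<\cdot,\cdot\>_{1,\nu}$ identifies $(\pi,V_{-1,-\nu})$ with the dual of $(\pi,V_{1,\nu})$, one has $\<\pi(\tlg)^{-1}h,\phi\>_{1,\nu}=\<h,\pi(\tlg)\phi\>_{1,\nu}$, and also, directly from \eqref{eq:pairingDef}, $\<\pi(\tlg)^{-1}h,\phi\>_{1,\nu}=\int_{-\pi/2}^{\pi/2}h(\tlg\tlk_\theta)\phi(\tlk_\theta)\,d\theta$, for $h\in V_{1,\nu}^{-\infty}$ and $\phi\in V_{-1,-\nu}^\infty$. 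Part (b) is then essentially already done: $\<\pi(\tlg)^{-1}\delta_{1,\nu,\infty},\phi_{-1,-\nu,-\ell}\>_{1,\nu}$ is precisely the quantity $f_\nu(\tln_x\tla(y^{1/2}))$ of \eqref{eq:fDef}, and the chain of manipulations ending in \eqref{eq:fEval} evaluates it as $i^{-\ell}\tfrac{1+i}{\sqrt2}\,y^{(\nu+1)/2}$; since $\ell\in2\Z$ we have $i^{-\ell}=i^{\ell}$, and this holds for all $\nu$ since the $\tlK$-integral is over a compact group.

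For part (a) I would avoid an integral and use the intertwining identities. By Lemma~\ref{lem:InterTwinDeltInfOnebb}(a), $\onebb_{1,\nu}=I_{1,\nu}\delta_{1,-\nu,\infty}$, and $I_{1,\nu}$ commutes with $\pi$; so, applying Lemma~\ref{lem:AlmostAdjoint}, then Lemma~\ref{lem:KFiniteLem} with $\epsilon=-1$ and with the role of ``$\ell$'' played by $-\ell$ (to evaluate $I_{-1,\nu}\phi_{-1,-\nu,-\ell}$), and then part (b) with $\nu$ replaced by $-\nu$, one gets
\begin{align*}
\<\pi(\tlg)^{-1}\onebb_{1,\nu},\phi_{-1,-\nu,-\ell}\>_{1,\nu}
&=-i\,\<\pi(\tlg)^{-1}\delta_{1,-\nu,\infty},\,I_{-1,\nu}\phi_{-1,-\nu,-\ell}\>_{1,-\nu}\\
&=-i\cdot i^{-\ell}\,\frac{1+i}{\sqrt2}\cdot
\frac{\pi 2^{1-\nu}\Gamma(\nu)}{\Gamma\big(\tfrac12(\tfrac12-\ell)+\tfrac{\nu+1}2\big)\Gamma\big(-\tfrac12(\tfrac12-\ell)+\tfrac{\nu+1}2\big)}
\cdot i^{\ell}\,\frac{1+i}{\sqrt2}\,y^{(-\nu+1)/2}.
\end{align*}
Since $-i\cdot i^{-\ell}\cdot i^{\ell}=-i$ and $\big(\tfrac{1+i}{\sqrt2}\big)^2=i$, the scalar prefactor collapses to $1$, giving (a). As both sides are meromorphic in $\nu$, it suffices to prove this in a region where the invoked lemmas hold unconditionally and then invoke uniqueness of meromorphic continuation.

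For part (c) I would argue directly. Fix $\Re(\nu)>0$. Then $(\dse_{1,\nu,n})_0=e(nx)\in L^\infty(\R)$; the relation \eqref{eq:sigmaIndRepUnbddEq} holds for $\dse_{1,\nu,n}$ as an equality of distributions on all of $\R$ (here one uses that $(\dse_{1,\nu,n})_\infty$ is in $L^1_{\loc}(\R)$ and, by \eqref{eq:dseVanish}, vanishes to infinite order at $0$, hence carries no atom at $0$); and $(\pi(\tlg)\phi_{-1,-\nu,-\ell})_0\in L^1(\R)$ by \eqref{eq:phi0Bnd}. So Proposition~\ref{prop:pairingToCondConvIntegral} applies and
\begin{equation*}
\<\pi(\tlg)^{-1}\dse_{1,\nu,n},\phi_{-1,-\nu,-\ell}\>_{1,\nu}
=\lim_{m_1,m_2\to\infty}\int_{-m_1}^{m_2}e(nt)\,\big(\pi(\tlg)\phi_{-1,-\nu,-\ell}\big)_0(t)\,dt.
\end{equation*}
Evaluating $(\pi(\tlg)\phi_{-1,-\nu,-\ell})_0$ from Lemma~\ref{lem:MetaUnbddAct}(b) (with the lemma's ``$\epsilon$'' and ``$\nu$'' equal to $-1$ and $-\nu$, since the ambient representation is $V_{-1,-\nu}$) gives $y^{(-\nu-1)/2}(\phi_{-1,-\nu,-\ell})_0(t/y-x/y)$, and the computation in \eqref{eq:phiArcTanToI}--\eqref{eq:branchCutResult} gives $(\phi_{-1,-\nu,-\ell})_0(s)=(1-is)^{\frac12(\frac12-\ell)-\frac{\nu+1}2}(1+is)^{-\frac12(\frac12-\ell)-\frac{\nu+1}2}$. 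Substituting $s=t/y-x/y$ and passing the limit under the integral by $L^1$-dominated convergence yields $e(nx)\,y^{(-\nu+1)/2}\int_{-\infty}^{\infty}e(nys)(1-is)^{\frac12(\frac12-\ell)-\frac{\nu+1}2}(1+is)^{-\frac12(\frac12-\ell)-\frac{\nu+1}2}\,ds$, which is $e(nx)\,y^{(-\nu+1)/2}W_\nu(ny)$ by \eqref{eq:WhittakerFunction}; uniqueness of meromorphic continuation then removes the restriction $\Re(\nu)>0$.

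The main difficulty is not conceptual but a matter of care: keeping straight the powers of $i$ and the branch conventions of \eqref{eq:branchCut} while specializing Lemmas~\ref{lem:MetaUnbddAct} and~\ref{lem:KFiniteLem} to the representation $V_{-1,-\nu}$ and to the weight ``$\ell$''$=-\ell$, and, for (c), checking cleanly that Proposition~\ref{prop:pairingToCondConvIntegral} is applicable to $\dse_{1,\nu,n}$ --- i.e.\ that \eqref{eq:sigmaIndRepUnbddEq} is a genuine equality of distributions on all of $\R$ and not merely on $\R_{\neq0}$.
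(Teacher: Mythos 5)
Your proposal is correct, and parts (b) and (c) essentially coincide with the paper's own argument: part (b) is indeed the quantity $f_{\nu}(\tln_x\tla(y^{1/2}))$ already evaluated in \eqref{eq:fEval} (the paper re-derives it via \eqref{eq:cmpctToUnbdd2}, landing on the same value since $i^{-\ell}=i^{\ell}$ for $\ell\in 2\Z$), and part (c) is the paper's computation with the group element moved onto the smooth vector via the invariance $\<\pi(\tlg)^{-1}f,\phi\>=\<f,\pi(\tlg)\phi\>$ rather than kept on the distribution; your care in checking that \eqref{eq:sigmaIndRepUnbddEq} holds on all of $\R$ for $\dse_{1,\nu,n}$ is exactly the hypothesis the paper also (more tersely) invokes. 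Where you genuinely diverge is part (a): the paper applies Proposition \ref{prop:pairingToCondConvIntegral} directly to $\onebb_{1,\nu}$ and evaluates the resulting beta integral by \eqref{eq:InterTwinedKFinite}, whereas you factor $\onebb_{1,\nu}=I_{1,\nu}\delta_{1,-\nu,\infty}$ and chain Lemmas \ref{lem:AlmostAdjoint}, \ref{lem:KFiniteLem} (at $\epsilon=-1$, weight $-\ell$), and part (b) at $-\nu$; your bookkeeping is right --- the Gamma factors $\Gamma\(\mp\frac12\(-\ell+\frac12\)+\frac{\nu+1}{2}\)$ match the stated denominator, and $-i\cdot i^{-\ell}\cdot i^{\ell}\cdot\(\frac{1+i}{\sqrt2}\)^2=1$. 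The trade-off is that your route avoids redoing the archimedean integral at the cost of leaning on the meromorphically continued intertwining operator and the sign conventions of three earlier lemmas; the ultimate source of the Gamma quotient is \eqref{eq:InterTwinedKFinite} either way, so the two proofs are reorganizations of the same computation, with yours making the structural relation $\onebb_{1,\nu}=I_{1,\nu}\delta_{1,-\nu,\infty}$ (and hence the parallel between (a) and (b)) explicit.
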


\begin{proof}
By the uniqueness of meromorphic continuation, it suffices to prove the above identities for $\Re(\nu)>0$.
By Proposition \ref{prop:pairingToCondConvIntegral},
\begin{align}
\label{eq:WhittakerStep1A}
&\left\<\pi\(\tln_x \tla(y^{1/2})\)^{-1} \onebb_{1,\nu}, \phi_{-1,-\nu,-\ell} \right\>_{1,\nu}\\
&= \int_{-\infty}^\infty \onebb_{1,\nu} \(\tln_x \tla(y^{1/2}) \tln_t\) \phi_{-1,-\nu,-\ell}(\tln_t) \, dt.\notag
\end{align}
Since
\begin{equation}
\label{eq:WhitElemId1}
\tln_t = \tlk(-\arctan(t))\tla\( (1+t^2)^{-1/2} \) \tln_-\(\frac{t}{1+t^2}\)
\end{equation}
then by \eqref{eq:phiDefB},
\begin{equation}
\label{eq:phiArcTan}
\phi_{-1,-\nu,-\ell}(\tln_t) = \phi_{-1,-\nu,-\ell}(\tlk(-\arctan(t))) (1+t^2)^{-\frac{\nu+1}{2}}.
\end{equation}
Thus by \eqref{eq:phiArcTanToI} and \eqref{eq:branchCutResult},
\begin{align}
\label{eq:WhittakerStep2A}
&\phi_{-1,-\nu,-\ell}(\tln_t) = (1-i t)^{\frac{1}{2} \(\frac{1}{2}-\ell\)} (1+i t)^{-\frac{1}{2} \(\frac{1}{2}-\ell\)} (1+t^2)^{-\frac{\nu+1}{2}}\\
&= (1-i t)^{\frac{1}{2} \(\frac{1}{2}-\ell\) - \frac{\nu+1}{2}} (1+i t)^{-\frac{1}{2} \(\frac{1}{2}-\ell\) - \frac{\nu+1}{2}}.\notag
\end{align}
Since
\begin{equation}
\label{eq:WhitElemId2}
\tln_x \tla(y^{1/2}) \tln_t = \tln(x+ty) \tla(y^{1/2}),
\end{equation}
then by \eqref{eq:TransLawsB}
\begin{equation}
\label{eq:WhittakerStep3A}
\onebb_{1,\nu}\(\tln_x \tla(y^{1/2}) \tln_t\) = y^{\frac{-\nu + 1}{2}}.
\end{equation}
Thus by \eqref{eq:WhittakerStep1A}, \eqref{eq:WhittakerStep2A}, and \eqref{eq:WhittakerStep3A},
\begin{align*}
&\left\<\pi\(\tln_x \tla(y^{1/2})\)^{-1} \onebb_{1,\nu}, \phi_{-1,-\nu,-\ell} \right\>_{1,\nu}\\
&= y^{\frac{-\nu + 1}{2}} \int_{-\infty}^\infty (1-i t)^{\frac{1}{2} \(\frac{1}{2}-\ell\) - \frac{\nu+1}{2}} (1+i t)^{-\frac{1}{2} \(\frac{1}{2}-\ell\) - \frac{\nu+1}{2}} dt.
\end{align*}
Part (a) then follows from \eqref{eq:InterTwinedKFinite}.

By Lemma \ref{lem:deltaInvN},
\begin{equation}
\pi\(\tln_x \tla(y^{1/2})\)^{-1} \delta_{1,\nu,\infty} = \pi\(\tla(y^{1/2})\)^{-1} \delta_{1,\nu,\infty}.
\end{equation}
Therefore by \eqref{eq:cmpctToUnbdd2} and the definition of $\delta_{1,\nu,\infty}$ (given just above \eqref{eq:tlsdeltprop}),
\begin{align}
\label{eq:WhittakerStep1B}
&\left\<\pi\(\tln_x \tla(y^{1/2})\)^{-1} \delta_{1,\nu,\infty}, \phi_{-1,-\nu,-\ell} \right\>_{1,\nu}\\
&= \int_{-\infty}^\infty \(\pi\(\tln_x \tla(y^{1/2})\)^{-1} \delta_{1,\nu,\infty}\)_\infty(\tln_t) \cdot \(\phi_{-1,-\nu,-\ell}\)_\infty(t) \, dt\notag\\
&= \int_{-\infty}^\infty \delta_{1,\nu,\infty}\(\tla\(y^{1/2}\) \tls^{-1} \tln_t\)\phi_{-1,-\nu,-\ell}(\tls^{-1} \tln_t) \, dt\notag\\
&= \int_{-\infty}^\infty \delta_{1,\nu,0}\(\tls^{-1} \tla\(y^{1/2}\) \tls^{-1} \tln_t\)\phi_{-1,-\nu,-\ell}(\tls^{-1} \tln_t) \, dt.\notag
\end{align}
Since
\begin{equation}
\tls^{-1} \tla\(y^{1/2}\) \tls^{-1} \tln_t = \tln(t/y) \tla\(y^{-1/2}\) \tlm_{-1,1} = \tln(t/y) \tla\(y^{-1/2}\) \(-\id,1\),
\end{equation}
by \eqref{eq:TransLawsB}
\begin{align}
\label{eq:WhittakerStep2B}
&\left\<\pi\(\tln_x \tla(y^{1/2})\)^{-1} \delta_{1,\nu,\infty}, \phi_{-1,-\nu,-\ell} \right\>_{1,\nu}\\
&= \int_{-\infty}^\infty i y^{\frac{\nu - 1}{2}} \delta_{1,\nu,0}\(\tln(t/y)\) \phi_{-1,-\nu,-\ell}(\tls^{-1} \tln_t) \, dt\notag\\
&= i y^{\frac{\nu + 1}{2}} \int_{-\infty}^\infty \delta_{1,\nu,0}\(\tln_t\) \phi_{-1,-\nu,-\ell}(\tls^{-1} \tln(ty)) \, dt\notag\\
&= y^{\frac{\nu + 1}{2}} i \phi_{-1,-\nu,-\ell}(\tls^{-1}).\notag
\end{align}
By \eqref{eq:tlsdef} and \eqref{eq:phiDef},
\begin{align*}
&i\phi_{-1,-\nu,-\ell}(\tls^{-1}) = i \phi_{-1,-\nu,-\ell}\(\tlk\(\frac{-\pi}{2}\)\) = i \exp\(-\(\frac{1}{2} - \ell\) \frac{\pi i}{2}\)\\
&= e(1/4) e\(\frac{1}{4}\(\ell - \frac{1}{2}\)\) = e\(\frac{1}{4}\(\ell + \frac{1}{2}\)\) = i^\ell \frac{1+i}{\sqrt{2}}.
\end{align*}
Part (b) follows from this and \eqref{eq:WhittakerStep2B}.

By Proposition \ref{prop:pairingToCondConvIntegral},
\begin{align}
\label{eq:WhittakerStep1}
&\left\<\pi\(\tln_x \tla(y^{1/2})\)^{-1} \dse_{1,\nu,n}, \phi_{-1,-\nu,-\ell} \right\>_{1,\nu}\\
&= \int_{-\infty}^\infty \dse_{1,\nu,n}\(\tln_x \tla(y^{1/2}) \tln_t\) \phi_{-1,-\nu,-\ell}(\tln_t) \, dt.\notag
\end{align}
By \eqref{eq:WhitElemId2}, \eqref{eq:TransLawsB}, and \eqref{eq:dseDef},
\begin{equation}
\label{eq:WhittakerStep3}
\dse_{1,\nu,n}\(\tln_x \tla(y^{1/2}) \tln_t\) = e(n (x+ty)) y^{\frac{-\nu + 1}{2}}.
\end{equation}
Thus by \eqref{eq:WhittakerStep1} and \eqref{eq:WhittakerStep2A},
\begin{align*}
&\left\<\pi\(\tln_x \tla(y^{1/2})\)^{-1} \dse_{1,\nu,n}, \phi_{-1,-\nu,-\ell} \right\>_{1,\nu}\\
& = \int_{-\infty}^\infty e(n (x+ty)) y^{\frac{-\nu + 1}{2}} (1-i t)^{\frac{1}{2} \(\frac{1}{2}-\ell\) - \frac{\nu+1}{2}} (1+i t)^{-\frac{1}{2} \(\frac{1}{2}-\ell\) - \frac{\nu+1}{2}}\\
&= e(n x) y^{\frac{-\nu + 1}{2}} W_\nu(ny),
\end{align*}
which completes the proof of part (c).
\end{proof}

By Lemma \ref{lem:DistToSeries}(a) and \eqref{eq:pairingDef},
\begin{equation}
\dsE_{\fa,\ell}\(z,\frac{\nu+1}{2}\) = i^\ell \frac{1-i}{\sqrt{2}} \left\<\pi\(\tln_x \tla(y^{1/2})\)^{-1} \tlE_{1,\nu}^{(\fa)}, \phi_{-1,-\nu,-\ell}\right\>_{1,\nu},
\end{equation}
where $z = x+iy \in \dsH$.
By Proposition \ref{prop:EisenInfMeroCont}, Proposition \ref{prop:DistTermsToSeriesTerms}, and the fact that $\ell$ is even,
\begin{align}
\label{eq:InfEisenSeriesFourExp}
&\dsE_{\infty,\ell}\(z,\frac{\nu+1}{2}\)\\
&= i^\ell \frac{1-i}{\sqrt{2}} \frac{\pi  2^{1-\nu}  \Gamma (\nu )}{\Gamma\left(\frac{1}{2} \left(\frac{1}{2} - \ell\right) + \frac{\nu +1}{2} \right) \Gamma \left(-\frac{1}{2} \left(\frac{1}{2} - \ell\right) +\frac{\nu +1}{2}\right)} a_{1,\nu}(0) y^{\frac{-\nu + 1}{2}}\notag\\
&+ a_{1,\nu}(\infty) y^{\frac{\nu+1}{2}} + i^\ell \frac{1-i}{\sqrt{2}}  \sum_{n \in \Z_{\neq 0}} e(nx) a_{1,\nu}(n) y^{\frac{-\nu+1}{2}} W_\nu(ny),\notag
\end{align}
where $a_{1,\nu}(0)$ is given in Proposition \ref{prop:EisenInfConst}, $a_{1,\nu}(\infty)$ is given in 
Proposition \ref{prop:EisenInfMeroCont}, and $a_{1,\nu}(n)$ for $n \in \Z_{\neq 0}$ is given in Theorem \ref{thm:SimplFourCoeff}.
Similarly, by \eqref{eq:ComplFourSerEisAtZero} and Proposition \ref{prop:DistTermsToSeriesTerms},
\begin{align}
\label{eq:ZeroEisenSeriesFourExp}
&\dsE_{0,\ell}\(z,\frac{\nu+1}{2}\)\\
&= i^\ell \frac{1-i}{\sqrt{2}} \frac{\pi  2^{1-\nu}  \Gamma (\nu )}{\Gamma\left(\frac{1}{2} \left(\frac{1}{2} - \ell\right) + \frac{\nu +1}{2} \right) \Gamma \left(-\frac{1}{2} \left(\frac{1}{2} - \ell\right) +\frac{\nu +1}{2}\right)} b_{1,\nu}(0) y^{\frac{-\nu + 1}{2}}\notag\\
&+ i^\ell \frac{1-i}{\sqrt{2}}  \sum_{n \in \Z_{\neq 0}} e(nx) b_{1,\nu}(n) y^{\frac{-\nu+1}{2}} W_\nu(ny),\notag
\end{align}
where $b_{1,\nu}(0)$ is given in Proposition \ref{prop:b0coeff}.
Replacing $\nu$ with $2s-1$ in \eqref{eq:InfEisenSeriesFourExp} and \eqref{eq:ZeroEisenSeriesFourExp} yields a more traditional form of the 
these Fourier series expansions.

The following proposition shows that the Whittaker function $W_\nu(y)$ defined in \eqref{eq:WhittakerFunction}
is compatible with the Whittaker function defined in \cite{Goldfeld85}.

\begin{prop} For $y \in \R_{>0}$,
\begin{equation*}
W_\nu(ny) = i^{-\ell} \frac{1+i}{\sqrt{2}} y^{\nu} \int_{-\infty}^\infty \frac{e(-nt)}{\(y^2 + t^2\)^{-\frac{1}{2}\(\frac{1}{2} - \ell\)+\frac{\nu+1}{2}}
\(t+iy\)^{\frac{1}{2} - \ell}} \, dt.
\end{equation*}
\end{prop}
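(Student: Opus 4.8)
The plan is to reduce the identity to a single change of variables in the integral \eqref{eq:WhittakerFunction}, followed by elementary manipulations of the chosen branch of the logarithm. Since both integrals converge absolutely for $\Re(\nu)>0$, it suffices to establish the identity on that half-plane and then invoke the uniqueness of meromorphic continuation.

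First I would substitute $t=-u/y$ in \eqref{eq:WhittakerFunction}. This turns $e(tny)$ into $e(-un)$, turns $dt$ into $-y^{-1}\,du$ (the sign being absorbed by the reversal of the limits of integration), and turns $1-it$ and $1+it$ into $(y+iu)/y$ and $(y-iu)/y$ respectively. Because $y>0$, the numbers $y\pm iu$ lie in the open right half-plane and $y$ is a positive real, so by the choice of branch cut in \eqref{eq:branchCut} the relevant arguments add without leaving $(-\pi,\pi]$; hence $\bigl((y+iu)/y\bigr)^{c}=(y+iu)^{c}y^{-c}$ and likewise for $y-iu$. Collecting the resulting powers of $y$ produces a single factor $y^{\nu}$ (the $y$-exponent is $\tfrac{\nu+1}{2}-\tfrac12(\tfrac12-\ell)+\tfrac12(\tfrac12-\ell)+\tfrac{\nu+1}{2}-1=\nu$), so that
\[
W_\nu(ny)=y^{\nu}\int_{-\infty}^{\infty}e(-un)\,(y+iu)^{\frac12(\frac12-\ell)-\frac{\nu+1}{2}}\,(y-iu)^{-\frac12(\frac12-\ell)-\frac{\nu+1}{2}}\,du.
\]

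Next I would massage the integrand into the claimed form. Writing $a=\tfrac12(\tfrac12-\ell)$ and $b=\tfrac{\nu+1}{2}$, I would split $(y-iu)^{-a-b}=(y-iu)^{a-b}(y-iu)^{-2a}$; since $\arg(y+iu)+\arg(y-iu)=0$ one may combine $(y+iu)^{a-b}(y-iu)^{a-b}=(y^2+u^2)^{a-b}$, and since $-2a=\ell-\tfrac12$ this leaves the integrand equal to $(y^2+u^2)^{a-b}(y-iu)^{\ell-\frac12}$. Finally, using $u+iy=i(y-iu)$ — whose argument again stays in $(-\pi,\pi]$ because $\Re(y-iu)>0$ — one gets $(u+iy)^{\ell-\frac12}=i^{\ell-\frac12}(y-iu)^{\ell-\frac12}$, and the constant identity $i^{-\ell}\tfrac{1+i}{\sqrt2}\cdot i^{\ell-\frac12}=\tfrac{1+i}{\sqrt2}\cdot\tfrac{1-i}{\sqrt2}=1$ accounts exactly for the prefactor $i^{-\ell}\tfrac{1+i}{\sqrt2}$ and for the replacement of $(y-iu)^{\ell-\frac12}$ by $(u+iy)^{\ell-\frac12}$ in the denominator $(y^2+t^2)^{-\frac12(\frac12-\ell)+\frac{\nu+1}{2}}(t+iy)^{\frac12-\ell}$. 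Renaming $u$ back to $t$ then yields the asserted formula.

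The arithmetic with the exponents is routine; the one place requiring genuine care is the bookkeeping of branch cuts — at each step where a power of a product or quotient is split into a product of powers, one must verify that the relevant sum of arguments lies in $(-\pi,\pi]$, which is precisely where the hypothesis $y>0$ (hence $\Re(y\pm iu)>0$) is used. I expect this to be the main obstacle, in the sense of being the only part of the argument that is not purely mechanical.
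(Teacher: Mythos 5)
Your proposal is correct and follows essentially the same route as the paper: a change of variables in \eqref{eq:WhittakerFunction} combined with branch-cut bookkeeping for the powers of $1\pm it$, the only cosmetic difference being that you perform the sign flip and the rescaling $t\mapsto t/y$ as a single substitution $t=-u/y$ and verify the branch identities inline, whereas the paper does them as two separate substitutions sandwiching the pre-established identity \eqref{eq:SqrtId}. Your exponent arithmetic, the combination $(y+iu)^{a-b}(y-iu)^{a-b}=(y^2+u^2)^{a-b}$, the relation $u+iy=i(y-iu)$, and the constant check $i^{-\ell}\tfrac{1+i}{\sqrt{2}}\cdot i^{\ell-\frac{1}{2}}=1$ are all valid under the branch convention \eqref{eq:branchCut}.
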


\begin{proof}
By \eqref{eq:WhittakerFunction}, \eqref{eq:branchCutResult}, and a change of variable,
\begin{align*}
&W_\nu(ny) = \int_{-\infty}^\infty e(nty) (1-it)^{\frac{1}{2} \(\frac{1}{2}-\ell\)} (1+it)^{-\frac{1}{2}\(\frac{1}{2}-\ell\)} (1+t^2)^{-\frac{\nu+1}{2}} \, dt\\
&= \int_{-\infty}^\infty e(-nty) (1+it)^{\frac{1}{2} \(\frac{1}{2}-\ell\)} (1-it)^{-\frac{1}{2}\(\frac{1}{2}-\ell\)} (1+t^2)^{-\frac{\nu+1}{2}} \, dt.
\end{align*}
One can show that for $\alpha \in \C$ and $t \in \R$,
\begin{equation}
\label{eq:SqrtId}
(1+ it)^{\alpha} (1- i t)^{-\alpha}  = e\(\frac{\alpha}{2}\) (1 + t^2)^{\alpha} (t+i)^{-2\alpha}.
\end{equation}
Applying \eqref{eq:SqrtId} with $\alpha = \frac{1}{2}\(\frac{1}{2}-\ell\)$ shows that
\begin{align*}
W_\nu(ny) &= e\(\frac{1}{4}\(\frac{1}{2}-\ell\)\) \int_{-\infty}^\infty e(-nty) (1+t^2)^{\frac{1}{2}\(\frac{1}{2}-\ell\)-\frac{\nu+1}{2}}
(t+i)^{\ell - \frac{1}{2}} \, dt\\
&= i^{-\ell} \frac{1+i}{\sqrt{2}} \int_{-\infty}^\infty \frac{e(-nty)}{(1+t^2)^{-\frac{1}{2}\(\frac{1}{2} - \ell\)+\frac{\nu+1}{2}}
(t+i)^{\frac{1}{2}-\ell}} \, dt.\notag
\end{align*}
Performing the change of variables $t \mapsto \frac{t}{y}$ shows that
\begin{align*}
&W_\nu(ny) = i^{-\ell} \frac{1+i}{\sqrt{2}} \int_{-\infty}^\infty \frac{e(-nt)}{\(1+\frac{t^2}{y^2}\)^{-\frac{1}{2}\(\frac{1}{2} - \ell\)+\frac{\nu+1}{2}}
\(\frac{t}{y}+i\)^{\frac{1}{2}-\ell}} \, \frac{dt}{y}\\
&= i^{-\ell} \frac{1+i}{\sqrt{2}} \int_{-\infty}^\infty \frac{e(-nt)}{y^{\(\frac{1}{2} - \ell\)-(\nu+1)} \(y^2 + t^2\)^{-\frac{1}{2}\(\frac{1}{2} - \ell\)+\frac{\nu+1}{2}}
y^{-\frac{1}{2} + \ell}\(t+iy\)^{\frac{1}{2}-\ell}} \, \frac{dt}{y}\\
&= i^{-\ell} \frac{1+i}{\sqrt{2}} y^{\nu} \int_{-\infty}^\infty \frac{e(-nt)}{\(y^2 + t^2\)^{-\frac{1}{2}\(\frac{1}{2} - \ell\)+\frac{\nu+1}{2}}
\(t+iy\)^{\frac{1}{2} - \ell}} \, dt.\qedhere
\end{align*}
\end{proof}

\sloppy  
\printbibliography
\end{document}